\theoremstyle{definition}
\newtheorem{theorem}{Theorem}[section]
\newtheorem*{theorem*}{Theorem}
\newtheorem{definition}[theorem]{Definition}
\newtheorem{definition*}{Definition}
\newtheorem{example}[theorem]{Example}
\newtheorem*{example*}{Example}
\newtheorem{proposition}[theorem]{Proposition}
\newtheorem*{proposition*}{Proposition}
\newtheorem*{note*}{Note}
\newtheorem*{notice*}{Notice}
\newtheorem{lemma}[theorem]{Lemma}
\newtheorem*{lemma*}{Lemma}
\newtheorem*{fact*}{Fact}
\newtheorem{question}[theorem]{Question}
\newtheorem*{question*}{Question}
\newtheorem{conjecture}[theorem]{Conjecture}
\newtheorem*{conjecture*}{Conjecture}
\newtheorem{notation}[theorem]{Notation}
\newtheorem*{notation*}{Notation}
\newtheorem{corollary}[theorem]{Corollary}
\newtheorem*{corollary*}{Corollary}
\newtheorem{remark}[theorem]{Remark}
\newtheorem*{remark*}{Remark}
\newtheorem{condition}[theorem]{Condition}
\newtheorem*{condition*}{Condition}
\newtheorem*{convention*}{Convention}
\newtheorem*{observation*}{Observation}
\newcommand{\deq}{\coloneqq}
\newcommand{\emp}{\varnothing}
\newcommand{\C}{\mathbb{C}}
\newcommand{\Z}{\mathbb{Z}}
\newcommand{\vep}{\varepsilon}
\newcommand{\mb}[1]{\mathbb{#1}}
\newcommand{\mcal}[1]{\mathcal{#1}}%
\newcommand{\opn}[1]{\operatorname{#1}}
\newcommand{\catn}[1]{\mathbf{#1}}
\newcommand{\abk}[1]{\langle {#1} \rangle}
\newcommand{\xto}[1]{\xrightarrow{#1}}
\newcommand{\hookto}{\hookrightarrow}
\newcommand{\myfootnote}[1]{\hspace{-5pt}\footnote{#1}}
\newcommand{\simto}{ 
\mathrel{\raisebox{0.13em}{${\sim}$}}
\kern -0.75em \mathrel{\raisebox{-0.11em}{${\scriptstyle \to}$}}  
}
\renewcommand*{\backrefalt}[4]{%
\ifcase #1 %
\or        [Cited on p.#2.]%
\else      [Cited on pp.#2.]%
\fi}
\DeclareMathOperator{\Pic}{Pic}
\DeclareMathOperator{\CDiv}{Div^{\infty}}
\DeclareMathOperator{\fuk}{fuk}
\DeclareMathOperator{\coh}{coh}
\DeclareMathOperator{\reg}{gen}
\DeclareMathOperator{\tform}{\Omega}
\DeclareMathOperator{\mform}{\mathcal{F}}
\crefname{equation}{}{}
\crefname{conjecture}{Conjecture}{Conjectures}
\numberwithin{equation}{section}
\title[Graded modules associated 
with permissible $C^{\infty}$-divisors]{Graded modules 
associated with permissible $C^{\infty}$-divisors
on tropical manifolds}
\author[Y. Tsutsui]{Yuki Tsutsui}
\address{Graduate School of Mathematical Sciences,
The University of Tokyo, 3-8-1 Komaba, Meguro-Ku,
Tokyo, 153-8914, Japan}
\email{tyuki@ms.u-tokyo.ac.jp}
\begin{document}

\begin{abstract}
We use ideas 
from the Strominger--Yau--Zaslow conjecture 
and microlocal sheaf theory 
to define graded modules
associated with permissible $C^{\infty}$-divisors
on compact tropical manifolds.
A $C^{\infty}$-divisor is
a generalization of
a Lagrangian section
on an integral affine manifold.
The group of $C^{\infty}$-divisors
on a tropical manifold surjects
onto the Picard group.
We also prove a Riemann--Roch formula 
for compact tropical curves 
and integral affine manifolds 
admitting Hessian forms. 
Our approach differs from the tropical Riemann--Roch theorem 
established by Gathmann and Kerber.
\end{abstract}

\maketitle

\section{Introduction}

\subsection{Background}

Tropical geometry is a combinatorial 
(or convex-geometric)
analog of algebraic (or analytic) geometry.
A \emph{rational polyhedral space}
is a pair
$(S,\mathcal{O}^{\times}_S)$
of a topological space $S$
and 
a sheaf $\mathcal{O}^{\times}_S$ on $S$
which is locally isomorphic to
the pair
of an open subset
of a polyhedral subset 
of a tropical affine space 
$
\overline{\mathbb{R}}^{k}=
(\mathbb{R}\cup\{+\infty\})^{k}
$
and
the sheaf
of affine linear functions with 
integer slopes
(see e.g.~\cite[Definition 2.2]{gross2019sheaftheoretic}).
The sheaf $\mathcal{O}_S^{\times}$ is a tropical analog
of the sheaf $\mathcal{O}_X^{\times}$ of 
units of the structure sheaf $\mathcal{O}_X$ of 
an algebraic variety $X$.
By a \emph{line bundle}
on $S$,
we mean an element
of
$
\Pic S
\coloneqq
H^{1}(S;\mathcal{O}_S^{\times})
$
(\cite[Section 4.3]{mikhalkinTropicalCurvesTheir2008a},
see also \cite[Definition 3.12 and Proposition 3.13]{gross2019sheaftheoretic}).
A \emph{tropical manifold} is 
a rational polyhedral space which is locally 
isomorphic to the direct product of a 
Bergman fan $\Sigma_M$ for some loopless
matroid $M$ and a tropical affine space 
$\overline{\mathbb{R}}^{k}$
(see \cite[Definition 6.1]{gross2019sheaftheoretic} 
which originates from 
\cite[Definition 1.14]{mikhalkinTropicalEigenwaveIntermediate2014a}).
An example of a tropical manifold is 
a \emph{tropical curve}
\cite[Definition 3.1]{mikhalkinTropicalCurvesTheir2008a}
(see also \cite[Definition 1.1]{gathmannRiemannRochTheoremTropical2008a}
for a combinatorial definition of a tropical curve).
In this paper, the term `tropical curve' refers to a 
$1$-dimensional tropical manifold.
Every (finite) metric graph 
(with no $1$-valent vertex) has 
a natural structure of a tropical curve 
\cite[Proposition 3.6]{mikhalkinTropicalCurvesTheir2008a}.

Finding tropical analogs of various theorems 
in algebraic geometry is one of the 
important themes in tropical geometry.
The tropical Riemann--Roch theorem for a 
divisor $D$
\cite[Definition 1.3]{gathmannRiemannRochTheoremTropical2008a}
on a compact (and connected) tropical curve $C$,
proved 
by Gathmann--Kerber \cite{gathmannRiemannRochTheoremTropical2008a}
as an extension of Riemann--Roch theorem for graphs
\cite{MR2355607} (see also 
\cite{mikhalkinTropicalCurvesTheir2008a}),
is the equality
\begin{align} \label{Gathmann-Kerber-TRR}
r(D)-r(K_C-D)=\opn{deg}(D)+\chi_{\mathrm{top}}(C),
\end{align}
where $r(D)$ is the rank of the linear system of 
$D$ 
\cite[Definition 1.12]{gathmannRiemannRochTheoremTropical2008a}, 
$K_C$ is the canonical divisor of $C$
\cite[Definition 1.3]{gathmannRiemannRochTheoremTropical2008a},
and $\chi_{\mathrm{top}}(C)$ is the topological 
Euler characteristic of $C$.
The tropical Riemann--Roch theorem for divisors
on a compact tropical curve were generalized to
divisors on a weighted metric graph \cite[Theorem 5.4]{MR3046301}, 
or multidivisors on a metric graph \cite[Theorem 4.3]{gross2022principal}.

It is a natural problem 
to find a tropical analog of 
the Hirzebruch--Riemann--Roch theorem
\begin{align} \label{equation-HRR}
\chi(H^{\bullet}(X;\mathcal{L}))
=\int_X \opn{ch}(\mathcal{L})\opn{td}(X)
\end{align}
for line bundles on 
complex projective manifolds
\cite{MR0202713}.
However,
the left-hand side of \cref{equation-HRR}
does not have a straightforward tropical analog,
since line bundles on tropical manifolds correspond
to isomorphism classes of sheaves
not of Abelian groups
but of commutative monoids,
so that 
the basic theory of homological algebra
does not apply.
See \cref{section-tropical-riemann-roch}
for more on this difficulty.
\cref{section-tropical-riemann-roch}
also contains earlier works
on topological Euler characteristics and 
Todd classes of tropical manifolds
(see also \cref{remark-todd-class}).

In this paper,
we pursue a different tropical analog
of the Euler characteristic of line bundles
motivated by ideas from
toric geometry,
geometric quantization,
Morse theory,
microlocal sheaf theory,
homological mirror symmetry,
and
the Strominger--Yau--Zaslow conjecture.

\subsection{SYZ conjecture and main results}

The \emph{Strominger--Yau--Zaslow conjecture} 
\cite{stromingerMirrorSymmetryTduality1996}
(or the SYZ conjecture for short)
suggests that 
a mirror pair $(\mathcal{X}, \check{X})$
of Calabi--Yau manifolds
comes with a dual pair
$
\left(
f_B\colon \mathcal{X} \to B,
\check{f}_{B}\colon \check{X} \to B
\right)
$
of special Lagrangian torus fibrations
in such a way that
homological mirror symmetry
\cite{MR1403918}
between
the (split-closed derived) Fukaya category 
of $\check{X}$ 
and the derived category
of coherent sheaves 
on $\mathcal{X}$
is induced by a Fourier-like transformation
\begin{align} \label{equation-SYZ}
  \Phi_{\mathrm{SYZ}} \colon 
\fuk \check{X} \simto \coh \mathcal{X}
\end{align}
called the \emph{SYZ transformation}.

If a Lagrangian torus fibration
$
\check{f}_{B} \colon \check{X} \to B
$
does not have a singular fiber,
then the Liouville--Arnold theorem
equips the base space $B$
with
an \emph{integral affine structure},
i.e.,
an atlas whose transition maps belong to
$\opn{GL}(n,\mathbb{Z})\ltimes \mathbb{R}^{n}$.
Conversely,
given an integral affine manifold $B$,
one can construct a Lagrangian torus fibration
$
\check{f}_{B} \colon \check{X}(B) \to B
$
inducing the integral affine structure on $B$.
One can also construct
an analytic torus fibration
$f_{B}\colon \mathcal{X}(B)\to B$
in the sense of \cite[Definition 4]{MR2181810}
in such a way that
if $B$ is compact and
$\pi_2(B) = 0$,
then the SYZ conjecture holds
\cite{MR1882331,MR3728639,MR3656481,MR4301560},
as we recall briefly now
(and in a little more detail later
in \cref{section-integral-affine-manifold}):
\footnote{The assumption $\pi_2(B) = 0$ is redundant
if Markus conjecture holds
(see \cref{remark-markus-conjecture}).}

Let $B$ be a compact integral affine manifold
(and hence a tropical manifold in particular).
Let
further
$\mathcal{C}^{\infty}_{B}$
be the sheaf
of $C^{\infty}$-functions.
An element of 
$
\CDiv(B) \coloneqq H^{0}(B;\mathcal{C}^{\infty}_{B}/\mathcal{O}^{\times}_{B})
$
is called a \emph{$C^\infty$-divisor}
(\cref{definition-prepermissible-smooth-cartier-divisor,definition-C-infinity-divisor}).
The short exact sequence
\begin{align}
0
\to
\mathcal{O}^{\times}_{B}
\to
\mathcal{C}^{\infty}_{B}
\to
\mathcal{C}^{\infty}_{B} / \mathcal{O}^{\times}_{B}
\to
0
\end{align}
of sheaves induces a surjection 
$
\CDiv(B)
\to
\Pic B
$.
Any
$
s
\in
\CDiv(B)
$
gives a Lagrangian section
$L_s$
of $\check{X}(B) \to B$,
which can be turned into an object
$\mathscr{L}_{s}$ of 
$\fuk \check{X}(B)$
by choosing an additional data
(a grading,
a spin structure,
and a flat $U(1)$-bundle),
which we ignore here
for the sake of brevity of the exposition.
A pair of $C^\infty$-divisors
which differ by an element in the image of
$H^0(B; \mathcal{C}^{\infty}_{B})$
give rise to
Hamiltonian isotopic
Lagrangian sections,
so that
the isomorphism class of the object $\mathscr{L}_s$
depends only on the image $[s]$ of $s$
in $\Pic B$
(and the additional data).
The SYZ transform 
$\mathcal{L}_s$ of $\mathscr{L}_s$ is a line bundle
on the rigid analytic space $\mathcal{X}(B)$,
and
for any
$
s, s' \in \CDiv(B)
$,
one has an isomorphism 
\begin{align}
\label{equation-hf-sheaf-cohomology}
\opn{HF}^{\bullet}(\mathscr{L}_s,\mathscr{L}_{s'})
\simeq \opn{Ext}^{\bullet}_{\mathcal{O}_{\mathcal{X}(B)}}(\mathcal{L}_s,\mathcal{L}_{s'}),
\end{align}
where $\opn{HF}^{\bullet}(\mathscr{L}_s,\mathscr{L}_{s'})$
is the Floer cohomology
(over the Novikov field $\Lambda_{\mathrm{nov}}^{\mathbb{C}}$
with $\mathbb{C}$-coefficients).
In particular,
if $L_s$ intersects the zero-section $L_{s_0}$
transversely,
then the Floer complex
$\opn{CF}^{\bullet}(\mathscr{L}_{s_0},\mathscr{L}_{s})$
underlying $\opn{HF}^{\bullet}(\mathscr{L}_{s_0},\mathscr{L}_{s})$
is the graded vector space
spanned by $L_s \cap L_{s_0}$,
whose Euler characteristic coincides with
$\chi(H^{\bullet}(\mathcal{X}(B);\mathcal{L}_s))$.

An extension of the SYZ conjecture
to a curve of higher genus
is studied in
\cite{MR3570582,auroux2022lagrangian}, 
which is expected to generalize
to higher dimensions
(see \cite{MR3600059} and 
\cite[{\textsection 7}]{auroux2022lagrangian}).
Ruddat also informed us that
Mak--Matessi--Ruddat--Zharkov proved
the construction problem for
a topological SYZ torus fibration 
$\check{f}_B\colon \check{X}\to B$
from integral affine manifold with
singularities (see \cite{MR4294796}
for more detail about it).

The \emph{tropical cohomology}
$H^{\bullet,\bullet}(S;\mathbb{Z})$
introduced in
\cite{mikhalkinTropicalEigenwaveIntermediate2014a,
itenbergTropicalHomology2019b}
is defined for an arbitrary tropical manifold $S$,
and carries a piece of information
on the limiting mixed Hodge structure
if the tropical manifold comes from a degenerating family
of complex manifolds.
We follow the notations of
\cite[Definition 2.6]{MR3894860}
for tropical cohomology.
A variation of tropical homology and cohomology 
for integral affine manifold having
at worst symple singularities 
is also developed in \cite{MR4347312}
(see also \cite[\textsection 2.1]{MR4179831}
for the relations with SYZ fibrations).
If $S$ is compact and connected of dimension $n$,
then one has the trace map
$
\int_S\colon 
H^{n,n}(S;\mathbb{Z})
\to \mathbb{Z}
$
as we recall in
\cref{equation-trace-integration}.  
In this paper,
we introduce
a graded $\Z$-module 
$\opn{LMD}^{\bullet}(S;s)$
(\cref{definition-local-morse-data-divisor})
associated with a permissible
$C^\infty$-divisor $s$ 
(\cref{condition-good-divisor})
on a compact
tropical manifold $S$.
If $S$ is an integral affine manifold 
and the intersection of 
$L_{s_0}$ and $L_{s}$ is transversal,
then
$
\opn{LMD}^{\bullet}(S;s)
\otimes_{\mathbb{Z}}
\Lambda_{\mathrm{nov}}^{\mathbb{C}}
$
coincides with the Floer complex
$\opn{CF}^{\bullet}(\mathscr{L}_{s_0},\mathscr{L}_{s})$.

\begin{conjecture}
\label{conjecture-enough-prepermissible}
For any line bundle $\mathcal{L}$ on a compact 
tropical manifold $S$, there exists 
a permissible $C^{\infty}$-divisor $s$ such that
$[s]=\mathcal{L}$. 
\end{conjecture}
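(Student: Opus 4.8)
The plan is to leverage the already-established surjection $\CDiv(S)\to\Pic S$ and reduce the conjecture to a perturbation problem. Since the short exact sequence
\begin{align}
0 \to \mathcal{O}^{\times}_{S} \to \mathcal{C}^{\infty}_{S} \to \mathcal{C}^{\infty}_{S}/\mathcal{O}^{\times}_{S} \to 0
\end{align}
yields surjectivity onto $\Pic S$, every line bundle $\mathcal{L}$ already admits at least one (not necessarily permissible) $C^{\infty}$-divisor representative $s_{0}$. The indeterminacy in this representative is exactly the image of $H^{0}(S;\mathcal{C}^{\infty}_{S})$: for any global smooth function $\varphi$ the divisor $s_{0}+\varphi$ still satisfies $[s_{0}+\varphi]=\mathcal{L}$. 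The conjecture therefore reduces to showing that, within the affine space $s_{0}+H^{0}(S;\mathcal{C}^{\infty}_{S})$, at least one member is permissible. I would prove this by establishing that permissibility (\cref{condition-good-divisor}) is a generic condition and that a generic smooth perturbation of $s_{0}$ achieves it.

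First I would unwind \cref{condition-good-divisor} into a transversality statement adapted to the stratification of $S$. On the open locus where $S$ is an integral affine manifold, where the SYZ fibration exists, permissibility corresponds to transversality of the Lagrangian section $L_{s}$ to the zero section—this is precisely what makes $\opn{LMD}^{\bullet}(S;s)$ (\cref{definition-local-morse-data-divisor}) agree with the Floer complex there—so a standard Sard--Thom argument shows that transversality holds after adding a generic $\varphi\in H^{0}(S;\mathcal{C}^{\infty}_{S})$. Near the singular strata, modelled on products of Bergman fans $\Sigma_{M}$ with tropical affine spaces, permissibility is an intrinsic combinatorial/transversality condition on $S$; I would prove it locally by perturbation and then patch the local perturbations into a single global $\varphi$ using a partition of unity subordinate to a cover adapted to the stratification, which is available because $\mathcal{C}^{\infty}_{S}$ admits partitions of unity (the same softness used to obtain the surjection above).

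The main obstacle is the behaviour along the singular strata of $S$. Unlike the smooth locus, where classical transversality together with a Baire-category argument produces a residual set of admissible perturbations, the conical local models coming from arbitrary loopless matroids $M$ require a stratified Sard-type theorem that simultaneously controls permissibility along strata of every codimension while respecting the integer-slope constraint imposed by $\mathcal{O}^{\times}_{S}$. The delicate point is that a single global $\varphi$ must achieve permissibility along all strata at once, and the interaction between the smooth perturbation and the piecewise-linear combinatorics at the deepest strata of the Bergman fan is not controlled by the usual genericity arguments. Making this stratified transversality precise for every matroid—and in particular ruling out an obstruction that could force every representative of some $\mathcal{L}$ to be non-permissible—is the crux of the difficulty, which is why the statement is recorded here as a conjecture rather than a theorem.
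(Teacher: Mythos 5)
First, note that the paper does not prove this statement: it is recorded as an open conjecture, with proofs only in the two special cases mentioned in the introduction --- for compact tropical curves via \cref{proposition-permissible-divisor}, and for compact integral affine manifolds, where prepermissibility is vacuous (the micro-support $\opn{SS}(S)_x$ of a smooth point is trivial) and the remaining finiteness conditions follow from a Morse perturbation as in the proof of \cref{theorem-MRR-hesse-preface}. Your reduction of the conjecture to a perturbation problem inside the linear-equivalence class $s_0+H^{0}(S;\mathcal{A}^{\mathrm{weak}}_S)$ is the same starting point the paper uses (for a general compact tropical manifold you must work with the fine sheaf $\mathcal{A}^{\mathrm{weak}}_S$ rather than $\mathcal{C}^{\infty}_S$, since charts hit points at infinity in $\mathbb{T}^{n}$), and you are honest that the stratified step is unproven. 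To that extent your framing is consistent with the paper's.

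However, the mechanism you propose for the singular strata --- genericity via a stratified Sard--Thom theorem --- is misaligned with the actual structure of \cref{cond: prepermissible}, and the paper explicitly warns about this: at a point $x$ where $\opn{span}_{\mathbb{R}}\opn{SS}(S)_x=(\tform^{1}_{\mathbb{R},S})_x$ (e.g.\ a vertex of a tropical curve, or the apex of a full-support Bergman fan, by \cref{condition-good-linearity-space}), prepermissibility forces $df(x)\in\tform^{1}_{\mathbb{Z},S,x}$, a \emph{closed, codimension-$n$, non-generic} condition; the remark following \cref{cond: prepermissible} states that prepermissibility ``may be not a generic condition.'' A generic perturbation $\varphi$ therefore \emph{destroys} prepermissibility at such points rather than achieving it, so no residual-set argument can close the gap there. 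The paper's proof in the curve case does the opposite of what you propose: using fineness, it first subtracts a global function agreeing with the local data on compact neighborhoods $K_v$ of the non-generic points, so that the new local data are \emph{constant} near those points (pinning $df=0\in\tform^{1}_{\mathbb{Z},S,x}$), and only then perturbs on the smooth locus with bump functions of controlled slope (\cref{proposition-bump-interval}) to cut $s_0\cap s$ down to a finite set. Any viable attack on the general conjecture should follow this two-step pattern --- pin the differential to lattice values near deep strata, then apply genericity only in the directions transverse to $\opn{span}_{\mathbb{R}}\opn{SS}(S)_x$ --- and the open difficulty is the first step for a general Bergman-fan local model, not a transversality theorem. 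A smaller inaccuracy: transversality of $L_s$ with the zero section is sufficient for \cref{condition-good-divisor} but not equivalent to it, so permissibility should not be identified with transversality even on the smooth locus.
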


\cref{conjecture-tropical-MRR-preface} below
is the main conjecture
of this paper:

\begin{conjecture}
\label{conjecture-tropical-MRR-preface}
For any compact tropical manifold $S$, 
there exists an element $\opn{td}(S)$
of the total tropical cohomology group
$H^{\bullet,\bullet}(S;\mathbb{Z})$ such that, 
for any permissible $C^{\infty}$-divisors $s$, 
one has
\begin{align}
\chi(\opn{LMD}^{\bullet}(S;s))=
\int_S \opn{ch}([s])\opn{td}(S).
\end{align}
Moreover,
one has
$\chi(\opn{LMD}^{\bullet}(S;s))=\chi_{\opn{top}}(S)$
when $[s] = 0 \in \Pic S$.
\end{conjecture}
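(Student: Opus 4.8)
The plan is to compute $\chi(\opn{LMD}^{\bullet}(S;s))$ by localizing it at the intersection points of the Lagrangian section $L_s$ with the zero section $L_{s_0}$, and then to reassemble these local contributions into the cohomological pairing $\int_S\opn{ch}([s])\opn{td}(S)$. First I would show that $\chi(\opn{LMD}^{\bullet}(S;s))$ depends only on the class $[s]\in\Pic S$ and is invariant under deformations of $s$ within that class. In the integral affine case this is forced by the identification with Floer cohomology in \eqref{equation-hf-sheaf-cohomology} together with the Hamiltonian invariance already noted in the excerpt, and in general it should follow from a homotopy-invariance property built into the definition of the local Morse data. Granting this, I may replace $s$ by a representative for which $L_s$ meets $L_{s_0}$ transversally, so that $\opn{LMD}^{\bullet}(S;s)$ splits as a finite direct sum of local Morse data indexed by $L_s\cap L_{s_0}$, giving the localization identity
\begin{equation*}
\chi(\opn{LMD}^{\bullet}(S;s))=\sum_{p\in L_s\cap L_{s_0}}\mu(p),
\end{equation*}
where $\mu(p)$ is the Euler characteristic of the local Morse datum at $p$.

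Next I would compute the indices $\mu(p)$ stratum by stratum. At a point $p$ in the integral affine locus of $S$, stratified Morse theory reduces to ordinary Morse theory and $\mu(p)$ is simply the intersection sign $\pm 1$. At a point lying in a Bergman-fan stratum modeled on $\Sigma_M$, the normal Morse datum is governed by the reduced homology of the matroid $M$, so $\mu(p)$ becomes a combinatorial invariant of $M$ together with the local cone data. Summing over $p$ organizes $\chi(\opn{LMD}^{\bullet}(S;s))$ into contributions attached to the faces of $S$, separated into an interior (smooth) part and boundary parts concentrated near lower strata.

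The decisive step is to recognize this stratified sum as $\int_S\opn{ch}([s])\opn{td}(S)$ for a single class $\opn{td}(S)$ independent of $s$. The idea is to isolate the $[s]$-dependence: the signed count of intersection points in the interior of top cells is governed by the winding of $L_s$ around the torus fibers of $\check{X}(B)$, which represents $c_1([s])$, so the leading term reproduces $\int_S$ of the top component of $\opn{ch}([s])$, while the contributions concentrated near lower strata pair lower powers of $c_1([s])$ against a fixed cohomology class that I would take as the \emph{definition} of $\opn{td}(S)$. The moreover clause is then the instance $[s]=0$: such a divisor is, modulo the image of $H^{0}(B;\mathcal{C}^{\infty}_{B})$, a global smooth function, its section is Hamiltonian isotopic to the graph of an exact form, and the signed count of zeros is $\chi_{\opn{top}}(S)$ by Poincaré--Hopf, forcing $\int_S\opn{td}(S)=\chi_{\opn{top}}(S)$ and pinning down the top component of $\opn{td}(S)$.

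I expect this last step to be the main obstacle, and it is precisely here that the statement remains conjectural in general. For the two cases one can fully control it is tractable: on a tropical curve the only strata are edges, contributing $\pm 1$, and vertices, contributing a valence-dependent index, and a direct bookkeeping yields $\opn{deg}([s])+\chi_{\opn{top}}(C)$, matching $\int_C\opn{ch}([s])\opn{td}(C)$; on an integral affine manifold carrying a Hessian form there are no Bergman-fan strata, the Hessian metric furnishes a convex generating function whose critical points one counts, and a Chern--Gauss--Bonnet type argument matches $\int_S\opn{td}(S)$ with $\chi_{\opn{top}}(S)$ while the winding of $L_s$ around the fibers supplies the remaining $\opn{ch}([s])$-dependence. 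What I do not see how to do in general is to construct $\opn{td}(S)$ canonically at the matroid singularities and to prove its independence of $s$ there; this appears to require a tropical deformation to the normal cone, or a Grothendieck--Riemann--Roch--type functoriality for the Todd class, which is why the general formula is stated only as a conjecture.
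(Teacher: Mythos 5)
You are reviewing a statement that the paper itself does not prove: it is \cref{conjecture-tropical-MRR-preface}, and the paper establishes only the two special cases \cref{theorem-MRR-curve-preface,theorem-MRR-hesse-preface}. Your proposal is rightly a strategy rather than a proof, and you correctly locate one genuine obstruction (a canonical $\opn{td}(S)$ at the Bergman-fan strata, independent of $s$), which matches the paper's reason for leaving the general statement open. However, your first step is more than a technicality you can defer: you assert that $\chi(\opn{LMD}^{\bullet}(S;s))$ depends only on $[s]\in\Pic S$ because of ``a homotopy-invariance property built into the definition of the local Morse data.'' No such property exists in the paper, and it cannot exist at the generality of rational polyhedral spaces: the paper exhibits a closed interval on which principal $C^{\infty}$-divisors $s_n=(nx^2)^{\mathrm{sm}}$ are all trivial in the Picard group yet have rotation number $n$ (\cref{remark-rotation-closed-interval}), so $\chi(\opn{LMD}^{\bullet})$ is \emph{not} a divisor-class invariant there. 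Its invariance on compact tropical manifolds is part of what the conjecture asserts, and in the two proved cases the paper obtains it only a posteriori: on curves by proving $\chi(\opn{LMD}^{\bullet}(C;s))=\opn{rot}(s)+\chi_{\opn{top}}(C)$ through a Mayer--Vietoris gluing over the cell decomposition cut out by $s_0\cap s$ (\cref{proposition-gluing-formula,proposition-MRR-1-dim-poly-space}) and then identifying $\opn{rot}(s)=\opn{deg}([s])$ via \v{C}ech cocycles and genuine tropical Cartier divisors (\cref{proposition-rotation-degree}); on Hessian manifolds via the Hamiltonian-isotopy invariance of the intersection number of Lagrangian sections of $\check{X}(B)\to B$.

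For the Hessian case your sketch also diverges from the paper's actual mechanism in a way that would leave the computation incomplete. The decisive input there is structural: by Shima's theorem and Bieberbach (\cref{remark-compact-hessian}), every closed integral affine manifold with a Hessian form is finitely, tropically \'etale covered by a tropical torus, and $\chi(\opn{LMD}^{\bullet})$ is multiplicative under such covers (\cref{proposition-euler-number-etale}); on the torus every divisor class is represented by the differential of a quadratic polynomial on the universal cover, the intersection count is the determinant of its linear part, and this determinant is matched to $\int_B c_1([s])^n/n!$ through the explicit exterior-algebra ring structure of the tropical cohomology of the torus. Your proposed ``convex generating function plus Chern--Gauss--Bonnet type argument'' does not appear in the paper and would not by itself produce the determinant; note also that $\chi_{\opn{top}}=0$ for these manifolds, so there is no Gauss--Bonnet matching left to perform once the reduction to the torus is made. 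Finally, your Poincar\'e--Hopf treatment of the clause $[s]=0$ is sound on the smooth locus, but in general it would rest on a Kashiwara-type microlocal index formula (\cref{condition-global-morse} and the surrounding discussion), which the paper can invoke only when $S$ embeds suitably into a manifold; at matroid singularities this, too, remains conjectural.
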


\cref{conjecture-enough-prepermissible,conjecture-tropical-MRR-preface}
imply that
the Euler characteristic of a line bundle is well-defined.
\cref{conjecture-enough-prepermissible} is obvious when 
$S$ is a compact tropical curve 
or a compact integral affine manifold.
\cref{theorem-MRR-curve-preface,theorem-MRR-hesse-preface}
below prove \cref{conjecture-tropical-MRR-preface}
for a compact tropical curve
and
a compact integral affine manifold
admitting a Hessian form respectively:

\begin{theorem}
\label{theorem-MRR-curve-preface}
If $C$ is a compact tropical curve,
then for any permissible $C^{\infty}$-divisors $s$,
one has
\begin{align}
\chi(\opn{LMD}^{\bullet}(C;s))=
\int_C c_1([s])+\frac{1}{2}c_1(-K_C)
=\opn{deg}([s])+\chi_{\opn{top}}(C). 
\end{align}
\end{theorem}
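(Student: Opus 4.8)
The plan is to identify the left-hand side with a signed count of critical points, to evaluate that count by reducing to two model situations, and to dispatch the second equality as an elementary computation of the degree of the tropical canonical class. Unwinding \cref{definition-local-morse-data-divisor} in the $1$-dimensional case, $\opn{LMD}^\bullet(C;s)$ is the graded $\Z$-module freely generated by the points where the Lagrangian section $L_s=[ds]$ meets the zero section $L_{s_0}$ of the torus bundle $\check X(C)\to C$, each generator sitting in the degree given by its local Morse index; permissibility (\cref{condition-good-divisor}) ensures that these points are finite and isolated and that the behaviour at the vertices is generic. Consequently
\begin{align}
\chi(\opn{LMD}^\bullet(C;s))=\sum_p (-1)^{\opn{ind}(p)},
\end{align}
the signed count of intersection points. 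This count is a deformation invariant of $L_s$ relative to $L_{s_0}$, so it depends only on the homology class of $L_s$, hence only on $\deg([s])$; establishing this invariance across the singular vertices is part of the work below.

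Granting the invariance, I would evaluate the count in two steps and conclude $\chi(\opn{LMD}^\bullet(C;s))=\deg([s])+\chi_{\opn{top}}(C)$. First, for the degree-$0$ case it suffices to take the representative $[s]=0$, for which $s$ lifts to a genuine $C^\infty$-function $f$ on $C$; then $L_s=\opn{graph}(df)$, the generators are the critical points of $f$, and stratified Morse theory on the graph $C$ (Poincar\'e--Hopf, with the vertices handled as $0$-dimensional strata) gives $\chi(\opn{LMD}^\bullet(C;s))=\chi_{\opn{top}}(C)$; this is also the ``Moreover'' clause of \cref{conjecture-tropical-MRR-preface}. Second, I would show that increasing $\deg([s])$ by $1$ increases the count by $1$: realizing the increment by a local modification that pushes the slope $s'$ through one additional lattice value from below creates a single generator of even Morse index, contributing $+1$, while all other generators are unaffected. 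Together these give the value $\deg([s])+\chi_{\opn{top}}(C)$.

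It remains to match $\deg([s])+\chi_{\opn{top}}(C)$ with $\int_C c_1([s])+\tfrac12 c_1(-K_C)$. The first summand is immediate from the definition of the trace map, $\int_C c_1([s])=\deg([s])$. For the second, using the tropical canonical divisor $K_C=\sum_V(\opn{val}(V)-2)[V]$ together with $\sum_V\opn{val}(V)=2\lvert E\rvert$, one computes
\begin{align}
\int_C c_1(-K_C)=-\deg(K_C)=-\sum_V(\opn{val}(V)-2)=2\lvert V\rvert-2\lvert E\rvert=2\chi_{\opn{top}}(C),
\end{align}
so that $\tfrac12\int_C c_1(-K_C)=\chi_{\opn{top}}(C)$, completing the chain of equalities.

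The main obstacle is the vertex analysis that underlies both the deformation invariance and the baseline Morse computation. At a $k$-valent vertex the curve is not a manifold, so the usual transversality and Poincar\'e--Hopf arguments must be replaced by the stratified local Morse data of the Bergman-fan model; in the explicit (edge-by-edge) version of the count the interior crossings of $s'$ telescope into $\deg([s])$, but the vertex contributions are of the form $1-d_V$ (with $d_V$ the number of edges along which $f$ decreases away from $V$), which are individually sensitive to the section and only sum to $\chi_{\opn{top}}(C)=\lvert V\rvert-\lvert E\rvert$. One must therefore verify that this stratified data is stable under the deformations used in the degree-linearity step and reproduces exactly the ``$-2$'' appearing in $K_C$; this is precisely where \cref{condition-good-divisor} is used, and where the argument genuinely leaves the integral-affine picture that motivates $\opn{LMD}^\bullet$.
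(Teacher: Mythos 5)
Your proposal is a plausible strategy sketch, but it has a genuine gap at its center: the deformation invariance you "grant" is essentially the whole theorem, and nothing in the proposal supplies a mechanism for it. The count is not an intersection number inside a manifold, and under an isotopy of $L_s$ intersection points can collide with vertices, where the local contribution is not a sign: by \cref{proposition-n-valent} the vertex contribution is $1-q$ (e.g.\ $-2$ for $s_{0,3}$ at a trivalent vertex), so your opening identity $\chi(\opn{LMD}^{\bullet}(C;s))=\sum_p(-1)^{\opn{ind}(p)}$ is false as stated — the local Morse data can have rank up to $\opn{val}(V)-1$ — and this contradicts your own later remark that vertices contribute $1-d_V$. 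Worse, the invariance statement is genuinely global and fails locally: \cref{remark-rotation-closed-interval} exhibits principal (hence trivial-class) $C^{\infty}$-divisors on a closed interval with arbitrary rotation number, so "the count depends only on $\deg([s])$" cannot be established by local modifications alone; your degree-increment step also tacitly assumes the modification can be kept permissible with no intersection point crossing a vertex. The paper avoids proving any invariance: it classifies the local indices (\cref{proposition-n-valent,proposition-1-valent}), proves a Mayer--Vietoris gluing formula for both $\chi(\opn{LMD}^{\bullet})$ and the rotation number (\cref{proposition-gluing-formula}), and runs an induction over the cell decomposition induced by $s_0\cap s$ to get $\chi(\opn{LMD}^{\bullet}(C;s))=\opn{rot}(s)+\chi_{\opn{top}}(C)$ (\cref{proposition-MRR-1-dim-poly-space}); the identification $\opn{rot}(s)=\deg([s])$ — which your "homology class of $L_s$" assertion silently replaces — is then a separate \v{C}ech-cocycle comparison with an ordinary tropical Cartier divisor (\cref{proposition-rotation-degree}, via \cref{lemma-csoft-tropical-rational-function}), and it is exactly here that compactness of the tropical curve enters.

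Your baseline case has a second, smaller gap: you invoke stratified Morse theory and Poincar\'e--Hopf at the vertices, but the paper explicitly warns (\cref{remark-nondeprave-curve}) that the relevant functions are not nondepraved at vertices, so Goresky--MacPherson theory does not apply directly; the sheaf-theoretic substitute is the microlocal index formula under \cref{condition-global-morse}, or, as in the paper, one simply never needs a global Poincar\'e--Hopf statement because the gluing induction handles the trivial class along with all others. The closing computations are fine and agree with the paper: $\int_C c_1([s])=\deg([s])$ by definition of the trace map, and $\frac{1}{2}\int_C c_1(-K_C)=\lvert V\rvert-\lvert E\rvert=\chi_{\opn{top}}(C)$ from $\deg K_C=\sum_V(\opn{val}(V)-2)$.
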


\begin{theorem}
\label{theorem-MRR-hesse-preface}
If
$B$ is an $n$-dimensional compact integral affine manifold $B$
admitting a Hessian form,
then for any permissible $C^{\infty}$-divisors $s$,
one has
\begin{align}
\chi(\opn{LMD}^{\bullet}(B;s))=
\int_B \frac{c_1([s])^{n}}{n!}.
\end{align}
\end{theorem}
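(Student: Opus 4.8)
The plan is to reduce $\chi(\opn{LMD}^{\bullet}(B;s))$ to an oriented intersection number and then to evaluate that number as the degree of a gradient map. First I would invoke the identification recorded above: when $B$ is integral affine and $L_s$ meets the zero section $L_{s_0}$ transversally, $\opn{LMD}^{\bullet}(B;s)\otimes_{\Z}\Lambda_{\mathrm{nov}}^{\C}$ is the Floer complex $\opn{CF}^{\bullet}(\mathscr{L}_{s_0},\mathscr{L}_{s})$, which is freely spanned by $L_s\cap L_{s_0}$ with each point placed in the degree given by its Maslov index. Tensoring with $\Lambda_{\mathrm{nov}}^{\C}$ preserves ranks and hence the Euler characteristic, and since $\chi(\opn{LMD}^{\bullet}(B;s))$ depends only on $[s]\in\Pic B$ I may assume the intersection is transversal. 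Thus
$$\chi(\opn{LMD}^{\bullet}(B;s))=\sum_{p\in L_s\cap L_{s_0}}(-1)^{\opn{ind}(p)},$$
which is precisely the oriented intersection number $L_s\cdot L_{s_0}$ in the total space $\check{X}(B)=T^{*}B/\Lambda^{*}$ of the dual torus fibration.

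Next I would make this count explicit using the Hessian form. On the universal cover the integral affine structure furnishes global affine coordinates, and a permissible divisor in the class $[s]$ is represented by a (multivalued) potential $\psi$ whose increments along the deck transformations are integral affine functions encoding $c_1([s])$. The section $L_s$ is then the graph of $d\psi$ inside $\check{X}(B)$, so a point of $L_s\cap L_{s_0}$ is exactly a point where $d\psi$ lies in the dual lattice $\Lambda^{*}$. Passing to the quotient, these are the preimages of $0$ under the gradient map $\overline{d\psi}\colon B\to\check{B}$ to the dual torus, and the signed count of preimages is the topological degree $\deg(\overline{d\psi})$. The Hessian form enters here to trivialise $TB$ against $T^{*}B$ and to supply the volume form, so that the local intersection sign at $p$ is $\opn{sgn}\det(\opn{Hess}\psi)(p)$ and $\deg(\overline{d\psi})=\opn{covol}(\Lambda^{*})^{-1}\int_{B}\det(\opn{Hess}\psi)\,dV$.

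It remains to recognise this as the cohomological integral. Via the Hessian form the class $c_1([s])\in H^{1,1}(B;\Z)$ is represented by the $(1,1)$-form assembled from $\opn{Hess}\psi$; the $n$-fold wedge power then produces the factor $n!$, so that $c_1([s])^{n}/n!$ is the top form $\det(\opn{Hess}\psi)\,dV$, and evaluating the trace map $\int_{B}\colon H^{n,n}(B;\Z)\to\Z$ returns $\opn{covol}(\Lambda^{*})^{-1}\int_{B}\det(\opn{Hess}\psi)\,dV$. Comparing with the previous paragraph gives $\chi(\opn{LMD}^{\bullet}(B;s))=\int_{B}c_1([s])^{n}/n!$; the absence of a Todd correction reflects the triviality of the Todd class for the abelian-variety-like fibration $\check{X}(B)$. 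I expect the main obstacle to be precisely this last identification: one must pin down the isomorphism between the tropical $(1,1)$-class $c_1([s])$ and the analytic Hessian form in a way compatible with the trace map, and simultaneously verify that the Maslov grading on $\opn{LMD}^{\bullet}$ reproduces $\opn{sgn}\det(\opn{Hess}\psi)$ point by point, so that the signed count and the cohomological integral agree on the nose rather than merely up to an overall sign.
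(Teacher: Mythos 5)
Your opening move coincides with the paper's---replace $\chi(\opn{LMD}^{\bullet}(B;s))$ by the signed count of $L_s\cap L_{s_0}$ after a transversality perturbation---but your justification for the perturbation is circular. You pass to a transverse representative ``since $\chi(\opn{LMD}^{\bullet}(B;s))$ depends only on $[s]\in\Pic B$,'' yet that invariance is not available a priori: it is precisely part of what the theorem establishes (the paper is explicit that only \cref{conjecture-enough-prepermissible,conjecture-tropical-MRR-preface} \emph{together} make the Euler characteristic of a line bundle well defined, and it even warns that $\chi(\opn{LMD}^{\bullet})$ is \emph{not} a divisor-class invariant on general compact rational polyhedral spaces). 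The paper closes this gap concretely: in the proof of \cref{theorem-MRR-hesse-preface} it performs a Morse perturbation of a local datum supported near $s_0\cap s$ and invokes the local index equality \eqref{equation-poincare-hopf} to see that the Euler characteristic is unchanged, and on the torus it uses invariance of intersection numbers under the Hamiltonian flow realizing linear equivalence. Relatedly, your appeal to the Floer-theoretic identification $\opn{LMD}^{\bullet}\otimes_{\Z}\Lambda_{\mathrm{nov}}^{\C}\simeq\opn{CF}^{\bullet}(\mathscr{L}_{s_0},\mathscr{L}_s)$ imports machinery the paper deliberately avoids (and which carries its own hypotheses, cf.\ \cref{section-floer-lmd}); it is also unnecessary, since at a transverse point the sheaf computation of \cref{example-morse-index} already yields $\opn{ind}_p(f)=(-1)^{\text{Morse index}}$ directly.

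The substantive gap is the final identification, which you yourself flag as the main obstacle: you need (i) that $c_1([s])\in H^{1,1}(B;\Z)$ is represented in a superform model of $H^{1}(B;\tform^{1}_{\R,B})$ by the Hessian $(1,1)$-form of a potential $\psi$ for $s$, (ii) that its $n$-th cup power corresponds to $n!\,\det(\opn{Hess}\psi)\,dV$, and (iii) that the tropical trace map \eqref{equation-trace-integration}, defined via Poincar\'e--Verdier duality and the fundamental class, agrees with integration of superforms. None of these comparisons is proved in the paper, and establishing them amounts to a tropical de Rham/trace comparison theorem---real work, not bookkeeping. The paper sidesteps all of it by letting the Hessian hypothesis do its actual job: by \cref{remark-compact-hessian} (Cheng--Yau/Shima plus Bieberbach) $B$ is finitely covered by a tropical torus $T$; \cref{proposition-euler-number-etale} transports both sides along the \'etale cover; on $T$ every class is represented by $dq_s$ with $q_s$ quadratic, the explicit ring $\mathbb{H}^{\bullet}(T;\tform^{\bullet}_{\Z,T})\simeq\bigwedge H^{0}(T;\tform^{1}_{\Z,T})$ identifies $\frac{1}{n!}\int_T c_1(s)^{n}$ with the determinant of the linear part of $dq_s$, and this is matched against the direct count $\sharp(L_0\cap L_{dq_s})=|\det dq_s|$ (with a displacement argument when $\det dq_s=0$). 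By contrast, in your sketch the Hessian form is used only to dualize $TB$ against $T^{*}B$---any Riemannian metric does that---so your argument as written would ``prove'' the formula for an arbitrary compact integral affine $B$, which should make you suspicious: either your missing comparison lemma secretly requires flatness, or you must locate where the Hessian hypothesis genuinely enters. Two further patches would be needed on your route: the gradient map $\overline{d\psi}\colon B\to\check{B}$ to a fixed dual torus only exists after the affine holonomy has been trivialized, i.e., after passing to the torus cover, since $\check{X}(B)\to B$ is in general a nontrivial flat torus bundle; and orientation must be addressed (integral affine manifolds may be nonorientable), an issue the paper's formulation avoids because the tropical fundamental class is defined without orientation data.
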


\cref{tropical elliptic curve} below gives
an example of both \cref{theorem-MRR-curve-preface} and \cref{theorem-MRR-hesse-preface}:

\begin{example} \label{tropical elliptic curve}
Let
$
B = \mathbb{R}/\mathbb{Z}
$
be a tropical elliptic curve.
For any integer $n$,
the quadratic function
$f_n \colon \mathbb{R} \to \mathbb{R}$,
$x \mapsto \frac{n}{2}x^{2}$
on the universal cover of $B$
defines a $C^{\infty}$-divisor 
$s_n \in \CDiv(B)$.
The associated Lagrangian section $L_{s_n}$
is the graph of
$
B \to 
\check{X}(B) \cong B \times \mathbb{R}/\mathbb{Z}
$;
$
x \mapsto (x, nx)
$,
and for any non-zero integer $n$, one has
\begin{align}
\chi(\opn{LMD}^{\bullet}(B;s_n))=
\sharp(L_{s_0}\cap L_{s_n})=n
=\opn{deg}([s_n])+\chi_{\opn{top}}(B). 
\end{align}
\end{example}

Although \cref{theorem-MRR-curve-preface,theorem-MRR-hesse-preface}
are closely related to
\cite{auroux2022lagrangian}
and
\cite{MR4301560}
respectively,
our proofs are purely `tropical'
and does not rely on the SYZ transformation
(see
\cref{section-syz-trivalent-graph,section-floer-lmd} 
for more details).

\subsection{Outline of this paper}

In \cref{section-local-morse-data},
we define local Morse data $\opn{LMD}^{\bullet}(S;s)$
for $C^{\infty}$-divisors $s$ 
satisfying the permissibility condition
on tropical manifolds.
\cref{section-tropical-curve} contains the proof of
\cref{theorem-MRR-curve-preface}.
In \cref{section-integral-affine-manifold}, we prove 
\cref{theorem-MRR-hesse-preface}
after preparation
from the theory of tropical cohomology.
In \cref{section-more-examples}, we discuss
more examples
of Riemann--Roch type formula.

\subsection*{Acknowledgments}
I would like to express 
my deepest gratitude to my advisor, Kazushi Ueda,
for his invaluable advice and constant encouragement.
Without his extensive support,
I would not have been able to complete this paper.
I want to offer my thanks 
to Yuto Yamamoto for explaining and answering my questions 
about his works 
in \cite{yamamotoTropicalContractionsIntegral2021}
and the Gross--Siebert program.
I would like to thank Andreas Gross for answering my questions 
on \cite{gross2019sheaftheoretic}.
I am grateful to Felipe Rinc\'on 
for explaining about the unpublished recent works
on Noether formula for general tropical surfaces.
I am so thankful to Masanori Kobayashi for 
pointing out some errors for the prior version 
of the prepermissibility condition for
principal $C^{\infty}$-divisors on metric graphs.
I would like to appreciate Yoshinori Gongyo for 
giving a question about relationships 
between a type of Pick's formula for
tropical Kummer surfaces and Atiyah--Singer
index formula. This question allows me 
to discover \cite{MR2676658}.
My thanks also go to Jin Miyazawa
for explaining us a heart of localization of 
the index of Dirac operator and answering some 
questions on \cite{MR2676658}.
I am grateful to Kentaro Yamaguchi 
for explaining about \cite{MR4234675}
and results in his master thesis \cite{yamaguchimaster}. 
I would also like to 
thank Yasuhito Nakajima for letting me know 
Yamaguchi's work.
I extend my gratitude to Helge Ruddat
for pointing out several references.
This work was supported by JSPS KAKENHI 
Grant Number JP21J14529.

\subsection*{Notations and conventions} \label{notation-general}

We use the following notations in this paper:

\begin{itemize}

\setlength{\parskip}{0pt}
\setlength{\leftskip}{-20pt}
\item $\underline{\mathbb{R}}\deq 
\mathbb{R}\cup \{-\infty\}$, $\overline{\mathbb{R}}
\deq \mathbb{R}\cup \{+\infty\}$.
\item $\mathbb{T}\deq(\mathbb{R}\cup\{-\infty\},
\opn{max},+)$: the (max-plus) tropical semifield.
\item For every continuous function $f\colon X\to {\mathbb{R}}$,
\begin{align*}
\{f<f(v)\}\deq \set{x\in X\mid f(x)< f(v)},\quad
\{f\geq f(v)\}\deq \set{x\in X\mid f(x)\geq f(v)}.
\end{align*}
\item  For $n\in \mathbb{Z}_{\geq 1}$,
$\|\cdot \|_{\mathbb{R}^{n}}\colon 
\mathbb{R}^{n}\to \mathbb{R};(x_1,\ldots,x_n)\mapsto 
\sqrt{\sum_{i=1}^{n}x_i^{2}}$.
\item For simplicity, $\mathbb{R}^{0}\deq \{0\}$ 
and 
$\|\cdot\|_{\mathbb{R}^{0}}\colon \mathbb{R}^{0}\to \mathbb{R}
; x\mapsto 0$.
\item For $n\in \mathbb{Z}_{\geq 0}$, $\varepsilon,\varepsilon'\in \mathbb{R}_{\geq 0}$
and $x \in \mathbb{R}^{n}$, 
\begin{align*}
B^{n}_{\varepsilon}(x)\deq 
\{v\in \mathbb{R}^{n}\mid \|x-v\|_{\mathbb{R}^{n}}
\leq \varepsilon\},
& \quad  
S^{n-1}_{\varepsilon}(x)\deq \partial B^{n}_{\varepsilon}(x),
\quad \\
\bar{B}^{n}_{\varepsilon}(x)\deq 
B^{n}_{\varepsilon}(x)\cup \partial B^{n}_{\varepsilon}(x),  \\
B_{\varepsilon,\varepsilon'}^{n}(x)\deq 
B^{n}_{\varepsilon}(x)\setminus 
\bar{B}^{n}_{\varepsilon '}(x), 
& \quad \bar{B}_{\varepsilon,\varepsilon'}^{n}(x)\deq 
B_{\varepsilon,\varepsilon'}^{n}(x)\cup 
\partial B_{\varepsilon,\varepsilon'}^{n}(x).
\end{align*}
\item $\{\opn{pt}\}$: the topological space of 
a fixed singleton set.
\item $a_X\colon X\to \{\opn{pt}\}$:
the continuous map from a topological space $X$ to
$\{\opn{pt}\}$.
\item $A_X$: the constant sheaf on a topological space $X$ 
with fiber $A$.
\item $\catn{Mod}(\mathcal{R})$: the category of 
sheaves of $\mathcal{R}$-modules
(e.g. \cite[Definition 2.2.6]{MR1299726}).
\item $D^{b}(A_X)$: the derived category
of bounded complexes of $\catn{Mod}(A_X)$.
\end{itemize}

We also impose the following assumption for simplicity in this paper:

\begin{itemize}
\setlength{\leftskip}{-20pt}
\item Any topological space is Hausdorff and
locally compact unless otherwise specified.
\item Any ring is a ring with unity.
\item $A$ is a PID.
\item We identify any ring $A$ with a constant sheaf on
$\{\opn{pt}\}$ with fiber $A$.
\item We identify any $A$-module with a sheaf of 
$A_{\{\opn{pt}\}}$-module.
\item Any $C^{\alpha}$-manifold is paracompact for 
$\alpha=0,1,\ldots,\infty,\omega$.
\item We mainly use max-plus algebra.
\end{itemize}

\section{The cohomological local Morse data for
\texorpdfstring{$C^{\infty}$}{C-infty}-divisors}
\label{section-local-morse-data}
In this section, we recall some elementary results
of sheaf theory
and define permissible $C^{\infty}$-divisors on rational
polyhedral spaces and
the (cohomological) local Morse data 
for permissible $C^{\infty}$-divisors.

\subsection{Sheaf theory on locally compact 
Hausdorff spaces.}
In this subsection, we follow some classical results of
the theory of sheaves on locally compact topological
spaces from \cite{iversenCohomologySheaves1986a,
MR1299726,MR1269324,MR2050072}.
We mainly follow the notation of \cite{MR1299726}
unless otherwise specified.

\subsubsection{Notes for modules over PID}

Henceforth, we assume $A$ is a PID. 
Then, the (global) homological dimension of 
$\catn{Mod}(A)$ is less than or equal to $1$
\cite[Exercise I.17, I.28]{MR1299726}.
Therefore, for any $M^{\bullet}\in \opn{Ob}(D^{b}(\catn{Mod}(A)))$
there exists the following quasi-isomorphism 
\cite[Exercise I.18]{MR1299726}:
\begin{align}
M^{\bullet}\simeq 
\bigoplus_{i\in \Z}H^{i}(M^{\bullet})[-i].
\end{align}

Let $\catn{mod}(A)$ be the category of 
finitely generated $A$-modules.
Since $A$ is a PID, then the Grothendieck group 
$K_0(\catn{mod}(A))$ is isomorphic to $\Z$ by
the rank $\opn{rk}_A M$ of a finitely generated
$A$-module $M$.
If $M^{\bullet}$ is a bounded $\Z$-graded finitely generated $A$-module,
then we can define the Euler characteristic 
$\chi(M^{\bullet})\deq 
\sum_{i\in \Z}(-1)^{i}\opn{rk}_A M^{i}$.
If $(M^{\bullet},d)$ is a bounded chain complex of finitely 
generated $A$-module, then 
$\chi(M^{\bullet})=\chi(H^{\bullet}(M^{\bullet}))$.
We note $K_0(\mathcal{C})\simeq 
K_0(D^{b}(\mathcal{C}))$ for any Abelian category 
$\mathcal{C}$
\cite[Exercise I.27]{MR1299726}.

\subsubsection{Some operations of sheaves}

From now on, we recall some elementary 
operations of sheaves. 
Let $f\colon Y\to X$ be a continuous map.
Then, $f$ induces the functor $f_!$ of
the direct image with proper supports 
\cite[(2.5.1)]{MR1299726}. 
For example, $a_{X!}\mathcal{F}\simeq \Gamma_c(X;\mathcal{F})$.

Let $X$ be a topological space, $Z$ a locally closed 
subset of $X$,
and $i\colon Z\to X$ the inclusion map.
For a given sheaf $\mathcal{F}$ on $X$, we set
\begin{align}
\mcal{F}|_{Z}\deq i^{-1}\mcal{F}, \quad 
\mcal{F}_Z\deq i_! i^{-1}\mcal{F}, \quad 
(A_X)_Z\deq i_!i^{-1}A_X, \quad 
\Gamma(Z;\mathcal{F})\deq \Gamma(Z;\mathcal{F}|_{Z}).
\end{align}

From \cite[Proposition 2.5.4]{MR1299726}, $i_!$ is exact 
and $\mcal{F}_Z$ is isomorphic to the sheaf defined in
\cite[p.93]{MR1299726}.
If $Z'$ be a closed subset of $Z$, then
there exists the following exact sequence 
\cite[Proposition 2.3.6.(v)]{MR1299726}:
\begin{align}
0\to \mathcal{F}_{Z\setminus Z'} \to 
\mathcal{F}_Z \to \mathcal{F}_{Z'}\to 0.
\end{align}

Let $\Gamma_{Z}(\mcal{F})$ be the sheaf of sections of 
$\mcal{F}$ supported by a locally closed subset $Z$
\cite[Definition 2.3.8]{MR1299726} and 
$\Gamma_{Z}(X;\mcal{F})$ the global section of 
$\mathcal{F}$ supported by $Z$.
This sheaf is defined as follows:

\begin{enumerate}
\item Let $U$ be an open subset of $X$ such that $U$ contains
$Z$ as a closed subset of $U$ then we write
\begin{align}
  \Gamma_Z(U;\mcal{F})\deq \opn{Ker}(\mcal{F}(U) \to 
\mcal{F}(U\setminus Z)).
\end{align}
\item Moreover, if $V$ is an open subset of $U$ contain $Z$, then
$\Gamma_{Z}(U;\mcal{F})\simeq \Gamma_{Z}(V;\mcal{F})$
via the canonical morphism. Therefore, we can define
$\Gamma_{Z}(X;\mcal{F})\deq \Gamma_{Z}(U;\mcal{F})$.
\item The sheaf $\Gamma_{Z}(\mcal{F})$ is the presheaf 
$U\mapsto \Gamma_{Z\cap U}(U;\mcal{F})$ on $X$.
\end{enumerate}

The functor $\Gamma_Z(X;\cdot)$ is a left exact functor from 
$\opn{Mod}(A_X)$ to $\opn{Mod}(A)$,
and $\Gamma_{Z}(\cdot)$ is a left exact functor from
$\opn{Mod}(A_X)$ to $\opn{Mod}(A_X)$ 
\cite[Proposition 2.3.9 (i)]{MR1299726}.

If $j\colon V\to X$ is an open inclusion of $X$, then 
$\Gamma_V(\mathcal{F})\simeq j_*j^{-1}\mathcal{F}$ 
\cite[Proposition 2.3.9 (iii)]{MR1299726}.

Let $f\colon Y\to X$ be a continuous map,
$\mathcal{F},\mathcal{F}'\in \opn{Ob}(\catn{Mod}(A_X))$\\
and $\mathcal{G}\in\opn{Ob}(\catn{Mod}(A_Y))$.
Then, we have the following isomorphisms
\cite[(2.3.18)-(2.3.20)]{MR1299726}:
\begin{align}
\label{equation-adjoint-sheaf-cut-off}
\mathcal{H}om_{A_X}(\mathcal{F}_{Z},\mathcal{F}')
\simeq \mathcal{H}om_{A_X}(\mathcal{F},\Gamma_{Z}(\mathcal{F}'))
, \quad \\
\label{equation-property-sheaf-cut-off}
f^{-1}\mathcal{F}_Z\simeq 
(f^{-1}\mathcal{F})_{f^{-1}(Z)},\quad 
\Gamma_Zf_*\mathcal{G}\simeq f_*\Gamma_{f^{-1}(Z)}
\mathcal{G}.
\end{align}

The first isomorphism \eqref{equation-adjoint-sheaf-cut-off}
deduces that $\Gamma_{Z}(\cdot)$ preserves injectives since
$(\cdot)_Z$ is exact.

Let $X$ be a topological space, $Z$ a locally 
closed subset of $X$ and $Z'$ a closed subset of $Z$.
There exists the following distinguished triangle 
for any $\mathcal{F}^{\bullet}\in \opn{Ob}(D^{b}(A_X))$ 
\cite[(2.6.32)]{MR1299726}:
\begin{align} \label{equation-exact-local}
  R\Gamma_{Z'}(\mcal{F}^{\bullet})\to 
R\Gamma_{Z}(\mcal{F}^{\bullet})\to 
R\Gamma_{Z\setminus Z'}(\mcal{F}^{\bullet})\xto{[1]} 
R\Gamma_{Z'}(\mcal{F}^{\bullet})[1].
\end{align}

\begin{example}
\label{example-local-cohomology}
Let $Z$ be an open subset of a Hausdorff space $X$, 
$\mathcal{F}^{\bullet}=A_{X}$ 
and $U=Z\setminus Z'$.
By taking the global 
section for \eqref{equation-exact-local}, we get 
the following exact sequence:
\begin{align}
  \cdots \to H^{i}_{Z'}(X;A_X)\to
H^{i}(Z;A_{Z})\to H^{i}(U;A_U)\to 
H^{i+1}_{Z'}(X;A_X) \to \cdots.
\end{align}
The cohomology $H^{\bullet}_{\{x\}}(X;A_X)$ is called
the local cohomology of $X$ at $x$.
If $Z$ is acyclic and $x\in Z$, then $H^{\bullet}_{\{x\}}(X;A_X)$ is 
isomorphic to the $(-1)$-shift of the reduced cohomology 
$\tilde{H}^{\bullet-1}(Z\setminus \{x\};
A_{Z\setminus \{x\}})$
of the constant sheaf on $Z\setminus \{x\}$
(see 
\cite[p.199]{hatcherAlgebraicTopology2002a} for 
reduced singular cohomology).
\end{example}

\begin{example}

We mention two elementary properties 
about stalks of $R\Gamma_{Z}(\mcal{F})$ of 
a sheaf $\mathcal{F}$ on 
a topological space $X$.

\begin{enumerate}
\item Let $j:V\to X$ be an open inclusion and $Z$ be a 
closed subset of $X$. Then, 
\begin{align}
\Gamma_{Z\cap V}(\mathcal{F}|_V)=\Gamma_{Z}(\mathcal{F})|_V.
\end{align}
We can see the above from the definition or the following
equations:
\begin{align}
\Gamma_{j^{-1}(Z)} \circ j^{-1}
=j^{-1}\circ j_*\circ \Gamma_{j^{-1}(Z)} \circ j^{-1}
=j^{-1}\circ\Gamma_{Z}\circ \Gamma_V
=j^{-1}\circ \Gamma_V \circ \Gamma_Z
=j^{-1} \circ \Gamma_Z.
\end{align}
Since $j^{-1}=j^{!}$ and $j_!$ is exact, 
$j^{-1}$ preserves injectives. 
Therefore, $R\Gamma_{j^{-1}(Z)} 
\circ j^{-1}=j^{-1}\circ R\Gamma_Z$.
In particular, for any point $v\in V$ we have
\begin{align}
(R\Gamma_{Z\cap V}(\mcal{F}|_{V}))_v\simeq 
(R\Gamma_{Z}(\mcal{F}))_v.
\end{align}
Therefore, the stalk is independent
of the choice of $V (\ni v)$.
\item Let $i: S\to X$ be a closed inclusion.
Since $i_!=i_*$ and $i_!$ is exact, we have
\begin{align}
  i^{-1}R\Gamma_{Z}(i_*\mcal{F})\simeq 
i^{-1}R(\Gamma_{Z}\circ i_*)\mcal{F}
  \simeq i^{-1}R(i_*\circ \Gamma_{i^{-1}(Z)})\mcal{F}
  \simeq R\Gamma_{Z\cap S}\mcal{F}
\end{align}
In particular, $(R\Gamma_{Z}(i_*\mcal{F}))_v
  \simeq (R\Gamma_{Z\cap S}\mcal{F})_v$ for any $v\in S$.
\end{enumerate}

By combining the above two examples, for an open 
subset $V' (\ni v)$ of $S$ we get 
\begin{align} \label{equation-local-stalk}
(R\Gamma_{Z}((A_{X})_S))_v\simeq 
(R\Gamma_{Z\cap S}(A_{S}))_v\simeq
(R\Gamma_{Z\cap S\cap V'}(A_{V'}))_v.
\end{align}
We use the isomorphism
\eqref{equation-local-stalk}
in \cref{prop-local-morse-data}.
\end{example}

\subsection{Cohomological local Morse data and index}

The stalk complex and its cohomology written below are 
the most important notions
for this paper, which is naturally appeared in
microlocal sheaf theory \cite{MR1299726}, and
the complex is called ``local Morse datum'' in \cite[p.271]{MR2031639} 
or microlocal stalk in \cite{MR4132582}.
(The double quotation mark here is taken from 
\cite[p.271]{MR2031639}.)
\begin{definition}[{Cohomological local Morse data}]
\label{definition-local-morse-data}
Let $f\colon X\to {\mathbb{R}}$ be a continuous map on a 
topological space $X$ and $x\in X$. Fix 
$\mathcal{F}^{\bullet}\in 
\opn{Ob}(D^{b}(\catn{Mod}(A_X)))$.
The cohomology of the \emph{cohomological local Morse 
data} of the pair $(\mathcal{F}^{\bullet},f)$ at $x$ 
is the following graded $A$-module: 
\begin{align}
\opn{LMD}^{\bullet}(\mcal{F}^{\bullet},f,x)\deq 
H^{\bullet}(R\Gamma_{\{f\geq f(x)\}}\mathcal{F}^{\bullet})_x
=H^{\bullet}(R\Gamma_c(\{x\};R\Gamma_{\{f\geq f(x)\}}
\mathcal{F}^{\bullet})).
\end{align}

The \emph{local Morse index} of $f$ at $x$ is the 
following number if \\
$R\Gamma_{\{f\geq f(x)\}}(\mathcal{F}^{\bullet})_x
\in D^{b}(\catn{mod}(A))$:
\begin{align} \label{equation-local-index}
\opn{ind}_x^{A}(f)\deq \chi(\opn{LMD}^{\bullet}(A_X,f,x)), \quad
\opn{ind}_x(f) \deq \opn{ind}_x^{\mathbb{Z}}(f).
\end{align}

\end{definition}

In this paper, we simply say 
\emph{local Morse data} as cohomological Morse data 
if we are not afraid of confusing it with local Morse data
in the theory of stratified Morse theory 
\cite[Part I. Definition 3.5.2]{MR932724}.

\begin{remark}
\begin{enumerate}
\item We follow the above notation from 
\cite[p.271]{MR2031639} for two reasons.
One of this is for easy to see the triple 
$(\mcal{F}^{\bullet},f,x)$. 
The other one is for emphasizing on analogy to
Morse complexes, see \cref{example-poincare-hopf}.
\item The name of local Morse data may be misleading.
We don't assume $f$ satisfies some 
`Morse condition' since
 we need to use some $C^{\infty}$-function
which is \emph{not} a Morse function in the sense of 
stratified Morse theory. 
We will give an explanation about this in 
\cref{remark-nondeprave-curve}.
Moreover, 
$\opn{LMD}^{\bullet}(\mathcal{F}^{\bullet},f,x)$
is closely related with (real) Milnor fiber, so
the name \emph{(cohomological) local Milnor data} may 
be more appropriate 
than the name cohomological local Morse data. 
\item We also note that we need to use 
the graded module $\opn{LMD}^{\bullet}(\mcal{F},f,x)$ instead of 
the complex $(R\Gamma_{\{f\geq f(x)\}}\mathcal{F})_x$ in order to define
a graded module associated with line bundle on tropical
spaces. See also \cref{remark-differential-graded-module} for more about 
this reason.
\end{enumerate}
\end{remark}

Before we see some important examples of cohomological
local Morse data, we will see some elementary
properties of them.
By \eqref{equation-local-stalk}, we get the following
isomorphism:

\begin{proposition} \label{prop-local-morse-data}
Let $f\colon X\to \mathbb{R}$ be a continuous function
on $X$, $S$ a closed subset
of $X$, $V$ open subset of $S$ and $x\in V$. Then,
\begin{align} 
\opn{LMD}^{\bullet}((A_{X})_S,f,x
) &\simeq  
\opn{LMD}^{\bullet}(A_S,f|_S,x)
\simeq \opn{LMD}^{\bullet}(A_{V},f|_V,x) \\
&\simeq \varinjlim_{U\in\opn{Nbh}(x,S)} 
\tilde{H}^{\bullet-1}(S\cap \{f<f(x)\}\cap U;A),
\end{align}
where $\tilde{H}^{i-1}(X;A)$ is
the $(i-1)$-th reduced cohomology
of $A_X$ and $\opn{Nbh}(x,S)$ is 
the set of open neighborhoods of $x$ in $S$. 
\end{proposition}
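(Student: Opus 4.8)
The plan is to establish the chain of isomorphisms in \cref{prop-local-morse-data} by reducing everything, via the stalk identity \eqref{equation-local-stalk}, to the computation of local cohomology on an open subset, and then to identify that local cohomology with reduced cohomology of a sublevel set using \cref{example-local-cohomology}. First I would settle the three leftmost terms. The isomorphism $\opn{LMD}^{\bullet}((A_X)_S,f,x)\simeq \opn{LMD}^{\bullet}(A_S,f|_S,x)\simeq \opn{LMD}^{\bullet}(A_V,f|_V,x)$ should follow directly from \eqref{equation-local-stalk}: unwinding the definition in \cref{definition-local-morse-data}, each $\opn{LMD}^{\bullet}$ is the cohomology of a stalk of the form $(R\Gamma_{\{f\geq f(x)\}}(-))_x$, and \eqref{equation-local-stalk} asserts exactly that these stalks agree when we pass from $(A_X)_S$ to $A_S$ and then restrict further to an open neighborhood $V'$ of $x$ in $S$. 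The one point to check is compatibility of the cut-off subset: the closed set $\{f\geq f(x)\}$ pulls back along the inclusions $S\hookrightarrow X$ and $V\hookrightarrow S$ to $\{f|_S\geq f(x)\}$ and $\{f|_V\geq f(x)\}$ respectively, which is immediate since restriction of $f$ commutes with taking sublevel/superlevel sets. So this part is essentially bookkeeping on top of the already-proven \eqref{equation-local-stalk}.

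Next I would establish the final isomorphism with the colimit of reduced cohomology. The strategy is to work entirely on the open subset $V$ (by the reduction just made) and compute the stalk $(R\Gamma_{\{f|_V\geq f(x)\}}A_V)_x$ as a colimit over open neighborhoods $U$ of $x$. Here I would invoke the relation between $R\Gamma_Z$-stalks and local cohomology, writing the stalk as
\begin{align*}
(R\Gamma_{\{f\geq f(x)\}}A_V)_x \simeq \varinjlim_{U\in\opn{Nbh}(x,V)} R\Gamma_{\{f\geq f(x)\}\cap U}(U;A_U).
\end{align*}
The superlevel set $\{f\geq f(x)\}\cap U$ is closed in $U$ and its open complement is $\{f<f(x)\}\cap U$, so \cref{example-local-cohomology} applies with the roles of its $Z,Z',U$ filled by $U$ (open in $V$, hence acyclic after shrinking to a contractible neighborhood), the superlevel set, and the sublevel set. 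That example identifies $H^{\bullet}_{\{f\geq f(x)\}\cap U}(U;A_U)$ with the $(-1)$-shift of the reduced cohomology $\tilde H^{\bullet-1}(\{f<f(x)\}\cap U;A)$, which is precisely the colimit appearing in the statement once we pass to the limit over $U$. Since $\opn{Nbh}(x,S)$ and $\opn{Nbh}(x,V)$ are cofinal in one another (every small enough neighborhood in $S$ lies in $V$), the colimit is the same whether indexed over $S$ or $V$.

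The main obstacle, and the step I would scrutinize most carefully, is the acyclicity hypothesis in \cref{example-local-cohomology}: that result computes the local cohomology in terms of $\tilde H^{\bullet-1}(Z\setminus\{x\})$ only under the assumption that the ambient open set $Z$ is acyclic. In the present setting the open neighborhoods $U$ need not be acyclic a priori, so the clean identification with reduced cohomology of the sublevel set is only valid after restricting to a cofinal system of acyclic (e.g.\ contractible, or at least $A$-acyclic) neighborhoods of $x$. I would therefore need to argue that such neighborhoods are cofinal in $\opn{Nbh}(x,S)$ — which holds since $S$ is locally a subset of a polyhedral space and hence admits a neighborhood basis of contractible (cone-like) open sets at each point — and that passing to this cofinal subsystem does not change the colimit. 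Once acyclicity is arranged, the long exact sequence of \cref{example-local-cohomology} collapses to the desired degree shift, and assembling the colimit over $U$ yields the last isomorphism. The rest is formal: the commutation of $\varinjlim$ with cohomology $H^{\bullet}$ is automatic because filtered colimits are exact.
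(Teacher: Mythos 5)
Your handling of the first two isomorphisms coincides with the paper's proof: both reduce to \eqref{equation-local-stalk} plus the compatibility $H^{\bullet}(\mathcal{F}^{\bullet})_x\simeq H^{\bullet}(\mathcal{F}^{\bullet}_x)$, and your remark about superlevel sets pulling back correctly is just the bookkeeping the paper leaves implicit. The gap is in your last step. The proposition carries \emph{no} local contractibility or polyhedrality hypothesis: $X$ is an arbitrary (Hausdorff, locally compact) space, $S$ an arbitrary closed subset, and $f$ merely continuous. Your argument collapses the long exact sequence of \cref{example-local-cohomology} neighborhood-by-neighborhood, which requires each $U$ to be $A$-acyclic, and you justify the existence of a cofinal acyclic system by asserting that ``$S$ is locally a subset of a polyhedral space'' --- a hypothesis imported from the paper's later applications but absent from the statement. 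For a closed subset such as a solenoid there is no neighborhood basis of $A$-acyclic open sets at all, yet the proposition still holds; so as written your proof does not establish the stated result.

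The repair --- and the paper's actual route --- is to take the filtered colimit \emph{before} collapsing anything. The paper takes stalks in the distinguished triangle \eqref{equation-exact-local} to get $(R\Gamma_{\{f\geq f(x)\}}A_S)_x \to (A_S)_x \to (R\Gamma_{\{f<f(x)\}}A_S)_x \xto{[1]}$, identifies $R\Gamma_{\{f<f(x)\}}\simeq Rj_*j^{-1}$ for the open inclusion $j\colon \{f<f(x)\}\to S$ (using that $j^{-1}=j^{!}$ preserves injectives), and computes the third stalk as $\varinjlim_{U}H^{\bullet}(\{f<f(x)\}\cap U;A)$ by the standard formula for higher direct images. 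The middle term is literally $A[0]$; equivalently, in your colimit of long exact sequences the troublesome term $\varinjlim_{U}H^{i}(U;A)$ vanishes for $i>0$ \emph{without any acyclicity assumption}, since for an injective resolution $A_S\to I^{\bullet}$ one has $\varinjlim_{U}H^{i}(I^{\bullet}(U))=H^{i}(I^{\bullet}_x)=H^{i}(A[0])$ by exactness of filtered colimits and of taking stalks. The resulting long exact sequence then yields the reduced-cohomology shift in all cases, including the one where $\{f<f(x)\}\cap U$ is eventually empty, which is exactly where the convention $\tilde{H}^{-1}(\emptyset;A)\simeq A$ enters. So your outline is repairable by this reordering, but the acyclic-neighborhood step is a genuine gap at the stated level of generality.
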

\begin{proof}
The first and second isomorphism follows 
from \eqref{equation-local-stalk} 
by replacing $Z$ (resp. $V'$) 
with $\{f\geq f(x)\}$ (resp. $V$) and the isomorphism
$H^{\bullet}(\mathcal{F}^{\bullet})_x\simeq 
H^{\bullet}(\mathcal{F}^{\bullet}_x)$ for any 
$\mathcal{F}^{\bullet}\in D^{b}(A_X)$ and $x\in X$.

From now on, we may assume $X=S=V$.
From \eqref{equation-exact-local} and by taking stalks,
we have the following distinguished triangle:
\begin{align}
(R\Gamma_{\{f\geq f(x)\}}A_S)_x \to 
(A_S)_x\to 
(R\Gamma_{\{f<f(x)\}}A_S)_x\to 
(R\Gamma_{\{f\geq f(x)\}}A_S)_x[1].
\end{align}
Let $j\colon \{f<f(x)\}\to S$ be the open inclusion map. 
We can see 
$R\Gamma_{\{f< f(x)\}}\simeq Rj_*\circ j^{-1}$
since $j^{-1}=j^{!}$ preserve injectives.
From the elemental properties of higher direct image
(e.g. \cite[Chapter II. Proposition 5.11]{iversenCohomologySheaves1986a}),
we have
\begin{align}
(R^{i}\Gamma_{\{f< f(x)\}}A_S)_x\simeq 
(R^{i}j_* A_{\{f<f(x)\}})_x \simeq 
\varinjlim_{U\in \opn{Nbh}(x,S)} 
H^{i}(\{f<f(x)\}\cap U;A).
\end{align}
\end{proof}

We will see several examples of local Morse data.

\begin{example}
Let $f\colon X \to \mathbb{R}$ a continuous function 
such that $\{f<f(x_0)\}\cap S=\emptyset$
for some $x_0\in X$. 
Then,
\begin{align}
\opn{LMD}^{\bullet}(A_S,f|_{S},x_0)\simeq 
\tilde{H}^{\bullet-1}(\emptyset;A)\simeq A[0].
\end{align}

\end{example}

\begin{example} \label{example-fundamental-system}
Let $\{U_j\}_{j\in J}$ be an open fundamental system of 
$x\in X$ satisfying
$H^{\bullet}(\{f<f(x)\}\cap U_j;A)\simeq 
H^{\bullet}(\{f<f(x)\}\cap U_{j'};A)$ via the 
inclusion map $U_{j'}\hookto U_j$. Then, 
\begin{align}
\opn{LMD}^{\bullet}(A_X,f,x)\simeq H^{\bullet-1}(\{f<f(x)\}\cap U_j;A).
\end{align}

\end{example}

\begin{example}
\label{example-morse-index}
In this example, we see
the local Morse index of is a certain generalization of 
the Poincar\'e--Hopf index of the gradient 
$\opn{grad}(f)$ of a smooth function $f$ 
when $f$ is a Morse function. 
We set
\begin{align}
f_{n,m}\colon \mathbb{R}^{n}\times \mathbb{R}^{m}\to \mathbb{R};
(x,y)\mapsto \|x\|_{\mathbb{R}^{n}}^2
-\|y\|_{\mathbb{R}^{m}}^2.
\end{align}
In order to calculate 
$\opn{LMD}^{\bullet}(A_{\mathbb{R}^{n+m}},f_{n,m},0)$,
for any $\varepsilon\in \mathbb{R}_{}>0$ we get
\begin{align}
\partial (\bar{B}^{n}_{\varepsilon}(0)
\times \bar{B}^{m}_{\varepsilon}(0))=
& S^{n-1}_{\varepsilon}(0)\times B^{m}_{\varepsilon}(0) 
\cup B^{n}_{\varepsilon}(0)\times S^{m-1}_{\varepsilon}(0) 
\notag \\
& \cup 
S^{n-1}_{\varepsilon}(0)\times S^{m-1}_{\varepsilon}(0) \\
=& \{\|\cdot\|_{\mathbb{R}^{n}}=\varepsilon\} 
\times \{\|\cdot\|_{\mathbb{R}^{m}} < \varepsilon\} 
\cup \{\|\cdot\|_{\mathbb{R}^{n}} < \varepsilon\} \times 
\{\|\cdot\|_{\mathbb{R}^{m}}= \varepsilon\}
\notag \\
& \cup 
\{\|\cdot\|_{\mathbb{R}^{n}}=\varepsilon\} \times 
\{\|\cdot\|_{\mathbb{R}^{m}}= \varepsilon\}.
\end{align}

Therefore,  
\begin{align}
\partial (\bar{B}^{n}_{\varepsilon}(0)
\times \bar{B}^{m}_{\varepsilon}(0))\cap 
\{f_{n,m}<f_{n,m}(0)\}
= B^{n}_{\varepsilon}(0)\times S^{m-1}_{\varepsilon}(0).
\end{align}

The above equation and \cref{prop-local-morse-data} 
give
\begin{align}
\opn{LMD}^{\bullet}(A_{\mathbb{R}^{n+m}},f_{n,m},0)\simeq 
\tilde{H}^{\bullet-1}(S^{m-1};A_S) \simeq A[-m],& \\
\opn{ind}^{A}_{0}(f_{n,m})=(-1)^{m}=
\opn{ind}^{\opn{PH}}_{0}(\opn{grad}(f_{n,m})),&
\end{align}
where $\opn{ind}^{\opn{PH}}_{0}(\opn{grad}(f_{n,m}))$ is 
the Poincar\'e-Hopf index of the gradient 
of $f_{n,m}$ at 
the origin $0$.
From the Morse lemma, 
$\opn{ind}_x^{A}(f)=\opn{ind}^{\opn{PH}}_x(
\opn{grad}(f))$
for every critical point $x$ of a Morse function 
$f\colon M\to \mathbb{R}$ on a 
Riemannian manifold $(M,g)$. 
This equation can be generalized for more cases.
\end{example}

If $X$ is a locally compact Hausdorff space, then there exists a 
closed and compact neighborhood $S$ of $x\in X$. 
By the second isomorphism of \cref{prop-local-morse-data},
we have 
$\opn{LMD}^{\bullet}(A_X,f,x)\simeq 
\opn{LMD}^{\bullet}(A_S,f|_S,x)$, so 
we can choose a sufficiently small closed and 
compact neighborhood 
of $x$.

By \cref{prop-local-morse-data}, we can calculate
of the local Morse index of a continuous function on 
a given polyhedral complex by the theory of sheaves on 
smooth manifolds.

\begin{notation}

We define $Y\deq 
\{x\in X\mid \opn{LMD}^{\bullet}(\mathcal{F}^{\bullet},f,x)
\not \simeq 0\}$ and set

\begin{align}
\opn{LMD}^{\bullet}(\mathcal{F}^{\bullet},f)\deq 
\bigoplus_{x\in Y}
\opn{LMD}^{\bullet}(\mathcal{F}^{\bullet},f,x).
\end{align}
\end{notation}

The graded module $\opn{LMD}^{\bullet}(\mathcal{F}^{\bullet},f)$
behaves like a Morse complex of a Morse function
as graded modules.

Next, we recall about the micro-support of sheaves 
from \cite{MR1299726}.
Let $X$ be a $C^{\alpha}$-manifold for
$\alpha\in\{1,2,\ldots,\infty,\omega\}$ and let
$\mcal{F}^{\bullet}\in\opn{Ob}(D^{b}(A_X))$ and
$\opn{SS}(\mcal{F}^{\bullet})$ 
the micro-support of $\mcal{F}^{\bullet}$:
\begin{definition}[{\cite[Proposition 5.1.1, Definition 5.1.2]{MR1299726}}]
The element $p\in T^{*}X$ 
is  $p\notin \opn{SS}(\mathcal{F}^{\bullet})$
if there exists an open neighborhood $U$ of $p$ such 
that for any $x_1\in X$
and any $C^{\alpha}$-function
$\psi$ defined 
in a neighborhood $V$ of $x_1$, with 
$(x_1;d\psi(x_1))\in U$, we have:
\begin{align}
(R\Gamma_{\{\psi \geq \psi(x_1)\}}(\mathcal{F}^{\bullet}|_V))_{x_1}
\simeq 0,\, \mathrm{i.e.} \, , 
\opn{LMD}^{\bullet}(\mathcal{F}^{\bullet}|_V,\psi,x_1)
\simeq 0.
\end{align}

\end{definition}

As remarked in \cite[Remark 5.1.6]{MR1299726},
the dependence of the micro-support 
$\opn{SS}(\mcal{F}^{\bullet})$ is solely 
on the $C^{1}$-structure of $X$, and the forgetful functor 
$\phi_X \colon D^{b}(A_X)\to D^{b}(\Z_X)$
preserves the micro-support of $\mcal{F}^{\bullet}$ 
\cite[Remark 5.1.5]{MR1299726}.
For example, if $M$ is a closed submanifold of $X$, then
$\opn{SS}((A_{X})_M)=T^{*}_M X$ 
\cite[Proposition 5.3.2]{MR1299726}.

For simplicity,
$\opn{SS}(\mcal{F})_x\deq \opn{SS}(\mcal{F}) 
\cap T^{*}_x X$ 
and $\check{\pi}_{X}\colon T^{*}X\to X$
is the cotangent bundle.
From the definition of the micro-support of sheaves, 
$\opn{SS}(\mcal{F}|_{U})_x=\opn{SS}(\mcal{F})_x$
for any open neighborhood $U$ of $x$.
The following condition and proposition are also important.
\begin{condition}[{\cite[Proposition 5.4.20]{MR1299726}}]
\label{condition-global-morse}
Let $X$ be a smooth manifold,\\ $A$ a PID, 
$\mathcal{F}^{\bullet}\in D^{b}(A_X)$,
and $f\colon X\to \mathbb{R}$ a smooth function.
The pair $(\mathcal{F}^{\bullet},f)$ satisfies
\begin{enumerate}
\item for all $t\in \mathbb{R}$, 
$\{x\in\opn{supp}(\mathcal{F}^{\bullet})\mid f(x)\leq t\}$ 
is compact,
\item $\sharp (\Lambda_f\cap \opn{SS}(\mathcal{F}^{\bullet}))
<\infty$ where $\Lambda_f$ is the image of 1-form 
$df\colon X\to T^{*}X$,
\item the graded module 
$\opn{LMD}^{\bullet}(\mathcal{F}^{\bullet},f)$ 
is finitely generated (as an $A$-module).

\end{enumerate}

\end{condition}

\begin{proposition}[{\cite[Remark 1.2]{MR1160840},
\cite[Proposition 5.4.20]{MR1299726}}]
Under \cref{condition-global-morse}, 
\begin{align}
\label{equation-global-morse}
\chi(H^{\bullet}(X;\mathcal{F}^{\bullet}))=
\chi (\opn{LMD}^{\bullet}(\mathcal{F}^{\bullet},f)).
\end{align}

\end{proposition}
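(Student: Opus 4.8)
The plan is to run the standard microlocal Morse theory argument of Kashiwara--Schapira, filtering $X$ by the sublevel sets $\Cbk{f<t}$ and tracking how $R\Gamma(\Cbk{f<t};\mcal{F}^{\bullet})$ changes as $t$ increases, with the jumps recorded by the local Morse data $\opn{LMD}^{\bullet}(\mcal{F}^{\bullet},f,x)$ at the critical points. First I would use part (2) of \cref{condition-global-morse}: the set $\Lambda_f\cap\opn{SS}(\mcal{F}^{\bullet})$ is finite, so I may list the distinct values it takes under $f$ as $c_1<\cdots<c_M$ and call these the critical values. The microlocal Morse lemma \cite[Proposition 5.4.17]{MR1299726}, whose hypotheses are supplied by the properness of $f$ on $\opn{supp}(\mcal{F}^{\bullet})$ coming from part (1) of \cref{condition-global-morse}, then tells me that whenever a band $\Cbk{a\le f<b}$ contains no critical value the restriction $R\Gamma(\Cbk{f<b};\mcal{F}^{\bullet})\to R\Gamma(\Cbk{f<a};\mcal{F}^{\bullet})$ is an isomorphism. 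Using part (1) once more, for $t\ll 0$ the sublevel set is disjoint from $\opn{supp}(\mcal{F}^{\bullet})$ so $R\Gamma(\Cbk{f<t};\mcal{F}^{\bullet})\simeq 0$, while for $t$ above $c_M$ the restriction maps are eventually isomorphisms, whence $R\Gamma(\Cbk{f<t};\mcal{F}^{\bullet})\simeq R\Gamma(X;\mcal{F}^{\bullet})$ by passing to the (eventually constant) limit along the exhaustion $X=\bigcup_t\Cbk{f<t}$.

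Next I would choose regular values $t_0<t_1<\cdots<t_M$ with $t_{j-1}<c_j<t_j$, so that $R\Gamma(\Cbk{f<t_0};\mcal{F}^{\bullet})\simeq 0$ and $R\Gamma(\Cbk{f<t_M};\mcal{F}^{\bullet})\simeq R\Gamma(X;\mcal{F}^{\bullet})$. For each $j$, writing $V=\Cbk{f<t_j}$, the open subset $U=\Cbk{f<t_{j-1}}$, and the relatively closed band $Z=\Cbk{t_{j-1}\le f<t_j}$, the local cohomology triangle \eqref{equation-exact-local} (cf.\ \cref{example-local-cohomology}) reads
\begin{align*}
R\Gamma_Z(V;\mcal{F}^{\bullet})\to R\Gamma(V;\mcal{F}^{\bullet})\to R\Gamma(U;\mcal{F}^{\bullet})\xto{[1]}R\Gamma_Z(V;\mcal{F}^{\bullet})[1].
\end{align*}

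The crucial identification is that $R\Gamma_Z(V;\mcal{F}^{\bullet})$ localizes at the critical points sitting over $c_j$. Since $\Lambda_f\cap\opn{SS}(\mcal{F}^{\bullet})$ meets the band $Z$ only above those $x$ with $f(x)=c_j$, the non-characteristic deformation lemma \cite[Proposition 2.7.2]{MR1299726} allows me to contract $Z$ onto disjoint small neighborhoods of these points and excise, giving
\begin{align*}
R\Gamma_Z(V;\mcal{F}^{\bullet})\simeq\bigoplus_{f(x)=c_j}\Paren{R\Gamma_{\Cbk{f\ge f(x)}}\mcal{F}^{\bullet}}_{x},
\end{align*}
whose summands are exactly the complexes underlying $\opn{LMD}^{\bullet}(\mcal{F}^{\bullet},f,x)$ appearing in \cref{definition-local-morse-data}.

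Finally, part (3) of \cref{condition-global-morse} ensures that every complex above has finitely generated cohomology over the PID $A$, so the additivity of $\chi$ along a distinguished triangle applies to each triangle and the identification above. Reading off $\chi(R\Gamma(V;\mcal{F}^{\bullet}))=\chi(R\Gamma(U;\mcal{F}^{\bullet}))+\sum_{f(x)=c_j}\chi(\opn{LMD}^{\bullet}(\mcal{F}^{\bullet},f,x))$ and telescoping over $j=1,\ldots,M$, from $\chi(0)=0$ up to $\chi(R\Gamma(X;\mcal{F}^{\bullet}))=\chi(H^{\bullet}(X;\mcal{F}^{\bullet}))$, yields the total sum $\sum_{x}\chi(\opn{LMD}^{\bullet}(\mcal{F}^{\bullet},f,x))$; since the summand vanishes off $\Lambda_f\cap\opn{SS}(\mcal{F}^{\bullet})$ by the very definition of the micro-support, this total equals $\chi(\opn{LMD}^{\bullet}(\mcal{F}^{\bullet},f))$, which is \eqref{equation-global-morse}. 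I expect the genuine obstacle to be the localization step: making the excision and deformation rigorous requires the microlocal Morse lemma in the form that controls $R\Gamma_Z$ across the whole band, together with a check that contributions from distinct critical points on the same level split as a direct sum rather than merely assembling into an iterated extension --- this is precisely where the properness hypothesis (1) and the finiteness of $\Lambda_f\cap\opn{SS}(\mcal{F}^{\bullet})$ in (2) do the real work.
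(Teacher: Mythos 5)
Your argument is correct and is essentially the proof the paper relies on: the paper states this proposition without proof, citing \cite[Remark 1.2]{MR1160840} and \cite[Proposition 5.4.20]{MR1299726}, and your reconstruction --- sublevel-set filtration, the microlocal Morse lemma, localization of $R\Gamma_Z$ at the finitely many points of $\Lambda_f\cap\opn{SS}(\mathcal{F}^{\bullet})$ via the non-characteristic deformation lemma, and additivity of $\chi$ along the resulting distinguished triangles under hypothesis (3) of \cref{condition-global-morse} --- is exactly the Kashiwara--Schapira argument behind that citation. The localization step you flag is indeed where the work lies, and the tools you name (\cite[Propositions 2.7.2 and 5.4.17]{MR1299726}, together with properness from hypothesis (1) to excise disjoint neighborhoods of the critical points on a common level) are precisely the ones the cited proof uses.
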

The RHS of \eqref{equation-global-morse}
can be considered 
as the intersection number 
of two Lagrangian cycles in $T^{*}X$ when 
$\mathcal{F}^{\bullet}$ is a complex of 
constructible sheaves,
and $f$ is an analytic function on an analytic manifold,
see \cite[Corollary 9.5.2 and Theorem 9.5.6]{MR1299726}.
This is called the microlocal index formula or Kashiwara's 
index formula.

\begin{remark}
We can change \cref{condition-global-morse} (2) 
to a weaker version \cite[(H.2)]{MR1160840}:
\begin{enumerate}
\setlength{\leftskip}{0pt}
\item[(2)'] $\check{\pi}_X(\Lambda_f\cap \opn{SS}(\mathcal{F}^{\bullet}))$
is a finite disjoint union of compact subsets of $X$ and
$f|_{\check{\pi}_X(\Lambda_f\cap \opn{SS}(\mathcal{F}^{\bullet}))}$ 
is locally constant.
\end{enumerate}
If $X$ is compact and $f$ is constant, then 
the pair $(A_X,f)$ satisfies 
\cref{condition-global-morse} (1), (3) and (2)',
but not (2).
\end{remark}

\begin{example}
Let $M$ be a compact integral affine manifold and $S$  
a compact rational polyhedral subspace in $M$ and 
$\iota\colon S\hookto M$ the inclusion map of $S$. 
Let $f\colon M \to \mathbb{R}$ be a smooth function such
that the pair $(\iota_*\mathbb{R}_S,f)$ satisfies 
\cref{condition-global-morse}:
\begin{align}
\chi(H^{\bullet}(M;\iota_*\mathbb{R}_S))
=\chi(\opn{LMD}(\iota_*\mathbb{R}_S,f))
\end{align}
for any smooth function $f\colon M\to \mathbb{R}$.
Therefore, this is a certain sheaf theoretic version of 
Poincar\'e--Hopf theorem for some polyhedral spaces. 
The cohomology 
$H^{\bullet}(M;\iota_*\mathbb{R}_S)$
 should be considered as a
certain analog of 
$\chi(H^{\bullet}(X;\iota_*\mcal{O}_Z))$ for the closed embedding
$\iota\colon Z\to X$ of a scheme.
\end{example}

\begin{example}[{cf. \cite[Exercise V.12]{MR1299726}}]
\label{example-poincare-hopf}
Let $U$ be an open neighborhood of the origin 
$0\in \mathbb{R}^{n}$,
$f\colon U\to \mathbb{R}$ be a smooth 
function 
with a unique isolated critical point at $0$. 
From \cref{prop-local-morse-data},
$\opn{LMD}^{\bullet}(A_U,f,0)\simeq 
\opn{LMD}^{\bullet}(A_{B_{\varepsilon}(0)},
f|_{B_{\varepsilon}(0)},0)\simeq 
\opn{LMD}^{\bullet}(A_{\bar{B}_{\varepsilon}(0)},
f|_{\bar{B}_{\varepsilon}(0)},0)$ 
for a sufficiently small $\varepsilon \in \mathbb{R}_{>0}$.
Fix $\varepsilon$ and an open smooth 
inclusion $j\colon U \to M$ of $U$
where $M$ is a compact smooth manifold,
$x_0\deq j(0)$ and $K\deq j(\bar{B}_{\varepsilon}(0))$. 
The sheaf $\mathcal{C}^{\infty}_M$ of smooth functions 
on $M$ is fine, so we can extend
$f|_{\bar{B}_{\varepsilon}(0)}$ to 
$f_0\in C^{\infty}(M)$.
From \cref{prop-local-morse-data}, 
$\opn{LMD}^{\bullet}(A_M,f_0,x_0)
\simeq \opn{LMD}^{\bullet}(A_U,f,0)$
(see \cite[Definition 3.3]{MR2359489} 
for the definition of fine sheaves in this paper).
By a Morse perturbation (see e.g. 
\cite[(2.57)]{MR1873233}), we can deform 
$f_0$ to a smooth function $f_1$ such that 
$f_0|_{K}=f_1|_K$ and $f_1$ is nondegenerate except at 
$x_0$.
Let $\opn{Crit}(f)$ be the set of critical points of $f$.
If $\opn{LMD}^{\bullet}(A_U,f,0)$ is 
a finitely generated graded $A$-module,
then we have
\begin{align} \label{equation-local-poincare-hopf}
\chi(H^{\bullet}(M;A))=
\chi(\opn{LMD}^{\bullet}(A_M,f_1,x_0))+
\sum_{x\in \opn{Crit}(f_1)\setminus \{x_0\}}
\chi(\opn{LMD}^{\bullet}(A_M,f_1,x))
\end{align}
by \cite[Proposition 5.4.20]{MR1299726}.
From the Poincar\'e--Hopf theorem and 
\cref{example-morse-index}, we have
\begin{align} \label{equation-poincare-hopf}
\opn{ind}_0^{A}(f)=
\opn{ind}_x^{A}(f_1) =
\opn{ind}_x^{\opn{PH}}(\opn{grad}(f_1))=
\opn{ind}_0^{\opn{PH}}(\opn{grad}(f)), 
\end{align}
where $\opn{ind}^{\opn{PH}}_x(\opn{grad} f)$ is the 
Poincar\'e--Hopf index at $x$ of the gradient
vector field of $f$ 
for a fixed Riemannian metric.
We note
$\opn{LMD}^{\bullet}(A_M,\tilde{f})=
\bigoplus_{x\in \opn{Crit}(\tilde{f})}
\opn{LMD}^{\bullet}(A_M,\tilde{f},x)$ is isomorphic
to the graded module of an $A$-valued Morse complex of 
$\tilde{f}$ when $\tilde{f}$ is a Morse function.
\end{example}

The following example is a $C^{\infty}$-function on a 
smooth manifold with an isolated critical point
at $x$ such that $\opn{LMD}^{\bullet}(A_M,f,x)$
is finitely generated.

\begin{example}
Let $f\colon M\to \mathbb{R}$ be a 
$C^{\infty}$-function on an $n$-dimensional
smooth Riemannian manifold 
$(M,g)$ and 
$\varepsilon$ a 
sufficiently small positive real number. Then,
for $x\in M$ we have
\begin{align}
\opn{LMD}^{\bullet}(A_{M},f,x)\simeq 
\opn{LMD}^{\bullet}(A_{B_{\varepsilon}^{n}(x)},
f|_{B_{\varepsilon}^{n}(x)},x) \simeq 
\opn{LMD}^{\bullet}(A_{\bar{B}_{\varepsilon}^{n}(x)},
f|_{\bar{B}_{\varepsilon}^{n}(x)},x).
\end{align}

If $f$ is smooth, has an isolated \emph{nondepraved} 
critical point at $x$, 
and $f^{-1}(f(x))\cap \opn{Crit}(f)=\{x\}$, then 
$\{f=f(x)\}\setminus \{x\}$ is a smooth submanifold 
of $M$ and the set 
$\{\{f=f(x)\}\setminus \{x\},\{x\} \}$ induces 
a stratification satisfying Whitney's condition B
\cite[Part I. Lemma 2.5.1]{MR932724}.
For instance, every real analytic function is nondepraved 
at any isolated critical point
~\cite[Part I.2.4]{MR932724}.

For a sufficiently small $\varepsilon\in \mathbb{R}_{> 0}$, 
we can assume that
$S_{\varepsilon}^{n-1}(x)$ and $\{f=f(x)\}$ 
intersect transversely \cite[Part I. Lemma 3.5.1]{MR932724},
and thus $S_{\varepsilon}^{n-1}(x) \cap \{f=f(x)\}$
is also a closed submanifold of $M$ and compact.
A relative version of Ehresmann's lemma for 
$\|\cdot\|_{\mathbb{R}^{n}}^2$ induces the 
following homeomorphism of pairs of smooth manifold 
with boundary:
\begin{align}
&(\bar{B}_{\varepsilon}^{n}(x)\setminus \{x\}, 
 \bar{B}_{\varepsilon}^{n}(x)\cap \{f=f(x)\} \setminus \{x\}) \\ 
\simeq & ((0,\varepsilon] \times S_{\varepsilon}^{n-1}(x),
(0,\varepsilon] \times
(S_{\varepsilon}^{n-1}(x) \cap \{f=f(x)\})). \notag
\end{align}
A proof of
a relative version of Ehresmann's lemma is 
in \cite{359708}, for example.
Besides, there exists a homeomorphism of pairs of
topological spaces:
\begin{align}
\label{equation-cone-pair}
(\bar{B}_{\varepsilon}^{n}(x),
\bar{B}_{\varepsilon}^{n}(x)\cap \{f=f(x)\})
\simeq (\opn{Cone}(S_{\varepsilon}^{n-1}(x)),
\opn{Cone}(S_{\varepsilon}^{n-1}(x) \cap \{f=f(x)\})),
\end{align}
where $\opn{Cone}(X)$ is the topological cone over
$X$
(see e.g. \cite[Example 4.4]{hatcherAlgebraicTopology2002a}).
This homeomorphism is classically proved when 
$f$ is real analytic function
\cite[Theorem 2.10]{MR0239612}.
The homeomorphism \eqref{equation-cone-pair} and
\cref{prop-local-morse-data} induces 
\begin{align}
\opn{LMD}^{\bullet}(A_{\bar{B}_{\varepsilon}(x)},
f|_{\bar{B}_{\varepsilon}(x)},x)\simeq 
\tilde{H}^{\bullet-1}(S_{\varepsilon}^{n-1}(x)
\cap \{f<f(x)\};A).
\end{align}

By the Alexander duality, 
$\tilde{H}^{\bullet}(S_{\varepsilon}^{n-1}(x)\setminus \{f=f(x)\};A)$ 
is finitely generated.
Therefore, $\opn{LMD}^{\bullet}(A_{\bar{B}_{\varepsilon}^{n}(x)},
f|_{\bar{B}_{\varepsilon}^{n}(x)},x)$ is
also finitely generated.

We also add an explanation of real Milnor fibers from
\cite[\textsection 3.2]{MR3779558}. 
If $f$ is real analytic, then 
$S_{\varepsilon}^{n-1}(x)\cap \{f\leq f(x)\}\simeq 
f^{-1}(-\delta)\cap \bar{B}_{\varepsilon}^{n}(x)$
for sufficiently small $\vep>\delta>0$.
The fiber 
$f^{-1}(-\delta)\cap \bar{B}_{\varepsilon}(x)$ is called 
the (negative) \emph{real Milnor fiber}.
By Khimshiashvili's [Him\v{s}ia\v{s}vili] formula \cite{MR0458467}, we have
\begin{align} \label{equation-Khimshiashvili}
  \opn{ind}_x^{A}(f)
=1-\chi(f^{-1}(-\delta)\cap \bar{B}_{\varepsilon}(x))
=\opn{ind}^{\opn{PH}}_{x}(\opn{grad}(f)).
\end{align}

\end{example}

\begin{remark}[{A vanishing properties
for some polyhedral cone}]
Fix $X\deq \mathbb{R}^{n}$.
Let $f\colon X\to {\mathbb{R}}$ be 
a convex $C^{1}$-function $X$ with the minimum point 
at the origin
and $S$ a polyhedral cone in $X$, i.e.,
a polyhedral subset such that 
$rS=S$ for all $r\in \mathbb{R}_{> 0}$.
One of trivial but remarkable point is that
\begin{align}
|\opn{ind}_0^{A}(f|_S)|\neq |\opn{ind}_0^{A}(-f|_S)|
\end{align}
except for special cases such like
\cref{example-poincare-hopf}.
From \cref{example-local-cohomology},
\cref{prop-local-morse-data} and
\cref{example-fundamental-system}, we have
\begin{align}
\opn{LMD}^{\bullet}(A_S,-f|_S,0)=
\tilde{H}^{\bullet-1}(S\setminus \{0\};A_{S}).
\end{align}

If $S$ is a purely $n$-dimensional Bergman fan, then 
$\opn{LMD}^{\bullet}(A_S,-f|_S,0)\simeq 
H^{n-1}(S\setminus\{0\};A)[-n]$ since every Bergman fan has a good subdivision 
\cite[Theorem 1]{MR2185977} and its link is shellable
(e.g. \cite[Theorem 7.9.1]{MR1165544}). Hacking also
studied homotopy types of the link of
the tropicalization of a very affine algebraic variety
at the origin,
and the vanishing property of its homology over
$\mathbb{Q}$ for generic cases
\cite[Theorem 2.5, Remark 2.11]{MR2452307}.
Since any tropical variety is connected
through codimension $1$ \cite[Theorem 3.3.5]{MR3287221},
the vanishing property works when $\dim S \leq 2$.
This phenomenon gives a certain analog of the Kodaira
vanishing theorem for 
positive line bundles, see 
also \cref{example-tropical-kodaira-vanishing}. 
Amini and Piquerez also discuss a certain shellability 
for tropical fans \cite{amini2021homology}.
\end{remark}

\subsection{$C^{\infty}$-divisors
on boundaryless rational 
polyhedral spaces}

In this subsection,
we recall about line bundles on 
rational polyhedral spaces and discuss the notion of
\emph{$C^{\infty}$-divisor}
for boundaryless rational polyhedral spaces
\cite[\textsection 4.2]{MR4246795}.
We mainly follow 
from \cite{MR3903579,gross2019sheaftheoretic},
but our notation are slightly different from
that of them. 

Let $S$ be a rational polyhedral space
\cite[Definition 2.2]{gross2019sheaftheoretic}, 
$\mathcal{O}^{\times}_S$ the sheaf of 
locally integral affine linear function on $S$
and $\mathcal{A}^{0,0}_S$ the sheaf of 
$(0,0)$-superforms on $S$ 
\cite[Definition 2.24]{MR3903579}. 
\begin{remark}
Jell, Shaw and Smacka defined the sheaf of superforms 
for polyhedral spaces 
with a local face structure in \cite{MR3903579}
(see also \cite[Definition 2.4.(c)]{gross2019sheaftheoretic}
for the definition of face structures 
of rational polyhedral space).
We can also define the sheaf of $(0,0)$-superforms on 
a rational polyhedral space as the same way,
at least.
A $(0,0)$-superform on $S$ are also called as 
a smooth function on $S$. In fact, 
the sheaf $\mathcal{A}^{0,0}_S$ of $(0,0)$-superforms on $S$
is equal to the sheaf $\mathcal{C}_S^{\infty}$ of 
smooth functions on $S$ if $S$ is an integral 
affine manifold.
\end{remark}

In various scenes, we assume rational polyhedral spaces
satisfy the following \emph{boundaryless condition}:

\begin{condition}[{\cite[\textsection 4.2]{MR4246795}}]
\label{condition-Rn}
Let $S$ be a rational polyhedral space admitting 
a coordinate system $\{(U_i,\phi_i)\}_{i\in I}$
such that the image of each chart 
$\phi_i\colon U_i \to \mathbb{T}^{n_i}$
is in $\mathbb{R}^{n_i}$.
\end{condition}

The \cref{condition-Rn} is strict for rational polyhedral
spaces, but it is enough to
give a test play for our approach.

As like tropical line bundles on tropical curves
\cite[Definition 4.4]{mikhalkinTropicalCurvesTheir2008a},
there exists the notion of tropical line bundle on 
rational polyhedral spaces
\cite[Definition 3.12]{gross2019sheaftheoretic}.
The set of isomorphism classes
of tropical line bundles
on a rational polyhedral space $S$ is 
bijective with the cohomology $H^{1}(S;\mcal{O}^{\times}_S)$
\cite[Proposition 3.13]{gross2019sheaftheoretic}.
The cohomology $H^{1}(S;\mcal{O}^{\times}_S)$ is called 
the \emph{Picard group} of $S$.
There exists the following exact sequence:
\begin{align}
\label{equation-smoooth-cartier-divisor-sequence}
0\to \mathcal{O}^{\times}_S \to
\mathcal{A}^{0,0}_{S} \to
\mathcal{A}^{0,0}_{S}/\mathcal{O}^{\times}_S\to 0.
\end{align}

This is a $C^{\infty}$-version of the following exact
sequences associated with the sheaf 
$\mcal{M}_S^{\times}$
of (real valued) \emph{tropical rational functions}
\cite[Definition 4.1]{MR3894860}:
\begin{align}
\label{equation-cartier-divisor-sequence}
0 \to \mcal{O}^{\times}_S \to \mcal{M}_S^{\times} 
\to \mcal{M}_S^{\times}/\mathcal{O}^{\times}_S\to 0.
\end{align}
We also note there exist different definitions of  
tropical rational function 
(for example, see \cite{mikhalkin2018tropical}).
By applying the global section functor for the
short exact sequence 
\eqref{equation-cartier-divisor-sequence},
we get a connecting homomorphism 
$\delta \colon 
H^{0}(S;\mcal{M}_S^{\times}/\mathcal{O}^{\times}_S)
\to H^{1}(S;\mathcal{O}^{\times}_S)$.
We note $H^{1}(S;\mcal{M}_{S}^{\times})=0$, 
and thus $\delta$ is surjective
\cite[Lemma 4.5 and Proposition 4.6]{MR3894860}.
An element in $H^{0}(S;\mcal{M}_S^{\times}/\mathcal{O}^{\times}_S)$
is called a \emph{Cartier divisor} on $S$.
Therefore, for every line bundle $\mathcal{L}$ on a rational polyhedral 
space, there exists a Cartier divisor $D$ 
which represents $\mathcal{L}$.

By taking global sections of the
short exact sequence
\eqref{equation-smoooth-cartier-divisor-sequence},
there also exists a connecting homomorphism 
$[\cdot] \colon 
H^{0}(S;\mathcal{A}^{0,0}_{S}/\mathcal{O}^{\times}_S)
\to H^{1}(S;\mathcal{O}^{\times}_S);s \mapsto [s]$. 
From definition, every rational polyhedral space 
$S$ is paracompact and
$\mathcal{A}^{0,0}_S$ is a fine sheaf 
\cite[Lemma 2.7, Proposition 2.26]{MR3903579}, so 
$H^{1}(S;\mathcal{A}^{0,0}_S)=0$ and 
so $[\cdot]$ is also surjective.
\begin{definition}
Let $S$ be a boundaryless rational polyhedral space.
A \emph{$C^{\infty}$-divisor} on 
$S$ is an element of
$H^{0}(S;\mathcal{A}^{0,0}_{S}/\mathcal{O}^{\times}_S)$.
Let 
$(\cdot)^{\mathrm{sm}}\colon 
H^{0}(S;\mathcal{A}^{0,0}_S)\to 
H^{0}(S;\mathcal{A}^{0,0}_{S}/\mathcal{O}^{\times}_S)$ 
be a morphism defined from 
\eqref{equation-smoooth-cartier-divisor-sequence}.
The \emph{principal $C^{\infty}$-divisor} of a $(0,0)$-superform 
$f$ is the image $(f)^{\mathrm{sm}}$ of $f$.
\end{definition}
By $\CDiv(S)$, we mean 
$H^{0}(S;\mathcal{A}^{0,0}_{S}/\mathcal{O}^{\times}_S)$
when $S$ is a boundaryless rational polyhedral space.
We will define $\CDiv(S)$
for any general rational polyhedral space $S$ by replacing 
the sheaf $\mathcal{A}^{0,0}_{S}$ with the sheaf 
$\mathcal{A}^{\mathrm{weak}}_S$ of weakly-smooth 
functions on $S$ in \cref{section-weakly-smooth-functions}.
We will also see a $C^{\infty}$-divisor is
an analog of Lagrangian section later
(\cref{proposition-cartier-lagrangian}).

Furthermore, we use the following representations of 
$C^{\infty}$-divisors like 
\cite[Definition 4.0.12]{MR2810322}.
\begin{definition}[{$C^{\infty}$-Cartier local data}]
\label{definition-Cartier-local-data}
Let $s$ be a $C^{\infty}$-divisor
on a (boundaryless) rational polyhedral space $S$, 
$\{U_i\}_{i\in I}$ an open cover of $S$ and 
$f_i\colon U_i \to \mathbb{R}$ a $(0,0)$-superform
on $U_i$. The set 
$\{(U_i,f_i)\}_{i\in I}$ is 
a \emph{local data} for $s$ if 
$s|_{U_i}=(f_i)^{\mathrm{sm}}$ for all 
$i\in I$. 
\end{definition}

Local data for a $C^{\infty}$-divisor $s$ are also useful for
calculating the divisor class
$[s]\in H^{1}(S;\mathcal{O}^{\times}_S)$
of $s$
by using the \v{C}ech cohomology 
$\check{H}^{1}(S;\mathcal{O}^{\times}_S)$
(see e.g. \cite[Remark 7.21]{MR3560225}).

In the next subsection, 
we will define an analog of Floer complexes of 
Lagrangian submanifolds
(without differential) for $C^{\infty}$-divisors
on boundaryless rational polyhedral spaces
satisfying the permissibility 
condition (\cref{condition-good-divisor}).

\subsection{Permissibility condition}

In this subsection, we define 
the \emph{permissibility condition}
(\cref{condition-good-divisor}) for
$C^{\infty}$-divisors on
boundaryless rational polyhedral spaces.
We will define an analog of a Morse complex
without differential for permissible 
$C^{\infty}$-divisors later.
In order to define the prepermissibility condition,
we recall some necessary sheaves
on rational polyhedral spaces from 
~\cite{gross2019sheaftheoretic}.

Fix a rational polyhedral space $S$.
We set
$\tform^{1}_{\mathbb{Z},S}
\deq \mathcal{O}^{\times}_S/\mathbb{R}_{S}$,
$\tform^{1}_{\mathbb{R},S}\deq 
\tform^{1}_{\mathbb{Z},S}
\otimes_{\mathbb{Z}_S} \mathbb{R}_S$ 
and $T_x S\deq \opn{Hom}_{\mathbb{Z}}(
(\tform^{1}_{\mathbb{Z},S})_x,\mathbb{R})$.
From the definition of rational polyhedral spaces,
the stalk $(\tform^{1}_{\mathbb{Z},S})_x$
at $x$ is a free 
$\mathbb{Z}$-module of finite rank.

In order to define the prepermissibility condition,
we define the subset $\opn{SS}(S)_x$ of cotangent
space $(\tform^{1}_{\mathbb{R},S})_x$ at $x\in S$ for 
a (boundaryless) rational polyhedral
space $S$.

Let $S$ be a rational polyhedral space, 
$x\in S$ and $\opn{LC}_x S$
the local cone of $S$ at $x$ 
\cite[2.2]{gross2019sheaftheoretic}.
This is a subset of
$T_{x}S$ which spans $T_{x}S$
\cite[Corollary 2.6]{gross2019sheaftheoretic}.
Then, there exists an open neighborhood 
$V$ of $0$ in $\opn{LC}_x S$ and
a canonical morphism $\phi \colon V\to S$ 
such that $\phi(0)=x$ and
$d_{0}\phi\colon T_0(\opn{LC}_xS) \simto T_x S$
\cite[Proposition 2.5]{gross2019sheaftheoretic}.
The $d_0 \phi$ does not depend on the choice of 
$V$. 
We frequently identify $T_0^{*}(\opn{LC}_x S)$ with 
$T_x^{*}S=(\tform_{\mathbb{R},S}^{1})_x$ for simplicity.
Under this identification, we can define the 
$\opn{SS}(S)_x$ as follows:
\begin{definition}
For the above notation, we set
\begin{align}
\opn{SS}(S)_x\deq 
\opn{SS}((\mathbb{Z}_{T_x S})_{\opn{LC}_x S})_0.
\end{align}

\end{definition}

In general, $\opn{SS}(S)_x$  
is complicated, but this is related with the 
following vector 
spaces:

\begin{definition}[{Lineality subspace 
\cite[\textsection 2.1]{MR4246795}}]
Let $Z$ be a subset of
$\mathbb{R}^{n}$. A \emph{lineality subspace} of $Z$ is 
a linear subspace $V$ of
$\mathbb{R}^{n}$ such that $x+s\in Z$ for all 
$s\in Z$ and $x\in V$, i.e.,
\begin{align}
Z+V=Z,
\end{align}
where $Z+V$ is the Minkowski sum of $Z$ and $V$.
\end{definition}
If $0\in Z$, then 
every lineality subspace of $Z$ is a subset of $Z$.
If $V$ and $W$ are a lineality subspace of $Z$, 
then the Minskowski sum $V+W$ is 
also a lineality space.
We write $\opn{lineal}(Z)$ for the maximal lineality 
subspace of $Z$. If $P$ is a polyhedron in 
$\mathbb{R}^{n}$, $\opn{lineal}(P)$ is the lineality space
in the sense of \cite[p.60]{MR3287221}, see also 
\cite[\textsection 5]{MR3041763}.

\begin{proposition}
\label{condition-good-linearity-space}
Let $S$ be a boundaryless rational polyhedral space 
and $x\in S$. Then,
\begin{align} \label{equation-W-SS-Lin}
(\mathcal{W}_{1,S}^{\mathbb{R}})_{x}^{\bot}\subset 
\opn{span}_{{\mathbb{R}}}\opn{SS}(S)_x\subset
\opn{lineal}(\opn{LC}_xS)^{\bot},
\end{align}
where $(V)^{\bot}$ is the orthogonal complement of 
$V$ for the standard pairing of dual vector spaces
and $\mathcal{W}_{1,S}^{\mathbb{R}}=
\mathcal{H}om(\tform^{1}_{\mathbb{Z},S};\mathbb{R}_S)$ is 
the sheaf of (real) wave tangent 
vector spaces of $S$ at $x$
(see \cite[1.3]{mikhalkinTropicalEigenwaveIntermediate2014a} 
and 
\cite[Definition 2.16, Proposition 2.20, Remark 2.21]{yamamotoTropicalContractionsIntegral2021}).
\end{proposition}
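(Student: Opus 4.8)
The plan is to prove the two inclusions in \eqref{equation-W-SS-Lin} separately, working entirely at the level of the local cone $\opn{LC}_x S$ and its micro-support at the origin. Throughout I would identify $T_0^{*}(\opn{LC}_x S)$ with $T_x^{*}S = (\tform^{1}_{\mathbb{R},S})_x$ as in the paragraph preceding the statement, so that $\opn{SS}(S)_x = \opn{SS}((\mathbb{Z}_{T_x S})_{\opn{LC}_x S})_0$ is a genuine subset of the cotangent space we are comparing against. The first observation is that since $S$ is boundaryless, the local cone is a purely full-dimensional rational polyhedral cone spanning $T_x S$, and the sheaf whose micro-support we compute is the cutoff constant sheaf $(\mathbb{Z}_{T_x S})_{\opn{LC}_x S}$ on the ambient vector space. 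The micro-support of such a cutoff sheaf along a polyhedral subset is controlled by the conormal geometry of the faces of $\opn{LC}_x S$, which is the structural input I would lean on.

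For the right-hand inclusion $\opn{span}_{\mathbb{R}}\opn{SS}(S)_x \subset \opn{lineal}(\opn{LC}_x S)^{\bot}$, I would argue as follows. Let $V = \opn{lineal}(\opn{LC}_x S)$ be the maximal lineality subspace, so that $\opn{LC}_x S = \opn{LC}_x S + V$ splits as a product $V \times C$ for a pointed cone $C$ in a complementary subspace. Under this product decomposition, the cutoff sheaf factors as an external product of the constant sheaf on $V$ with the cutoff sheaf associated with $C$. The micro-support of a constant sheaf on the whole factor $V$ is the zero section of $T^{*}V$, and by the behavior of micro-support under external products \cite[Proposition 5.4.1]{MR1299726}, any covector in $\opn{SS}(S)_x$ must annihilate the $V$-directions; that is, it lies in $V^{\bot} = \opn{lineal}(\opn{LC}_x S)^{\bot}$. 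Taking $\mathbb{R}$-spans preserves this containment since $V^{\bot}$ is a linear subspace, which gives the desired inclusion.

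For the left-hand inclusion $(\mathcal{W}_{1,S}^{\mathbb{R}})_x^{\bot} \subset \opn{span}_{\mathbb{R}}\opn{SS}(S)_x$, the strategy is to identify the wave tangent space with the linear span of $\opn{LC}_x S$ and then show that the micro-support is large enough to span the conormal directions to that span. Concretely, $\mathcal{W}_{1,S}^{\mathbb{R}} = \mathcal{H}om(\tform^{1}_{\mathbb{Z},S}; \mathbb{R}_S)$, and its stalk at $x$ is naturally the subspace of $T_x S$ spanned by the directions along which $S$ is ``genuinely'' affine, which by the references \cite{mikhalkinTropicalEigenwaveIntermediate2014a, yamamotoTropicalContractionsIntegral2021} coincides with $\opn{span}_{\mathbb{R}}\opn{LC}_x S = T_x S$ in the boundaryless full-dimensional case. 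I would compute $\opn{SS}((\mathbb{Z}_{T_x S})_{\opn{LC}_x S})_0$ using the standard description of the micro-support of a cutoff sheaf on a cone in terms of conormals to its faces \cite[Proposition 5.3.2, Proposition 5.4.2]{MR1299726}: at the cone point $0$, the fiber of the micro-support contains the conormal cone, whose linear span is exactly the orthogonal complement of the lineality space, and dually contains $(\mathcal{W}_{1,S}^{\mathbb{R}})_x^{\bot}$.

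The step I expect to be the main obstacle is the explicit computation of $\opn{SS}(S)_x = \opn{SS}((\mathbb{Z}_{T_x S})_{\opn{LC}_x S})_0$ at the cone point, since the local cone of a rational polyhedral space need not be a smooth or even a convex cone — it can be a Bergman-fan-type object, so the singular-support geometry is genuinely subtle and \cite[Proposition 5.3.2]{MR1299726} applies cleanly only for closed submanifolds. The subtlety is precisely why the statement gives a two-sided \emph{sandwich} rather than an equality: the middle term $\opn{span}_{\mathbb{R}}\opn{SS}(S)_x$ is hard to pin down exactly, so I would not attempt to compute it directly but instead bound it from both sides using only the product decomposition along the lineality space (for the upper bound) and a local generation-of-sections argument (for the lower bound). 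Handling the non-convexity of $\opn{LC}_x S$ carefully — in particular verifying that the external-product factorization along $\opn{lineal}(\opn{LC}_x S)$ is legitimate at the sheaf level — is where the real work lies.
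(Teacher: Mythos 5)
Your right-hand inclusion is correct and is exactly the paper's argument: split $\opn{LC}_xS\simeq \opn{lineal}(\opn{LC}_xS)\times\bigl(\opn{LC}_xS/\opn{lineal}(\opn{LC}_xS)\bigr)$, factor the cutoff sheaf as an external product, and apply \cite[Proposition 5.4.1]{MR1299726} to conclude $\opn{SS}(S)_x\subset\{0\}\times\opn{SS}(\cdot)_0\subset\opn{lineal}(\opn{LC}_xS)^{\bot}$; your worry about the sheaf-level legitimacy of the factorization is unfounded, since the cutoff of a constant sheaf along a product subset is the external product of the cutoffs.

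The left-hand inclusion, however, has a genuine gap, and it starts with a false identification. The stalk $(\mathcal{W}_{1,S}^{\mathbb{R}})_x$ of the sheaf Hom $\mathcal{H}om(\tform^{1}_{\mathbb{Z},S};\mathbb{R}_S)$ consists of germs of homomorphisms defined on a whole neighborhood, and equals $\bigcap_{y\in(\opn{LC}_xS)_{\reg}}\opn{span}_{\mathbb{R}}U_y$, the intersection of the linear spans of small regular neighborhoods --- \emph{not} $\opn{span}_{\mathbb{R}}\opn{LC}_xS=T_xS$. At the vertex of the three-ray fan $\Gamma_3^{\circ}\subset\mathbb{R}^{2}$ the wave space is $\{0\}$, so $(\mathcal{W}_{1,S}^{\mathbb{R}})_x^{\bot}$ is all of $T_x^{*}S$ and the left inclusion is a nontrivial \emph{lower} bound on the micro-support. (Had your identification been correct, the left-hand side would be $\{0\}$ and the inclusion trivially vacuous, making your subsequent conormal argument pointless --- a sign the identification should have been checked.) Worse, the conormal-cone argument you sketch would establish the stronger equality $\opn{span}_{\mathbb{R}}\opn{SS}(S)_x=\opn{lineal}(\opn{LC}_xS)^{\bot}$, which the paper deliberately does not claim and only expects in special cases; and as you concede, \cite[Proposition 5.3.2]{MR1299726} does not apply to Bergman-fan-type local cones --- yet you then defer precisely this step (``where the real work lies'') without carrying it out, so the proposal never actually exhibits a single covector of $\opn{SS}(S)_x$. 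The paper closes this gap constructively: writing $(\mathcal{W}_{1,S}^{\mathbb{R}})_x^{\bot}=\sum_{y}(\opn{span}_{\mathbb{R}}U_y)^{\bot}$ over generic points $y$, it fixes $v\in(\opn{span}_{\mathbb{R}}U_y)^{\bot}$, builds a $C^{\infty}$ bump $\rho_P$ (products of pullbacks of $e^{-1/x}$ under affine maps) supported on the cone $P=\mathbb{R}_{\geq 0}K$ over a polytope $K\ni y$ inside the regular locus, so that $v$ vanishes on $P$ and
\begin{align}
\{v-\rho_P<0\}\cap\opn{LC}_xS=\bigl(\{v<0\}\cap\opn{LC}_xS\bigr)\sqcup\bigl(\opn{LC}_xS\cap\opn{int}(P)\bigr),
\end{align}
whence the local Euler characteristics of the sublevel sets of $v$ and $v-\rho_P$ at $0$ differ by $1$ (the new piece being convex); therefore at least one of these two functions, both with differential $v$ at $0$, has nonvanishing local Morse data, witnessing $v\in\opn{SS}(S)_x$. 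Some such test-function construction is indispensable; without it the left inclusion remains unproved.
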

\begin{proof}
Since
$\opn{LC}_x S\simeq 
\opn{lineal}(\opn{LC}_x S)\times 
(\opn{LC}_x S/\opn{lineal}(\opn{LC}_x S))$
and 
\cite[Proposition 5.4.1]{MR1299726}, we have
\begin{align} 
\opn{SS}(\opn{LC}_x S)_0\simeq \{0\}\times 
\opn{SS}(\opn{LC}_x S/\opn{lineal}(\opn{LC}_x S))_0
\subset \opn{lineal}(\opn{LC}_xS)^{\bot}.
\end{align}

Next, we prove the following inclusion:
\begin{align}
(\mathcal{W}_{1,S}^{\mathbb{R}})_{x}^{\bot}=
(\bigcap_{y \in (\opn{LC}_xS)_{\reg}} 
\opn{span}_{\mathbb{R}} U_y)^{\bot}=
\sum_{y \in (\opn{LC}_xS)_{\reg}} 
(\opn{span}_{\mathbb{R}} U_y)^{\bot} \subset 
\opn{span}_{\mathbb{R}}\opn{SS}(\opn{LC}_x S)_0,
\end{align}
where $(\opn{LC}_xS)_{\reg}$ is
the set of generic points in $\opn{LC}_xS$ 
\cite[\textsection 4.B]{MR3894860},
and $U_y$ is the intersection of
$(\opn{LC}_xS)_{\reg}$ and a sufficiently 
small open ball $V_y$ of $y$ in $T_0(\opn{LC}_xS)$.
If the diameter of $V_y$ is sufficiently small, 
then $\opn{span}_{\mathbb{R}} U_y$ is independent of 
the choice of $V_y$.
We can prove the first equation since 
$(\opn{LC}_x S)_{\reg}$ is open dense and 
$T_xS$ is generated by $\{\opn{span}_{\mathbb{R}}U_y\}_{y\in(\opn{LC}_x S)_{\reg}}$.

Fix $y\in (\opn{LC}_x S)_{\mathrm{gen}}$ and
$v\in (\opn{span}_{\mathbb{R}} U_y)^{\bot}$.
In order to prove $v\in \opn{SS}(\opn{LC}_x S)_0$, 
we will find a $C^{\infty}$-function 
$f\colon T_x S \to \mathbb{R}$ 
such that $f(0)=0$ $df(0)=v$ and 
\begin{align}
(R_{\{f\geq 0\}}(A_{T_xS})_{\opn{LC}_xS})_0
\not \simeq 0.
\end{align}
From now on, we assume $T_x S=\mathbb{R}^{n}$.

By \cref{prop-local-morse-data}, 
we only need to find a $C^{\infty}$-function 
$f\colon T_x S \to \mathbb{R}$ such that 
\begin{align}
\label{equation-non-acyclic}
1-\chi_{\mathrm{top}}(\{x\in \opn{LC}_x S\cap B_{\varepsilon}^{n}(0)
\mid f(z)<0\})\ne 0
\end{align}
for any sufficiently small $\varepsilon >0$.
 
Let 
$\rho_{\mathbb{R}_{\geq 0}}\colon 
\mathbb{R}\to \mathbb{R}$ be the following
well-known $C^{\infty}$-function:
\begin{align}
\label{equation-elementary-bump}
\rho_{\mathbb{R}_{\geq 0}}(x)=
\begin{cases}
e^{-1/x},\quad  (x> 0) \\
0, \quad (x\leq 0).
\end{cases}
\end{align}

Let $l\colon \mathbb{R}^{n}\to \mathbb{R}$
be an affine map. Then, the pullback 
$l^{*}\rho_{\mathbb{R}_{\geq 0}}$ is a 
$C^{\infty}$-function such that 
\begin{align}
\label{equation-pullback-support}
\{l^{*}\rho_{\mathbb{R}_{\geq 0}}\ne 0\}
=\{l^{*}\rho_{\mathbb{R}_{\geq 0}}> 0\}
=\{l>0\}.
\end{align}
Let $P$ be an $n$-dimensional convex polyhedron 
in $\mathbb{R}^{n}$.
If $P=\bigcap_{i=1}^{m}\{l_i\geq 0\}$
for a family $\{l_i\}_{i=1,\ldots,m}$
of affine linear maps on $\mathbb{R}^{n}$,
then $\rho_P\deq\prod_{i=1}^{m}l_i^{*}\rho_{\mathbb{R}_{\geq 0}}$
is a $C^{\infty}$-function satisfies
\begin{align}
\{\rho_P
\ne 0\}
=\bigcap_{i=1}^{m}\{l_i>0\}=\opn{int}(P),
\end{align}
where $\opn{int}(P)$ is the interior of $P$.
Besides, $d\rho_P(x)=0$ if $x\notin \opn{int}(P)$.

Let $K\subset V_y$ be an $n$-dimensional convex polytope
containing $y$ and 
$P\deq \mathbb{R}_{\geq 0}K$. 
Then, 
\begin{align}
\label{equation-bump-set}
\{z\in\opn{LC}_xS\mid v(z)-\rho_P(z)<0\}=
\{z\in\opn{LC}_xS\mid v(z)<0\}\sqcup (\opn{LC}_x S \cap \opn{int}(P)).
\end{align}

\eqref{equation-bump-set} follows from $v|_{\mathbb{R}_{\geq 0}U_y}=0$ 
and $P\subset \mathbb{R}_{\geq 0}U_y$. 
The RHS of \eqref{equation-bump-set} is a cone and 
$\opn{LC}_x S \cap \opn{int}(P)$ is convex.
Hence, we have
\begin{align}
\chi_{\mathrm{top}}(
\{z\in \opn{LC}_x S\cap B^{n}_{\varepsilon}(0)
\mid v(z)-\rho_{P}(z)<0\}) \notag \\
-\chi_{\mathrm{top}}(
\{z\in \opn{LC}_x S \cap B^{n}_{\varepsilon}(0)\mid v(z)<0
\})=1
\end{align}
for any sufficiently small $\varepsilon>0$.
Therefore, $v-\rho_{P}$ or $v$ satisfies 
\eqref{equation-non-acyclic}.
\end{proof}

\begin{remark}
In many cases, the three vector spaces in 
\eqref{equation-W-SS-Lin} are equal.
For instance, if $S$ is a tropical curve 
or integral affine manifold, 
then $S$ satisfies this condition.
Let $S$ be the tropical hypersurface of
$f=\max\{x_1,\ldots,x_n,1\}$ in $\mathbb{R}^{n}$. 
We can see
$((\mathcal{W}_{1,S}^{\mathbb{R}})_{x})^{\bot}=\opn{span}_{{\mathbb{R}}}(\opn{SS}(S)_x)=\tform_{S,x}^{1}$
when $x\deq (0,\ldots,0)$. 
Of course, the other points in $S$ satisfies 
that the three vector space in 
\eqref{equation-W-SS-Lin} is equal. If 
$S=\mathbb{R}_{\geq 0}$, then 
$((\mathcal{W}_{1,S}^{\mathbb{R}})_{0})^{\bot}=\{0\}$ and
$\opn{span}_{\mathbb{R}}(\opn{SS}(S)_0)=
(\opn{lineal}(\opn{LC}_0 S))^{\bot}=\mathbb{R}$. 
We expect the middle and right vector space in 
\eqref{equation-W-SS-Lin} is equal for various 
important cases.
\end{remark}

From now on, we define the prepermissibility conditions 
for $C^{\infty}$-divisors on boundaryless
rational polyhedral spaces.
This condition is one of necessary conditions
to define the local Morse data
for $C^{\infty}$-divisor.
From the definition of superforms on 
polyhedral spaces 
\cite[Definition 2.10, 2.24]{MR3903579}, we 
can define the set of \emph{critical points} 
of $(0,0)$-superforms on
rational polyhedral spaces
like that of $C^{\infty}$-function on differential
manifolds.
Let $f\colon S \to \mathbb{R}$ be a $(0,0)$-superform on 
a boundaryless rational polyhedral space $S$. 
The differential $d''f\in \mathcal{A}^{0,1}_S$ of $f$ has the 
differential vector $df (x)\in (\tform_{S}^{1})_x$ for any 
$x\in S$ from the definition of $(0,0)$-superforms
and properties of the local 
cone \cite[Corollary 2.6]{gross2019sheaftheoretic}.
We write $\opn{Crit}(f)$ for the set
of critical points of $f$.
For a general rational polyhedral space $S$,
we can also define the 
differential of $f$ at $x$ by the pullback of 
the germ of morphisms 
$(\opn{LC}_x S,0)\to (S,x)$, and we will use this
in \cref{definition-prepermissible-weakly-smooth}.

\begin{condition}[{Prepermissiblity condition}] \label{cond: prepermissible}
Let $S$ be a boundaryless rational polyhedral space.
A $(0,0)$-superform $f\colon S \to {\mathbb{R}}$ is \emph{prepermissible} at $x$ 
if 
\begin{align}
df(x)\notin (\opn{span}_{{\mathbb{R}}}
\opn{SS}(S)_x+\tform_{\mathbb{Z},S,x}^{1})
\setminus \tform_{\mathbb{Z},S,x}^{1}.
\end{align}
\end{condition}

From definition, if $f$ is prepermissible at $x$ then
$f|_{U_x}+m$ is also prepermissible at $x$ for any 
open neighborhood $U_x$ of $x$ and 
$m\in \mathcal{O}_{S}^{\times}(U_x)$.
Therefore, we can define the prepermissibility condition
for principal $C^{\infty}$-divisors directly.
\begin{remark}
Suppose $\opn{span}_{\mathbb{R}}(\opn{SS}(S)_x)
=(\tform_{\mathbb{R},S}^{1})_x$. 
Then, $f$ is prepermissible if and only if 
$df(x)\in (\tform_{\mathbb{Z},S}^{1})_x$.
Therefore, the prepermissibility
condition may be not a generic condition.
If $\dim_{\mathbb{R}} \opn{span}_{{\mathbb{R}}}
(\opn{SS}(S)_x)<\dim_{\mathbb{R}} (\tform_{\mathbb{R},S}^{1})_x$,
then the 
set of prepermissible cotangent vector at $x$ is dense and 
connected.
\end{remark}

\begin{definition}
\label{definition-prepermissible-smooth-cartier-divisor}
A $C^{\infty}$-divisor $s$ is \emph{prepermissible} 
if there exists a local data $\{(U_i,f_i)\}_{i\in I}$ 
for $s$
such that $f_i$ is prepermissible for all $i\in I$. 
In general, a pair $(s',s)$ of smooth 
Cartier divisors $s,s'$ is 
\emph{prepermissible} if $s-s'$ is 
prepermissible. 
\end{definition}

Let $s_0$ be the zero $C^{\infty}$-divisor on $S$. 
Then, a pair $(s_0,s)$ of $C^{\infty}$-divisors 
is prepermissible if and only if $s$ is prepermissible.
If a $C^{\infty}$-divisor $s$ is prepermissible, 
then every local data for $s$ satisfies the above 
prepermissibility condition.

\begin{definition}
\label{definition-divisor-intersection}
Let $(s,s')$ be a prepermissible pair of 
$C^{\infty}$-divisors and 
$\{(U_i,f_i)\}_{i\in I}$ a local data
for $s-s'$. The intersection set of the pair 
$(s,s')$ is the following set
\begin{align}
s'\cap s\deq \set{x\in S \mid  df_i(x)\in 
\tform^{1}_{\mathbb{Z},S,x} 
\text{ for some } i\in I}.
\end{align}
\end{definition}

The set $s'\cap s$ is independent of the choice of 
a local data for $s-s'$. 

\subsection{Stratified torus fibration}
\label{section-stratified-torus-fibration}
We will explain the set $s'\cap s$ is the 
intersection of two `Lagrangian sections' of
a stratified torus fibration in this subsection.
We can skip this subsection for following 
our main theorem.

Let $S$ be a rational polyhedral space.
We can define a naive analog of Lagrangian torus 
fibration 
$\bigcup_{x\in S}(\tform^{1}_{\mathbb{R},S})_x/
(\tform^{1}_{\mathbb{Z},S})_x \to S$
for $S$ (as a set). Instead of this fibration, 
we define (the underlying set of) 
another torus fibration as follows. 
\begin{definition}[{Stratified torus fibration}]
\label{definition-stratified-torus-fibration}
Let $S$ be a compact rational polyhedral space.
Let $\check{X}(S)_x$ be 
the real torus defined as follows:
\begin{align}
\check{X}_0(S)_x\deq  
(\tform_{\mathbb{Z},S,x}^{1}/\opn{span}_{{\mathbb{R}}}
(\opn{SS}(S)_x)\cap \tform_{\mathbb{Z},S,x}^{1})\otimes_{\Z} {\mathbb{R}}/\Z.
\end{align}
We call the union $\check{X}_0(S)\deq \bigcup_{x\in S}
\check{X}_0(S)_x$
the underlying set of the \emph{stratified torus 
fibration} of $S$.
\end{definition}
We won't pursue about
topological structures of $\check{X}_0(S)$ 
in this paper.

\begin{example}
{\setlength{\leftmargini}{22pt}
\begin{enumerate}
\item If $B$ is an integral affine manifold, then
$\check{X}_0(B)$ is equal to $\check{X}(B)$ as a set.
\item 
Let $C$ be a compact trivalent tropical curve.
The stratified torus fibration $\check{X}_0(C)$ is equal to 
the stratified space associated
with a trivalent metric graph defined in \cite{auroux2022lagrangian}
as a set.
\end{enumerate}
}
\end{example}

Let $\check{f}_S\colon \check{X}_0(S)\to S$ be the 
canonical projection onto $S$.
We recall the following elementary proposition 
in order to define `Lagrangian sections' for $\check{f}_S$.

\begin{proposition}

Let $M$ (resp. $N$) be a $\mathbb{Z}$-module and
$i\colon M'\to M$ (resp. $j\colon N'\to N$) be the 
inclusion map of submodules, and 
$\opn{pr}\colon M\otimes_{\mathbb{Z}}N \to M/M'\otimes_{\mathbb{Z}} N/N'$
is the canonical projection map induced from the following
commutative diagram:
\begin{equation}
\begin{tikzcd}
M\otimes_{\mathbb{Z}}N \arrow[d] \arrow[r] & M/M'\otimes_{\mathbb{Z}}N \arrow[d] \\
M\otimes_{\mathbb{Z}}N/N' \arrow[r]        & M/M'\otimes_{\mathbb{Z}}N/N'.       
\end{tikzcd}
\end{equation}
Then, 
\begin{align}
\opn{pr}^{-1}(0)=\opn{Im}(i\otimes_{\mathbb{Z}}\opn{id}_N)+
\opn{Im}(\opn{id}_M\otimes_{\mathbb{Z}}j).
\end{align}
\end{proposition}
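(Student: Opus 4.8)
The plan is to realize $\opn{pr}$ as a composite of two surjections whose kernels are each controlled by right-exactness of the tensor product, and then to run a short diagram chase. No deep input is needed: the whole statement is elementary homological algebra over $\mathbb{Z}$, and in particular neither flatness nor the PID hypothesis is invoked, since only right-exactness of $-\otimes_{\mathbb{Z}} N$ and $M/M'\otimes_{\mathbb{Z}}-$ enters.

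First I would factor the projection through the commutative square as $\opn{pr}=\beta\circ\alpha$, where $\alpha=\pi_M\otimes_{\mathbb{Z}}\opn{id}_N\colon M\otimes_{\mathbb{Z}}N\to M/M'\otimes_{\mathbb{Z}}N$ and $\beta=\opn{id}_{M/M'}\otimes_{\mathbb{Z}}\pi_N\colon M/M'\otimes_{\mathbb{Z}}N\to M/M'\otimes_{\mathbb{Z}}N/N'$, with $\pi_M,\pi_N$ the quotient maps. Applying right-exactness of $-\otimes_{\mathbb{Z}}N$ to the sequence $M'\xrightarrow{i}M\to M/M'\to 0$ yields the exact sequence
\begin{align*}
M'\otimes_{\mathbb{Z}}N\xrightarrow{\,i\otimes_{\mathbb{Z}}\opn{id}_N\,}M\otimes_{\mathbb{Z}}N\xrightarrow{\,\alpha\,}M/M'\otimes_{\mathbb{Z}}N\to 0,
\end{align*}
so that $\opn{Ker}(\alpha)=\opn{Im}(i\otimes_{\mathbb{Z}}\opn{id}_N)$ and $\alpha$ is surjective. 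Likewise, right-exactness of $M/M'\otimes_{\mathbb{Z}}-$ gives $\opn{Ker}(\beta)=\opn{Im}(\opn{id}_{M/M'}\otimes_{\mathbb{Z}}j)$ inside $M/M'\otimes_{\mathbb{Z}}N$.

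The inclusion $\supseteq$ is then immediate. Indeed $\opn{Im}(i\otimes_{\mathbb{Z}}\opn{id}_N)=\opn{Ker}(\alpha)\subseteq\opn{Ker}(\opn{pr})$, while $\alpha$ carries $\opn{Im}(\opn{id}_M\otimes_{\mathbb{Z}}j)$ onto $\opn{Im}(\opn{id}_{M/M'}\otimes_{\mathbb{Z}}j)=\opn{Ker}(\beta)$ (a generator $\bar m\otimes j(n')$ is $\alpha(m\otimes j(n'))$ for any lift $m$ of $\bar m$), so $\opn{pr}=\beta\circ\alpha$ annihilates $\opn{Im}(\opn{id}_M\otimes_{\mathbb{Z}}j)$ as well. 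For the reverse inclusion I would take $z\in\opn{Ker}(\opn{pr})$, so that $\alpha(z)\in\opn{Ker}(\beta)$; since $\alpha$ maps $\opn{Im}(\opn{id}_M\otimes_{\mathbb{Z}}j)$ onto $\opn{Ker}(\beta)$, choose $w\in\opn{Im}(\opn{id}_M\otimes_{\mathbb{Z}}j)$ with $\alpha(w)=\alpha(z)$; then $z-w\in\opn{Ker}(\alpha)=\opn{Im}(i\otimes_{\mathbb{Z}}\opn{id}_N)$, and writing $z=(z-w)+w$ exhibits $z$ in the required sum. There is no real obstacle here; the only point demanding a line of care is the surjectivity of $\alpha$ restricted to $\opn{Im}(\opn{id}_M\otimes_{\mathbb{Z}}j)$ onto $\opn{Ker}(\beta)$, which is exactly where right-exactness (the surjectivity of $\pi_M\otimes_{\mathbb{Z}}\opn{id}_{N'}\colon M\otimes_{\mathbb{Z}}N'\to M/M'\otimes_{\mathbb{Z}}N'$) is used.
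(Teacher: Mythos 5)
Your proof is correct and complete. The paper states this proposition without proof (it is merely ``recalled'' as an elementary fact en route to defining the projections $\opn{pr}_{S,x}$), so there is no argument in the paper to compare against; your factorization $\opn{pr}=\beta\circ\alpha$ with $\opn{Ker}(\alpha)=\opn{Im}(i\otimes_{\mathbb{Z}}\opn{id}_N)$, $\opn{Ker}(\beta)=\opn{Im}(\opn{id}_{M/M'}\otimes_{\mathbb{Z}}j)$, and the lifting step via surjectivity of $\pi_M\otimes_{\mathbb{Z}}\opn{id}_{N'}$ is exactly the standard argument one would supply, and, as you correctly observe, it uses only right-exactness of the tensor product and hence holds over an arbitrary ring, not just $\mathbb{Z}$.
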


From the above proposition,
the canonical projection
$\opn{pr}_{S,x}\colon 
(\tform_{\mathbb{R},S}^{1})_x\simeq 
(\tform_{\mathbb{Z},S}^{1})_x\otimes_{\mathbb{Z}}\mathbb{R}
\to \check{X}_0(S)_x$ satisfies 
\begin{align}
\label{equation-torus-fiber}
\opn{pr}_{S,x}^{-1}(0)
=\opn{span}_{\mathbb{R}}(\opn{SS}(S)_x)+
(\tform^{1}_{\mathbb{Z},S})_x
\end{align}
for all $x\in S$.

\begin{definition}
Let $s$ be a $C^{\infty}$-divisor 
on a (boundaryless) rational polyhedral space $S$ 
and $\{(U_i,f_i)\}_{i\in I}$ a local data for $s$. 
The \emph{stratified Lagrangian section} associated with $s$ is the following
section of $\check{f}_S$:
\begin{align}
\check{s}\colon S \to \check{X}_0(S); x \mapsto \opn{pr}_{S,x} (df_i(x)).
\end{align}
\end{definition}

The $\check{s}$ is independent of the choice 
of local data for $s$ since 
$(\tform^{1}_{\mathbb{R},S})_x/(\tform^{1}_{\mathbb{Z},S})_x$
is naturally isomorphic to
$(\tform^{1}_{\mathbb{Z},S})_x
\otimes_{\mathbb{Z}}\mathbb{R}/\mathbb{Z}$.

From the prepermissibility condition and
\eqref{equation-torus-fiber}, we get the following proposition.

\begin{proposition}
Let $(s',s)$ be a prepermissible pair of
$C^{\infty}$-divisors
on a (boundaryless) rational polyhedral space, 
$\{(U_i,f_i)\}_{i\in I}$ a local data 
for $s-s'$ and $L_s\deq \opn{Im}(\check{s})$.
Then,
\begin{align}
\check{f}_S(L_{s'}\cap L_s)=s'\cap s.
\end{align}

\end{proposition}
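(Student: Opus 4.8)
The plan is to unwind the definitions and reduce the set equality to the pointwise statement that the two stratified Lagrangian sections coincide exactly over $s' \cap s$. First I would note that $L_s = \opn{Im}(\check{s})$ and $L_{s'} = \opn{Im}(\check{s'})$ are graphs of sections of $\check{f}_S$, so any point of $L_{s'} \cap L_s$ lies over a single base point $x = \check{f}_S(p)$, and such a point exists if and only if the two fibre values agree there. Hence
\begin{align}
\check{f}_S(L_{s'} \cap L_s) = \set{x \in S \mid \check{s}(x) = \check{s'}(x)}.
\end{align}

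Next I would translate the coincidence condition into differentials. Choosing local data $\{(U_i,g_i)\}_{i \in I}$ for $s$ and $\{(U_i,g_i')\}_{i \in I}$ for $s'$ over a common open cover, the differences $f_i \deq g_i - g_i'$ constitute a local data for $s - s'$, and both $\check{s}$ and the set $s' \cap s$ are already known to be independent of the chosen local data. Since each fibre $\check{X}_0(S)_x$ is an abelian group and $\opn{pr}_{S,x}$ is a homomorphism, for $x \in U_i$ one has $\check{s}(x) - \check{s'}(x) = \opn{pr}_{S,x}(df_i(x))$, so that $\check{s}(x) = \check{s'}(x)$ if and only if $df_i(x) \in \opn{pr}_{S,x}^{-1}(0)$. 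By \eqref{equation-torus-fiber} this reads
\begin{align}
df_i(x) \in \opn{span}_{\mathbb{R}}(\opn{SS}(S)_x) + (\tform^{1}_{\mathbb{Z},S})_x.
\end{align}

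The two inclusions then follow. For $s' \cap s \subseteq \check{f}_S(L_{s'} \cap L_s)$, the defining condition $df_i(x) \in \tform^{1}_{\mathbb{Z},S,x}$ of \cref{definition-divisor-intersection} immediately places $df_i(x)$ in $\opn{span}_{\mathbb{R}}(\opn{SS}(S)_x) + \tform^{1}_{\mathbb{Z},S,x}$, so the sections coincide at $x$. For the reverse inclusion I would invoke prepermissibility: since $(s',s)$ is a prepermissible pair, $s - s'$ is prepermissible and each $f_i$ satisfies \cref{cond: prepermissible} at every point of $U_i$, namely
\begin{align}
df_i(x) \notin \Paren{\opn{span}_{\mathbb{R}}\opn{SS}(S)_x + \tform_{\mathbb{Z},S,x}^{1}} \setminus \tform_{\mathbb{Z},S,x}^{1}.
\end{align}
Thus if the coincidence of sections puts $df_i(x)$ into $\opn{span}_{\mathbb{R}}(\opn{SS}(S)_x) + \tform^{1}_{\mathbb{Z},S,x}$, then necessarily $df_i(x) \in \tform^{1}_{\mathbb{Z},S,x}$, i.e.\ $x \in s' \cap s$.

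The argument is essentially a chain of definitional identifications, and the only genuinely substantive ingredient is the prepermissibility condition, which is designed precisely so that the ``thick'' preimage $\opn{pr}_{S,x}^{-1}(0) = \opn{span}_{\mathbb{R}}\opn{SS}(S)_x + \tform^{1}_{\mathbb{Z},S,x}$ collapses back onto the lattice $\tform^{1}_{\mathbb{Z},S,x}$ along the coincidence locus. The main point requiring care is the bookkeeping for the choice of local data: since $\check{s}$, $\check{s'}$ and $s' \cap s$ were each shown to be independent of it, passing to a common refinement to form $f_i = g_i - g_i'$ is harmless.
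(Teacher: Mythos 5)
Your proof is correct and is exactly the argument the paper intends: the paper states this proposition with no written proof beyond the remark that it follows from the prepermissibility condition and \eqref{equation-torus-fiber}, and your chain of identifications (graphs coincide over $x$ iff $df_i(x)\in \opn{pr}_{S,x}^{-1}(0)=\opn{span}_{\mathbb{R}}\opn{SS}(S)_x+\tform^{1}_{\mathbb{Z},S,x}$, then collapsing to $\tform^{1}_{\mathbb{Z},S,x}$ via \cref{cond: prepermissible}) is precisely that derivation spelled out. You also correctly handle the one bookkeeping subtlety, namely that every local data for a prepermissible divisor is prepermissible and that $\check{s}$, $\check{s'}$ and $s'\cap s$ are independent of the choices, so nothing further is needed.
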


\subsection{Cohomological local Morse data for permissible divisors}

In this subsection, we
define the graded module of a given prepermissible pair 
$(s',s)$ of $C^{\infty}$-divisors such that 
$\sharp (s'\cap s)<\infty$.
In order to define the graded module
$\opn{LMD}^{\bullet}(s',s)$, we define
intersection data of 
the prepermissible pair $(s',s)$.

\begin{definition}[{intersection data}]
\label{definition-intersection-data}
Fix a (boundaryless) 
rational polyhedral space $S$.
Let $s$ be a prepermissible $C^{\infty}$-divisor 
on $S$ such that $\sharp (s_0\cap s)<\infty$. 
An \emph{intersection data} for $s$ is the set 
$\{(U_p,f_p)\}_{p\in s_0\cap s}$ of pairs of open subsets
$U_i$ of $S$ and (0,0)-superforms
$f_p\colon U_p \to \mathbb{R}$ indexed by 
$ s_0\cap s$ 
such that
\begin{enumerate}
\item $s|_{U_p}=(f_p)^{\mathrm{sm}}$
for all $p\in s_0\cap s$,
\item $\opn{Crit}(f_p)=\{p\}$ for all $p\in s_0\cap s$,
\item $f_p(p)=0$ for all $p\in s_0\cap s$.
\end{enumerate}
An intersection data for a prepermissible pair 
$(s',s)$ of $C^{\infty}$-divisors such that 
$\sharp (s'\cap s)<\infty$ is 
an intersection data for $s-s'$.
\end{definition}

\begin{definition}[{Cohomological local Morse data for 
$C^{\infty}$-divisors}]
\label{definition-local-morse-data-divisor}
Let $S$ be a (boundaryless) rational polyhedral space 
and $(s',s)$ be a prepermissible pair of
$C^{\infty}$-divisors such that
$s'\cap s$ is finite and 
$\{(U_p,f_p)\}_{p\in s'\cap s}$ 
be an intersection data for $s$.
We set
\begin{align}
\opn{LMD}^{\bullet}_{A}(s',s)\deq 
\bigoplus_{p\in s' \cap s} 
\opn{LMD}^{\bullet}(A_{U_p},f_{p},p), \quad 
\opn{LMD}^{\bullet}(s',s)
\deq \opn{LMD}^{\bullet}_{\mathbb{Z}}(s',s).
\end{align}
When $s'=s_0$, we also define 
\begin{align}
\opn{LMD}^{\bullet}_A(S;s)
\deq \opn{LMD}^{\bullet}_{A}(s_0,s), \quad
\opn{LMD}^{\bullet}(S;s)\deq
\opn{LMD}^{\bullet}_{\mathbb{Z}}(S;s).
\end{align}
\end{definition}

\begin{remark}
\label{remark-differential-graded-module}
We stress that we should not consider 
$\opn{LMD}^{\bullet}_{A}(S;s)$ as the cohomology of $s$,
but just
a graded module 
since we need to define the differentials 
$\mathfrak{m}_1$ of 
$\opn{LMD}^{\bullet}_{A}(S;s)$ as an analog of
Floer cohomology.
We don't have, however,
the definition of the true differential of 
$\opn{LMD}^{\bullet}_A(S;s)$ in general, except when 
$\opn{LMD}^{\bullet}_{A}(S;s)$ is equal to a Floer complex
of Lagrangian submanifolds
as graded modules (e.g. \cite[5.2]{MR1882331}).
\end{remark}

It is important that 
$\opn{LMD}^{\bullet}_{A}(S;s)\simeq 
\opn{LMD}^{\bullet}(A_{\coprod_{p\in s_0\cap s} U_p},
\coprod_{p\in s_0\cap s} f_p)$ is
for a given intersection data 
$\{(U_p,f_p)\}_{p\in s_0\cap s}$ for $s$, and thus
we can use topological tools for computations of 
$\opn{LMD}^{\bullet}_{A}(S;s)$.

\begin{example}
\label{example-tropical-kodaira-vanishing}
Let $S$ be a boundaryless compact tropical manifold 
of pure dimension $n$, 
$s$ a prepermissible $C^{\infty}$-divisor on $S$ satisfying 
$\sharp (s_0\cap s)<\infty$,
and $\{(U_p,f_p)\}_{p\in s_0\cap s}$
an intersection data for $s$. 
If $\{-f_p<-f_p(p)\}=U_p\setminus\{p\}$
for all $p\in s_0\cap s$, then
\begin{align}
\label{equation-kodaira-vanishing}
\opn{LMD}^{\bullet}_{A}(S;s)\simeq 
\bigoplus_{p\in s_0 \cap s} A[0], \quad
\opn{LMD}^{\bullet}_{A}(S;-s)\simeq 
\bigoplus_{p\in s_0\cap -s} 
H^{n-1}(S\setminus\{p\};A)[-n].
\end{align}
In this case, the `cohomology' of 
$\opn{LMD}^{\bullet}_{A}(S;s)$ is independent of the choice of 
differential of $\opn{LMD}^{\bullet}_{A}(S;s)$.

Such a $C^{\infty}$-divisor naturally appears when
we consider an analog of positive forms
on complex manifolds. A \emph{Hessian form} on
an (integral) affine manifold is a certain analog
of a K\"ahler form, and has been studied
in Hessian geometry (see 
\cref{definition-hessian-metric,example-hessian-metric-from}).
Lagerberg \cite{MR2892935} defined a tropical analog of
positive $(p,p)$-forms and currents in $\mathbb{C}^{n}$.

As like the above previous research, we can also define
the positivity condition for $C^{\infty}$-divisors
tentatively. A $C^{\infty}$-divisor $s$ on
a boundaryless rational polyhedral space $S$ is 
\emph{positive} if there exists
a coordinate system 
$\{(U_i,\psi_i\colon U_i\to \Omega_i 
(\subset \mathbb{R}^{n_i}))\}_{i\in I}$ and 
a family $\{f_i\}_{i\in I}$
of strongly convex $C^{\infty}$-functions on a convex
open subset of $\mathbb{R}^{n_i}$ such that 
$\{(U_i,\psi_i^{*}f_i)\}_{i\in I}$ is a local data
for $s$.

If $s$ is a positive and prepermissible
$C^{\infty}$-divisor on a rational polyhedral space $S$
such that $s_0\cap s$ is finite, then 
$s$ satisfies \eqref{equation-kodaira-vanishing}.
This is a tropical analog of the Kodaira vanishing theorem
for a (positive power of) positive line bundle
$\mathcal{L}$ on a complex manifold $M$: 
\begin{align}
H^{\bullet}(M;\mcal{L}^{-1})\simeq H^{n}(M;\mcal{L}^{-1})[-n].
\end{align}
Positive $C^{\infty}$-divisors on integral affine 
manifolds are also related
with \\
Bohr--Sommerfeld points on integral affine manifolds
(see \cref{sec: BSRR}).
\end{example}

The following is the \emph{permissibility condition} for 
$C^{\infty}$-divisors.

\begin{condition}[{Permissibility condition}]
\label{condition-good-divisor}
$s$ is a prepermissible $C^{\infty}$-divisor on 
a (boundaryless) rational polyhedral space 
$S$ such that $s_0\cap s$ is finite and 
$\opn{LMD}^{\bullet}(S;s)$ is 
finitely generated.
\end{condition}

With this, all preparations which are necessary 
to formulate \cref{conjecture-tropical-MRR-preface} 
have been completed when $S$ is boundaryless.

The following proposition is an analog of \'{e}tale multiplicity of Euler characteristics
(cf. \cite[Proposition 1.1.28]{MR2095471}).
Let's see \cref{definition-etale-covering} for 
the definition of tropical \'etale covering 
map in this paper.
\begin{proposition}
\label{proposition-euler-number-etale}
Let $S$ be a compact (boundaryless) rational polyhedral 
space and $s$ be a permissible 
$C^{\infty}$-divisor.
For a tropical \'etale covering 
map $q \colon S' \to S$ having
the finite topological degree 
$\opn{deg}_{\opn{top}}(q)$,
\begin{align}
\chi(\opn{LMD}^{\bullet}(S';q^{*}(s)))=
\opn{deg}_{\opn{top}}(q)\chi(\opn{LMD}^{\bullet}(S;s)).
\end{align}
\end{proposition}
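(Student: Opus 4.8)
The plan is to reduce the statement about Euler characteristics to a local multiplicity count at the intersection points, using the fact that a tropical \'etale covering map $q\colon S'\to S$ of finite topological degree is, locally around each point, a disjoint union of $\opn{deg}_{\opn{top}}(q)$-many copies of a neighborhood in $S$ which are identified with that neighborhood by $q$. Concretely, I would first pick an intersection data $\{(U_p,f_p)\}_{p\in s_0\cap s}$ for $s$ on $S$ witnessing \cref{condition-good-divisor}, with each $U_p$ chosen small enough that $q$ restricts over $U_p$ to a trivial covering, i.e.\ $q^{-1}(U_p)\cong U_p\times\{1,\ldots,\opn{deg}_{\opn{top}}(q)\}$ compatibly with the rational polyhedral structure.

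The key observation is that $q$ is a local isomorphism of rational polyhedral spaces, so it pulls back the sheaves $\tform^{1}_{\mathbb{Z},S}$, $\opn{SS}(S)_x$ and the superform $f_p$ faithfully. Hence the pulled-back divisor $q^{*}(s)$ has local data $\{(\tilde U_{\tilde p},f_p\circ q)\}$ where $\tilde p$ ranges over the fiber $q^{-1}(p)$, each $\tilde U_{\tilde p}$ maps isomorphically onto $U_p$, and prepermissibility together with the identity $df_p(q(\tilde p))=d(f_p\circ q)(\tilde p)$ (under the identification of cotangent spaces) is inherited. In particular $s_0\cap q^{*}(s)=q^{-1}(s_0\cap s)$ as a set, so it is finite of cardinality exactly $\opn{deg}_{\opn{top}}(q)\,\sharp(s_0\cap s)$. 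This step is essentially bookkeeping once the right definition of tropical \'etale covering (\cref{definition-etale-covering}) is in hand.

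Next I would invoke the locality of cohomological local Morse data. Since $q$ is a local homeomorphism and an isomorphism of polyhedral structures near each $\tilde p$, the second isomorphism of \cref{prop-local-morse-data} gives
\begin{align}
\opn{LMD}^{\bullet}(A_{\tilde U_{\tilde p}},f_p\circ q,\tilde p)
\simeq
\opn{LMD}^{\bullet}(A_{U_p},f_p,p)
\end{align}
for every $\tilde p\in q^{-1}(p)$, because local Morse data depend only on a germ of the pair $(\text{space},\text{function})$ at the point and $q$ induces an isomorphism of such germs. Summing over the fiber and then over $p$, and using the direct-sum decomposition in \cref{definition-local-morse-data-divisor}, yields
\begin{align}
\opn{LMD}^{\bullet}(S';q^{*}(s))
\simeq
\bigoplus_{p\in s_0\cap s}\ \bigoplus_{\tilde p\in q^{-1}(p)}
\opn{LMD}^{\bullet}(A_{U_p},f_p,p),
\end{align}
which is a direct sum of $\opn{deg}_{\opn{top}}(q)$ copies of $\opn{LMD}^{\bullet}(S;s)$; in particular it is finitely generated, so $q^{*}(s)$ is again permissible. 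Taking Euler characteristics, which are additive over direct sums by the discussion of $K_0(\catn{mod}(A))\cong\Z$ in the PID subsection, gives the claimed factor of $\opn{deg}_{\opn{top}}(q)$.

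The main obstacle I anticipate is purely definitional rather than computational: verifying that the notion of tropical \'etale covering in \cref{definition-etale-covering} genuinely makes $q$ a local isomorphism of rational polyhedral spaces in the precise sense needed to transport $\opn{SS}(S)_x$ and the prepermissibility condition, and that the topological degree agrees fiberwise with the covering multiplicity. Once the \'etale hypothesis is unwound to say that each point of $S$ has a neighborhood evenly covered by polyhedral-isomorphic sheets, the germ-invariance of local Morse data and the additivity of $\chi$ finish the argument immediately.
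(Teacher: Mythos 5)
Your proposal is correct and matches the paper's intent: the paper dismisses this proposition with ``the proof is trivial from definition,'' and what you have written out --- even covering over each $U_p$, germ-invariance of $\opn{LMD}^{\bullet}$ via \cref{prop-local-morse-data}, and additivity of $\chi$ --- is exactly the definitional unwinding being taken for granted. Your flagged obstacle is also resolved as you suspect: for a covering map of tropical manifolds, $\opn{deg}(q)=\opn{deg}_{\opn{top}}(q)$ forces each $d_xq$ to be an isomorphism on integral tangent lattices (otherwise local lattice indices inflate the tropical degree, as in the Frobenius example), so $q$ is indeed a local isomorphism of polyhedral structures transporting $\opn{SS}$ and prepermissibility.
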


The proof of \cref{proposition-euler-number-etale} is trivial from definition.
Besides, we can apply the Meyer--Vietoris exact sequence 
for $\opn{LMD}^{\bullet}(S;s)$ under a suitable condition.
We will use this feature for tropical curves
(\cref{proposition-gluing-formula}).

\begin{remark}
Obviously, $\opn{LMD}^{\bullet}(S;s)$ is not usually 
finitely generated and this is not invariant 
for the divisor class $[s]$ of $s$
if $S$ is not compact.
This corresponds that Morse homology does not work directly
for non-compact manifolds unlike for compact manifolds.
Besides, $\chi(\opn{LMD}^{\bullet}(S;s))$ is \emph{not}
an invariant for line bundles for general compact rational 
polyhedral spaces. See also 
\cref{remark-rotation-closed-interval}.  
\end{remark}

\subsection{\texorpdfstring{$C^{\infty}$}{C-infty}-divisors 
for general rational polyhedral spaces}
\label{section-weakly-smooth-functions}

In this section, we will define
$C^{\infty}$-divisors 
for general rational polyhedral spaces 
and finish preparations which is necessary to formulate
\cref{conjecture-tropical-MRR-preface} for any 
compact tropical manifolds.
We recall \cref{condition-good-divisor}
and the definition of $(0,0)$-superforms
on a tropical affine space $\underline{\mathbb{R}}^{n}$
\cite[Definition 2.4]{MR3903579}.
As explained in \cite[Example 2.6]{MR3903579},
every $(0,0)$-superform on $\underline{\mathbb{R}}$ 
 is constant at $-\infty$.
For instance, a continuous function
$f\colon \underline{\mathbb{R}} \to \mathbb{R}; x\mapsto 
\opn{log}(1+e^{x})$ is not a $(0,0)$-superform
on $\underline{\mathbb{R}}$.
This is contrast with \cref{condition-good-divisor}.
In order to overcome this point, we define 
the sheaf $\mathcal{A}^{\mathrm{weak}}_S$ of 
weakly-smooth functions for rational polyhedral 
spaces $S$.

\begin{definition} \label{definition-weakly-smooth}
Let $\opn{exp}\colon \underline{\mathbb{R}}^{l}\to 
\mathbb{R}_{\geq 0}^{l}$ be the coordinate-wise 
exponential map of 
$\underline{\mathbb{R}}^{l}$ and $\Omega$
an open subset of a rational polyhedral subset 
in $\underline{\mathbb{R}}^{l}$.
A continuous function $f_0\colon \Omega\to \mathbb{R}$
is a \emph{primitively weakly-smooth} if 
$f_0=\opn{log}\circ h\circ \opn{exp}$ for some 
smooth function $h\colon W\to \mathbb{R}_{>0}$ on 
an open subset $W$ of $\mathbb{R}^{l} 
(\supset \mathbb{R}_{\geq 0}^{l})$:
\[\begin{tikzcd}
	W & {\mathbb{R}_{> 0}} \\
	\Omega & {\mathbb{R}}
	\arrow["\log", from=1-2, to=2-2]
	\arrow["f_0"', from=2-1, to=2-2]
	\arrow["h", from=1-1, to=1-2]
	\arrow["{\operatorname{exp}}", from=2-1, to=1-1].
\end{tikzcd}\]
Let $f$ be a continuous function on a rational polyhedral 
space $S$ and $x\in S$ such that 
there exists a chart 
$\psi\colon U \simto \Omega$ satisfying $f=f_0 \circ \psi$.
The data $(x,U,\psi,W,h)$ is 
called a \emph{primitive data} of $f$ at $x$.
\end{definition}

\begin{example}
Let 
$g\in \mathbb{R}_{\geq 0}[x_1,\ldots,x_n]$ 
be a polynomial 
function with nonnegative coefficients such that $g(0)\ne 0$. 
Then, $f=\opn{log}\circ g \circ \opn{exp}$ is 
primitively weakly-smooth 
on $\underline{\mathbb{R}}^{n}$.
\end{example}

\begin{definition}
Let $f$ is a continuous function on a rational polyhedral 
space $S$.
The function $f$ is \emph{weakly-smooth} at $x$ if 
there exists a chart 
$\psi \colon U\simto \Omega (\subset \underline{\mathbb{R}}^{l})$ 
of $S$ and a primitively weakly-smooth 
function $f_0$ such that $x\in U$ and $f=f_0\circ \psi$.

\end{definition}

\begin{definition}
Let $S$ be a rational polyhedral space and 
$U$ an open subset of $S$:
\begin{align}
\label{equation-weakly-smooth-definition}
\mathcal{A}_S^{\mathrm{weak}}(U)
\deq \{ f\in C^0(U)\mid
f \text{ is weakly-smooth at } x 
\text{ for all } x\in U\}.
\end{align}
The sheaf $\mathcal{A}_S^{\mathrm{weak}}$ 
of weakly-smooth functions on $S$ is the sheaf defined from 
\eqref{equation-weakly-smooth-definition}. 
\end{definition}

\begin{proposition}
\label{proposition-sheaf-weak-smooth}
The $\mathcal{A}_S^{\mathrm{weak}}$ is the sheaf of Abelian groups.
\end{proposition}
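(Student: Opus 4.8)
The plan is to observe first that $\mathcal{A}_S^{\mathrm{weak}}$ is, by its very definition in \eqref{equation-weakly-smooth-definition}, the subpresheaf of the sheaf $C^0_S$ of real-valued continuous functions cut out by the pointwise local condition of weak-smoothness; since this condition is stable under restriction and is verified separately at each point, the presheaf is automatically a sheaf. Thus only the algebraic part of the assertion carries content: I must show that for every open $U\subseteq S$ the set $\mathcal{A}_S^{\mathrm{weak}}(U)$ is a subgroup of $(C^0(U),+)$, i.e. that it contains $0$ and is closed under negation and addition. As these are again pointwise local conditions, it suffices to verify them at a single point $x$, working with primitive data.

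The unit and the inverse are immediate. The constant function $0$ is primitively weakly-smooth via $0=\opn{log}\circ \mathbf{1}\circ \opn{exp}$ (take $h\equiv 1$), so $0\in \mathcal{A}_S^{\mathrm{weak}}(U)$. If $f$ is weakly-smooth at $x$ with primitive data $(x,U,\psi,W,h)$, then, because $h\colon W\to \R_{>0}$ is smooth and positive, so is $h^{-1}$, and $-\opn{log}h=\opn{log}(h^{-1})$; hence $(x,U,\psi,W,h^{-1})$ is primitive data for $-f$, and $-f$ is weakly-smooth at $x$.

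The substantive step is closure under addition, whose only difficulty is that two functions $f,g$ weakly-smooth at $x$ may a priori be presented in different charts $\psi_1\colon U_1\simto \Omega_1$ and $\psi_2\colon U_2\simto \Omega_2$. I would therefore first prove a chart-compatibility lemma: primitive weak-smoothness is preserved by the transition maps of $S$. Writing a transition map $\tau$ in the exponential coordinates $z_i=\opn{exp}(\xi_i)$, the integral-affine $\tau$ becomes a Laurent monomial map $\mu$ with integer exponents, characterised by $\opn{exp}\circ\tau=\mu\circ\opn{exp}$; consequently, for a primitive $f_0=\opn{log}\circ h\circ \opn{exp}$ one computes $f_0\circ\tau=\opn{log}\circ(h\circ\mu)\circ \opn{exp}$, and since $\mu$ is smooth and $\R_{>0}$-valued on the relevant open set, $h\circ\mu$ is again smooth and positive, so $f_0\circ\tau$ is primitively weakly-smooth. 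Granting this lemma, I shrink to a common chart $\psi\colon U'\simto \Omega$ with $x\in U'$, transport both presentations into it to obtain smooth positive $h_1,h_2$ with $f=\opn{log}\circ h_1\circ\opn{exp}\circ\psi$ and $g=\opn{log}\circ h_2\circ\opn{exp}\circ\psi$, and conclude from $f+g=\opn{log}\circ(h_1h_2)\circ\opn{exp}\circ\psi$, with $h_1h_2$ smooth and positive, that $f+g$ is weakly-smooth at $x$.

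I expect the chart-compatibility lemma to be the main obstacle. The delicate point is that $h\circ\mu$ must be smooth on an open neighbourhood of the boundary-containing region $\R_{\geq 0}^{l}$ and remain positive there: near a boundary stratum $\{z_i=0\}$ a negative exponent in $\mu$ would destroy smoothness (and positivity). I would resolve this by invoking the definition of a rational polyhedral space, whose charts land in polyhedral subsets of $\underline{\R}^{k}$ and whose transition maps respect the stratification by boundary faces; this forces the exponents of $\mu$ to be nonnegative in exactly the directions approaching the relevant boundary. With that sign control the monomial map $\mu$ extends to a smooth map carrying a neighbourhood of $\R_{\geq 0}^{l}$ into the domain $W$ of $h$, and the computation above goes through.
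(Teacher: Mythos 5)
Your proof is correct, but it takes a genuinely different route from the paper's. The only substantive point in either argument is closure under addition when $f_1,f_2$ are presented in different charts, and you resolve it by first proving a chart-compatibility lemma (primitive weak-smoothness is preserved by transition maps, which in exponential coordinates become Laurent monomial maps $\mu$ whose integer exponents must be nonnegative in the directions approaching a boundary stratum) and then shrinking to a common chart where $f_1+f_2=\opn{log}\circ(h_1h_2)\circ\opn{exp}\circ\psi$. The paper avoids transition maps altogether by a diagonal trick: given primitive data $(x,U_i,\psi_i,W_i,h_i)$ for $f_i$ ($i=1,2$), it forms the diagonal map $\psi=(\psi_1,\psi_2)\colon U_1\cap U_2\to\Omega_1\times\Omega_2$, observes that $\psi$ is again a chart (the graph of the transition map is again a local model of a rational polyhedral space), and exhibits $(x,\,U_1\cap U_2,\,\psi,\,W_1\times W_2,\,\pi_1^{*}h_1\cdot\pi_2^{*}h_2)$ directly as primitive data for $f_1+f_2$. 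So the paper trades your boundary-exponent analysis --- which you rightly identify as the delicate step, and whose sign control does follow from continuity of the transition map at points with $-\infty$ coordinates, since a negative slope in a boundary direction would force the value $+\infty$ --- for the much softer fact that the diagonal is a chart; your route is heavier but yields a reusable statement (chart-independence of weak smoothness), while the paper's is shorter and stays entirely inside the given presentations. Your handling of $0$ via $h\equiv 1$ and of $-f$ via $h^{-1}$ matches what the paper leaves implicit. One small imprecision in your lemma: what is needed is not that $\mu$ be $\mathbb{R}_{>0}$-valued but that it carry an open neighbourhood of the relevant exponentiated region into the domain $W$ of $h$; at a boundary stratum $\mu$ legitimately takes values with some coordinates equal to $0$, and positivity of $h\circ\mu$ comes from $h>0$ on $W$, not from positivity of $\mu$.
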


\begin{proof}
The condition is local, and thus $\mathcal{A}_S^{\mathrm{weak}}$ 
is a subsheaf of the sheaf $\mathcal{C}^{0}_S$ of 
continuous functions on $S$. From now on, we prove 
$\mathcal{A}^{\mathrm{weak}}_S$ is Abelian.
Fix an open subset $U$ of $S$, a point $x$ in $U$ 
and $f_1,f_2\in \mathcal{A}_S^{\mathrm{weak}}(U)$.
Each $f_i$ ($i=1,2$) has the primitive data
$(x,U_i,\psi_i,W_i,h_i)$ of
weakly-smooth function $f$ at $x$
respectively. We can assume $U_1=U_2$.
The charts 
$\psi_1\colon U_1 \to \Omega_1$ and 
$\psi_2\colon U_2\to \Omega_2$ induce
the diagonal morphism
$\psi\colon 
U_1\cap U_2\to \Omega_1\times \Omega_2; x\mapsto 
(\psi_1(x),\psi_2(x))$.
The $\psi$ is also a chart.
Let $\pi_1\colon W_1\times W_2\to W_1$ 
and $\pi_2\colon W_1\times W_2\to W_2$
the canonical projections.
Then, we get the following commutative diagram:  
\[\begin{tikzcd}
	{W_1\times W_2} & {\mathbb{R}_{> 0}} \\
	{\psi(U_1\cap U_2)} & {\mathbb{R}}
	\arrow["\log", from=1-2, to=2-2]
	\arrow["f_1+f_2"', from=2-1, to=2-2]
	\arrow["{\operatorname{exp}}", from=2-1, to=1-1]
	\arrow["{\pi_1^{*}h_1\cdot\pi_2^{*}h_2}", from=1-1, to=1-2]
\end{tikzcd}\]
The continuous map
$\pi_1^*h_1\cdot \pi_2^{*}h_2$ is also smooth
on $W_1\times W_2$.

Therefore, the data
$(x,U_1\cap U_2,\psi,W_1\times W_2,
\pi_1^*h_1\cdot \pi_2^{*}h_2)$ forms a primitive data
of $f_1+f_2$ at $x$. Hence, 
the $f_1+f_2$ is also weakly-smooth at $x$ for all
$x\in U$. Of course, $-f_1$ is also weakly-smooth
at $x$ for all $x\in U$.
\end{proof}

\begin{remark}
Since $\opn{log}\colon \mathbb{R}_{>0}\simeq \mathbb{R}$ is 
smooth, we can also prove $\mathcal{A}^{\mathrm{weak}}_S$ is 
the sheaf of $\mathbb{R}$-algebras by the same way
as \cref{proposition-sheaf-weak-smooth}. 
\end{remark}

From definition of the sheaf of $(0,0)$-superforms, 
$\mathcal{A}_S^{(0,0)}=\mathcal{A}_S^{\mathrm{weak}}$
if $S$ is boundaryless. 
As like $\mathcal{A}_S^{(0,0)}$, 
$\mathcal{A}_S^{\mathrm{weak}}$ is also fine
(cf. \cite[Lemma 2.7, Proposition 2.26]{MR3903579}).
There exists a canonical injective morphism 
$\mathcal{O}_S^{\times}\to \mathcal{A}_S^{\mathrm{weak}}$ 
since every 
affine map $x\mapsto \langle u,x \rangle+b$ 
with integer slope on $\mathbb{R}^{n}$
is equal to $\opn{log} \circ h\circ \opn{exp}$ for 
some Laurent monomial $h$ over the semifield 
$\mathbb{R}_{\geq 0}$ of nonnegative real numbers, 
see also \eqref{equation-log-polynomial}.

\begin{definition}
\label{definition-C-infinity-divisor}
A \emph{$C^{\infty}$-divisor} on 
a rational polyhedral space $S$ is an 
element of $H^{0}(S;\mathcal{A}_S^{\opn{weak}}/
\mathcal{O}^{\times}_S)$. 
A \emph{principal Cartier divisor} of $f$ is 
the value $(f)^{\mathrm{sm}}$ 
of the canonical morphism
$(\cdot)^{\mathrm{sm}}\colon 
H^{0}(S;\mathcal{A}_S^{\opn{weak}})\to 
H^{0}(S;\mathcal{A}_S^{\opn{weak}}/
\mathcal{O}^{\times}_S)$.
\end{definition}

We write 
$\CDiv(S)\deq 
H^{0}(S;\mathcal{A}_S^{\opn{weak}}/
\mathcal{O}^{\times}_S)$ for simplicity.
We can define a \emph{local data} for 
$s$ like when $S$ is boundaryless naturally
(\cref{definition-Cartier-local-data}).
Moreover, we can extend the prepermissibility condition for
weakly-smooth functions and $C^{\infty}$-divisors on
general rational polyhedral spaces by the following lemma.  

\begin{lemma}
Let 
$(x,U,\psi_1,W_1,h_1)$ and $(x,U,\psi_2,W_2,h_2)$
be primitive data of a weakly-smooth function $f$ at $x$.
Then, there exists an open neighborhood $V_0$ at the 
origin in $T_x S$ and the following natural diagram of 
morphisms:
\begin{equation}
\begin{tikzcd}
	& {\psi_1(U)} & {\opn{exp}^{-1}(W_1)} \\
	U & {V_0\cap\operatorname{LC}_xS} & {V_0} & {\mathbb{R}} \\
	& {\psi_2(U)} & {\opn{exp}^{-1}(W_2)}
	\arrow[hook, from=2-3, to=1-3]
	\arrow[hook', from=2-3, to=3-3]
	\arrow[hook, from=3-2, to=3-3]
	\arrow[hook, from=1-2, to=1-3]
	\arrow[hook, from=2-2, to=1-2]
	\arrow[hook', from=2-2, to=3-2]
	\arrow[hook, from=2-2, to=2-3]
	\arrow["{\opn{log}\circ h_1\circ \opn{exp}}", from=1-3, to=2-4]
	\arrow["{\opn{log}\circ h_2\circ \opn{exp}}"', from=3-3, to=2-4]
	\arrow[hook, from=2-2, to=2-1]
	\arrow["{\psi_{1}}"', from=2-1, to=3-2]
	\arrow["{\psi_0}", from=2-1, to=1-2],
\end{tikzcd}
\end{equation}
where $\phi \colon V_0\cap \opn{LC}_x S\hookto U$ is the 
canonical morphism induced from a morphism of germs 
in \cite[Proposition 2.5]{gross2019sheaftheoretic}.
In particular, the pullback $\phi^{*}f$ of $f$ is also 
weakly-smooth at $0$.
\end{lemma}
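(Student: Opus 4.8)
The plan is to lean on the one structural input we are entitled to cite, namely the canonical morphism of germs $\phi\colon (V,0)\to (S,x)$ of \cite[Proposition 2.5]{gross2019sheaftheoretic}, which realizes the local cone as the germ of $S$ at $x$ and satisfies $d_0\phi\colon T_0(\opn{LC}_x S)\simto T_x S$. The key observation is that $\phi$ is a local isomorphism near $0$, so the composite $\psi_i\circ\phi$ is again a chart of $\opn{LC}_x S$ (now based at $0$, with values in a neighborhood of $\psi_i(x)$ inside $\Omega_i=\psi_i(U)\subset\underline{\mathbb{R}}^{l_i}$). Since weak-smoothness at a point is a condition on the germ, this lets me transport the primitive data from $S$ to $\opn{LC}_x S$ almost verbatim, reusing $W_i$ and $h_i$ unchanged.

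First I would fix the neighborhood. Starting from the $V$ of \cite[Proposition 2.5]{gross2019sheaftheoretic}, I shrink it to an open $V_0\ni 0$ in $T_x S$ so that $\phi$ restricts to an isomorphism of $V_0\cap\opn{LC}_x S$ onto an open neighborhood of $x$ inside the common domain $U$, and so that $\psi_i(\phi(V_0\cap\opn{LC}_x S))\subset \Omega_i\cap \opn{exp}^{-1}(W_i)$ for both $i=1,2$ at once. Here the inclusion $\Omega_i\subset \opn{exp}^{-1}(W_i)$ already holds, because $\opn{exp}(\Omega_i)\subset\mathbb{R}_{\geq 0}^{l_i}\subset W_i$, so only the finite intersection of two neighborhood constraints must be imposed. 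With this choice each arrow of the asserted diagram is one of the structure maps: the chart embeddings $\psi_i\circ\phi$ into $\Omega_i$, the cone-into-ambient inclusion $V_0\cap\opn{LC}_x S\hookto V_0$, the ambient recentering charts $V_0\to\opn{exp}^{-1}(W_i)$ discussed below, and the two defining maps $\opn{log}\circ h_i\circ\opn{exp}$. Commutativity is then formal, since the two outer paths $V_0\cap\opn{LC}_x S\to\mathbb{R}$ both compute $f\circ\phi$ via the defining identity $f|_U=\opn{log}\circ h_i\circ\opn{exp}\circ\psi_i$.

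The weak-smoothness of $\phi^{*}f$ at $0$ is then immediate. Taking the chart $\psi:=\psi_i\circ\phi$ for either $i$, together with the same open set $W_i\supset\mathbb{R}_{\geq 0}^{l_i}$ and the same smooth positive function $h_i$, the identity $\phi^{*}f=\opn{log}\circ h_i\circ\opn{exp}\circ(\psi_i\circ\phi)$ exhibits $(0,\,V_0\cap\opn{LC}_x S,\,\psi_i\circ\phi,\,W_i,\,h_i)$ as a primitive data of $\phi^{*}f$ at $0$ in the sense of \cref{definition-weakly-smooth}. No rescaling of $h_i$ is required, precisely because $\psi_i\circ\phi$ already lands inside $\Omega_i$, on which $h_i\circ\opn{exp}$ is defined.

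The main obstacle I anticipate is purely the boundary of $\underline{\mathbb{R}}^{l_i}$: one must check that $\psi_i\circ\phi$ is an honest chart even when $\psi_i(x)$ has coordinates equal to $-\infty$, and that the passage from log-coordinates on $T_x S$ to exp-coordinates respects the smooth structure on $W_i$. Splitting the coordinates of $\psi_i(x)$ into the set $I$ where the value is $-\infty$ and the set $J$ where it is finite, recentering acts as the identity on the $I$-directions and as the translation $t_j\mapsto t_j-\psi_i(x)_j$ on the $J$-directions; under $\opn{exp}$ the latter becomes multiplication by the positive constant $e^{\psi_i(x)_j}$, i.e.\ a linear automorphism $M$ of $\mathbb{R}^{l_i}$ preserving $\mathbb{R}_{\geq 0}^{l_i}$ and the openness of $W_i$. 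This is exactly the identification realizing the ambient arrows $V_0\to\opn{exp}^{-1}(W_i)$, and it shows that if one prefers to recenter the ambient vector space to the origin, the pair $(M^{-1}(W_i),\,h_i\circ M)$ is again primitive data. Once this boundary bookkeeping is in place the remaining verifications are routine, and the commuting diagram records the compatibility of the two primitive data.
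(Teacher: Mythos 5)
Your concluding move is sound, but the structural claim you build the whole argument on is false at precisely the points this lemma exists to handle. You assert that $\phi$ is a local isomorphism near $0$, and you even shrink $V_0$ ``so that $\phi$ restricts to an isomorphism of $V_0\cap\opn{LC}_x S$ onto an open neighborhood of $x$ inside the common domain $U$.'' This fails whenever $x$ is sedentary: by the paper's own \cref{exmaple-weakly-smooth-line}, $\opn{LC}_{-\infty}\underline{\mathbb{R}}=\{0\}$, so at $x=-\infty$ (a $1$-valent vertex of a compact tropical curve, or a boundary point of $\mathbb{T}P^{1}$) the canonical morphism of \cite[Proposition 2.5]{gross2019sheaftheoretic} is $\{0\}\to\underline{\mathbb{R}}$, $0\mapsto -\infty$, whose image is a point and not open; no shrinking of $V_0$ can help, because $\dim\opn{LC}_x S=\dim T_x S$ is in general strictly smaller than the local dimension of $S$ at $x$. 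Since the boundaryless case was already covered by $\mathcal{A}^{0,0}_S$ in the earlier subsections, the sedentary case is the entire content of \cref{definition-weakly-smooth} and of this lemma, so a proof valid only when $\phi$ is open misses the intended generality. (You are evidently aware that $\psi_i(x)$ may have $-\infty$ coordinates, since your last paragraph does the $I$/$J$ coordinate bookkeeping, but that bookkeeping is inconsistent with the local-isomorphism assumption: if points of smaller sedentarity accumulate at $x$, as for $\underline{\mathbb{R}}$ at $-\infty$, the two cannot both hold.)

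The gap is repairable, and your endgame survives the repair. What the canonical germ morphism actually provides is an identification of $(\opn{LC}_x S,0)$ with the germ at $x$ of the stratum of points of the same sedentarity as $x$; concretely, $\psi_i\circ\phi$ is an isomorphism onto an open subset of $\Omega_i\cap\{y_j=-\infty \mid j\in I_i\}$, where $I_i$ is the set of coordinates of $\psi_i(x)$ equal to $-\infty$. That image is still an open subset of a rational polyhedral subset of $\underline{\mathbb{R}}^{l_i}$ (now of smaller dimension, sitting inside the boundary stratum), so $\psi_i\circ\phi$ is a legitimate chart even though it is not open in $\Omega_i$, and since $W_i\supset\mathbb{R}_{\geq 0}^{l_i}$ the identity $\phi^{*}f=\opn{log}\circ h_i\circ\opn{exp}\circ(\psi_i\circ\phi)$ exhibits $(0,\,V_0\cap\opn{LC}_x S,\,\psi_i\circ\phi,\,W_i,\,h_i)$ as primitive data at $0$, exactly as you conclude. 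Two points of yours that are correct and worth keeping: you assert commutativity only for paths out of $V_0\cap\opn{LC}_x S$ (rightly so, since the two ambient extensions of $\phi^{*}f$ over $V_0$ through $h_1$ and $h_2$ need not agree off the cone), and your recentering via the diagonal positive automorphism $M$, with $(M^{-1}(W_i),h_i\circ M)$ again primitive data, is fine. So: replace ``local isomorphism onto a neighborhood of $x$'' by ``embedding onto the sedentarity-stratum germ of $x$'' throughout, and the proof closes.
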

Every local cone $\opn{LC}_x S$ is boundaryless, and thus
we can define the prepermissible condition for any 
weakly-smooth functions.
The \emph{differential vector} 
$df(x)\in (\tform^{1}_{\mathbb{R},S})_x$ at $x$
is that of $\phi^{*}f$ via the identification 
$T_0(\opn{LC}_x S)=T_xS$.
We can also define the set $\opn{Crit}(f)$ of
\emph{critical points} of a weakly-smooth function $f$
on a rational polyhedral space naturally.
\begin{definition}[{Prepermissibility condition}]
\label{definition-prepermissible-weakly-smooth}
Let $f$ be a weakly-smooth function on a rational polyhedral 
space $U$. The $f$ is \emph{prepermissible} at $x\in U$ if 
the pullback $\phi^{*}f$ is prepermissible at 
$0\in \opn{LC}_x U$.
The weakly-smooth function $f$ is prepermissible if 
$f$ is prepermissible at $x$ for all $x\in U$.
Let $s\in H^{0}(S;\mathcal{A}^{\mathrm{weak}}_S/
\mathcal{O}^{\times}_S)$ be a $C^{\infty}$-divisor on a rational polyhedral space $S$.
The $s$ is \emph{prepermissible} if there exists 
a local data $\{(U_i,f_i)\}_{i\in I}$ for $s$
such that every $f_i$ is prepermissible.
\end{definition}

\begin{example}
\label{exmaple-weakly-smooth-line}
Let $f\colon \underline{\mathbb{R}}\to \mathbb{R}$ be a 
weakly-smooth function. The local cone 
$\opn{LC}_{-\infty} \underline{\mathbb{R}}$ 
at $-\infty$ is trivial, 
and thus every $f$ is prepermissible.
Fix a primitive data 
$(-\infty,[-\infty,\varepsilon),\opn{id},W,g)$ for $f$
at $-\infty$.
By the Jacobi rule, we have
\begin{align}
\frac{\partial f}{\partial x}(z)=(g\circ \opn{exp}(z))^{-1} 
\frac{\partial g}{\partial y}(\opn{exp}(z))\opn{exp}(z)
\end{align}
for all $z\in (-\infty,\varepsilon)$.
We can assume $\frac{\partial g}{\partial y}$ and 
$g$ are bounded on $(0,\opn{exp}(\varepsilon))$
for sufficiently small $\varepsilon$.
Therefore, $\lim_{z\to -\infty}\frac{\partial f}{\partial x}=0$.
\end{example}

As like the prepermissibility condition,
we can define a \emph{prepermissible pair} $(s',s)$
of $C^{\infty}$-divisors, the
set $s'\cap s$ of $(s',s)$ 
(\cref{definition-divisor-intersection}), 
an intersection data for $s$ 
(\cref{definition-intersection-data}), 
and cohomological local Morse data of $s$
(\cref{definition-local-morse-data-divisor}).
With them, all preparations which are necessary to 
formulate \cref{conjecture-tropical-MRR-preface}
have been completed for any compact 
tropical manifolds.

\section{Proof for compact tropical curves}
\label{section-tropical-curve}
In this section, we prove \cref{theorem-MRR-curve-preface}. 
See \cite[Definition 3.1]{mikhalkinTropicalCurvesTheir2008a} 
for the definition of tropical curves.
\cref{theorem-MRR-curve-preface} and
our proof of it are highly effected from 
\cite{MR2676658,knill2012graph,auroux2022lagrangian}.
Some propositions which have already proved as a special case
of the previous section will appear, but we write short
proofs of them 
for simplicity.

\subsection{Proof for \cref{theorem-MRR-curve-preface}}
The goal of this subsection is to prove 
\cref{theorem-MRR-curve-preface}.

Let $\Gamma_{n}$ be the projectivization of 
the tropical module defined in 
\cite[\textsection 2.2]{mikhalkinTropicalCurvesTheir2008a}.
Each $\Gamma_{n}$ has an embedding 
into the $(n-1)$-dimensional tropical projective space
$\mathbb{T}P^{n-1}\deq \mathbb{P}(\mathbb{T}^{n})$.
The $\mathbb{T}P^{n-1}$ has a canonical inclusion
$\iota_n\colon \mathbb{R}^{n-1}\to 
\mathbb{T}P^{n-1}$ 
of a tropical algebraic torus.
There exists the following isomorphism as rational 
polyhedral spaces:
\begin{align}
\Gamma_{n}^{\circ}\deq \Gamma_{n}\cap \iota_n(\mathbb{R}^{n-1}) \simeq \mathbb{R}_{\geq 0}(1,0,\ldots,0)\cup \cdots
\cup {\mathbb{R}}_{\geq 0}(0,\ldots,0,1)\cup 
{\mathbb{R}}_{\geq 0}(-1,\ldots,-1).
\end{align}
In particular, $\Gamma_{n}$ is isomorphic to the closure of
$\Gamma_{n}^{\circ}$ in $\mathbb{T}P^{n-1}$.
Every point in a tropical curve has an open neighborhood 
which is isomorphic to an open subset of 
$\Gamma_n$ for some $n\in \mathbb{Z}_{\geq 1}$.
Let $C$ be a tropical curve,
and $C^{\circ}$ the complement
of the set of $1$-valent vertices in $C$. 
Then, every point in $C^{\circ}$ has 
an open neighborhood which is isomorphic to 
an open subset of some $\Gamma_{n}^{\circ}$.
Besides, every compact tropical curve $C$ 
has a canonical metric structure $d_{C^{\circ}}$
on $C^{\circ}$
associated with the lattice structure of the tangent 
vector space of $C$ 
\cite[Proposition 3.6]{mikhalkinTropicalCurvesTheir2008a}.

Next, we see the prepermissibility condition 
for $C^{\infty}$-divisors on tropical curves.

\begin{proposition}
\label{proposition-prepermissible-curve}
Let $|\Sigma|$ be the support of a pure $1$-dimensonal 
finite convex polyhedral fan $\Sigma$ such that
$|\Sigma|$ spans $\mathbb{R}^{n}$.
Let $f\colon |\Sigma|\to \mathbb{R}$ be a 
$(0,0)$-superform on $|\Sigma|$. Then, 
$f$ is prepermissible at $x=(0,\ldots,0)$ if and only if 
$df(x)\in (\tform^{1}_{\mathbb{Z},|\Sigma|})_x\simeq (\mathbb{Z}^{n})^{\vee}$
or $\opn{lineal}(|\Sigma|,x)\simeq \mathbb{R}$.
\end{proposition}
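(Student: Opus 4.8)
The plan is to unwind the prepermissibility condition (\cref{cond: prepermissible}) into a disjunction and then to pin down the vector space $\opn{span}_{\R}\opn{SS}(|\Sigma|)_x$ using the two-sided bound already established in \cref{condition-good-linearity-space}. Write $V \deq \opn{span}_{\R}\opn{SS}(|\Sigma|)_x$ and $L \deq (\tform^{1}_{\Z,|\Sigma|})_x \simeq (\Z^{n})^{\vee}$. The condition in \cref{cond: prepermissible} reads $df(x)\notin (V+L)\setminus L$, which is logically equivalent to
\[
df(x)\in L \quad\text{or}\quad df(x)\notin V+L.
\]
Hence everything comes down to computing $V$ for a pure $1$-dimensional fan, and I would organize the argument according to whether $|\Sigma|$ is contained in a line.

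First I would treat the generic case in which $|\Sigma|$ is not contained in a line; since $|\Sigma|$ spans $\R^{n}$, this is exactly the case $n\geq 2$, and then $\Sigma$ has at least two rays with non-parallel directions. Following the computation in the proof of \cref{condition-good-linearity-space}, the wave tangent space is $(\mcal{W}^{\R}_{1,|\Sigma|})_x=\bigcap_{\rho}\R v_\rho$, the intersection running over the rays $\rho$ of $\Sigma$ with direction vector $v_\rho$; two non-parallel rays already force $\R v_{\rho}\cap \R v_{\rho'}=\{0\}$, so the intersection is $\{0\}$. Therefore $(\mcal{W}^{\R}_{1,|\Sigma|})_x^{\bot}=\R^{n}$, and the lower bound of \cref{condition-good-linearity-space} squeezes $V=\R^{n}$. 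In this case $\opn{lineal}(\opn{LC}_x|\Sigma|)=\{0\}$, so $V+L=\R^{n}$ and the alternative $df(x)\notin V+L$ is impossible; the displayed disjunction reduces to $df(x)\in L$, which matches the claim, the second alternative $\opn{lineal}\simeq \R$ being vacuous for $n\geq 2$.

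Next I would treat the case where $|\Sigma|$ is contained in a line, which forces $n=1$. If $|\Sigma|$ is the full line $\R$, then $|\Sigma|$ is a manifold at $x$ and the upper bound $\opn{lineal}(\opn{LC}_x|\Sigma|)^{\bot}=\{0\}$ of \cref{condition-good-linearity-space} gives $V=\{0\}$; then $V+L=L$, so $(V+L)\setminus L=\emptyset$ and $f$ is prepermissible for every $df(x)$, matching the alternative $\opn{lineal}(|\Sigma|)\simeq \R$. If instead $|\Sigma|$ is a single ray, then $\opn{lineal}(|\Sigma|)=\{0\}$ and I would compute $V$ directly from \cref{prop-local-morse-data}: the microsupport $\opn{SS}((\Z_{\R})_{\R_{\geq 0}})_0$ consists of the nonnegative covectors, so its real span is all of $\R=\R^{n}$, and prepermissibility is again equivalent to $df(x)\in L$, as claimed.

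The only genuinely delicate point is this last, single-ray, case: there the sandwich of \cref{condition-good-linearity-space} is not tight (its lower bound is $\{0\}$ and its upper bound is $\R$), so one cannot avoid the direct microlocal computation of the microsupport of the constant sheaf on a closed half-line via \cref{prop-local-morse-data}. This computation is elementary, but it is the one place where invoking \cref{condition-good-linearity-space} does not suffice; everywhere else the result is bookkeeping combining \cref{cond: prepermissible} with the two-sided bound on $\opn{span}_{\R}\opn{SS}(|\Sigma|)_x$.
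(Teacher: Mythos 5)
Your proof is correct, and it reaches the crucial computation $\opn{span}_{\mathbb{R}}\opn{SS}(|\Sigma|)_x=\mathbb{R}^{n}$ by a genuinely different route than the paper. The paper's proof is uniform and self-contained: after disposing of the case $\dim_{\mathbb{R}}\opn{lineal}(|\Sigma|,x)=1$ (where $|\Sigma|\simeq\mathbb{R}$), it picks a basis $\{v_{\sigma_i}\}_{i=1,\ldots,n}$ of $\mathbb{R}^{n}$ from ray directions and a linear form $h=\sum_i a_{\sigma_i}v_{\sigma_i}^{*}$ with all $a_{\sigma_i}<0$; then $|\Sigma|\cap\{h<0\}$ is disconnected when $n\geq 2$, while $|\Sigma|\cap\{-h<0\}$ is empty when $n=1$, so by \cref{prop-local-morse-data} these covectors have nonvanishing local Morse data and hence lie in $\opn{SS}(|\Sigma|)_x$; since such $h$ fill an open cone, their span is all of $\mathbb{R}^{n}$. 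You instead recycle the sandwich \eqref{equation-W-SS-Lin} of \cref{condition-good-linearity-space}: for $n\geq 2$ the wave space $\bigcap_{\rho}\mathbb{R}v_{\rho}$ vanishes, so the lower bound alone forces $V=\mathbb{R}^{n}$, and only the single-ray case requires the direct computation $\opn{SS}((\mathbb{Z}_{\mathbb{R}})_{\mathbb{R}_{\geq 0}})_{0}=\mathbb{R}_{\geq 0}$ — which you carry out correctly, and which is in substance the same empty-sublevel-set calculation the paper performs with $-h$. Your initial unwinding of \cref{cond: prepermissible} into the disjunction $df(x)\in L$ or $df(x)\notin V+L$, and the case-by-case matching with the statement's two alternatives (including that $\opn{lineal}\simeq\mathbb{R}$ forces $n=1$ and $|\Sigma|=\mathbb{R}$, where $V=\{0\}$ makes every $f$ prepermissible), is accurate. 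What each approach buys: yours is shorter in the generic case because it reuses \cref{condition-good-linearity-space}, whose proof already contains the bump-function work, and it cleanly isolates the one place (a single ray) where the sandwich is not tight; the paper's argument avoids invoking that proposition and produces the needed covectors explicitly, giving an elementary, uniform treatment of both the $n\geq 2$ and the ray cases at the modest cost of repeating a disconnectedness computation.
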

\begin{proof}
Obviously, 
$\opn{dim}_{\mathbb{R}}\opn{lineal}(|\Sigma|,x)\leq 1$.
If $\opn{dim}_{\mathbb{R}}\opn{lineal}(|\Sigma |,x)=1$, 
then $|\Sigma | \simeq \mathbb{R}$. 
From now on, we assume $\dim_{\mathbb{R}}\opn{lineal}(|\Sigma|,x)=0$.
Fix a vector $v_{\sigma}$ for $\sigma\in \Sigma \setminus\{0\}$ 
such that $\opn{span}_{\mathbb{R}}(v_{\sigma})=\sigma$.
We can choose a basis $\{v_{\sigma_i}\}_{i=1,\ldots, n}$ of
$\mathbb{R}^{n}$
from $\{v_{\sigma}\}_{\sigma \in \Sigma}$,
and a linear form
$h=\sum_{i=1}^{n} 
a_{\sigma_i} v^{*}_{\sigma_i}$
such that $a_{\sigma_i}<0$ for any 
$i=1,\ldots,n$.
Then, the lower set $|\Sigma|\cap \{h< 0\}$ is disconnected
when $n\geq 2$. If $n=1$, $|\Sigma|\cap \{-h<0\}$ is 
empty.
Thus, 
$\opn{span}_{\mathbb{R}}(\opn{SS}((\mathbb{Z}_{X})_S)_x)
=\mathbb{R}^{n}$ where $X=\mathbb{R}^{n}$ and $S=|\Sigma|$.
\end{proof}

\cref{proposition-n-valent,proposition-1-valent}  
classify 
the type of intersection data of 
$C^{\infty}$-divisors which appear
in tropical curves.  

\begin{proposition}[{cf. \cite{knill2012graph}}]
\label{proposition-n-valent}
Let $|\Sigma|$ be the support of a pure $1$-dimensonal 
finite convex polyhedral fan $\Sigma$ such that
$|\Sigma|$ spans $\mathbb{R}^{n}$ and
$s_{p,q}\colon |\Sigma|\to \mathbb{R}$
a prepermissible $(0,0)$-superform on $|\Sigma|$ 
such that $\opn{Crit}(s_{p,q})=\{x\}$ where
$x=(0,\ldots,0)$. We set
\begin{align}
p\deq\sharp(\pi_0(\{s_{p,q}>0\})), \quad 
q\deq \sharp(\pi_0(\{s_{p,q}<0\})).
\end{align}

Then, 
\begin{align}
\opn{ind}_x(s_{p,q})=1-q ,\quad \opn{ind}_x(-s_{p,q})=1-p.
\end{align}

\end{proposition}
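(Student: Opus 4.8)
The plan is to read off the index from its description as a reduced Euler characteristic of a sublevel set, and then to use that $|\Sigma|$ is one-dimensional so drastically that the relevant sublevel set is a disjoint union of contractible pieces. First I would record the general reduction. Since $s_{p,q}(x)=0$ (the normalization of \cref{definition-intersection-data}), \cref{prop-local-morse-data} gives, for a sufficiently small neighbourhood $U$ of $x$ in $|\Sigma|$, an isomorphism $\opn{LMD}^{i}(\mathbb{Z}_{|\Sigma|},s_{p,q},x)\simeq \tilde{H}^{i-1}(\{s_{p,q}<0\}\cap U;\mathbb{Z})$, stably in $U$. Taking the alternating sum and absorbing the degree shift turns this into $\opn{ind}_x(s_{p,q})=-\tilde{\chi}\bigl(\{s_{p,q}<0\}\cap U\bigr)=1-\chi\bigl(\{s_{p,q}<0\}\cap U\bigr)$, where $\tilde{\chi}=\chi-1$ is the reduced Euler characteristic (valid also for the empty set).

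Next I would pin down the homotopy type of $\{s_{p,q}<0\}\cap U$. As $\Sigma$ is a pure one-dimensional fan, the punctured neighbourhood $U\setminus\{x\}$ is a disjoint union of half-open intervals, one per ray $\sigma$ of $\Sigma$. Along each punctured ray $\mathbb{R}_{>0}v_{\sigma}$ the restriction $t\mapsto s_{p,q}(tv_{\sigma})$ is smooth and vanishes at $t=0$; an interior zero of its $t$-derivative would be a critical point of $s_{p,q}$ distinct from $x$, since at an interior edge point $y$ the cotangent space $(\tform^{1}_{\mathbb{R},|\Sigma|})_{y}$ is one-dimensional and $ds_{p,q}(y)$ vanishes iff the edge-directional derivative does. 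This contradicts $\opn{Crit}(s_{p,q})=\{x\}$, so each such restriction is strictly monotone near $0$ and hence $s_{p,q}$ has constant sign on each punctured ray. Therefore $\{s_{p,q}<0\}\cap U$ is the disjoint union of those punctured rays on which $s_{p,q}<0$, each contractible, and their number is exactly $q=\sharp\pi_{0}(\{s_{p,q}<0\})$. Consequently $\chi\bigl(\{s_{p,q}<0\}\cap U\bigr)=q$ and $\opn{ind}_x(s_{p,q})=1-q$.

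Finally, applying the identical argument to $-s_{p,q}$, whose strictly negative locus near $x$ is $\{s_{p,q}>0\}\cap U$, a disjoint union of $p$ contractible punctured rays, yields $\opn{ind}_x(-s_{p,q})=1-p$. The only genuine content lies in the constant-sign-per-ray step, which is precisely where the hypotheses of pure one-dimensionality and of $x$ being the unique critical point enter; the rest is bookkeeping with reduced Euler characteristics. I expect the main point to watch to be the identification of interior critical points of the one-variable restriction with critical points of $s_{p,q}$ on $|\Sigma|$, which is what makes the monotonicity argument rigorous.
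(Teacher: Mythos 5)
Your proof is correct and takes essentially the same route as the paper: the paper's entire proof is the one-line observation that $s_{p,q}$ is monotone on $|\Sigma|\setminus\{x\}$, which you rigorously expand (monotonicity on each ray deduced from $\opn{Crit}(s_{p,q})=\{x\}$ via the one-dimensional cotangent space at interior edge points, followed by the reduced-Euler-characteristic bookkeeping through \cref{prop-local-morse-data}). Your explicit invocation of the normalization $s_{p,q}(x)=0$ from \cref{definition-intersection-data} is also the intended reading, since without it the sign sets defining $p$ and $q$ would not match the sublevel set $\{s_{p,q}<s_{p,q}(x)\}$ appearing in the definition of the index.
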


\begin{proof}
From definition, $s_{p,q}$ is a monotone function on 
$|\Sigma|\setminus\{x\}$.
\end{proof}

\begin{example}
When $p+q=3$, we have
\begin{align}
\opn{ind}_x(s_{3,0})=1, \quad 
\opn{ind}_x(s_{2,1})=0, \quad \opn{ind}_x(s_{1,2})=-1, 
\quad \opn{ind}_x(s_{0,3})=-2.
\end{align}
\end{example}

\begin{remark}
\label{remark-nondeprave-curve}
Every $|\Sigma|$ of \cref{proposition-n-valent}
has a coarse polyhedral cone structure 
$\mathscr{C}$. 
For this structure, $s_{p,q}$ has a unique critical 
point at the origin with respect to 
$\mathscr{C}$ in the theory of stratified 
Morse theory \cite[Part I.2.1]{MR932724}
but does not satisfy the nondegeneracy condition.
In particular, $s_{p,q}$ is not nondepraved 
at the origin \cite[Part I.2.3. Definition (c)]{MR932724}, 
and thus we cannot use the theory of local Morse 
data for nondepraved functions directly 
\cite[Part I. Definition 3.5.2]{MR932724}. 
We expect, however, that there exists a 
good method to connect our approach with
stratified Morse theory.
\end{remark}

\begin{proposition}
\label{proposition-1-valent}
Let $U\deq [-\infty,a) (\subset \underline{\mathbb{R}})$ be a connected open neighborhood of 
$-\infty$ and $f\colon U\to \mathbb{R}$ a weakly-smooth
function on $U$ such that $\opn{Crit}(f)=\{-\infty\}$.
Then, $\{f< 0\}$ is equal to $\emptyset$ or 
$U\setminus\{-\infty\}$.
\end{proposition}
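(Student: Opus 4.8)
The plan is to reduce the statement to the elementary behaviour of a smooth function of one real variable on the open interval $U\setminus\{-\infty\}=(-\infty,a)$, together with the continuity of $f$ at the boundary point $-\infty$. First I would observe that on $(-\infty,a)$ the maps $\opn{exp}$ and $\opn{log}$ occurring in the definition of a weakly-smooth function are diffeomorphisms, so the restriction $f|_{(-\infty,a)}$ is an ordinary smooth function of one variable. The relevant sublevel set is the one at the critical value, so I take the natural normalization $f(-\infty)=0$ (matching the intersection-data condition $f_p(p)=0$); since $\mathcal{A}^{\mathrm{weak}}_U$ is a sheaf of $\mathbb{R}$-algebras, adding a real constant keeps $f$ weakly-smooth and does not change $\opn{Crit}(f)$, so this entails no loss.

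Next I would establish strict monotonicity on the open part. By hypothesis $\opn{Crit}(f)=\{-\infty\}$, so $df(x)\neq 0$, i.e. $f'(x)\neq 0$, for every $x\in(-\infty,a)$. As $f'$ is continuous and $(-\infty,a)$ is connected, the intermediate value theorem forces $f'$ to have constant sign, whence $f$ is either strictly increasing or strictly decreasing on $(-\infty,a)$.

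Finally I would transfer this to the boundary. The function $f$ is continuous on all of $U$, because weakly-smooth functions form a subsheaf of the continuous functions, so $f(-\infty)=\lim_{x\to-\infty}f(x)=0$; that the differential genuinely vanishes at $-\infty$, making it the critical point, is exactly the content of \cref{exmaple-weakly-smooth-line}. Combining continuity at $-\infty$ with strict monotonicity: in the increasing case, for any $x\in(-\infty,a)$ choosing $x'$ with $-\infty<x'<x$ gives $f(-\infty)\le f(x')<f(x)$, whence $f(x)>0$ for all $x\neq-\infty$ and $\{f<0\}=\emptyset$; in the decreasing case the same comparison yields $f(x)<0$ for all such $x$, so $\{f<0\}=U\setminus\{-\infty\}$.

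The only genuinely delicate point is the passage to the boundary: I must be sure that strict interior monotonicity yields the \emph{strict} inequality $f(x)\neq f(-\infty)$ rather than merely $f(x)\ge f(-\infty)$ (or $\le$), since a non-strict comparison would permit $f$ to touch $0$ and produce a proper, nonempty sublevel set. This is handled by the two-point comparison above, which uses strict monotonicity at an intermediate point $x'$ and then passes to the limit at $-\infty$ by continuity.
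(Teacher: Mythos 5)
Your proof is correct and amounts to exactly the argument the paper intends: the paper states this proposition without any written proof, and—just as with \cref{proposition-n-valent}, which it dispatches with the one-line observation that the function is strictly monotone away from its unique critical point—the intended content is precisely the strict-monotonicity-plus-continuity-at-$-\infty$ argument you spell out (nonvanishing $f'$ on the connected interval $(-\infty,a)$ has constant sign, and the two-point comparison transfers the strict inequality to the boundary value). One small caveat on your normalization: taking $f(-\infty)=0$ is indeed the correct reading (it matches the paper's convention $\{f<f(v)\}$ and the condition $f_p(p)=0$ in \cref{definition-intersection-data}), but adding a constant does change the set $\{f<0\}$—for instance $f(x)=e^{x}-1$ is weakly-smooth with $\opn{Crit}(f)=\{-\infty\}$ and $\{f<0\}=[-\infty,0)$—so the normalization is part of interpreting the statement as concerning the sublevel set at the critical value, not a cost-free reduction of the literal statement; since that is plainly the intended meaning in context, this does not affect the validity of your proof.
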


\begin{remark}
By using \cref{proposition-1-valent}, we can generalize
\cref{proposition-n-valent} for any point in 
a pure $1$-dimensonal rational polyhedral space.
\end{remark}

Let $C$ be a pure $1$-dimensional
rational polyhedral space and 
$C_{\reg}$ the set of generic points in $C$
(see \cite[\textsection 4.B]{MR3894860} and
\cite[Definition 2.7]{gross2019sheaftheoretic}).
In the rest, we assume every $C$ satisfies
the following condition for simplicity 
(see also \cref{remark-classification-curve}):
\begin{condition}
\label{condition-simplicity-curve}
By a \emph{$1$-dimensional rational polyhedral space},
we mean a pure $1$-dimensional rational polyhedral
space $C$ such that every connected component
of $C_{\reg}$ is isomorphic to 
an open interval of $\mathbb{R}$ or a 
circle $\mathbb{R}/c\mathbb{Z}$ ($c\in \mathbb{R}_{>0}$) 
with the standard structure induced from $\mathbb{R}$. 
In particular,
\begin{align}
  C_{\reg}\simeq \left( \coprod_{e\in E} (a_e,b_e)\right)\amalg
\left(\coprod_{l\in L} \mathbb{R}/c_l\mathbb{Z} \right), \quad
(a_e\in \underline{\mathbb{R}}, b_e\in 
\overline{\mathbb{R}}, c_l\in \mathbb{R}_{>0})
\end{align}
for some indexed set $E$ and $L$.
\end{condition}

\begin{remark}
\label{remark-classification-curve}
To be honest, \cref{condition-simplicity-curve}
is unnecessary. This remark explains that.
Let $C$ be a pure $1$-dimensional
rational polyhedral space.
Then, $C_{\reg}$ is also a rational polyhedral space,
and isomorphic to the disjoint union of 
$1$-dimensional integral affine manifolds
(see \cite[\textsection 2.1]{MR2181810}
for three different definition of
integral affine manifolds).
Since $C_{\reg}$ is paracompact,
every connected component of $C_{\reg}$
is diffeomorphic to an open interval or a circle.
Every connected $n$-dimensional
integral affine manifold $B$ has 
the developing map $\opn{dev}_{B}\colon\tilde{B}\to \mathbb{R}^{n}$
where $\tilde{B}$ is the universal cover of $B$
(see e.g. \cite[\textsection 2.3]{goldmanRadianceObstructionParallel1984a}).
Besides, every developing map is an immersion and a submersion, 
and thus $\opn{dev}_{B}$ is an embedding onto 
an open subset of $\mathbb{R}$ when $\dim B=1$.
\end{remark}

Since $C_{\reg}$ is an integral affine manifold, 
$C_{\reg}$ has a standard Lagrangian torus fibration
$
\check{f}_{C_{\reg}}
\colon \check{X}(C_{\reg})\to C_{\reg}
$ 
and a dual torus fibration 
$f_{C_{\reg}}\colon X(C_{\reg})\to C_{\reg}$
(see \cref{definition-integral-affine-manifold}
and \cref{definition-SYZ-torus-fibration}).

\begin{definition}[{cf. \cref{definition-stratified-torus-fibration}}]
\label{definition-continuous-section}
Let $C$ be a tropical curve or a closed interval in 
$\mathbb{T}P^{1}$.
We set the following spaces:
\begin{align}
\check{X}(C_{\reg})\deq 
T^{*}C_{\reg}/\mathcal{T}_{\mathbb{Z},C_{\reg}}^{\vee}, \quad 
\check{X}_0(C)\deq 
\check{X}(C_{\reg})\sqcup_{i,\check{s}_0}C,
\end{align}
where $\check{X}(C_{\reg})\sqcup_{i,\check{s}_0}C$ is the pushout of 
the inclusion $i\colon C_{\reg} \to C$ and 
$\check{s}_0\colon C_{\reg}\to \check{X}(C_{\reg})$ is the zero 
section of $\check{f}_{C_{\reg}}\colon \check{X}(C_{\reg})
\to C_{\reg}$.
\end{definition}

Obviously, $\check{X}_0(C)$ has a canonical projection
$\check{f}_C \colon \check{X}_0(C)\to C$.
By the universal property of the pushout,
$\check{f}_C$ is continuous.

\begin{example}
Let $C=[a,b]$ be a closed interval in $\mathbb{T}P^{1}$, then 
$\check{X}(C_{\reg})$ is just a cylinder
and $\check{X}(C)$ has a canonical embedding 
into 
$[a,b]\times \mathbb{R}^{\vee}/\mathbb{Z}^{\vee}$.
\begin{align}
\check{X}_0([a,b])\simeq 
\{(a,0)\}\cup((a,b)\times 
\mathbb{R}^{\vee}/\mathbb{Z}^{\vee})
\cup \{(b,0)\}.
\end{align}
The negation map
$\iota \colon\mathbb{T}P^{1}\to \mathbb{T}P^{1}; x\mapsto -x$
induces the canonical automorphism induced from the pullback of cotangent bundles:
\begin{align}
\iota^{*}\colon \check{X}_0(\mathbb{T}P^{1})
\to \check{X}_0(\mathbb{T}P^{1}); (x;y)\mapsto (-x;-y). 
\end{align}
This is a symplectic automorphism on 
$\check{X}(\mathbb{R}) (\subset \check{X}_0(\mathbb{T}P^{1}))$ since 
$dx\wedge dy=d(-x)\wedge d(-y)$.
This automorphism can be visualized as the 180-degree rotation of the 
cylinder.
\end{example}

The following proposition is a repetitive work of
\cref{section-stratified-torus-fibration}.

\begin{proposition}[{continuous sections}]
\label{proposition-prepermissible-section}
Let $s$ be a prepermissible $C^{\infty}$-divisor
$s=\{(U_i,f_i)\}_{i\in I}$ on a tropical curve 
$C$ or a closed interval of $\mathbb{T}P^{1}$. 
Then, 
{\setlength{\leftmargini}{22pt}
\begin{enumerate}
\item There exists the following continuous map:
\begin{align}
\label{equation-continuous-section}
\check{s}\colon C\to \check{X}_0 (C); x
\mapsto (x,\opn{pr}_{C,x}(df_{i}(x))).
\end{align}
\item If $\check{s}_0\colon C\to \check{X}_0 (C)$ be the 
zero section, then $s_0\cap s=\check{f}_C(\check{s}_0\cap \check{s})$.
\end{enumerate}
}
\end{proposition}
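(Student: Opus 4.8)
The plan is to follow the template of \cref{section-stratified-torus-fibration} verbatim, the only genuinely new input being the behaviour of $\check{s}$ over the non-generic points of $C$, where the torus fibre of $\check{f}_C$ is collapsed to a single point by the pushout construction of \cref{definition-continuous-section}. I would treat the two assertions separately: part (1) is the well-definedness and continuity of the section $\check{s}$, while part (2) is the identification of the two intersection loci, which is formally the same computation as in the general case.

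For part (1) I would first record that $\check{s}$ is independent of the chosen local data: on an overlap $U_i\cap U_j$ the difference $f_i-f_j$ is a section of $\mathcal{O}^{\times}_C$, so $d(f_i-f_j)(x)\in(\tform^{1}_{\mathbb{Z},C})_x$ and hence $\opn{pr}_{C,x}(df_i(x))=\opn{pr}_{C,x}(df_j(x))$, exactly as for $\check{s}$ in \cref{section-stratified-torus-fibration}. Over the generic locus $C_{\reg}$ the restriction of $\check{X}_0(C)$ is the honest circle bundle $\check{X}(C_{\reg})=T^{*}C_{\reg}/\mathcal{T}^{\vee}_{\mathbb{Z},C_{\reg}}$, and continuity of $\check{s}|_{C_{\reg}}$ is immediate because $x\mapsto df_i(x)$ is continuous and $\opn{pr}_{C,\cdot}$ is the continuous quotient by the lattice.

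The substantive step is continuity of $\check{s}$ at the non-generic points, where the fibre is the single point glued in from $C$. Near a vertex of valence $\geq 3$ (and, for a closed interval, near a finite endpoint) the local cone has trivial lineality, so $\opn{span}_{\mathbb{R}}\opn{SS}(C)_x=(\tform^{1}_{\mathbb{R},C})_x$ by \cref{proposition-prepermissible-curve} (respectively by the computation for $\mathbb{R}_{\geq 0}$ recorded after \cref{condition-good-linearity-space}), and then prepermissibility forces $df_i(x)\in(\tform^{1}_{\mathbb{Z},C})_x$. Parametrising an incident edge by an integral affine coordinate $t\geq 0$ with $t=0$ at $x$ and writing $g\deq f_i$ restricted to that edge, the value $\opn{pr}_{C,y}(df_i(y))$ is $g'(t)\bmod\mathbb{Z}$, and $g'(t)\to\langle df_i(x),v_{\mathrm{edge}}\rangle\in\mathbb{Z}$ as $t\to 0^{+}$ since $df_i(x)$ is integral and $v_{\mathrm{edge}}$ is a primitive integral tangent vector; hence $\opn{pr}_{C,y}(df_i(y))\to 0$ and $\check{s}(y)$ tends to the collapsed point over $x$. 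At a point at infinity of $C$ the same conclusion holds because $df_i\to 0$ there by \cref{exmaple-weakly-smooth-line}. I expect this to be the main obstacle, precisely because one must check that this convergence is compatible with the topology near the collapsed fibres; here I would use the description of $\check{X}_0(C)$ as a subspace of the compact torus bundle (as in the interval example, where $\check{X}_0([a,b])\simeq\{(a,0)\}\cup((a,b)\times\mathbb{R}^{\vee}/\mathbb{Z}^{\vee})\cup\{(b,0)\}$), whose neighbourhoods of a collapsed point are fixed-thickness tubes about the zero section, so that $\opn{pr}_{C,y}(df_i(y))\to 0$ indeed yields $\check{s}(y)\to\check{s}(x)$.

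For part (2) I would argue exactly as in \cref{section-stratified-torus-fibration}. By definition $\check{f}_C(\check{s}_0\cap\check{s})=\{x\in C\mid \opn{pr}_{C,x}(df_i(x))=0\}$, and by \eqref{equation-torus-fiber} the vanishing of $\opn{pr}_{C,x}(df_i(x))$ is equivalent to $df_i(x)\in\opn{span}_{\mathbb{R}}(\opn{SS}(C)_x)+(\tform^{1}_{\mathbb{Z},C})_x$. The prepermissibility condition (\cref{cond: prepermissible}) states precisely that $df_i(x)$ cannot lie in $(\opn{span}_{\mathbb{R}}\opn{SS}(C)_x+\tform^{1}_{\mathbb{Z},C,x})\setminus\tform^{1}_{\mathbb{Z},C,x}$, so this condition is in turn equivalent to $df_i(x)\in(\tform^{1}_{\mathbb{Z},C})_x$, which is the defining condition of $s_0\cap s$ in \cref{definition-divisor-intersection}. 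This gives $\check{f}_C(\check{s}_0\cap\check{s})=s_0\cap s$ and completes the proof.
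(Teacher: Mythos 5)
Your proposal is correct and follows the same route as the paper's (very terse) proof: well-definedness of $\check{s}$ from the $\mathcal{O}^{\times}_C$-cocycle condition, continuity checked edge by edge via the gluing lemma, and part (2) read off directly from the defining formula \eqref{equation-continuous-section} together with prepermissibility and \eqref{equation-torus-fiber}. The only difference is one of detail: where the paper says ``obviously continuous by the gluing lemma for each edge,'' you explicitly verify $\opn{pr}_{C,y}(df_i(y))\to 0$ as $y$ approaches vertices, finite endpoints, and points at infinity (using \cref{proposition-prepermissible-curve} and \cref{exmaple-weakly-smooth-line}), working with the paper's identification of $\check{X}_0$ with its image in the torus bundle --- a worthwhile elaboration, since this convergence at the collapsed fibres is precisely the point the paper leaves implicit.
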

\begin{proof}
(1): 
Choose $(U_i,f_i)$ and $(U_j,f_j)$ such that
$U_i\cap U_j\neq \emp$. 
Then, $df_i(x)-df_j(x)\in 
(\tform_{\mathbb{Z},C}^{1})_x$.
Therefore, the construction of $\check{s}$ is well-defined.
Obviously, $\check{s}$ is continuous by gluing lemma 
for each edge of $C$.

(2): It follows directly from \eqref{equation-continuous-section}.
\end{proof}

By \cref{proposition-prepermissible-section},
$s_0\cap s$ is compact when $C$ is compact.

Let $C$ be a tropical curve and 
$\mathcal{A}_C^{\opn{weak}}$ be the sheaf 
of weakly-smooth functions on $C$. From definition,
$\mathcal{A}_C^{0,0}\subset \mathcal{A}_C^{\opn{weak}}$
and $\mathcal{A}_C^{\opn{weak}}$ is also a fine sheaf.
Since $\mathcal{A}_C^{\opn{weak}}$ is fine, 
the connecting homomorphism 
$[\cdot]\colon H^{0}(C;\mathcal{A}_C^{\opn{weak}}/
\mathcal{O}^{\times}_C)\to \opn{Pic}(C);s\mapsto [s]$ is 
surjective.

From now on, we will prove
\cref{conjecture-enough-prepermissible}
for compact tropical curves
(\cref{proposition-prepermissible-section}).
Geometrically, this is clear,
but in order to solve it,
some preparations are necessary.

\begin{definition}
An \emph{open interval} in
a $1$-dimensional rational polyhedral space $C$ is
an open subset of $C_{\opn{reg}}$ which is homeomorphic
to an open interval.
\end{definition}

\begin{proposition}
\label{proposition-bump-interval}
Let $U$ be a finite union of open intervals in a 
compact tropical curve $C$.
Then, there exists a prepermissible nonnegative weakly-smooth function
$f\colon C\to \mathbb{R}_{\geq 0}$ satisfying
\begin{enumerate}
\item $\{f>0\}\subset U$
\item $U\cap(\bigcup_{m\in \mathbb{Z}}\{f'=m\})$ is finite.
\end{enumerate}
\end{proposition}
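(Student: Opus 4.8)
The plan is to produce $f$ by prescribing its derivative component by component on $U$, choosing that derivative so that it meets every real value only finitely often; prepermissibility and condition (1) then follow directly from the results of this section, and condition (2) becomes essentially automatic.

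First I would replace $U$ by the disjoint union of its connected components $V_1,\dots,V_N$; by \cref{condition-simplicity-curve} each $V_k$ is a connected open subset of $C_{\reg}$, hence either an open interval or an entire circle of $C_{\reg}$. I describe the interval case, the circle case being analogous and endpoint-free. An integral affine coordinate identifies $V_k$ with an interval $(\alpha_k,\beta_k)$ with $\alpha_k\in\underline{\mathbb{R}}$, $\beta_k\in\overline{\mathbb{R}}$ as in \cref{condition-simplicity-curve}, the infinite endpoints corresponding to $1$-valent vertices. Since the $V_k$ are pairwise disjoint it suffices to build a bump $g_k$ supported on $\overline{V_k}$ and set $f=\sum_k g_k$. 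On $V_k$ I would specify $g_k'$ to be a strictly unimodal positive hump on the first half of the interval followed by a strictly unimodal negative hump on the second half, with equal signed areas so that $g_k=\int g_k'$ returns to $0$; near a finite endpoint I would make $g_k'$ vanish to infinite order using the elementary bump $\rho_{\mathbb{R}_{\geq 0}}$ of \eqref{equation-elementary-bump}, and near an endpoint that is a $1$-valent vertex I would instead take $g_k$ of the weakly-smooth form $\log\circ h\circ\opn{exp}$ as in \cref{exmaple-weakly-smooth-line}. By construction $g_k>0$ on $V_k$ and $g_k$ extends by $0$ across $\partial V_k$ to a nonnegative weakly-smooth function on $C$, so $\{f>0\}=\bigsqcup_k V_k=U$, which is (1).

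For prepermissibility I would invoke \cref{proposition-prepermissible-curve}. At an interior point of an edge the local cone is a line, so $\opn{lineal}\simeq\mathbb{R}$ and $f$ is prepermissible there regardless of $df$; at a vertex $v\in\partial U$ of valence at least three the infinite-order vanishing of the $g_k$ forces $df(v)=0\in(\tform^{1}_{\mathbb{Z},C})_v$, which is precisely the prepermissibility condition; and at a $1$-valent vertex prepermissibility holds automatically by \cref{exmaple-weakly-smooth-line}. Since $f\equiv 0$ off $\overline U$, every point of $C$ is covered, so $f$ is prepermissible.

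Finally, condition (2). Here $f'$ is smooth and supported in the compact set $\overline U$, hence bounded, so $\{f'=m\}=\varnothing$ for all but finitely many $m\in\mathbb{Z}$. By design, on each interval component $g_k'$ is a positive unimodal hump followed by a negative unimodal hump with monotone tails, so it meets each nonzero real value at most twice and vanishes on $V_k$ only at the single peak of $g_k$; consequently $\{f'=m\}\cap V_k$ has at most two points and $\{f'=m\}\cap U$ is finite for every $m$. (For a circle component, or if one prefers not to arrange unimodality by hand, one instead takes each $g_k'$ to be Morse and rescales each $g_k$ by a positive constant so that the finitely many critical values of $f'$ avoid $\mathbb{Z}$; every integer is then a regular value of $f'$, whose corresponding compact level set is finite.) Since $f'\equiv 0$ off $U$, this gives (2). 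The main obstacle is exactly this derivative condition: a naive bump compactly supported strictly inside each interval would be locally constant, hence have $f'=0$ on an open set, on the complementary part of $V_k$, making $\{f'=0\}\cap U$ infinite; this is what forces $g_k$ to be strictly positive and unimodal on all of $V_k$, and reconciling this profile with the weakly-smooth normal form $\log\circ h\circ\opn{exp}$ at the $1$-valent vertices is the one genuinely tropical point to verify.
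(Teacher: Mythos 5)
Your proposal is correct and essentially reproduces the paper's own proof: the paper also works component by component, fills each interval entirely with a bump that vanishes to infinite order at finite endpoints and has the weakly-smooth form $\log\circ h\circ\opn{exp}$ at $1$-valent vertices (concretely the explicit bumps $f_{a,b}$ and $f_{\frac{1}{2},\frac{1}{2}}\circ\opn{exp}$, summed for the case $C=\mathbb{T}P^{1}$), and it relies on exactly your key observation that the support must equal the whole component so that $\{f'=0\}\cap U$ stays finite. Your derivative-first unimodal design and the circle-component case are only cosmetic additions — the paper's explicit functions realize the same profile and, being real-analytic on the open edge with derivative tending to $0$ at both ends, give the finiteness of the integer level sets directly — with one micro-quibble: in your parenthetical alternative for circles, rescaling $g_k$ cannot move a critical value $0$ of $f'$ off $\mathbb{Z}$, so one should also take $g_k$ itself Morse (your primary construction avoids this issue).
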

\begin{proof}
We may assume $C$ is connected.
If $U$ is isomorphic to a bounded open interval in
$\mathbb{R}$,
then we can create such an $f$ easily by using 
the following bump function
$f_{a,b}\colon\mathbb{R}\to \mathbb{R}$
\begin{align}
f_{a,b}(x)\deq 
\begin{cases}
\opn{exp}\left(-\frac{1}{a^{2}-(x-b)^{2}}\right),
\quad \text{ if } (|x-b|<a), \\
0,\quad \text{ o.w. }
\end{cases}
\end{align}
for all $a\in \mathbb{R}_{>0}$ and $b\in \mathbb{R}$.
Then, $f=f_{a,b}$ is a $C^{\infty}$-function on such that
$\{f>0\}=(b-a,b+a)$ and has a unique maximal point.
Of course, $f_{a,b}'$ is bounded.
We can extend $f|_U$ to $C$ by zero outside $U$
by the pullback of $f$ by an appropriate linear function 
on $\Gamma_{n}^{\circ}$
(see also \cref{equation-pullback-support}).

Suppose $U=\mathbb{R}_{>0}$.
Then, 
$f=f_{\frac{1}{2},\frac{1}{2}}\circ \opn{exp}
\colon \underline{\mathbb{R}}\to \mathbb{R}$ is a
nonnegative weakly-smooth function which is strictly monotone 
except at $x=-\opn{log}2$ and $\{f>0\}=(-\infty,0)$.
By \cref{exmaple-weakly-smooth-line}, $f$ is also
bounded. In this case, we can also extend 
$f|_U$ to $C$ by zero outside $U$.

If $U=\mathbb{R}$, then $C=\mathbb{T}P^{1}$.
$f\colon \mathbb{T}P^{1}\to\mathbb{R};x\mapsto 
f_{\frac{1}{2},\frac{1}{2}}\circ \opn{exp}(x)
+f_{\frac{1}{2},\frac{1}{2}}\circ \opn{exp}(-x)$
satisfies the conditions $(1)$ and $(2)$.

By combining with the above three cases, we can 
construct $f$ satisfying the conditions
$(1)$ and $(2)$ for every $U$.
\end{proof}

\begin{proposition}
\label{proposition-permissible-divisor}
Let $s$ be a $C^{\infty}$-divisor on a compact
tropical curve $C$.
Then, there exists
a prepermissible $C^{\infty}$-divisor $s'$ which is linearly
equivalent to $s$ and
$s_0\cap s'$ is a finite set. 
\end{proposition}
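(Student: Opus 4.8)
The plan is to exhibit a representative of the class $[s]\in\Pic C$ whose associated torus-valued slope --- the stratified Lagrangian section $\check{s}$ of \cref{proposition-prepermissible-section} --- vanishes at only finitely many points. Recall that $[\cdot]$ is surjective because $\mathcal{A}_C^{\opn{weak}}$ is fine, and that adding a principal $C^{\infty}$-divisor $(g)^{\mathrm{sm}}$ does not change the class; so it is enough to build one convenient divisor in the class $[s]$ and then correct it by global weakly-smooth functions, which keeps it linearly equivalent to $s$. I would start from a divisor $D=\sum_p a_p[p]$ representing $[s]$ and, using the linear equivalence of divisors on the tropical curve $C$ (sliding the points $p$ along the edges), arrange that $\opn{supp}(D)$ lies in the edge-interiors $C_{\opn{reg}}$, away from the finitely many vertices of valence $\ne2$ and the $1$-valent ends. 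Smoothing the corners of a tropical rational function $\phi$ with $\opn{div}(\phi)=D$ inside charts yields a genuine $C^{\infty}$-divisor $s_0'$; since the smoothing leaves the $\mathcal{O}_C^{\times}$-valued transition cocycle unchanged, $[s_0']=[s]$.

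Next I would pin down the behaviour at the special points, of which there are only finitely many. By \cref{proposition-prepermissible-curve}, prepermissibility at a vertex $v$ of valence $\ge3$ forces $df(v)\in\tform^{1}_{\Z,C,v}$, so every such $v$ must lie in $s_0\cap s'$; I arrange this by taking $\phi$, and hence $s_0'$, integral-affine near $v$, which is possible precisely because $\opn{supp}(D)$ now avoids $v$, making $s_0'$ smooth with integral differential there. At each $1$-valent end the local model is $[-\infty,a)$: every weakly-smooth function is prepermissible, its slope tends to $0$ by \cref{exmaple-weakly-smooth-line}, and by \cref{proposition-1-valent} the end contributes a single isolated point to $s_0\cap s'$.

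The heart of the argument, and the main obstacle, is on the edge-interiors. There every slope is prepermissible since the lineality is one-dimensional, but an integral-affine representative has constant integer slope on whole segments, so $s_0\cap s'$ would be infinite: prepermissibility at the vertices forces the slope to hit integers exactly at the vertices, whereas finiteness demands that it avoid integer values on the punctured edges nearby. I resolve this tension with the bump functions of \cref{proposition-bump-interval}. Choosing $U$ to be a finite union of open intervals covering $C_{\opn{reg}}$ and avoiding the vertices, I add the resulting nonnegative weakly-smooth $\beta$: it is principal, so $[s']$ is unchanged; it satisfies $d\beta=0$ at the vertices and ends, so the integral vertex slopes, and hence prepermissibility, are preserved; and by its property~(2) the set $U\cap\{\beta'\in\Z\}$ is finite. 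On a segment where the base slope is a constant integer $m$ the perturbed slope is $m+\beta'$, integral only finitely often, so $s'\deq s_0'+(\beta)^{\mathrm{sm}}$ has $s_0\cap s'$ finite.

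Finally I would check the construction on each topological type of edge. Bounded edges, together with the monotone transition regions produced when smoothing the slope-jumps of $D$, are handled by the ordinary bumps $f_{a,b}$, and infinite edges by $f_{a,b}\circ\opn{exp}$, whose slope decays to $0$ at the end while still meeting each integer finitely often; these are exactly the first two cases in the proof of \cref{proposition-bump-interval}. A loop component carries no vertex and is treated directly as in \cref{tropical elliptic curve}: a degree $d\ne0$ loop by a slope winding $d$ times, with $|d|$ transverse zeros, and a degree $0$ loop either by a nonzero constant slope, giving empty intersection, or, for the trivial class, by a single small oscillation with two transverse zeros. Assembling these local pieces yields a prepermissible $s'$ linearly equivalent to $s$ with $s_0\cap s'$ finite.
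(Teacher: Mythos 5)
Your proposal is correct in substance, but it takes a genuinely different route from the paper. The paper never leaves the smooth world: starting from an arbitrary local data $\{(U_i,f_i)\}$ for $s$, it (i) uses fineness of $\mathcal{A}^{\mathrm{weak}}_C$ to produce a global weakly-smooth $h$ agreeing with the $f_i$ on disjoint compact neighborhoods $K_v$ of the points of $C\setminus C_{\reg}$, so that $s-(h)^{\mathrm{sm}}$ has \emph{constant} representatives near every vertex (hence is prepermissible), and then (ii) observes that the interior of $(s_0\cap s)\cap C_{\reg}$ is a finite union of open intervals and adds exactly the bump function of \cref{proposition-bump-interval} supported there to make the intersection finite. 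You instead normalize to a piecewise-linear model: represent $[s]$ by an ordinary Cartier divisor $D$ with $\opn{supp}(D)\subset C_{\reg}$ (this is what \cref{lemma-csoft-tropical-rational-function} supplies, and is the same device the paper uses later in \cref{proposition-rotation-degree}), smooth the corners, and then perturb by bumps; linear equivalence to $s$ --- not just equality of classes --- is indeed recovered, since $H^1(C;\mathcal{A}^{\mathrm{weak}}_C)=0$ makes the kernel of $[\cdot]$ exactly the principal $C^{\infty}$-divisors. The trade-off: the paper's two-step correction is shorter and purely local, with no detour through $\mcal{M}^{\times}_C$ or $\Pic C$; your PL normal form costs those extra ingredients but makes the integral-slope locus explicitly a finite union of closed segments and points, so the final finiteness after adding $\beta$ is completely transparent (the paper's corresponding step is terser, since for a general smooth representative one must still argue that $\{df\in\tform^1_{\mathbb{Z},C}\}$ minus its interior is finite), and the normal form is convenient again when computing rotation numbers.

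One imprecision to repair, though it does not threaten the approach: your slope analysis $m+\beta'$ is only valid where the base slope is a constant integer, yet you let $U$ cover all of $C_{\reg}$, including the smoothing transition regions where the base slope is non-constant; there $f'+\beta'$ could a priori hit $\mathbb{Z}$ infinitely often. The clean fix is to build the corner-smoothings with strictly monotone derivative (so each transition region meets $\{df\in\mathbb{Z}\}$ in finitely many points on its own) and to support the bumps of \cref{proposition-bump-interval} only on the locus of locally constant integral slope --- including, as you do, the germs at $1$-valent ends, where the support must reach $-\infty$ because the representative is eventually constant there (the mere vanishing of the limit slope from \cref{exmaple-weakly-smooth-line} is not what isolates the end point; the nonvanishing of $\beta'$ near the end is).
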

\begin{proof}
We may assume $C$ is connected.
Fix a local data $\{(U_i,f_i)\}_{i\in I}$
for $s$. We can assume $I$ is finite
since $C$ is compact.
Let $K_v$ be a compact and connected neighborhood 
of $v (\in C\setminus C_{\reg})$ such that
$K_{v}\subset U_i$ for some $i\in I$ and 
$K_v\cap K_{v'}=\emptyset$ when $v\ne v'$ for
all $v,v'\in C\setminus C_{\reg}$.
Let $h\colon C\to \mathbb{R}$ be a weakly-smooth function
such that $h|_{K_v}=f_i|_{K_v}$.
Such an $h$ exists since $\mathcal{A}^{\mathrm{weak}}_C$
is fine. 
The set $\{(U_i,h|_{U_i})\}_{i\in I}$ is a local data for 
$(h)^{\mathrm{sm}}$.
From construction,
$s'\deq s-(h)^{\mathrm{sm}}$ is linearly-equivalent 
to $s$, prepermissible 
and has a local data 
$\{(U_i,\tilde{f}_i)\}_{i\in I}$ for $s'$
satisfying $\tilde{f}_i|_{K_v}$ is constant for all 
$i\in I$ and $v\in C\setminus C_{\reg}$. 

From now on, we assume $s$ is prepermissible.
We may assume $s\ne s_0$.
By \cref{proposition-prepermissible-section},
$s_0\cap s$ is closed and compact.
Therefore, $(s_0\cap s)\cap C_{\reg}$ is 
a finite union of closed subset of $C_{\reg}$.

If $s$ is not trivial, then each connected
component of the interior 
$\opn{int}((s_0\cap s)\cap C_{\reg})$ is 
isomorphic to an open interval.
Let $U\deq \opn{int}((s_0\cap s)\cap C_{\reg})$ and 
$f$ be a weakly-smooth function on $C$
satisfies the conditions of 
\cref{proposition-bump-interval}.
Then, $s'=s+(f)^{\mathrm{sm}}$ is also
prepermissible and $\sharp(s_0\cap s')<\infty$.
\end{proof}

Let $C$ be a $1$-dimensional rational polyhedral space.
$\check{X}_0(C_{\reg})$ has a canonical symplectic structure,
and thus there exists a canonical orientation induced
from the symplectic form on $\check{X}(C_{\reg})$.

\tikzset{every picture/.style={line width=0.75pt}} 

\begin{figure}
  \centering
  \begin{tikzpicture}[x=0.75pt,y=0.75pt,yscale=-1,xscale=1]

    \draw   (220,160.5) .. controls (220,138.13) and (230.07,120) .. (242.5,120) .. controls (254.93,120) and (265,138.13) .. (265,160.5) .. controls (265,182.87) and (254.93,201) .. (242.5,201) .. controls (230.07,201) and (220,182.87) .. (220,160.5) -- cycle ;
    \draw   (385,160.5) .. controls (385,138.13) and (395.07,120) .. (407.5,120) .. controls (419.93,120) and (430,138.13) .. (430,160.5) .. controls (430,182.87) and (419.93,201) .. (407.5,201) .. controls (395.07,201) and (385,182.87) .. (385,160.5) -- cycle ;
    \draw    (242.5,120) -- (407.5,120) ;
    \draw    (242.5,201) -- (407.5,201) ;
    \draw  [dash pattern={on 15pt off 1.5pt}]  (220,160.5) -- (385,160.5) ;
    \draw    (220,160.5) .. controls (263,97.5) and (348,224.5) .. (385,160.5) ;
  \end{tikzpicture}

\caption{A continuous section $s\colon [0,1]\to [0,1]\times \mathbb{R}/\mathbb{Z}$}
\end{figure}

As like \cite{auroux2022lagrangian},
we also use the notion of the \emph{rotation number} of 
$C^{\infty}$-divisors for the proof of 
\cref{theorem-MRR-curve-preface}. 
From now on, we start to prepare for defining
the rotation number of a permissible
$C^{\infty}$-divisor on a $1$-dimensional rational
polyhedral space.

We recall there exists a canonical 
embedding $\check{X}_0([a,b])\to [a,b]\times 
\mathbb{R}^{\vee}/\mathbb{Z}^{\vee}$ for 
any closed interval $[a,b]$ in $\mathbb{T}P^1$.
We usually identify $\check{X}_0([a,b])$ with 
the image of the above embedding.
\begin{definition}
\label{definition-rotation-interval}
Fix a closed interval $[a,b]$ in $\mathbb{T}P^{1}$.
Let $s\colon [a,b]
\to [a,b]\times \mathbb{R}^{\vee}/\mathbb{Z}^{\vee}$ be 
a continuous section of the canonical projection
$p\colon [a,b]\times \mathbb{R}^{\vee}/\mathbb{Z}^{\vee} \to [a,b]$
such that $s(a)=(a,0)$ and $s(b)=(b,0)$. 
The rotation number $\opn{rot}(s)$
is the homotopy class of the closed path
$q\circ s\colon [a,b]\to \mathbb{R}^{\vee}/\mathbb{Z}^{\vee}$
induced from $s$ and the canonical projection
$q\colon [a,b]\times \mathbb{R}^{\vee}/\mathbb{Z}^{\vee}
\to \mathbb{R}^{\vee}/\mathbb{Z}^{\vee}$.
Here, 
we identify $\pi_1(\mathbb{R}^{\vee}/\mathbb{Z}^{\vee},0)\simeq \mathbb{Z}$
by the canonical orientation of
$\mathbb{R}^{\vee}/\mathbb{Z}^{\vee}$.
\end{definition}

The canonical orientation on 
$[a,b]\times \mathbb{R}^{\vee}/\mathbb{Z}^{\vee}$
is compatible with the symplectic orientation 
of $\check{X}((a,b))$.

\begin{lemma}
\label{lemma-rotation-slope}
Let $s$ be a prepermissible $C^{\infty}$-divisor on 
a closed interval $[a,b]$ in $\mathbb{T}P^{1}$ 
and
$f\colon (a,b)\to \mathbb{R}$ be 
a weakly-smooth function
such that $s|_{(a,b)}=(f)^{\mathrm{sm}}$.
Then, 
\begin{align}
\label{equation-degree-rot-line}
\lim_{x\to b-0}f'(x)-\lim_{x\to a+0} f'(x)
=\opn{rot}(\check{s}).
\end{align}

\end{lemma}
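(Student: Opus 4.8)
The plan is to compute the rotation number directly from its definition as the homotopy class of the loop $q\circ\check{s}\colon[a,b]\to\mathbb{R}^{\vee}/\mathbb{Z}^{\vee}$, by exhibiting a continuous lift of this loop to the universal cover $\mathbb{R}^{\vee}$ and reading off the total variation of that lift; the decisive observation is that on the interior the derivative $f'$ itself \emph{is} such a lift.

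First I would record the explicit shape of $\check{s}$ on the interior. Under the canonical embedding $\check{X}_0([a,b])\hookto[a,b]\times\mathbb{R}^{\vee}/\mathbb{Z}^{\vee}$ and \cref{proposition-prepermissible-section}, for $x\in(a,b)$ one has $\check{s}(x)=(x,\opn{pr}_{C,x}(df(x)))$, and the coordinate $x$ trivializes $\tform^{1}_{\mathbb{Z}}$ so that $df(x)$ is identified with $f'(x)\in\mathbb{R}^{\vee}$ and $\opn{pr}_{C,x}$ becomes reduction modulo $\mathbb{Z}^{\vee}$. Hence $q\circ\check{s}(x)=f'(x)\bmod\mathbb{Z}$ on $(a,b)$, and the continuous real-valued function $f'\colon(a,b)\to\mathbb{R}^{\vee}$ is a lift of the interior portion of the loop.

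Second I would prove that the one-sided limits $L_a\deq\lim_{x\to a+0}f'(x)$ and $L_b\deq\lim_{x\to b-0}f'(x)$ exist and lie in $\mathbb{Z}$, so that $f'$ extends to a continuous lift $[a,b]\to\mathbb{R}^{\vee}$ of the whole loop. Each endpoint is either a point at infinity of $\mathbb{T}P^{1}$ or a finite point. At a point at infinity, \cref{exmaple-weakly-smooth-line} gives $f'\to 0\in\mathbb{Z}$. At a finite endpoint the local cone is a ray, so $\opn{span}_{\mathbb{R}}\opn{SS}(S)$ exhausts the cotangent line; by the remark following \cref{cond: prepermissible}, prepermissibility forces the differential at the vertex to be integral, and smoothness up to the boundary identifies $L_a$ (resp.\ $L_b$) with that integer. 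In all cases $\check{s}(a)=(a,0)$ and $\check{s}(b)=(b,0)$, so that $\opn{rot}(\check{s})$ is defined, and the limits are the integer lift-values of $0$ as needed.

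Finally I would invoke the standard identification $\pi_1(\mathbb{R}^{\vee}/\mathbb{Z}^{\vee},0)\simeq\mathbb{Z}$, under which the class of a based loop equals the difference of the values at the two endpoints of any continuous lift to $\mathbb{R}^{\vee}$. Applied to the lift $f'$ this yields $\opn{rot}(\check{s})=L_b-L_a$, which is exactly \eqref{equation-degree-rot-line}, provided the canonical orientation of $\mathbb{R}^{\vee}/\mathbb{Z}^{\vee}$ is the one making the $+y$ direction positive. The hard part will be the second step: one must check that $f'$ really extends continuously across the vertices with integer values there, treating the finite and the infinite endpoints on the same footing since $[a,b]\subset\mathbb{T}P^{1}$ may contain a point at infinity. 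A subtler but genuine point is fixing the sign, i.e.\ confirming that the symplectic convention $dx\wedge dy$ orienting the cylinder $\check{X}((a,b))$ makes the winding of $f'$ count as $L_b-L_a$ and not its negative.
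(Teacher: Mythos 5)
Your proposal is correct and takes essentially the same route as the paper's proof: the paper likewise lifts the loop $q\circ\check{s}$ to $\mathbb{R}^{\vee}$ via the one-sided limits of $f'$ (its lift is $g(z)=\lim_{x\to z-0}f'(x)-\lim_{x\to a+0}f'(x)$, i.e.\ your lift normalized to start at $0$) and reads off the winding number from the definition of the fundamental group, with your endpoint analysis (integrality at a finite vertex from prepermissibility since $\opn{span}_{\mathbb{R}}\opn{SS}(S)_x$ is the whole cotangent line, and \cref{exmaple-weakly-smooth-line} at a point at infinity) just making explicit what the paper leaves terse. One small slip that does not affect the argument: at an infinite endpoint it is the local representative of $s$ whose derivative tends to $0$, so $f'$ tends to the integer slope of the transition cocycle rather than to $0$ itself; since you only use existence and integrality of the limit, the proof stands.
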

\begin{proof}
From \cref{exmaple-weakly-smooth-line}
and $s|_{(a,b)}=(f)^{\mathrm{sm}}$,
the RHS of \cref{equation-degree-rot-line} is an
integer.
A continuous function
$g\colon [a,b]\to \mathbb{R}^{\vee};z\mapsto 
\lim_{x\to z-0}f'(x)-\lim_{x\to a+0} f'(x)$
is just a lift of the
rotation map $q\circ\check{s}$ defined in 
\cref{definition-rotation-interval}.
Therefore, we can get our assertion from definition 
of fundamental group. 
\end{proof}

\begin{definition}
Let $s$ be a prepermissible $C^{\infty}$-divisor on 
a closed interval $[a,b]$ in $\mathbb{T}P^{1}$.
We define $\opn{rot}_0(s)\deq\opn{rot}(\check{s})$.
\end{definition}

\cref{proposition-rotation-number}
and \cref{remark-rotation-slope} 
show that the rotation number of 
a permissible $C^{\infty}$-divisor is
independent of the choice of embeddings
of closed intervals into $\mathbb{T}P^{1}$. 

\begin{proposition} \label{proposition-rotation-number}
Let 
$\phi \colon [a,b] \to [a,b]$ be an automorphism of 
a closed interval in $\mathbb{T}P^{1}$, 
$s\colon [a,b]\times \mathbb{R}^{\vee}/\mathbb{Z}^{\vee}$
a section of the canonical projection 
$q\colon [a,b]\times \mathbb{R}^{\vee}/\mathbb{Z}^{\vee} \to [a,b]$ and 
$\varphi \colon [a,b]\times \mathbb{R}^{\vee}/\mathbb{Z}^{\vee} \to 
[a,b]\times \mathbb{R}^{\vee}/\mathbb{Z}^{\vee}$ 
the symplectic automorphism induced from $\phi$.
Then, $\opn{rot}(\varphi\circ s \circ \phi^{-1})=\opn{rot}(s)$.
\end{proposition}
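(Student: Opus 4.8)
The plan is to classify the automorphism $\phi$ and then reduce the claim to a short winding-number computation in $\mathbb{R}^{\vee}/\mathbb{Z}^{\vee}$. Since $[a,b]$ carries the integral affine structure inherited from $\mathbb{T}P^{1}$, any automorphism $\phi$ is affine with integer slope whose inverse is again an integer, so the slope is $\pm 1$; preserving the interval then forces $\phi$ to be either $\opn{id}$ or the orientation-reversing involution $\phi(x)=a+b-x$, which is the restriction of the negation automorphism of $\mathbb{T}P^{1}$ discussed above. For $\phi=\opn{id}$ we have $\varphi=\opn{id}$ and the statement is trivial, so I would concentrate on the flip.

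For the flip I would first pin down $\varphi$. As $\varphi$ is induced by the pullback on cotangent bundles, exactly as in the case where negation on $\mathbb{T}P^{1}$ induces $(x;y)\mapsto(-x;-y)$, the flip gives $\varphi(x,y)=(a+b-x,-y)$ on $[a,b]\times\mathbb{R}^{\vee}/\mathbb{Z}^{\vee}$. Writing $s'\deq\varphi\circ s\circ\phi^{-1}$, I would check that $s'$ still meets the basepoint conditions of \cref{definition-rotation-interval} so that $\opn{rot}(s')$ is defined: since $\phi^{-1}=\phi$ swaps the endpoints and $\varphi$ carries the zero-section fibres over them to themselves, one computes $s'(a)=\varphi(s(b))=\varphi(b,0)=(a,0)$ and likewise $s'(b)=(b,0)$.

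The heart is then a direct computation. Writing $s(x)=(x,\sigma(x))$ with fibre loop $\gamma\deq q\circ s=\sigma$, one obtains $s'(x)=(x,-\sigma(a+b-x))$, so the fibre loop of $s'$ is $\gamma'(x)=-\sigma(a+b-x)$. This differs from $\gamma$ by two operations: the reparametrization $x\mapsto a+b-x$, which reverses the path and sends the class $[\gamma]$ to $-[\gamma]$ in $\pi_{1}(\mathbb{R}^{\vee}/\mathbb{Z}^{\vee},0)$, and the fibre negation $y\mapsto -y$, which reverses the canonical orientation of the circle and again negates the class. I expect the only genuine subtlety to be the bookkeeping that guarantees these two sign changes cancel rather than reinforce. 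Concretely, lifting $\sigma$ to $\tilde{\sigma}\colon[a,b]\to\mathbb{R}^{\vee}$ with $\tilde{\sigma}(a)=0$ and setting $\tilde{\sigma}'(x)\deq\tilde{\sigma}(b)-\tilde{\sigma}(a+b-x)$ produces a lift of $\gamma'$ with $\tilde{\sigma}'(a)=0$ and $\tilde{\sigma}'(b)=\tilde{\sigma}(b)$, whence $\opn{rot}(s')=\tilde{\sigma}'(b)-\tilde{\sigma}'(a)=\tilde{\sigma}(b)=\opn{rot}(s)$, which is the desired equality.
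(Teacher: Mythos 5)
Your proof is correct and follows essentially the same route as the paper: reduce to the orientation-reversing flip, compute $\varphi(x,y)=(a+b-x,-y)$ and the transformed fibre loop $-\sigma(a+b-x)$, and observe that the path reversal and the fibre negation each negate the class in $\pi_1(\mathbb{R}^{\vee}/\mathbb{Z}^{\vee},0)$ so the two signs cancel --- your explicit lift computation (using that $\tilde{\sigma}(b)\in\mathbb{Z}^{\vee}$, so $\tilde{\sigma}'$ really is a lift) just makes precise what the paper dismisses with ``from definition of fundamental group''. The only piece you omit is the case of intervals with infinite endpoints, in particular $[a,b]=\mathbb{T}P^{1}$, where your classification of automorphisms is incomplete (translations $x\mapsto x+c$ also occur); the paper covers this with a one-line remark, and your computation extends to it with only trivial modifications since a translation reparametrizes the fibre loop without changing its class.
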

\begin{proof}
Since every automorphism of 
$[a,b]$ is a composition of the negation map 
with a translation map, we only need to consider the case
when $\phi$ is the negation map.
In this case, $\varphi(x,y)=(b+a-x,-y)$ for 
$(x,y)\in [a,b]\times \mathbb{R}^{\vee}/\mathbb{Z}^{\vee}$ if 
$a,b\notin \{-\infty,\infty\}$.
Since $s(x)=(x,q\circ s(x))$ for all $x\in [a,b]$,
we have
\begin{align}
q\circ \psi \circ s \circ \phi^{-1}(x)=
-q\circ s(\phi^{-1}(x)))=-q\circ s(b+a-x).
\end{align}
From definition of fundamental group, 
$\opn{rot}(\varphi\circ s \circ \phi^{-1})$
should be $\opn{rot}(s)$.
The same method works for $[a,b]=\mathbb{T}P^{1}$.
\end{proof}

\begin{remark}
\label{remark-rotation-slope}
Let $s$ be
a prepermissible $C^{\infty}$-divisors
on a closed interval $[a,b]$ in $\mathbb{T}P^{1}$,
$f\colon (a,b)\to \mathbb{R}$
a permissible weakly-smooth function such that 
$s|_{(a,b)}=(f)^{\mathrm{sm}}$
and $\phi\colon [a,b]\to [a,b]\colon x\mapsto b+a-x$.
Then,
\begin{align}
\lim_{x->b-0}\phi^{*}(f)'(x)-\lim_{x->a+0}\phi^{*}(f)'(x)
=-\lim_{x->a+0}f'(x)+\lim_{x->b-0}f'(x)
\end{align}
Therefore, $\opn{rot}_0(s)=
\opn{rot}_0(\phi^{*}s)$.
This is a special case of \cref{proposition-rotation-number}.
\end{remark}

From \cref{proposition-prepermissible-section} and 
\cref{proposition-rotation-number}
(or \cref{remark-rotation-slope}), we can define the 
rotation number for every permissible
$C^{\infty}$-divisor (\cref{{definition-rotation-number}}).

\begin{definition}
Let $C$ be a $1$-dimensional compact rational polyhedral space
and $s$ a prepermissible $C^{\infty}$-divisor on $C$.
We set
\begin{align}
I(C,s)\deq \{
e\in\pi_0(C\setminus (s_0\cap s))\mid e 
\text{ is an open interval}\}.
\end{align}
\end{definition}

\begin{definition}[{cf. \cite{auroux2022lagrangian}}]
\label{definition-rotation-number}
Let $s$ be a prepermissible $C^{\infty}$-divisor 
and a $1$-dimensional compact rational polyhedral space $C$ 
and 
$\check{s}$ its associated Lagrangian section.
The \emph{rotation number} of $s$ is 
\begin{align}
\opn{rot}(s)\deq \sum_{e\in I(C,s)}
\opn{rot}_0(\psi_{\bar{e}}^{*}(s)),
\end{align}
where  
$\psi_{\bar{e}}\colon [a_e,b_e]\to C$ is 
a continuous map such that the restriction of $\psi_{\bar{e}}$ 
on $(a_e,b_e)$ is an isomorphism onto $e$.
\end{definition}

The rotation number for prepermissible 
$C^{\infty}$-divisors is equal to 
the degree of the divisor class $[s]$ of 
prepermissible $C^{\infty}$-divisor when
$C$ is a compact tropical curve
(\cref{proposition-rotation-degree}). 

From now on, we start the proof of 
\cref{theorem-MRR-curve-preface}.

\begin{proposition}
\label{proposition-trivial-case-curve}
Let $s$ be a prepermissible $C^{\infty}$-divisor 
and a $1$-dimensional compact rational polyhedral space $C$
such that $s_0\cap s=\emptyset$. Then, 
\begin{align}
  \opn{LMD}^{\bullet}(C;s)=\chi_{\opn{top}}(C)=0.
\end{align}
\end{proposition}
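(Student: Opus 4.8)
The plan is to establish the two asserted equalities $\opn{LMD}^{\bullet}(C;s)=0$ and $\chi_{\opn{top}}(C)=0$ separately: the first is purely formal, while the second carries the geometric content of the hypothesis $s_0\cap s=\emptyset$.

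First I would dispose of the module. By \cref{definition-local-morse-data-divisor} one has $\opn{LMD}^{\bullet}(C;s)=\opn{LMD}^{\bullet}_{\mathbb{Z}}(s_0,s)=\bigoplus_{p\in s_0\cap s}\opn{LMD}^{\bullet}(\Z_{U_p},f_p,p)$, a direct sum indexed by the intersection set $s_0\cap s$. Since $s_0\cap s=\emptyset$ by hypothesis, the indexing set is empty, so $\opn{LMD}^{\bullet}(C;s)=0$ and in particular $\chi(\opn{LMD}^{\bullet}(C;s))=0$.

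The heart of the matter is to show that $s_0\cap s=\emptyset$ forces $C=C_{\reg}$, i.e.\ that $C$ possesses no genuine vertices. I would argue by contradiction: suppose $v\in C\setminus C_{\reg}$. Since $v$ is not a generic point, its local cone $\opn{LC}_v C$ is not a single line; for a one-dimensional fan this forces $\opn{lineal}(\opn{LC}_v C)=0$ (a one-dimensional fan with one-dimensional lineality is exactly a line, as already observed in the proof of \cref{proposition-prepermissible-curve}). Pulling back a local potential $f$ for $s$ to $\opn{LC}_v C$ and applying \cref{proposition-prepermissible-curve}, the prepermissibility of $s$ at $v$ together with the triviality of the lineality forces $df(v)\in\tform^{1}_{\mathbb{Z},C,v}$. (At a one-valent vertex lying at $-\infty$, where the hypotheses of that proposition do not literally apply, the same conclusion holds trivially, since there $\tform^{1}_{\mathbb{Z},C,v}=0$ and $df(v)=0$ by \cref{exmaple-weakly-smooth-line}.) By \cref{definition-divisor-intersection} this says $v\in s_0\cap s$, contradicting the hypothesis. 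Hence $C\setminus C_{\reg}=\emptyset$, i.e.\ $C=C_{\reg}$.

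Finally I would read off the topology. With $C=C_{\reg}$, \cref{condition-simplicity-curve} presents $C$ as a disjoint union of open intervals and circles; compactness of $C$ rules out the open-interval components, since each such component would be a non-compact clopen subset, so $C$ is a finite disjoint union of circles. As $\chi_{\opn{top}}(S^1)=0$ and the topological Euler characteristic is additive over the finitely many connected components, $\chi_{\opn{top}}(C)=0$, which completes both equalities. The hard part is exactly the middle step: certifying that \emph{every} non-generic point of $C$ lies in $s_0\cap s$, which requires matching the local-cone description of vertices with the prepermissibility dichotomy of \cref{proposition-prepermissible-curve} and separately handling the degenerate one-valent case at $-\infty$.
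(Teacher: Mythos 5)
Your proof is correct and follows the same route as the paper's (which merely states that $\opn{LMD}^{\bullet}(C;s)=0$ is trivial, that $C\setminus C_{\reg}\subset s_0\cap s$ holds ``from definition,'' and that $C$ is then a disjoint union of tropical elliptic curves). Your middle step --- deducing $df(v)\in\tform^{1}_{\mathbb{Z},C,v}$ at every non-generic $v$ from \cref{proposition-prepermissible-curve}, with the $1$-valent vertex at $-\infty$ handled via the trivial local cone --- is exactly the content the paper compresses into ``from definition,'' so you have simply supplied the details it omits.
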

\begin{proof}
The equation $\opn{LMD}^{\bullet}(C;s)=0$ is trivial.
From definition, $C\setminus C_{\reg} \subset s_0\cap s=\emptyset$.
Then, $C$ must be the disjoint union of
tropical elliptic curves. 
\end{proof}

\begin{proposition}
\label{proposition-simple-interval-rr}
Let $C=[a,b]$ be a closed interval in $\mathbb{T}P^{1}$
and $s$ a prepermissible $C^{\infty}$-divisor on 
$C$
such that $s_0\cap s=\{a,b\}$. Then,
\begin{align}
\chi(\opn{LMD}^{\bullet}(C;s))=\opn{rot}(s)+
\chi_{\opn{top}}(C).
\end{align}
\end{proposition}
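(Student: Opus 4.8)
The plan is to reduce the identity to the two endpoint contributions and then match them against the rotation number through the slope lift of \cref{lemma-rotation-slope}. First I would record that $\chi_{\opn{top}}([a,b]) = 1$ and that, by \cref{definition-local-morse-data-divisor}, the module $\opn{LMD}^{\bullet}(C;s)$ splits as the direct sum of the local Morse data at the two intersection points $s_0 \cap s = \{a,b\}$. Hence, for any intersection data $\{(U_a,f_a),(U_b,f_b)\}$ one has $\chi(\opn{LMD}^{\bullet}(C;s)) = \opn{ind}_a(f_a) + \opn{ind}_b(f_b)$, and the claim is equivalent to $\opn{ind}_a(f_a) + \opn{ind}_b(f_b) = \opn{rot}(s) + 1$.

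Next I would compute each endpoint index. Both $a$ and $b$ are $1$-valent in $C = [a,b]$, so near each of them $f_\bullet$ is strictly monotone on the punctured neighbourhood (its only critical point being the endpoint, by \cref{definition-intersection-data}), and \cref{proposition-1-valent} gives that $\{f_\bullet < 0\}$ is either empty or the whole punctured neighbourhood. Feeding this into \cref{prop-local-morse-data} and \cref{example-local-cohomology}, the local Morse data is $\tilde{H}^{\bullet-1}(\emptyset;A) \simeq A[0]$ in the first case and $\tilde{H}^{\bullet-1}(\mathrm{pt};A) \simeq 0$ in the second. Writing $q_a, q_b \in \{0,1\}$ for the number of connected components of $\{f_a < 0\}$, $\{f_b<0\}$ near $a$ and $b$, this reads $\opn{ind}_a(f_a) = 1 - q_a$ and $\opn{ind}_b(f_b) = 1 - q_b$, so the target identity becomes $\opn{rot}(s) = 1 - q_a - q_b$.

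Then I would identify $q_a, q_b$ with the rotation number by passing to a single weakly-smooth $f$ on $(a,b)$ with $s|_{(a,b)} = (f)^{\mathrm{sm}}$. The normalized local representatives differ from $f$ by integral affine functions, so $f_a' = f' - \alpha$ and $f_b' = f' - \beta$, where $\alpha, \beta \in \mathbb{Z}$ are the endpoint slopes; thus $g(x) \deq f'(x) - \alpha$ is exactly the lift of $q\circ\check{s}$ from \cref{lemma-rotation-slope}, with $g(a) = 0$ and $g(b) = \beta - \alpha = \opn{rot}(s)$. Since no interior point lies in $s_0 \cap s$, we have $f'(x) \notin \mathbb{Z}$ there, so $g$ avoids $\mathbb{Z}$ on the connected set $(a,b)$ and $g((a,b))$ is contained in a single band $(m,m+1)$; this already forces $\opn{rot}(s) \in \{-1,0,1\}$. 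The sign of $f_a$ just to the right of $a$ equals the sign of $\int_a^{\cdot} g$, i.e. the sign of $g$ on the band, while the sign of $f_b$ just to the left of $b$ is controlled by $f_b' = g - \opn{rot}(s)$.

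Finally I would run the case analysis over $\opn{rot}(s)\in\{-1,0,1\}$: when $\opn{rot}(s)=1$ the band is $(0,1)$, so $g>0$ and $g-1<0$ give $q_a = q_b = 0$; when $\opn{rot}(s)=-1$ the band is $(-1,0)$, giving $q_a = q_b = 1$; and when $\opn{rot}(s)=0$ exactly one of $q_a, q_b$ equals $1$ according to whether the band is $(0,1)$ or $(-1,0)$. In every case $1 - q_a - q_b = \opn{rot}(s)$, which completes the argument. The main obstacle I anticipate is the bookkeeping of the endpoint signs: the local representatives are defined only up to integral affine functions, and an endpoint may sit at $-\infty$, where the linear slope vanishes by \cref{exmaple-weakly-smooth-line}; for this reason the sign of $f_\bullet$ near an endpoint must be read off from the lift $g$ (equivalently, from the derivative in the exponential chart) rather than from the value of a slope, and it is precisely this rephrasing that makes the three cases uniform.
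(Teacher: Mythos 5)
Your proposal is correct and follows essentially the same route as the paper: the paper's proof is exactly this direct finite case check, invoking \cref{proposition-n-valent} and \cref{proposition-1-valent} for the endpoint indices and the fact that $\opn{rot}(s)\in\{-1,0,1\}$. Your band argument (that $g$ avoids $\mathbb{Z}$ on $(a,b)$ since no interior point lies in $s_0\cap s$) and the sign bookkeeping at a possibly infinite endpoint via \cref{lemma-rotation-slope} and \cref{exmaple-weakly-smooth-line} are precisely the details the paper leaves implicit in its one-line proof.
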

\begin{proof}
Since there are a finite number of types of 
intersection data and 
$\opn{rot}(s)=-1,0,1$,
we can directly compute it using
\cref{proposition-n-valent} and 
\ref{proposition-1-valent} directly.
\end{proof}

Let $C$ be a $1$-dimensional rational polyhedral space,
$C'$ be a rational
polyhedral subspace of $C$ and $i_{C'}$
the inclusion map of $C'$. Then, the pullback of 
a weakly-smooth function is also weakly smooth from
definition, so there exists a morphism 
$\iota_{C'}^{-1}\mathcal{A}^{\mathrm{weak}}_C\to 
\mathcal{A}^{\mathrm{weak}}_{C'}$.
This also induces the pullback 
$\iota_{C'}^{*}\colon \CDiv(C')\to \CDiv(C)$
(see also \cite[Proposition 3.15]{gross2019sheaftheoretic}).
A good point of the rotation number
$\opn{rot}(s)$ and the 
Euler characteristic of 
$\opn{LMD}^{\bullet}(C;s)$ is that
they have the
following Meyer--Vietoris type lemma.

\begin{proposition}
\label{proposition-gluing-formula}
Let $s$ be a permissible $C^{\infty}$-divisor
on a $1$-dimensional compact rational space $C$.
Fix two closed $1$-dimensional compact rational
polyhedral
subspaces $C'$, $C''$ of $C$ such that
$C' \cup C''=C$ and $C'\cap C''\subset s_0 \cap s$.
Then, 
the inclusion $i_{C'}\colon C'\to C$ 
and $i_{C''}\colon C''\to C$ induces
\begin{align}
\label{equation-meyer-vietoris}
\chi(\opn{LMD}^{\bullet}(C;s))
&=\chi(\opn{LMD}^{\bullet}(C';i_{C'}^{*}(s)))
+\chi(\opn{LMD}^{\bullet}(C'';i_{C''}^{*}(s)))
-\sharp(C'\cap C''), \\
\label{equation-rotation-sum}
\opn{rot}(s)&=\opn{rot}(i_{C'}^{*}(s))
+\opn{rot}(i_{C''}^{*}(s)).
\end{align}
\end{proposition}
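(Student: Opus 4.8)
The plan is to exploit the fact that both invariants are assembled from purely local contributions: $\chi(\opn{LMD}^{\bullet})$ is a sum indexed by the finite set $s_0\cap s$, while $\opn{rot}$ is a sum indexed by the interval components of $C\setminus(s_0\cap s)$. Cutting $C$ along points of $s_0\cap s$ then simply distributes these contributions between $C'$ and $C''$. The structural input I would establish first is a local dichotomy: for every $p\in C'$, either $p$ admits a neighborhood in $C$ entirely contained in $C'$ (because $C''$ is closed and $C=C'\cup C''$), or $p\in C'\cap C''$. Since $C'\cap C''\subset s_0\cap s$ by hypothesis, this dichotomy shows that restriction creates no new intersection points, i.e. $s_0\cap i_{C'}^{*}(s)=(s_0\cap s)\cap C'$ and likewise for $C''$, and that at a point with a $C'$-neighborhood the germ of $(C,s)$ agrees with that of $(C',i_{C'}^{*}(s))$. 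This yields the disjoint decomposition
\begin{align*}
s_0\cap s=\big[(s_0\cap s)\cap(C'\setminus C'')\big]\sqcup\big[(s_0\cap s)\cap(C''\setminus C')\big]\sqcup(C'\cap C'').
\end{align*}

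For \eqref{equation-meyer-vietoris} I would use \cref{definition-local-morse-data-divisor} to write $\opn{LMD}^{\bullet}(C;s)$ as the direct sum over $p\in s_0\cap s$ of the germs $\opn{LMD}^{\bullet}(A_{U_p},f_p,p)$, each depending only on an arbitrarily small neighborhood by \cref{prop-local-morse-data}. At a point $p$ lying in exactly one of $C'$, $C''$ the germ coincides with the corresponding germ for $i_{C'}^{*}(s)$ or $i_{C''}^{*}(s)$, so there is no correction. At $p\in C'\cap C''$, a small neighborhood $U_p$ is the wedge of $U'=U_p\cap C'$ and $U''=U_p\cap C''$ along $p$; deleting $p$ (which is never in $\{f_p<f_p(p)\}$) splits the sublevel set as a genuine topological disjoint union $U_p\cap\{f_p<f_p(p)\}=P'\sqcup P''$ with $P'=U'\cap\{f_p<f_p(p)\}$ and $P''=U''\cap\{f_p<f_p(p)\}$. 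Writing $\tilde\chi(X)=\chi(\tilde H^{\bullet}(X;A))$ and using the reduced-cohomology description of \cref{prop-local-morse-data} together with the additivity $\tilde\chi(P'\sqcup P'')=\tilde\chi(P')+\tilde\chi(P'')+1$ (valid even when a piece is empty, with the convention $\tilde H^{-1}(\emptyset)=A$) and the degree shift, I obtain
\begin{align*}
\chi\big(\opn{LMD}^{\bullet}(A_{U_p},f_p,p)\big)=\chi\big(\opn{LMD}^{\bullet}(A_{U'},f_p|_{U'},p)\big)+\chi\big(\opn{LMD}^{\bullet}(A_{U''},f_p|_{U''},p)\big)-1.
\end{align*}
Summing this over the decomposition above produces exactly the correction $-\sharp(C'\cap C'')$, with $C$ compact and $s$ permissible (\cref{condition-good-divisor}) guaranteeing all sums are finite.

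For \eqref{equation-rotation-sum} I would argue that every connected component $e$ of $C\setminus(s_0\cap s)$ lies entirely in $C'$ or entirely in $C''$: since $e$ is connected and avoids $s_0\cap s\supset C'\cap C''$, the sets $e\cap C'$ and $e\cap C''$ are relatively closed, cover $e$, and are disjoint, so one is empty. Combined with the absence of new intersection points, this identifies $I(C',i_{C'}^{*}s)$ with the set $I'$ of interval components of $C\setminus(s_0\cap s)$ contained in $C'$, and $I(C'',i_{C''}^{*}s)$ with $I''$, giving $I(C,s)=I'\sqcup I''$. Because $\opn{rot}_0(\psi_{\bar e}^{*}s)$ depends only on $s|_{\bar e}$ and $\psi_{\bar e}$ factors through the subspace containing $e$, each edge contributes the identical term on both sides, and \cref{definition-rotation-number} together with \cref{lemma-rotation-slope} yields $\opn{rot}(s)=\opn{rot}(i_{C'}^{*}s)+\opn{rot}(i_{C''}^{*}s)$.

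The main obstacle is the bookkeeping at the gluing points $C'\cap C''$: verifying that a neighborhood in $C$ is genuinely a wedge, so that deleting $p$ disconnects the local sublevel set into its $C'$- and $C''$-parts, and that the reduced cohomology of the resulting disjoint union contributes the single extra copy of $A$ responsible for the $-1$, uniformly across the cases in which $P'$ or $P''$ is empty. Establishing the local dichotomy cleanly — and hence the equality $s_0\cap i_{C'}^{*}(s)=(s_0\cap s)\cap C'$ — is what reduces both formulas to this elementary computation; once it is in place, the rotation-number identity is essentially immediate.
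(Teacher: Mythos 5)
Your proposal is correct and follows essentially the same route as the paper: restriction creates no new intersection points (so $s_0\cap i_{C'}^{*}(s)=(s_0\cap s)\cap C'$), the rotation numbers add because each component of $C\setminus(s_0\cap s)$ lies wholly in $C'$ or in $C''$, and the Euler characteristics split at each gluing point with a defect of $-1$. Your explicit wedge computation $\chi(\opn{LMD}^{\bullet}(A_{U_p},f_p,p))=\chi(\opn{LMD}^{\bullet}(A_{U'},f_p|_{U'},p))+\chi(\opn{LMD}^{\bullet}(A_{U''},f_p|_{U''},p))-1$ via reduced Euler characteristics is precisely the local content behind the paper's terse appeal to \cref{proposition-n-valent} and the Mayer--Vietoris sequence, just carried out directly and with the bookkeeping (including the empty-sublevel-set convention) made explicit.
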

\begin{proof}
From assumption, $i^{*}_{C'}(s)$ and 
$i^{*}_{C''}(s)$ are prepermissible.
The \eqref{equation-rotation-sum} is true from
the definition of rotation number. 
 Besides,
\begin{align}
s_0\cap i^{*}_{C'}(s)=i^{-1}_{C'}(s_0\cap s), \, 
s_0\cap i^{*}_{C''}(s)=i^{-1}_{C''}(s_0\cap s), \,
s_0\cap i^{*}_{C'\cap C''}(s)=
i^{-1}_{C'\cap C''}(s_0\cap s).
\end{align}
By \cref{proposition-n-valent} 
and the Meyer--Vietoris sequence, we get 
the \eqref{equation-meyer-vietoris}.
\end{proof}

We can generalize
\cref{proposition-simple-interval-rr} by
\cref{proposition-gluing-formula} as follows.

\begin{proposition}
\label{proposition-MRR-1-dim-poly-space}
Let $C$ be a $1$-dimensional compact rational 
polyhedral space and
$s$ a permissible $C^{\infty}$-divisor on $C$.
Then,
\begin{align}
\label{equation-rotation-rr}
\chi(\opn{LMD}^{\bullet}(C;s))=\opn{rot}(s)+
\chi_{\opn{top}}(C).
\end{align}
\end{proposition}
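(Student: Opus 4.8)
The plan is to prove \eqref{equation-rotation-rr} by induction on the number $E$ of connected components of $C\setminus(s_0\cap s)$ that are open intervals, using \cref{proposition-gluing-formula} to reduce to the two building blocks already established: the single closed interval (\cref{proposition-simple-interval-rr}) and the empty-intersection case (\cref{proposition-trivial-case-curve}). First I would record the structural facts that drive the induction. Since $s$ is permissible, $s_0\cap s$ is finite, and it contains every non-generic point of $C\setminus C_{\reg}$: at a vertex the prepermissibility condition forces $df(x)$ to be an integral covector by \cref{proposition-prepermissible-curve}, and at a one-valent end the differential vanishes by \cref{exmaple-weakly-smooth-line}, so such points lie in $s_0\cap s$. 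Hence $s_0\cap s$ endows $C$ with a finite graph structure whose vertex set is $s_0\cap s$ and whose edges are the closures in $C$ of the components of $C\setminus(s_0\cap s)$; each closure is either a closed interval with both endpoints in $s_0\cap s$ or, in the exceptional loop case, a circle carrying a single marked point. Because $\opn{LMD}^{\bullet}(C;s)=\bigoplus_{p\in s_0\cap s}\opn{LMD}^{\bullet}(A_{U_p},f_p,p)$ splits over the intersection set, the left-hand side, together with $\opn{rot}(s)$ and $\chi_{\opn{top}}(C)$, is additive over connected components, so I would reduce at once to connected $C$.

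The inductive engine combines \cref{proposition-gluing-formula} with the Mayer--Vietoris additivity $\chi_{\opn{top}}(C)=\chi_{\opn{top}}(C')+\chi_{\opn{top}}(C'')-\sharp(C'\cap C'')$, which is valid because $C'\cap C''$ is a finite set of points. Taking $C'$ to be the closure of one chosen edge and $C''$ the closure of its complement, and assuming \eqref{equation-rotation-rr} for $C'$ and $C''$, the correction $-\sharp(C'\cap C'')$ of \eqref{equation-meyer-vietoris} is cancelled exactly by the one from the topological Euler characteristic, and \eqref{equation-rotation-sum} gives
\begin{align*}
\chi(\opn{LMD}^{\bullet}(C;s))
&=\bigl(\opn{rot}(i_{C'}^{*}s)+\chi_{\opn{top}}(C')\bigr)
+\bigl(\opn{rot}(i_{C''}^{*}s)+\chi_{\opn{top}}(C'')\bigr)-\sharp(C'\cap C'')\\
&=\opn{rot}(s)+\chi_{\opn{top}}(C).
\end{align*}
When the chosen edge is ordinary its closure is a closed interval, so the base instance for $C'$ is \cref{proposition-simple-interval-rr}; a circle carrying two or more marked points is split into such intervals in the same way. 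The induction bottoms out at $E\le 1$: for $E=0$ the component is a tropical elliptic curve and all three quantities vanish by \cref{proposition-trivial-case-curve}, while for $E=1$ it is either a single closed interval (\cref{proposition-simple-interval-rr}) or a single-marked-point circle.

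The case the gluing cannot reach, which I expect to be the main obstacle, is the \emph{single-marked-point circle}: a circle $C\cong\mathbb{R}/c\mathbb{Z}$ with $s_0\cap s=\{p\}$ admits no decomposition $C=C'\cup C''$ with $C'\cap C''\subset s_0\cap s$ into smaller polyhedral subspaces, so it must be settled directly. Here $\chi_{\opn{top}}(C)=0$, so the goal is $\opn{ind}_p(s)=\opn{rot}(s)=\opn{rot}_0(\psi_{\bar e}^{*}s)$ for the unique edge $\bar e=[a,b]$ with $\psi(a)=\psi(b)=p$. The subtlety is that $\opn{ind}_p(s)$ must be computed in the chart at $p$, where the two branches of $\bar e$ meet with slopes that differ from the arc-end slopes $\lim_{x\to a+}f'$ and $\lim_{x\to b-}f'$ by the affine monodromy of traversing $C$ once; by \cref{lemma-rotation-slope} this monodromy equals $\opn{rot}_0(\psi_{\bar e}^{*}s)$. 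Feeding the monodromy-corrected branch slopes into \cref{proposition-n-valent}, with the convention that a branch contributes to $q$ precisely when $f$ descends away from $p$ in the local chart, yields $\opn{ind}_p(s)=\opn{rot}_0(\psi_{\bar e}^{*}s)$ and closes this case.

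Finally I would verify that the standing hypotheses are inherited under the peeling step so that the induction is well posed: by the argument in the proof of \cref{proposition-gluing-formula} the restrictions $i_{C'}^{*}s$ and $i_{C''}^{*}s$ are prepermissible, with $s_0\cap i_{C'}^{*}s=i_{C'}^{-1}(s_0\cap s)$ finite and likewise for $C''$, and finite generation of the associated $\opn{LMD}^{\bullet}$ is inherited from that of $\opn{LMD}^{\bullet}(C;s)$, so each piece is again permissible in the sense of \cref{condition-good-divisor}.
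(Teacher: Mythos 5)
Your proof is correct, and its engine is the same as the paper's: reduce to connected $C$, use the finite cell structure that $s_0\cap s$ induces on $C$ (every non-generic point lies in $s_0\cap s$, exactly as you argue), and peel off one closed $1$-cell at a time, combining \cref{proposition-gluing-formula} for both $\chi(\opn{LMD}^{\bullet})$ and $\opn{rot}$ with the base cases \cref{proposition-simple-interval-rr,proposition-trivial-case-curve}; the paper phrases this as a filtration $\emptyset=C_0\subsetneqq C_1\subsetneqq\cdots\subsetneqq C_{m+1}=C$ rather than your induction on the number of edges, an immaterial difference. The genuine divergence is your explicit base case for the loop: the paper's proof cites only \cref{proposition-simple-interval-rr} for each $1$-cell, which literally covers cells whose closure is a closed interval with two distinct endpoints in $s_0\cap s$, and leaves implicit the case of an edge whose closure is a circle meeting $s_0\cap s$ in a single point (realized already by $f(x)=x^{2}/2$ on $\mathbb{R}/\mathbb{Z}$, where $s_0\cap s_1=\{0\}$ and $\opn{rot}(s_1)=\opn{ind}_0=1$). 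You rightly observe that this piece admits no decomposition $C=C'\cup C''$ with $C'\cap C''\subset s_0\cap s$, so the gluing formula cannot reach it, and your direct argument is sound: cutting the loop at $p$ and lifting the slope along the cut edge to a continuous $\tilde g$ with $\tilde g(0)=0$, the single-intersection hypothesis confines $\tilde g((0,1))$ to a single interval $(m,m+1)$ with $m\in\{-1,0\}$, so $\opn{rot}_0(\psi_{\bar e}^{*}s)=\tilde g(1)\in\{m,m+1\}$ by \cref{lemma-rotation-slope}, and in each of the resulting cases the monodromy-corrected branch slopes fed into \cref{proposition-n-valent} give $\opn{ind}_p(s)=1-q=\opn{rot}_0(\psi_{\bar e}^{*}s)$, which suffices since $\chi_{\opn{top}}$ of the circle vanishes. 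In short, your route buys completeness — every loop attachment in the induction reduces, via \cref{proposition-gluing-formula} with $\sharp(C'\cap C'')=1$, either to an interval cell or to this circle case — at the cost of one extra local computation that the paper's terser write-up glosses over.
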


\begin{proof}
Without loss of generality, we can assume $C$ is 
connected and $\sharp (s_0\cap s)\geq 1$ 
by \cref{proposition-trivial-case-curve}.
Since $s_0\cap s\supset (C\setminus C_{\reg})$ 
and $C$ is connected, $C\setminus (s_0\cap s)$ has no 
connected components which 
are isomorphic to tropical elliptic curves.
The set $s_0\cap s$ induces a finite cell decomposition
$\mathscr{C}$ of 
$C$.
Let $\mathscr{C}_1=\{e_{\alpha}\}_{\alpha \in \pi_0(C\setminus (s_0\cap s))}$
be the set of (closed) $1$-cells of $\mathscr{C}$. 
Fix a total ordering $<$ on $\pi_0(C\setminus (s_0\cap s))$.
By gluing $1$-cells in that order, we get a filtering;
\begin{align}
\emptyset=C_0\subsetneqq C_1\subsetneqq \cdots
\subsetneqq C_m \subsetneqq C_{m+1}=C,
\end{align}
where $C_{i+1}\deq C_{i}\cup e_{i+1}$
($e_i\in \mathscr{C}_1$)
for $i=0,\ldots,m$.
Let $i_{C'}\colon C'\to C$ be the inclusion map
of a cell subcomplex $C'$ of $C$.
By \cref{proposition-simple-interval-rr,proposition-gluing-formula},
for $i=0,\ldots,m$ we have
\begin{align}
\label{equation-1st}
\chi(\opn{LMD}^{\bullet}(C_1;i_{C_1}^{*}(s)))
=& \opn{rot}(i_{C_1}^{*}(s))+\chi_{\opn{top}}(C_1), \\
\chi(\opn{LMD}^{\bullet}(C_{i+1};i_{C_{i+1}}^{*}(s)))
=& \chi(\opn{LMD}^{\bullet}(C_i;i_{C_i}^{*}(s)))
+ \chi(\opn{LMD}^{\bullet}(e_{i+1};i_{e_{i+1}}^{*}(s)))\\
-&\chi_{\opn{top}}(C_i\cap e_{i+1}), \\
\opn{rot}(i_{C_{i+1}}^{*}(s))
=& \opn{rot}(i_{C_{i}}^{*}(s))
+ \opn{rot}(i_{e_{i+1}}^{*}(s)).
\end{align}

If $C_{j}$ ($j\in \{0,\ldots,m\}$)
satisfies \eqref{equation-rotation-rr}, then

\begin{align}
\chi(\opn{LMD}^{\bullet}(C_{j+1};i_{C_{j+1}}^{*}(s)))
=&\opn{rot}(i_{C_{j}}^{*}(s))+\opn{rot}(i_{e_{j+1}}^{*}(s))
+\chi_{\opn{top}}(C_j)+\chi_{\opn{top}}(e_{j+1}) \\
-&\chi_{\opn{top}}(C_j\cap e_{j+1}) \notag \\
=&\opn{rot}(i_{C_{j+1}}^{*}(s))
+\chi_{\opn{top}}(C_{j+1}). \label{equation-j-th}
\end{align}
\eqref{equation-1st} and \eqref{equation-j-th}
complete the proof of \eqref{equation-rotation-rr}.
\end{proof}

\begin{proposition}
\label{proposition-rotation-degree}
Let $s$ be a permissible $C^{\infty}$-divisor on
a compact tropical curve $C$. Then,
\begin{align}
\label{equation-rotation-number}
\opn{rot}(s)=\opn{deg}([s])(=\int_{C} c_1([s])).
\end{align}
\end{proposition}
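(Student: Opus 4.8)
The plan is to compute both sides as one and the same sum of local slope data over the points of $s_0\cap s$. After reducing to the case $C$ connected with $s\neq s_0$, so that every component of $C\setminus(s_0\cap s)$ is an open interval and $I(C,s)=\pi_0(C\setminus(s_0\cap s))$, I would fix local data $\{(U_i,f_i)\}$ for $s$. For each $e\in I(C,s)$ with closure parametrized by $\psi_{\bar e}\colon[a_e,b_e]\to C$, \cref{lemma-rotation-slope} rewrites $\opn{rot}_0(\psi_{\bar e}^{*}(s))$ as the difference of the one-sided slopes of a weakly-smooth representative $f_e$ of $s$ at the two endpoints of $e$. Adding an integer-affine function (a section of $\mathcal{O}^{\times}_C$) to $f_e$ shifts both one-sided slopes by the same integer, so this slope difference is independent of the representative chosen on $e$; and since $C\setminus C_{\reg}\subset s_0\cap s$ while $df_e\in\tform^{1}_{\mathbb{Z},C}$ at every point of $s_0\cap s$, each one-sided slope is an integer. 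At a $1$-valent vertex, \cref{proposition-1-valent} together with \cref{exmaple-weakly-smooth-line} forces the one-sided slope to vanish, so such points contribute $0$.

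Next I would regroup the sum $\opn{rot}(s)=\sum_{e\in I(C,s)}\opn{rot}_0(\psi_{\bar e}^{*}(s))$ by endpoints. Writing $u_{p,e}$ for the primitive tangent vector of $e$ pointing out of a vertex $p\in s_0\cap s$, a comparison of orientations shows that each incidence of $e$ at $p$ contributes $-\langle df_e(p),u_{p,e}\rangle$, so that
\[
\opn{rot}(s)=-\sum_{p\in s_0\cap s}\opn{ord}_p(s),\qquad \opn{ord}_p(s)\deq\sum_{e\ni p}\langle df_e(p),u_{p,e}\rangle .
\]
Here it is essential that $df_e(p)$ is taken with the representative living on the respective edge $e$: these differentials need not agree across the edges meeting at $p$, and their discrepancy is exactly the integer slope of the transition functions $f_i-f_j$ of the Cartier data. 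This is precisely the mechanism by which the total becomes nonzero; \cref{tropical elliptic curve} illustrates it, the monodromy of $f_n(x)=\tfrac{n}{2}x^{2}$ around $\mathbb{R}/\mathbb{Z}$ making $-\sum_p\opn{ord}_p(s_n)$ equal to $n$.

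Finally I would identify $-\sum_{p}\opn{ord}_p(s)$ with $\opn{deg}([s])$. The expression $\sum_{p}\opn{ord}_p(s)\,p$ is the Weil divisor attached to the $C^{\infty}$-Cartier data of $s$ under the combinatorial description of $\opn{Pic}(C)=H^{1}(C;\mathcal{O}^{\times}_C)$ for compact tropical curves, and its total is, up to the global sign fixed by the symplectic orientation of $\check{X}(C_{\reg})$ used to define $\opn{rot}$, the image of $[s]$ under the degree homomorphism; I would pin down this sign by comparison with \cref{tropical elliptic curve}. Combining the two displays yields $\opn{rot}(s)=\opn{deg}([s])$. I expect the main obstacle to be precisely this last identification: making the passage from the \v{C}ech/cocycle description of the degree to the edgewise slope sum fully rigorous, in particular the correct bookkeeping of the transition functions at the vertices and the resulting global sign, rather than the local slope computation, which is routine. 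As a cross-check I would run the alternative inductive argument, using the additivity of both $\opn{rot}$ and of the degree under the cell decomposition via \cref{proposition-gluing-formula} together with the base case \cref{proposition-simple-interval-rr} on a single closed interval in $\mathbb{T}P^{1}$.
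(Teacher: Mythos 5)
Your local analysis is sound: the use of \cref{lemma-rotation-slope}, the integrality of the endpoint slopes at points of $s_0\cap s$, and the treatment of $1$-valent vertices via \cref{proposition-1-valent} and \cref{exmaple-weakly-smooth-line} all work (with the small caveat that the vanishing of the slope at a $1$-valent vertex holds only for a representative that is weakly-smooth in the vertex chart, so you must choose the per-edge representatives accordingly). But the regrouping identity $\opn{rot}(s)=-\sum_p\opn{ord}_p(s)$ is a tautological reshuffling of the edgewise sum, so all the content sits in the step you yourself flag as the main obstacle, and that step is where the genuine gap lies. It is not correct that $\sum_p\opn{ord}_p(s)\,p$ ``is the Weil divisor attached to the $C^{\infty}$-Cartier data'': with per-edge representatives $f_e$ the individual numbers $\opn{ord}_p(s)$ are not well-defined (adding a section of $\mathcal{O}^{\times}_C$ to $f_e$ shifts the two endpoint contributions of $e$ oppositely, changing each vertex term while preserving only the total), whereas with a single weakly-smooth representative $g_p$ near each $p$ one gets $\opn{ord}_p(g_p)=\langle dg_p(p),\sum_{e\ni p}u_{p,e}\rangle=0$ by the balancing of the local models $\Gamma_n$, so the naive pointwise divisor of smooth data vanishes identically. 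The degree is carried entirely by the transition cocycle $(f_j-f_i)$ and is invisible to any pointwise slope count; ``pinning down the sign'' against \cref{tropical elliptic curve} cannot substitute for an argument converting the cocycle into a slope sum. Your fallback cross-check also fails outright: the divisor class, unlike $\opn{rot}$ and $\chi(\opn{LMD}^{\bullet})$, does not behave under restriction to closed intervals --- by \cref{remark-rotation-closed-interval} every $C^{\infty}$-divisor on $[a,b]$ has trivial class while its rotation number is arbitrary --- so \cref{proposition-gluing-formula} together with \cref{proposition-simple-interval-rr} proves \cref{proposition-MRR-1-dim-poly-space}, not \cref{proposition-rotation-degree}.

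The paper bridges exactly this gap with \cref{lemma-csoft-tropical-rational-function}: taking the cover by edges and contractible vertex neighborhoods, it produces an ordinary Cartier divisor $D$ with tropical rational local data $\{(U_i,g_i)\}$ whose \v{C}ech cocycle is literally the same, $(g_j-g_i)_{i,j}=(f_j-f_i)_{i,j}$, hence $[D]=[s]$, and whose support lies in $C_{\reg}$; the degree of $D$ is then computed by the established divisor map and degree theory of \cite[Theorem 4.15]{MR3894860}, and \cref{lemma-csoft-tropical-rational-function} combined with \cref{lemma-rotation-slope} converts that weight sum into $\opn{rot}(s)$. Some such replacement of the weakly-smooth data by piecewise-linear data with the same cocycle --- or an equivalent from-scratch computation of the degree of the class of the cocycle $(\ell_{p,e})$ of transition slopes your bookkeeping produces --- is unavoidable, and it is absent from your proposal.
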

For proving \cref{equation-rotation-number},
we use the following elementary lemma.
\begin{lemma}
\label{lemma-csoft-tropical-rational-function}
Let $(a,b)$ be an open interval in
$\mathbb{T}P^{1}$ and 
$l\colon \mathbb{R}\to \mathbb{R}$ be an 
affine function with an integer slope.
Then, there exists a tropical rational function
$f \colon\underline{\mathbb{R}}\to \mathbb{R}$
such that the support $\opn{supp}((f))$ of 
the principal divisor $(f)$ is in $(a,b)$, 
$\lim_{x\to a}f'(x)=0$, and $f(z)=l(z)$ for all 
$z\geq \opn{max}\{x\mid x\in\opn{supp}((f))\}$.
\end{lemma}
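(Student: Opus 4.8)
The plan is to exhibit an explicit tropical rational function with a single bend point and to verify the three requirements directly. Write $l(x) = mx + c$ with $m \in \mathbb{Z}$ and $c \in \mathbb{R}$, and fix any finite point $p \in (a,b) \cap \mathbb{R}$, which is nonempty since $(a,b)$ is a nonempty open interval of $\mathbb{T}P^{1}$. I would then set
\[
f(x) \deq m\max\{x,p\} + c \qquad (x \in \underline{\mathbb{R}}),
\]
where at the boundary point we read $\max\{-\infty, p\} = p$, so that $f(-\infty) = mp + c \in \mathbb{R}$ and $f$ takes values in $\mathbb{R}$. Geometrically $f$ is the function equal to the constant $l(p)$ on $[-\infty, p]$ and equal to $l$ on $[p, +\infty)$; in particular $f \equiv l$ when $m = 0$.

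First I would confirm that $f$ is a tropical rational function in the sense of \cite[Definition 4.1]{MR3894860}. For $m \ge 0$ we have $m\max\{x,p\} = \max\{mx, mp\}$, a tropical polynomial, while for $m < 0$ we have $m\max\{x,p\} = \min\{mx,mp\} = -\max\{-mx,-mp\}$, a difference of tropical polynomials; adding the constant $c$ preserves this in every case. Next I would read off the slopes: $f'(x) = 0$ for $x < p$ and $f'(x) = m$ for $x > p$, so the only bend occurs at $p$, with $f'(p^{+}) - f'(p^{-}) = m$. Hence $\opn{supp}((f)) = \{p\} \subset (a,b)$ when $m \ne 0$ and $\opn{supp}((f)) = \emptyset$ when $m = 0$; either way the support lies in $(a,b)$. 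Since $a < p$, we have $f'(x) = 0$ for all $x$ between $a$ and $p$ (and for all $x < p$ when $a = -\infty$), so $\lim_{x\to a} f'(x) = 0$. Finally, for $z \ge p = \max\{x \mid x \in \opn{supp}((f))\}$ we have $\max\{z,p\} = z$, hence $f(z) = mz + c = l(z)$, which is the last assertion (and for $m=0$ the equality $f \equiv l$ makes it hold for all $z$).

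The only genuine subtleties, and where I would take care, are at the (possibly infinite) left endpoint $a$. The hypothesis $\lim_{x\to a} f'(x) = 0$ is precisely what guarantees that $f$ stays finite as $x \to -\infty$, so that $f \colon \underline{\mathbb{R}} \to \mathbb{R}$ is well defined, and that the principal divisor $(f)$ carries no mass at $-\infty$: I would note that the slope of $f$ emanating from $-\infty$ is $0$, whence $-\infty \notin \opn{supp}((f))$, consistent with $\opn{supp}((f)) = \{p\}$. A secondary point to record is that $\min$-type expressions are admissible tropical rational functions, as used above for $m < 0$. If one prefers to avoid a single large bend, the identical argument works by distributing the total bend $m$ over points $a < p_1 < \dots < p_k < b$ and replacing $f$ by a sum of unit bends; this is never needed here but shows that the construction is flexible and that the bend can be placed wherever is convenient inside $(a,b)$.
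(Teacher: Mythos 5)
Your construction is correct, but it takes a genuinely different route from the paper, whose entire proof is a citation: the lemma is observed to be a special case of \cite[Lemma 4.4]{MR3894860}, a general statement about tropical rational functions on polyhedral spaces, and no explicit function is written down. You instead exhibit the minimal witness $f(x) = m\max\{x,p\}+c$ with a single bend at an arbitrary $p \in (a,b)\cap\mathbb{R}$ and check the three conditions by hand; all the verifications go through, including the two points that actually need care: that $\min$-type expressions such as $m\max\{x,p\}$ for $m<0$ are legitimate tropical rational functions (as differences of tropical polynomials), and that the vanishing slope on $[-\infty,p]$ is exactly what keeps $f$ real-valued at $-\infty$ and keeps $-\infty$ out of $\opn{supp}((f))$ --- which matters since $\mathcal{M}^{\times}_S$ consists of \emph{real-valued} functions. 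Your handling of the degenerate case $m=0$ (empty support, $f\equiv l$) is also the right reading of the statement, where $\max$ over the empty support would otherwise be undefined. What each approach buys: the paper's citation is shorter and places the lemma inside the general machinery it already relies on for divisors and \v{C}ech data; your argument is self-contained, elementary, and makes transparent the flexibility used later in \cref{proposition-rotation-degree} --- the bend can be placed anywhere in $(a,b)$, and, as you note, the total slope change $m$ can even be distributed over several points, which is precisely the freedom needed when matching transition functions $(g_j-g_i)=(f_j-f_i)$ on a curve.
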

\begin{proof}
This is a special case of \cite[Lemma 4.4]{MR3894860}.
\end{proof}

\begin{proof}[{Proof of \cref{proposition-rotation-degree}}]
We recall that a local data $\{(U_i,f_i)\}_{i\in I}$ for $s$
defines an \v{C}ech $1$-cocycle 
$(f_{j}-f_i)_{i,j}
\in C^{1}(\{U_i\}_{i\in I},\mathcal{O}_C^{\times})$
whose cohomology class is equal to $[s]$
(see also \cite[Remark 7.2.1]{MR3560225}).
There exists a family 
$\{U_v\}_{v\in C\setminus C_{\reg}}$ of 
contractible open neighborhoods $U_v$ of $v$
such that $U_v\cap U_{v'}\ne \emptyset $ if $v\ne v'$.
So we may assume 
$\{U_i\}_{i\in I}=\{U_v\}_{v\in C\setminus C_{\reg}}
\cup\{e\}_{e\in\pi_0(C_{\reg})}$.
Then, we can find an ordinary Cartier divisor 
$D\in H^{0}(C;\mathcal{M}_C^{\times}/
\mathcal{O}_C^{\times})$
and a local data $\{(U_i,g_i)\}_{i\in I}$ for $D$
such that $(g_j-g_i)_{i,j}=(f_j-f_i)_{i,j}$ for any 
$i,j\in I$
and $\opn{supp}(D)\subset C_{\reg}$
by \cref{lemma-csoft-tropical-rational-function}.
From \cite[Theorem 4.15]{MR3894860}
(or \cite[Proposition 5.12]{gross2019sheaftheoretic}),
we can calculate the degree of $D$ by the sum of
the weights on $\opn{supp}(D)$,
which are determined by 
the divisor map $\opn{div}\colon 
H^{0}(C;\mathcal{M}_C^{\times}/
\mathcal{O}_C^{\times})\to \mathrm{Z}_{0}(C)$
\cite[Defintion 4.14]{MR3894860}.
By \cref{lemma-csoft-tropical-rational-function}
and \cref{lemma-rotation-slope}, we get
$\opn{deg}([s])=\opn{deg}(D)=\opn{rot}(s)$.
\end{proof}

\begin{remark}
\label{remark-rotation-closed-interval} 
The rotation number of a permissible
$C^{\infty}$-divisor on $1$-dimensional
compact rational polyhedral space is 
\emph{not} an invariant for its divisor class.
Let $C\deq [0,1]$ be a closed interval
and $f_n\colon C\to \mathbb{R};x \mapsto nx^{2}$.
Any $f_n$ is a prepermissible $(0,0)$-superform on $C$.
From the definition
of principal $C^{\infty}$-divisor,
$s_n\deq (f_{n})^{\mathrm{sm}}$ is always trivial,
but $\opn{rot}(\check{s}_n)=n$. 
A philosophical reason of this is
that a closed interval in $\mathbb{R}$
is a tropical analog of a closed annulus 
in $\mathbb{C}$ but not for
a complex projective line (cf. 
\cite[Definition 3.7]{MR3903579}).
In fact, $X(C_{\reg})$ has a standard complex structure
which is biholomorphic to
an open annulus in $\mathbb{C}$
(see \eqref{definition-SYZ-torus-fibration}
for the definition of $X(C_{\reg})$).
\end{remark}

\begin{proof}[{Proof of \cref{theorem-MRR-curve-preface}}]
We can prove it from \cref{proposition-MRR-1-dim-poly-space,proposition-rotation-degree}.
\end{proof}

\begin{remark}
\label{remark-non-prepermissible-divisor}
In this remark, we explain
the reason why we need the prepermissibility condition for 
$C^{\infty}$-divisors. 
One of reasons is that there exists 
a weakly-smooth function $f$
on a polyhedral space $S$ and 
$x\notin\opn{Crit}(f)$ such that
$\opn{LMD}^{\bullet}(A_S,f,x)$ is nontrivial.
We will explain about this problem by the following 
example:
Let 
$g$ be a weakly-smooth function on 
$\Gamma_3$
such that 
\begin{enumerate}
\item $dg=(\frac{1}{2},\frac{1}{2})$,
\item $\opn{Crit}(g)=\Gamma_3\setminus \Gamma_3^{\circ}$,
\item $\sum_{p\in \opn{Crit}(g)}\opn{ind}_p(g)=2$.
\end{enumerate}
$g$ is prepermissible except $x=(0,0)$.
If we generalize 
\cref{theorem-MRR-curve-preface} for 
a principal non-prepermissible
$C^{\infty}$-divisors $(g)^{\mathrm{sm}}$, 
we need to add the index $\opn{ind}_x(g)$ at $x$ satisfying
\begin{align}
\sum_{p\in \opn{Crit}(g)}\opn{ind}_p(g)+
\opn{ind}_x(g)=1=\chi_{\opn{top}}(\Gamma_3).
\end{align}
At least, \cref{theorem-MRR-curve-preface} is not
true without correction of the effects at 
non-prepermissible points
of $g$.
In this case, $\opn{ind}_x(g)$ should be $-1$.
This condition is independent of the choice
of $g$ satisfying the above condition.
On the other hand, the index of a given 
$C^{\infty}$-divisor should be 
independent of the choice of 
local data.
Since $\opn{ind}_x(g+m)$ depends on the choice
of monomials $m$, we need to choose a monomial $m_0$
with integer slope to 
define the index of $g$ at $x$ as a divisor.
In our knowledge, there is no canonical choice
of $m_0$ (up to constant) for $g$ as a principal divisor without
assumption prepermissibility.
Every prepermissible function $f$ has a canonical choice
of monomials $m$ which is one such that 
$x\in \opn{Crit}(f+m)$ (up to constant).
We could not use this condition for $g$ since 
$x\notin \opn{Crit}(g+m)$ for every monomial $m$ with
integer slope.
\end{remark}

\begin{remark}
Our definition of $C^{\infty}$-divisor 
has an analog of positive divisor by using 
the local data $\{(U_i,f_i)\}_{i\in I}$ as 
explained in \cref{example-tropical-kodaira-vanishing}.
However, this positive $C^{\infty}$-divisor is different 
from ample line bundles for tropical curves. 
If $s$ is a positive $C^{\infty}$-divisor, then
$\chi(\opn{LMD}(C;s))=\sharp(s_0\cap s)\geq \sharp 
(C\setminus C_{\reg})$. 
Thus, $\chi(\opn{LMD}(C;s))\geq 2g(C)-2$
when $C$ is a compact trivalent
graph. This means that
$\opn{deg}(s)\geq 3g(C)-3$.
On the other hand, every divisor $D$ satisfying
$\opn{deg}(D)>0$ is ample \cite[Corollary 43]{MR2892941}.
Roughly speaking, the positivity condition is 
stricter than the ampleness condition.
\end{remark}

\subsection{Relationships to previous results}
\label{section-tropical-curve-note}

\subsubsection{Relationships to
homological mirror symmetry of trivalent graphs}
\label{section-syz-trivalent-graph}

Our approach is mostly affected from 
\cite{auroux2022lagrangian}.
When $C$ is a compact trivalent metric graph, 
our $\check{X}_0(C)$ are 
almost the same with 
the trivalent configuration $M$ of 
$2$-spheres in 
\cite[\textsection 3.1]{auroux2022lagrangian}.
Here, the space $M$ is the
union of $\C P^{1}$'s whose intersection complex is $C$, 
i.e., every edge of $C$ corresponds to a
$\C P^{1}$ and every trivalent vertex of $C$ corresponds
to the intersection point of some three $\C P^{1}$'s
(see also \cite{MR3570582}).
The mirror manifold of $M$ is an algebraic curve $X_K$
defined from Mumford's construction for the dual intersection
complex defined from $C$.

The similarity of $\check{X}_0(C)$ and $M$ is evident 
since $M$ is a compactification of $\check{X}_0(C)$ 
as a topological space,
so below we will write the main differences
between  
we will list the differences between 
$X_0(C)$ and $M$ below:
{\setlength{\leftmargini}{22pt}
\begin{enumerate}
  \item The local model of $M$ comes from the critical locus
of the potential function of a (A-side) Landau--Ginzburg 
model $(\C^{3},-xyz)$.
  \item If a $C^{\infty}$-divisor $s$ on $C$ 
satisfies 
\cref{condition-good-divisor} and has an intersection 
data $\{(U_p,f_p)\}_{p\in s_0\cap s}$ such that
$f_p$ is strictly convex at all $p\in C\setminus C_{\reg}$, 
then \\
$\opn{LMD}_{\Lambda_{\mathrm{nov}}^{\mathbb{C}}}
(C;s)$ corresponds to the 
Floer complex \cite[(3.2)]{auroux2022lagrangian}
as a graded module, but 
the both are different if not. 
In particular, the graded modules associated with 
an intersection point $p\in s_0\cap s$ may not 
be a shift of $1$-dimensional vector space.
\item We don't know the differential $\mathfrak{m}_1$ 
for $\opn{LMD}_{\Lambda_{\mathrm{nov}}^{\mathbb{C}}}
(C;s)$ in general.
\end{enumerate}
}

Auroux--Efimov--Katzarkov  
expected a generalization of their results
shown in \cite{auroux2022lagrangian}
to projective hypersurfaces
\cite[\textsection 7]{auroux2022lagrangian}.
We hope our $\check{X}_0(S)$ for a 
rational polyhedral space $S$ give a hint for 
a generalization of their expectation.  
\subsubsection{Relationships to the Poincar\'e--Hopf 
theorem for graphs}

\cref{theorem-MRR-curve-preface} can be considered 
as a $C^{\infty}$-version of Knill's graph 
theoretical Poincar\'e--Hopf theorem \cite{knill2012graph}
or 
Banchoff's critical point theorem \cite{MR225327}
when the line bundle on a tropical curve is trivial.
We can easily see the index $i(v)$ defined in 
\cite[\textsection 3]{knill2012graph}
corresponds to $\opn{ind}_v(f)$ defined in 
\eqref{equation-local-index} (see also 
\cref{proposition-n-valent} and 
\cite[\textsection 7]{knill2012graph}).

Banchoff proved a polyhedral complex version of Poincar\'e--Hopf theorem
for height functions \cite[Theorem 1]{MR225327}.
The definition of the index $a(v,f)$ 
\cite[p.246]{MR225327} which 
he used for the theorem may seem different 
from our index but essentially equal to it
(see also \cite[p.143-144]{grunert2017piecewise} 
for more detail about it).

\subsubsection{Relationships to a localization of 
index on compact Riemann surfaces.}

We mention relationships between 
\cref{theorem-MRR-curve-preface} and
\cite[6]{MR2676658}.
For simplicity,
we assume $C$ is a compact trivalent tropical curve, i.e.,
a compact tropical curve which only has $3$-valent
vertex in $C\setminus C_{\reg}$.  
Then, $C_{\reg}$ has a canonical dual torus fibration 
$\pi_{C_{\reg}}\colon X(C_{\reg})\to C_{\reg}$ of
$\check{\pi}_{C_{\reg}} \colon \check{X}(C_{\reg})\to C_{\reg}$.
We can define $X_0(C)\deq C\sqcup_{i,s} X(C_{\reg})$ like
$\check{X}_0(C)$.
Besides, $C$ has a canonical lattice length metric $d_{C}$, and
thus we can define the following set for 
a sufficiently small
$\vep >0$: 
\begin{align}
C_{0,\vep}\deq \{p\in C\mid d_C(p,v)>\vep 
\text{ for all } v\in C\setminus C_{\reg}\}.
\end{align}

For a given $\vep$ (and a \emph{signed tropical structure}
on $C$ \cite{MR3076066}), 
$X_0(C)$ can be considered as a subset of a compact 
Riemann surface $X$ whose complex structure is 
compatible with $X(C_{0,\vep})$. 
Such an $X$ is given by gluing triangles for 
each open neighborhood of $p\in X_0(C)\setminus X(C_{\reg})$
appropriately.
Let $s$ be a permissible $C^{\infty}$-divisor on $C$.
The restriction $s|_{C_{\reg}}$ corresponds to a Lagrangian section of 
$\check{\pi}_{C_{\reg}}$ from 
\cref{proposition-cartier-lagrangian}.
By the semi-flat SYZ transformation 
(see \cite{MR1876073}
or \cite[\textsection 9.1]{MR1882331}),
$s|_ {C_{\reg}}$ induces a family of $U(1)$-holonomoy for
each torus fiber $\pi_{C_{\reg}}^{-1}(p)$ and defines a 
complex line bundle on $X(C_{0,\varepsilon})$. From now on, 
we suppose $s_0\cap s<\infty$. 
Let $\{(U_p, f_p)\}_{p\in s_0\cap s}$ be
an intersection data 
for $s$ such that 
$\{x\in C\mid d_C(p,x)\leq \vep\}\subset U_p$ for
all $p\in C\setminus C_{\reg}$.
Besides, we assume 
$(s\cap s_0)\setminus C_{0,\vep}=C\setminus C_{\reg}$ and $s$ satisfies the following cocycle
condition for each $p\in C\setminus C_{\reg}$:
\begin{align}
df_p(p_1)+df_p(p_2)+df_p(p_3)=0,\quad 
(\{p_1,p_2,p_3\}\deq \{x\in C\mid d_C(p,x)=\vep\}).
\end{align}

From the above condition, $0<|df_p(p_i)| \ll 1$ and
$f_p$ is of type $s_{2,1}$ or $s_{1,2}$ discussed
in \cref{proposition-n-valent}.
From the above condition, we can define 
a flat $U(1)$-bundle on each pants of a 
connected component of $X\setminus X(C_{0,\vep})$
as like \cite[6.1.3]{MR2676658}.
We remark the isomorphism
$(\tform_{\mathbb{Z},C}^{1})_p\otimes_{\mathbb{Z}}
\mathbb{R}/\mathbb{Z}\simeq 
H^1(\mathbb{P}^{1}\setminus 
\{0,1,\infty\};\mathbb{R}/\mathbb{Z})$
is important and no coincidence for this construction.
The $s_{2,1}$
corresponds a small pants and the $s_{1,2}$
corresponds a large pants in 
\cite[Definition 6.3]{MR2676658}.

Each small pants and large pants have a local
Riemann--Roch number defined the theory of 
localization of Dirac-type operator and its 
number is equal 
to our index by the above correspondence.
The following equation is a correspondence about 
the local index $\opn{ind}_v$ and the local 
Riemann--Roch number under the above 
identification \cite[Theorem 6.7]{MR2676658}.
\begin{align} \label{equation-local-RR}
& [BS^{+}]=\opn{ind}_v(s_{2,0})=1, 
&& [BS^{-}]=\opn{ind}_v(s_{0,2})=-1,\notag \\
& [D^{+}]=\opn{ind}_v(s_{1,0})=1,
&& [D^{-}]=\opn{ind}_v(s_{0,1})=0, \\
& [P^{S}]=\opn{ind}_v(s_{2,1})=0,
&& [P^{L}]=\opn{ind}_v(s_{1,2})=-1 \notag.
\end{align}
The origin of our approach firstly comes from the 
above correspondence.
We stress that the both methods of computation
of our index and the local Riemann--Roch number
defined in \cite{MR2676658} are different 
each other, even though both indexes have various 
similar features.
We can also consider the following question as 
like \cite{auroux2022lagrangian}.

\begin{question} \label{question-tropical-complex-rr}
Can we generalize the 
correspondence \eqref{equation-local-RR}
for more general the complement of
projective hyperplane arrangements and flat $U(1)$-bundles on it?
How about for pants decomposition of
  complex algebraic hypersurfaces in \cite{MR2079993} and line bundles?
\end{question}

We add two comments about the above question. 
First, the Bergman fan $\Sigma$ of a given complex 
(projective) hyperplane arrangement $\mathcal{A}$ does \emph{not} 
determine 
the homotopy type of the complement of (projective) 
hyperplane
arrangements $U(\mathcal{A})$ uniquely in general.
Second, every flat $U(1)$-bundle on 
$U(\mathcal{A})$ is, 
however, given
from a group homomorphism 
$\rho\colon T_{|\Sigma|,0}^{\mathbb{Z}}\to 
\mathbb{R}/\mathbb{Z}$ since 
$T_{|\Sigma|,0}^{\mathbb{Z}}\simeq 
H_1(U(\mathcal{A});\mathbb{Z})$ (see \cite[Theorem 4]{MR3153919}).
Therefore, we hope that we can get an affirmative answer 
for \cref{question-tropical-complex-rr}.

\section{Proof for integral affine manifold with Hessian form.}
\label{section-integral-affine-manifold}
In this section,
we recall about affine manifolds from
\cite{MR2293045,
goldmanRadianceObstructionParallel1984a,
MR2181810,
grossMirrorSymmetryLogarithmic2006a,
sepethesis}
and 
\cite[Chapter 6]{MR2567952}.

\begin{definition} \label{definition-integral-affine-manifold}
An $n$-dimensional \emph{integral affine 
(resp. strongly integral affine, affine)} manifold
is a pair of an $n$-dimensional differential manifold $B$ 
and an atlas $\{(U_i,\psi_i)\}_{i \in I}$ of $B$ such that 
$\psi_i \circ \psi_j^{-1}$ is a restriction of 
an element in $\opn{GL}_n(\mathbb{Z})\ltimes \mathbb{R}^{n}$
(resp. $\opn{GL}_n(\mathbb{Z})\ltimes \mathbb{Z}^{n}$,
$\opn{GL}_n(\mathbb{R})\ltimes \mathbb{R}^{n}$) for any $i,j\in I$.
\end{definition}

\begin{remark}
An (integral) affine manifold is a special case of 
$(G,X)$-manifolds \cite[3.3]{MR1435975}.
The terminology integral affine manifold has 
different terminologies for different authors. 
For instance, an integral (resp. strongly integral) affine manifold 
is called a tropical affine manifold (resp. integral affine manifold)
in \cite[Definition 1.22]{MR2722115}. 
The terminology \emph{strongly integral affine manifold} 
comes from \cite[Remark 5.10]{MR3079343}.
\end{remark}

We can consider a sheaf theoretic definition of 
integral affine manifold as explained 
in \cite[2.1]{MR2181810} 
and every integral affine manifold is a special case of 
tropical manifold. 
Every integral affine structure of $B$ induces a 
canonical local system $\mathcal{T}_{\mathbb{Z},B}$ 
of integer valued tangent vectors in $TB$ and the dual 
local system 
$\mathcal{T}_{\mathbb{Z},B}^{\vee}
(\simeq \tform_{\mathbb{Z},B}^{1})$ of it.

Let $(B,\{(U_i,\psi_i)\}_{i \in I})$ be an affine manifold.
Every  $U_i$ has a coordinate system $\{x_k\}_{k=1,\ldots,n}$ 
induced from a coordinate system formed 
by affine functions on $\mathbb{R}^{n}$.
We call this coordinate system as an \emph{affine coordinate system} 
of $U_i$. 

\begin{definition}
\label{definition-hessian-metric}
A \emph{Hessian metric} or \emph{metric of a Hessian form}
on an affine manifold 
$(B,\{(U_i,\psi_i)\}_{i \in I})$
is a Riemannian metric $g$ such that 
there exists a multivalued function $K$ on $B$ satisfying
$g=\sum_{k,l}\frac{\partial K}{\partial x_k \partial x_l}dx_k dx_l$ for 
an affine coordinate system of $U_i$. The triple
$(B,\{(U_i,\psi_i)\}_{i \in I},g)$ is called a \emph{Hessian manifold}.
\end{definition}
\begin{remark}
We follow the terminology Hessian metric and Hessian manifold 
from \cite{MR2293045}.
The terminology 
Hessian manifold is called
\emph{K\"ahler affine manifold}
in \cite{MR714338}, \emph{affine K\"ahler manifold}
or AK-manifold \cite{MR1882331} for short.
To our knowledge, the notion of Hessian metric have already
appeared in \cite[p.213]{MR413010}.
\end{remark}

A Hessian metric can be considered as a $(1,1)$-superform 
naturally and this superform is a certain analog of 
a K\"ahler form, so a Hessian metric itself is called a Hessian form.
We can see this the following pair of torus fibrations induced 
from the local system $\mathcal{T}_{B}$ on an integral 
affine manifold:

\begin{equation} \label{definition-SYZ-torus-fibration}
\begin{tikzcd}
T^{*}B/\mathcal{T}_{\Z , B}^{\vee} =:\hspace{-35pt} 
&\check{X}(B) \arrow[rd,"\check{f}_{B}"'] 
&   & X(B) \arrow[ld,"f_{B}"] 
&\hspace{-35pt} \deq TB/\mathcal{T}_{\Z, B} \\
& &  B & &
\end{tikzcd}.
\end{equation}

In some literature, the symbols $\check{X}$ and
$X$ are reversed. 
The manifold $\check{X}(B)$
has a canonical symplectic structure 
induced from $T^{*}B$, and each fiber of 
$\check{f}_{B}$ is a Lagrangian torus.
On the other hand, $X(B)$ has a canonical complex
structure induced from $TB$ and integral affine 
structure of $B$.
Since each fiber of the fibrations $f_{B}$ and 
$\check{f}_{B}$ are compact,
$f_{B}$ and $\check{f}_{B}$ are proper maps.
In particular, if $B$ is compact then 
$\check{X}(B)$ and $X(B)$ are compact.
If $B$ has a Hessian form $g$, 
then $\check{X}(B)$ and $X(B)$ have
the standard K\"ahler structure
induced from $g$ 
\cite[Proposition 6.14 and 6.15]{MR2567952}.
 
\begin{remark} \label{remark-compact-hessian}
We recall a well-known result for 
compact special affine manifolds admitting
Hessian forms which is mentioned 
in some literature
(e.g. \cite[\textsection 5.2]{MR1882331}).
By \cite[Theorem 2.1 and Corollary 2.3]{MR714338},
 every closed special Hessian manifold $(B,g)$ has 
a flat Riemannian and Hessian metric $\tilde{g}$ such 
that its volume form $\opn{vol}_{\tilde{g}}$ is parallel 
with respect to the
affine connection induced from the affine structure 
on $B$. Moreover, $\tilde{g}$ satisfies 
real Monge--Amp\`ere equation, and thus
the associated first and second Koszul 
form of $\opn{vol}_{\tilde{g}}$ is trivial 
\cite[Definition 3.1.2]{MR2293045}.
If $B$ is a special integral affine manifold with 
a Hessian metric 
$g=\sum_{k,l}\frac{\partial K}{\partial x_k \partial x_l}dx_k dx_l$, 
then the K\"ahler metric $\omega=2\sqrt{-1}\partial \bar{\partial}
(K\circ f_{B})$ on $X(B)$ is Ricci--flat if 
and only if $K$ satisfies real Monge--Amp\`ere equation
\cite[Proposition 6.14]{MR2567952}. 
By applying the proof of \cite[Theorem 8.3.3]{MR2293045}
for $\tilde{g}$,
the Levi--Civita connection of $\tilde{g}$ is 
equal to the affine connection of $B$ 
\cite[Corollary 8.3.7]{MR2293045}. 
By the Bieberbach theorem, every closed flat Riemannian manifold 
is covered by a Riemannian flat torus 
(see also \cite[Theorem 5.3]{MR862114}), and thus every closed 
integral Hessian manifold is an unramified finite cover of 
a tropical torus.
\end{remark}

\begin{remark}
The integral affine structure itself is a very strict condition.
\emph{Chern's conjecture} states 
every closed affine manifold $B$ has zero topological 
Euler characteristic.
If $B$ is a closed flat Riemannian manifold, Chern's conjecture is true from Chern--Gauss--Bonnet formula.
This conjecture is true for 
\emph{special affine manifold} \cite{MR3665000},
i.e., an affine manifold $(B, \{U_i,\psi_i\}_{i\in I})$ 
whose transition map is 
in $\opn{SL}_n(\mathbb{R})\ltimes \mathbb{R}^{n}$.
Therefore, every closed integral affine 
manifold satisfies Chern's conjecture 
since the orientable double cover of an integral affine manifold has a compatible special integral affine structure.
\myfootnote{We learned this conjecture 
from \cite{goldmanRadianceObstructionParallel1984a}
and \cite{MR3665000}.
}
\end{remark}

\begin{remark}[{Relationships with symplectic geometry}]
Integral affine manifolds naturally appear 
as base spaces of Lagrangian torus fibration 
\cite{duistermaatGlobalActionangleCoordinates1980a}. 
Besides, a Hessian manifold naturally appears as
the moduli space of special Lagrangian deformations of
a compact special Lagrangian submanifold of
a Calabi-Yau manifold by McLean's theorem 
\cite[Theorem 3.6 and Corollary 3.10]{MR1664890}
and Hitchin's theorem
\cite[Theorem 2]{MR1655530} (see also 
\cite[Chapter 6.1]{MR2567952}).
In \cref{appendix-geometric-quantization}, we write
about the relationships between integral affine manifolds 
and geometric prequantization. 
\end{remark}

\subsection{Tropical homology, Tropical superform and Cartier data}
From now on, we follow about tropical cohomology and superforms 
for tropical manifolds
from \cite{mikhalkinTropicalEigenwaveIntermediate2014a,
MR3903579,gross2019sheaftheoretic}.
In the case of affine manifolds, some parts of them has already
studied in the field of Hessian geometry by the language 
of differential geometry
(see e.g. ~\cite[Chapter 7]{MR2293045}).
Let $X$ be a rational polyhedral space and 
$\Omega_{\mathbb{Z},X}^{p}$ the sheaf
of tropical $p$-forms on $X$
~\cite[Definition 2.7]{gross2019sheaftheoretic}.
Let $\mform^{p}_{\mathbb{Z},X}$ be 
the sheaf defined in 
\cite[2.4]{mikhalkinTropicalEigenwaveIntermediate2014a}
where $p\in \mathbb{Z}_{\geq 0}$.
If $X$ is a tropical manifold, then 
$\tform^{p}_{\mathbb{Z},X}\simeq 
\mform_{\mathbb{Z},X}^{p}$ for all 
$p\in \mathbb{Z}_{\geq 0}$
(see \cite[Remark 2.8]{gross2019sheaftheoretic}).
In the rest of this paper,
we assume every rational polyhedral space $X$
has a local face structure and
satisfies $\Omega_{\mathbb{Z},X}^{p}\simeq \mform_{\mathbb{Z},X}^{p}$
for all $p$.
We set 
$\tform^{p}_{\mathbb{R},X}\deq 
\tform^{p}_{\mathbb{Z},X}\otimes_{\mathbb{Z}_X}\mathbb{R}_X$.

We recall some results about tropical superforms from
\cite{MR3903579,smacka2017differential}.
Let $\mathcal{A}^{p,q}_X$ be the sheaf of 
$(p,q)$-superforms on a rational polyhedral space
$X$ with a local face structure. 
The bigraded sheaf $\mathcal{A}_X^{\bullet,\bullet}$ has 
a canonical bigraded
complex structure
$(\mathcal{A}_X^{\bullet,\bullet},d',d'')$.
The sheaf $\tform^{p}_{\mathbb{R},X}$ has the following acyclic resolution
\cite[Corollary 3.18, Lemma 3.21]{MR3903579}:
\begin{align}
  0 \to \tform^{p}_{\mathbb{R},X} \to \mcal{A}^{p,0}_{X}\xto{d''} 
\mcal{A}^{p,1}_{X} \xto{d''}\cdots.
\end{align}

\begin{example}
\label{example-hessian-metric-from}
If $B$ is an integral affine manifold, then 
$\tform^{p}_{\mathbb{Z},B}\simeq 
\bigwedge^{p}_{i=1} \mathcal{T}_{\mathbb{Z},B}^{\vee}$ for 
all $p\in \mathbb{Z}_{\geq 0}$.
Every Riemannian metric $g$ on $B$ can be considered
as an element of $\mathcal{A}_{B}^{1,1}$.
The metric $g$ is $d''$-closed if and only 
if $g$ is a Hessian metric (\cite[Lemma 7.4.1]{MR2293045}).
\end{example}

The complex $(\tform_{\mathbb{R},X}^{\bullet},d')$
has a canonical dga structure induced from 
$(\mathcal{A}_X^{\bullet,\bullet},d',d'')$, 
and thus its hypercohomology 
$\mb{H}^{\bullet}(X;\tform_{\mathbb{R},X}^{\bullet})$ is a 
graded-commutative algebra.
This is a certain tropical analog of the singular cohomology
$H^{\bullet}(X;\C)$ for a complex manifold $X$ since 
the analytic de Rham theorem $\C_X \simeq \Omega_X^{\bullet}$ 
gives isomorphism 
$H^{\bullet}(X;\C)\simeq \mb{H}^{\bullet}(X;\Omega_X^{\bullet})$
of graded algebras. 

An elementary but remarkable fact of 
$(\tform_{\mathbb{R},X}^{\bullet},d')$ is that 
$\tform_{\mathbb{R},X}^{\bullet}\simeq 
\bigoplus_{i\in \Z}\tform_{\mathbb{R},X}^{i}[-i]$, i.e., the differential 
of $\tform_{\mathbb{R},X}^{\bullet}$ is trivial unlike the analytic de Rham complex
of complex manifolds, see \cite[Corollary 2.15]{smacka2017differential}.
Therefore, we can calculate the multiplication of 
$\mb{H}^{\bullet}(X;\tform_{\mathbb{R},X}^{\bullet})$ by 
the cup products of each $H^{q}(X;\tform_{\mathbb{R},X}^{p})$ easily.
We can also construct the subcomplex
$(\tform_{\mathbb{Z},X}^{\bullet},d'')$ of 
$(\tform_{\mathbb{R},X}^{\bullet},d'')$ easily,
and the differential of 
$(\tform_{\mathbb{Z},X}^{\bullet},d'')$
is also trivial. 
Therefore, the hypercohomology
$\mathbb{H}^{\bullet}(X;\tform_{\mathbb{Z},X}^{\bullet})$
also has
a very simple graded-commutative ring structure.  

Let $f\colon X\to Y$ be a morphism of rational polyhedral 
spaces. Then, this induces a canonical morphism
\begin{align}
b_f^{p}\colon \tform^{p}_{\mathbb{Z},Y}\to 
f_* \tform^{p}_{\mathbb{Z},X} \to
Rf_*\tform^{p}_{\mathbb{Z},X}
\end{align}
for all $p\in \mathbb{Z}_{\geq 0}$ and the pullback 
$f^{*}\colon 
\mb{H}^{\bullet}(Y;\tform_{\Z, Y}^{\bullet})\to 
\mb{H}^{\bullet}(X;\tform_{\Z, X}^{\bullet})$
\cite[Proposition 4.17]{gross2019sheaftheoretic}.
The pullback $f^{*}$ is a graded ring homomorphism. 

The canonical monomorphism $\mcal{O}^{\times}_X \to \mcal{A}^{0,0}_X$
induces the following commutative diagram 
\myfootnote{Several similar diagrams for some special tropical spaces 
and similar spaces appear
in literature, e.g. \cite[p.468]{MR2567952}
and \cite[Definition 1.45]{grossMirrorSymmetryLogarithmic2006a}.}:

\begin{equation} \label{equation-smooth-cartier-diagram}
  \begin{tikzcd}
    & 0 \arrow[d]    & 0 \arrow[d]           &                      &   \\
    & \mb{R}_{X} \arrow[r,equal] \arrow[d]   & \mb{R}_{X} \arrow[d]\\
0\arrow[r] & \mathcal{O}_X^{\times} \arrow[r] \arrow[d] &
\mcal{A}^{0,0}_X \arrow[r] \arrow[d] & \mcal{A}^{0,0}_X /\mcal{O}_{X}^{\times}  \arrow[r] \arrow[d,equal] & 0 \\
    0 \arrow[r] & 
\tform_{\mathbb{Z},X}^{1} \arrow[r] \arrow[d] & 
\mcal{Z}^{1}_{X} \arrow[r] \arrow[d]  & \mcal{Z}^{1}_{X}/
\tform_{\mathbb{Z},X}^{1} \arrow[r]   & 0 \\
    & 0 & 0 &  &
  \end{tikzcd}
\end{equation}
Here $\mcal{Z}^{q}_{X}\deq
  \opn{Ker}(d'': \mcal{A}^{0,q}_X\to \mcal{A}^{0,q+1}_X)$.
Every row and column of the diagram 
\eqref{equation-smooth-cartier-diagram} are exact
and $\mcal{A}_{X}^{0,0}/\mcal{O}^{\times}_X
  \simeq \mcal{Z}^{1}_X/\tform_{\Z,X}^{1}$ from the snake lemma.
The left column short exact sequence of 
\eqref{equation-smooth-cartier-diagram} is called
the 
\emph{tropical exponential sequence}
(e.g. \cite[\textsection 3]{MR3894860}).

By taking the long exact sequence for 
\eqref{equation-smooth-cartier-diagram}, we get a canonical morphism:
\begin{align} \label{equation-tropical-cartier}
c_1\colon H^{0}(X;\mathcal{A}^{0,0}_X/\mathcal{O}_X^{\times})\to \opn{Pic}(X)\to 
H^{1}(X;\tform_{\mathbb{Z},X}^{1}); s\mapsto [s] 
\mapsto c_1([s]).
\end{align}
The second morphism of
\eqref{equation-tropical-cartier} is 
called the Chern class map of 
$\opn{Pic}(X)$ in \cite[5]{mikhalkinTropicalCurvesTheir2008a}.
Let $f\colon X\to Y$ be a morphism of 
two boundaryless rational polyhedral spaces.
From definition of morphism of rational polyhedral space
and \cite[Lemma 2.21]{MR3903579},
$f$ induces the pullback 
$f^{*}\colon \CDiv(Y)\to \CDiv(X)$ 
and $c_1(f^{*}s)=f^{*}(c_1(s))$ from the diagram
\eqref{equation-smooth-cartier-diagram}. 
In particular, we have
$c_1(f^{*}(s)^{n})=f^{*}(c_1(s)^{n})$ for every 
$s\in\CDiv(Y)$
by \cite[Proposition 4.17]{gross2019sheaftheoretic}.

\subsection{Verdier duality and Borel--Moore homology
for tropical manifolds}
In this subsection, we recall the Verdier duality for
tropical manifolds from \cite{gross2019sheaftheoretic}.
See \cref{section-verdier-dual} if you are not familiar 
with the classical Verdier duality.
Every rational polyhedral space $X$ is locally compact, 
and thus we can use the Verdier duality for tropical manifolds.
We set $H^{p,q}(X;\mathbb{Z})\deq 
H^{q}(X;\tform_{\mathbb{Z},X}^{p})$. 
The total cohomology 
$H^{\bullet,\bullet}(X;\mathbb{Z})$ has a 
canonical ring structure induced from the 
hypercohomology 
$\mathbb{H}^{\bullet}(X;
\tform_{\mathbb{Z},X}^{\bullet})$.
Let $\upomega_X^{\bullet}$ be the dualizing complex 
of $D^{b}(\mathbb{Z}_X)$.
The $(p,q)$-th Borel--Moore homology of $X$
\cite[Definition 4.3]{gross2019sheaftheoretic} is
\begin{align}
H^{\opn{BM}}_{p,q}(X;\Z)\deq 
H^{-q}R\opn{Hom}(\tform_{\mathbb{Z},X}^{p},\upomega_X^{\bullet})\simeq 
\opn{Hom}_{D^{b}(\mathbb{Z}_X)}(\tform_{\mathbb{Z},X}^{p}[q],\upomega_X^{\bullet}).
\end{align}
$H_{0,q}^{\opn{BM}}(X;\Z)=
H^{-q}R\opn{Hom}(A_X,\upomega_X^{\bullet})$ 
is equal to the classical $p$-th Borel--Moore homology
\cite[Lemma 4.8]{gross2019sheaftheoretic}.
We note $H_{0,0}^{\opn{BM}}(X;\mathbb{Z})\simeq 
 \mathbb{Z}$ if 
$X$ is compact and path-connected.
This isomorphism follows from the universal 
coefficient theorem and 
$H^{1}_c(X;\mathbb{Z})$ is finitely generated and
torsion-free 
\cite[Chapter VI. Proposition 5.3]{iversenCohomologySheaves1986a}.

Let $f\colon X \to Y$ be a proper morphism
of rational polyhedral spaces.
Then, there exists the pushforward
$f_*\colon H_{p,q}^{\mathrm{BM}}(X;\mathbb{Z})
\to H_{p,q}^{\mathrm{BM}}(Y;\mathbb{Z})$
of tropical Borel--Moore homology
\cite[Definition 4.9]{gross2019sheaftheoretic}:
\begin{align}
f_*(\psi)\deq \opn{Tr}_{f,\upomega_Y^{\bullet}}\circ 
Rf_!\psi \circ b_{f}^{p}[q]\in 
\opn{Hom}_{D^{b}(\mathbb{Z}_X)}(
\tform^{p}_{\mathbb{Z},Y}[q],\upomega_Y^{\bullet}),
\end{align}
where 
$\opn{Tr}_{f,\upomega_Y^{\bullet}}$ is the counit 
$Rf_!f^{!}\upomega_Y^{\bullet}\to \upomega_Y^{\bullet}$.
\begin{example}

If $p=0$, $q=0$ and $f=a_X$, 
then $b^{0}_{a_X}$ is just a counit for 
the adjoint $a^{-1}_X\dashv Ra_{X*}$.
Therefore, we can identify
$a_{X*}\colon H^{\opn{BM}}_{0,0}(X;\mathbb{Z})
\to H^{\opn{BM}}_{0,0}(\{\opn{pt}\};\mathbb{Z})$
with $\opn{Hom}_{D(\mathbb{Z})}(\mathbb{Z},\opn{Tr}_{a_X,\mathbb{Z}})$
by the adjoint isomorphism\\
$\opn{Hom}_{D(\mathbb{Z}_X)}(a_X^{-1}\mathbb{Z},
\upomega_X^{\bullet})\simeq 
\opn{Hom}_{D(\mathbb{Z})}(\mathbb{Z},Ra_{X*}\upomega_X^{\bullet})$
if $a_X$ is proper.

\end{example}

Let $f\colon X\to Y$ and $g\colon Y\to Z$ be proper 
morphisms of rational polyhedral spaces.
Then, $f_*\circ g_*=(f\circ g)_*$ from 
\eqref{equation-trace}.

The tropical cohomology and the tropical Borel--Moore homology 
also have the cap product 
\cite[\textsection 4.6]{gross2019sheaftheoretic}:
\begin{align}
\cdot \frown \cdot \colon
H^{i,j}(X;\mathbb{Z}) \times 
H_{p,q}^{\mathrm{BM}}(X;\mathbb{Z})\to 
H_{p-i,q-j}^{\mathrm{BM}}(X;\mathbb{Z});(c,\alpha) 
\mapsto c\frown \alpha.
\end{align}

If $p=i$ and $q=j$, then the cap product $c\frown \alpha$ 
is just the composition $\alpha \circ c$ of 
morphisms:
\begin{align} \label{equation-composition-cap}
\cdot \frown \cdot =\cdot \circ \cdot \colon
\opn{Hom}_{D^{b}(\mathbb{Z}_X)}(
\mathbb{Z}_X,\tform^{p}_{\mathbb{Z},X}[q])\times &
\opn{Hom}_{D^{b}(\mathbb{Z}_X)}
(\tform^{p}_{\mathbb{Z},X}[q],\upomega_X^{\bullet}) \notag \\
\to &
\opn{Hom}_{D^{b}(\mathbb{Z}_X)}(\mathbb{Z}_X,
\upomega_X^{\bullet}).
\end{align}

Let $f\colon X \to Y $ be a proper morphism of 
rational polyhedral spaces.
Then, there exists the projective formula 
for $\alpha\in H_{p,q}^{\mathrm{BM}}(X)$
and $c\in H^{i,j}(Y)$ 
\cite[Proposition 4.18]{gross2019sheaftheoretic}:
\begin{align}
  f_*(f^{*}c\frown \alpha)=
c\frown f_*\alpha\in H_{p-i,q-j}^{\opn{BM}}(Y).
\end{align}

By an \emph{n-dimensional tropical manifold},
we mean a tropical manifold of pure dimension $n$,
for simplicity in this paper
(see \cite[\textsection 2.A]{MR3894860}).
Let $X$ be a compact
$n$-dimensional tropical manifold. 
Then, there exists the following natural isomorphism, 
which is called the Poincar\'e--Verdier duality
for tropical manifolds
\cite[Theorem 6.2]{gross2019sheaftheoretic}:
\begin{align}
\opn{PVD}^{(n-p)}_X\colon \tform_{\mathbb{Z},X}^{n-p}[n]
\simeq 
\mathcal{D}_{\mathbb{Z}_X}(\tform_{\mathbb{Z},X}^{p}),
\end{align}
where $\mathcal{D}_{\mathbb{Z}_X}(\mathcal{F}^{\bullet})
\deq R\mathcal{H}om_{\mathbb{Z}_X}(\mathcal{F}^{\bullet}
,\upomega^{\bullet}_X)$ for 
$\mathcal{F}^{\bullet}\in D^{b}(\mathbb{Z}_X)$.
The Poincar\'e--Verdier duality 
and the tensor-hom adjunction induce
the following isomorphism
(see also \eqref{equation-verdier-dual}):
\begin{align}
H^{n-p,n-q}(X;\mathbb{Z})=
R^{0}\opn{Hom}(\mathbb{Z}_X,
\tform_{\mathbb{Z},X}^{n-p}[n-q])\simeq 
R^{0}\opn{Hom}(\tform_{\mathbb{Z},X}^{p}[q],
\upomega_{X}^{\bullet})=
H_{p,q}^{\mathrm{BM}}(X;\mathbb{Z}).
\end{align}
This is the Poincar\'e duality
for tropical manifolds 
\cite[Corollary 6.3]{gross2019sheaftheoretic}.
The Poincar\'e duality for tropical manifold with
a global face structure was firstly 
proved in \cite[Theorem 5.3]{MR3894860}.

We recall the construction 
of $\opn{PVD}^{(n)}_X$ in \cite{gross2019sheaftheoretic}.
Let $\mathscr{Z}_n^{X}$ be the sheaf of tropical 
$n$-cycles \cite[Definition 3.5]{gross2019sheaftheoretic}
on an $n$-dimensional tropical manifold $X$ and 
$\mathcal{H}^{n}_X$ the $n$-th 
homology sheaf \cite[Definition 4.6]{gross2019sheaftheoretic}.
Then, there exists the following isomorphism:
\begin{align}
\mathbb{Z}_X \simeq\mathscr{Z}_n^{X}\simeq 
\mathcal{H}om(\tform^{n}_{\mathbb{Z},X},
\mathcal{H}^{n}_X).
\end{align}
The first isomorphism comes from \cite[Lemma 2.4]{MR3041763} and
the second isomorphism is true for $n$-dimensional rational polyhedral 
spaces \cite[Proposition 5.1]{gross2019sheaftheoretic}.
Let $X_{\reg}$ the set of generic points in $X$, i.e.,
the set of points which has an open neighborhood 
isomorphic to a Euclidean space
\cite[\textsection 4.B]{MR3894860}
(see also \cite[Definition 2.7]{gross2019sheaftheoretic}).
The constant function
$1_{X_{\reg}}\colon X_{\reg} \to \mathbb{Z}; 
x\mapsto 1$ defines a generator of 
$Z_n(X)=\Gamma(X;\mathscr{Z}_n^{X})\simeq 
\opn{Hom}(\tform^{n}_{\mathbb{Z},X},
\mathcal{H}^{n}_X)\simeq 
\opn{Hom}_{D^{b}(\mathbb{Z}_X)}(\tform^{n}_{\mathbb{Z},X}[n],
\upomega_X^{\bullet})$ if $X$ is connected.
In particular, $1_{X_{\reg}}$ defines a 
natural morphism 
$\tform^{n}_{\mathbb{Z},X}[n]\to \mathcal{H}^{n}_X[n] 
\to \upomega_X^{\bullet}$.
This morphism is just $\opn{PVD}^{(n)}_X$.
We call $[X]\deq \opn{PVD}_X^{(n)}$
the \emph{fundamental class} of $X$
(cf.~\cite[Definition 4.8]{MR3894860}).
We stress that the fundamental class
of every integral affine manifold (as tropical manifold) 
is determined without the data of orientation.

If $X$ is a compact and connected tropical manifold
of dimension $n$, 
we can define the \emph{trace map} 
on tropical cohomology like complex manifolds 
(e.g. \cite[Example 13.A.3]{MR2810322}):
\begin{align} 
\label{equation-trace-integration}
\int_X \colon H^{n,n}(X;\mathbb{Z})\to 
H_{0,0}^{\opn{BM}}(\{\opn{pt}\};\mathbb{Z})\simeq 
\mathbb{Z},
\qquad \alpha \mapsto a_{X*}(\alpha \frown [X]).
\end{align}
The trace map extends on $H^{\bullet,\bullet}(X;\mathbb{Z})$
naturally.
The fundamental class $[X]$ of $X$ is a generator of 
$H_{n,n}^{\opn{BM}}(X;\mathbb{Z})$, and thus we 
can define the degree of a proper morphism.
We can extend \eqref{equation-trace-integration}
for compact tropical manifolds naturally.
\begin{definition}[{cf.~\cite[Definition 2.11]{MR3668972}}]
Let $f\colon X \to Y$ be a proper surjective morphism
of $n$-dimensional tropical manifolds
such that $Y$ is connected.

The (tropical) degree
$\opn{deg}_{\opn{trop}}(f)$
of $f$ is the integer $m$
such that $f_*([X])=m[Y]$. 
\end{definition}
We write $\opn{deg}_{\opn{trop}}(f)$ by $\opn{deg}(f)$
for simplicity.
From the projection formula, every element 
$c\in H^{n,n}(Y;\mathbb{Z})$ have the equation below
when $X$ and $Y$ are compact:
\begin{align}
\int_{X}f^{*}c
=a_{Y*}(f_*(f^{*}c\frown [X]))
=a_{Y*}(c\frown \opn{deg}(f)[Y])
=\opn{deg}(f)\int_Y c.
\end{align}

\begin{example}
Let $f\colon X\to Y$ be a morphism of 
$n$-dimensional connected tropical manifolds such that $f$ is a 
covering map of topological degree $m$ and the associated map $d_xf\colon 
T_{x}^{\mathbb{Z}} X\to T_{f(x)}^{\mathbb{Z}}Y$ is 
isomorphic.
Let $\opn{cyc}_X$ be the cycle class map of $X$ 
\cite[Definition 5.4]{gross2019sheaftheoretic}.
Then, $f_*[X]=\opn{cyc}_X(f_*1_{X_{\reg}})
=m[\opn{cyc}_Y(1_{Y_{\reg}})]=m
[Y]$ from the commutativity of
tropical cycle maps
\cite[Proposition 5.6]{gross2019sheaftheoretic}
and the definition of the pushforward
of tropical cycles \cite[Definition 3.6]{gross2019sheaftheoretic}.
In this case, $\opn{deg}(f)=\opn{deg}_{\mathrm{top}}(f)$.
\end{example}

\begin{definition} \label{definition-etale-covering}
Let $f\colon X\to Y$ be a proper morphism 
of tropical manifolds such
that $Y$ is connected and $f$ is a covering map.
The morphism $f$ is a \emph{tropical \'etale} covering map 
if $\opn{deg}(f)=\opn{deg}_{\mathrm{top}}(f)$.  
\end{definition}
\begin{remark}
Our definition of \'etale covering is different from that of 
\cite[Definition 1.1]{grossMirrorSymmetryLogarithmic2006a}
when $X$ and $Y$ are integral affine manifolds.
A typical example of non-\'etale covering map 
in our sense but \'etale in the sense of
\cite{grossMirrorSymmetryLogarithmic2006a} is given
from the $n$-th Frobenius endomorphism 
$\opn{Fr}_{n,X}^{\natural}\colon \mathcal{O}_X \to \mathcal{O}_X; f\mapsto nf$ of 
the structure sheaf of tropical manifolds. 
This is a tropical analog of Frobenius morphisms of 
schemes in positive characteristic 
(e.g. \cite[IV. Remark 2.4.1]{hartshorneAlgebraicGeometry1977a}).
As like classical Frobenius morphisms, 
the morphism $\opn{Fr}_{n,X}=(\opn{id}_X,\opn{Fr}_{n,X}^{\natural})$ of semiringed space does 
not induce an endomorphism of a given tropical manifold 
(over $\mathbb{T}$), but
there exists the following commutative diagram:
\begin{equation}
\begin{tikzcd}
 (X,\mathcal{O}_X) \arrow[r,"{\opn{Fr}_{n,X}}"] 
\arrow[d,"{a_X}"']
 &   (X,\mathcal{O}_X) \arrow[d,"{a_Y}"] \\
(\{\opn{pt}\},\mathbb{T})
\arrow[r,"{\opn{Fr}_{n,\{\opn{pt}\}}}"]
 & (\{\opn{pt}\},\mathbb{T}).
\end{tikzcd}
\end{equation}
By the base change map $\opn{Fr}_{n,\{\mathrm{pt}\}}$, we get 
a new tropical manifold $X^{(n)}$
and the corresponding morphism 
$\opn{Fr}_{n,X}\colon X^{(n)}\to X$. 
If $X=\mathbb{R}^{d}$, then $X^{(n)}$ is 
isomorphic to $X$ and 
$\opn{Fr}_{n,X}$ corresponds to 
the $n$-th dilation map of $X$.
\end{remark}

\subsection{Sheaf cohomology for integral affine manifolds}

From now on, we recall the commutative diagram 
\eqref{equation-smooth-cartier-diagram}
 for integral affine manifolds.
In this case, every sheaf
in \eqref{equation-smooth-cartier-diagram} naturally comes 
from symplectic geometry.

Of course, 
$\mcal{A}_{B}^{0,0}=\mcal{C}^{\infty}_{B}$ and
$\mcal{Z}^{1}_{B}$ is the sheaf of 
closed $1$-forms on $B$.
We recall the sheaf $\mcal{Z}^{1}_{B}$ can be 
considered as the sheaf 
$\opn{Lag}(T^{*}B)$ of Lagrangian sections 
$s:U \to T^{*}U$ for open set $U \subset B$ 
(see \cite[3.2]{MR1853077} or some other standard textbook
of symplectic geometry).
The $\mcal{T}_{\Z,B}^{\vee}$ is isomorphic to
the period lattice of 
$\check{f}_{B}\colon \check{X}(B)\to B$ 
 \cite{duistermaatGlobalActionangleCoordinates1980a}.
Another important thing is that 
$\mcal{Z}^{1}(B)/\mcal{T}_{\Z,B}^{\vee}$ 
is isomorphic to the sheaf of germs of Lagrangian sections 
of the Lagrangian torus fibration 
$\check{f}_{B}\colon \check{X}(B)\to B$ 
\cite[(2.7), (2.11)]{duistermaatGlobalActionangleCoordinates1980a}.

Thus, we can rewrite the commutative diagram \eqref{equation-smooth-cartier-diagram} like this 
(e.g. \cite[p.468]{MR2567952}):

\begin{equation} \label{equation-cartier-lagrangian}
  \begin{tikzcd}
    & 0 \arrow[d]    & 0 \arrow[d]           &                      &   \\
    & \mb{R}_{B} \arrow[r,equal] \arrow[d]                & \mb{R}_{B} \arrow[d]           &                      &   \\
    0 \arrow[r] & \mathcal{O}_{B}^{\times} \arrow[r] \arrow[d]         & \mathcal{C}^{\infty}_{B} \arrow[r] \arrow[d] & \mathcal{C}^{\infty}_{B}/\mathcal{O}_{B}^{\times}  \arrow[r] \arrow[d,equal] & 0 \\
    0 \arrow[r] & \mcal{T}_{\Z,B}^{\vee} \arrow[r] \arrow[d] & \opn{Lag}(T^{*}B) \arrow[r] \arrow[d]  & \opn{Lag}(\check{X}(B)) \arrow[r]   & 0. \\
    & 0 & 0 &  &
  \end{tikzcd}
\end{equation}

From the diagram \eqref{equation-cartier-lagrangian},
we have the following group isomorphism already remarked 
before:

\begin{proposition}
\label{proposition-cartier-lagrangian}
Let $B$ be an integral affine manifold.
Then, the following group isomorphism exists:
\begin{align}
\CDiv(B)\simeq \Gamma(B;
\opn{Lag}(\check{X}(B)));s\mapsto L_s.
\end{align}
\end{proposition}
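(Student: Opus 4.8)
The plan is to read the isomorphism off from the commutative diagram \eqref{equation-cartier-lagrangian} by a purely diagram-theoretic argument, using that every sheaf appearing there has already been identified with a geometric object in the preceding discussion. First I would recall that, by definition, $\CDiv(B)=H^{0}(B;\mathcal{C}^{\infty}_{B}/\mathcal{O}^{\times}_{B})$. The whole point of the diagram \eqref{equation-cartier-lagrangian} is that the right-hand vertical equality $\mathcal{C}^{\infty}_{B}/\mathcal{O}^{\times}_{B}\xrightarrow{\ =\ }\opn{Lag}(\check{X}(B))$ identifies the quotient sheaf defining $C^{\infty}$-divisors with the sheaf of germs of Lagrangian sections of $\check{f}_{B}\colon\check{X}(B)\to B$. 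This last identification is exactly what is recorded just before the statement, citing \cite[(2.7),(2.11)]{duistermaatGlobalActionangleCoordinates1980a}: the sheaf $\mathcal{Z}^{1}_{B}/\mathcal{T}_{\Z,B}^{\vee}=\opn{Lag}(\check{X}(B))$ is the sheaf of germs of Lagrangian sections of the torus fibration.

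The key steps, in order, are as follows. First I would justify that the two bottom short exact sequences in \eqref{equation-smooth-cartier-diagram} and \eqref{equation-cartier-lagrangian} are really the same, i.e.\ that $\mathcal{A}^{0,0}_{B}=\mathcal{C}^{\infty}_{B}$, $\mathcal{Z}^{1}_{B}=\opn{Lag}(T^{*}B)$ and $\mathcal{Z}^{1}_{B}/\tform^{1}_{\Z,B}=\opn{Lag}(\check{X}(B))$ for an integral affine manifold $B$; these are precisely the sheaf identifications made in the text. Then, taking the functor $H^{0}(B;-)=\Gamma(B;-)$ of the equality of sheaves $\mathcal{C}^{\infty}_{B}/\mathcal{O}^{\times}_{B}\simeq\opn{Lag}(\check{X}(B))$ yields an isomorphism of the groups of global sections,
\begin{align}
\CDiv(B)=\Gamma\Paren{B;\mathcal{C}^{\infty}_{B}/\mathcal{O}^{\times}_{B}}
\simeq\Gamma\Paren{B;\opn{Lag}(\check{X}(B))}.
\end{align}
Since the sheaf isomorphism in the right column of \eqref{equation-cartier-lagrangian} is an isomorphism of sheaves of Abelian groups, the induced map on global sections is a group isomorphism, which is the assertion. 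Finally I would describe the map on the level of local data: a $C^{\infty}$-divisor $s$ with local data $\{(U_i,f_i)\}_{i\in I}$ is sent to the global Lagrangian section $L_s$ whose restriction to $U_i$ is the graph of $df_i$ modulo the period lattice $\mathcal{T}_{\Z,B}^{\vee}$; the compatibility on overlaps is exactly the condition $df_i-df_j\in(\tform^{1}_{\Z,B})_x=\mathcal{T}_{\Z,B,x}^{\vee}$, which holds because the $f_i$ differ by sections of $\mathcal{O}^{\times}_B$, so that the graphs glue to a well-defined section of $\check{f}_B$.

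The only genuinely substantive point — and hence the main (mild) obstacle — is verifying that $\mathcal{Z}^{1}_{B}/\tform^{1}_{\Z,B}$ coincides with $\opn{Lag}(\check{X}(B))$ as sheaves, rather than merely fiberwise. This is where the Duistermaat action–angle description enters: a germ of a Lagrangian section of $\check{X}(B)=T^{*}B/\mathcal{T}_{\Z,B}^{\vee}$ lifts locally to a germ of a Lagrangian section of $T^{*}B$, i.e.\ a germ of a closed $1$-form, and two such lifts differ by a section of the period lattice $\mathcal{T}_{\Z,B}^{\vee}$; conversely every closed $1$-form descends to a Lagrangian section downstairs. I would spell out that the assignment $\eta\mapsto(\text{graph of }\eta \bmod \mathcal{T}_{\Z,B}^{\vee})$ is surjective on germs with kernel exactly $\mathcal{T}_{\Z,B}^{\vee}$, which is the content of the bottom row of \eqref{equation-cartier-lagrangian} being exact. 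Everything else is formal: applying $\Gamma(B;-)$ to an isomorphism of sheaves of Abelian groups. Because this identification has already been attributed to \cite{duistermaatGlobalActionangleCoordinates1980a} in the surrounding text, the proof is genuinely short, and I would keep it to a brief paragraph reading off $\Gamma$ of the right-hand vertical equality of \eqref{equation-cartier-lagrangian}.
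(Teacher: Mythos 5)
Your proposal is correct and follows essentially the same route as the paper, which likewise reads the isomorphism off by taking global sections of the right-hand column of \eqref{equation-cartier-lagrangian}, where the identification $\mathcal{C}^{\infty}_{B}/\mathcal{O}^{\times}_{B}\simeq\opn{Lag}(\check{X}(B))$ comes from the snake lemma applied to \eqref{equation-smooth-cartier-diagram} together with Duistermaat's identification of $\mcal{Z}^{1}_{B}/\mcal{T}^{\vee}_{\Z,B}$ with the sheaf of germs of Lagrangian sections. Your explicit description of $s\mapsto L_s$ via local data $\{(U_i,f_i)\}$, with compatibility on overlaps because $f_i-f_j\in\mathcal{O}^{\times}_B(U_i\cap U_j)$, is exactly the construction of $\check{s}$ already carried out in the stratified torus fibration subsection, so no step is missing.
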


If two $C^{\infty}$-divisors $s,s'$ are linearly equivalent, 
i.e., $s-s'=(f)^{\mathrm{sm}}$ for some
$f\in C^{\infty}(B)$, 
then $s$ is the image of a one-time Hamiltonian flow of $s'$
on $\check{X}(B)$
\cite[Exercise 6.65]{MR2567952}.

From now on, we start to prove
\cref{theorem-MRR-hesse-preface}.

\begin{theorem} \label{theorem-MRR-hesse}
Let $B$ be an $n$-dimensional compact 
integral affine manifold admitting a Hessian form and
$s$ a $C^{\infty}$-divisor such that its
Lagrangian section $L_s$ intersects 
with the zero section transversely. Then,
  \begin{align} \label{equation-Hesse-RR}
\chi(\opn{LMD}^{\bullet}(B,s))=\frac{1}{n!}\int_{B}c_1(s)^{n}.
  \end{align}
\end{theorem}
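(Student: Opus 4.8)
The plan is to reduce the whole statement to an explicit computation on a tropical torus, exploiting the fact that a compact integral affine manifold admitting a Hessian form is finitely covered by one.

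First I would unwind the left-hand side. Fix an intersection data $\{(U_p,f_p)\}_{p\in s_0\cap s}$ for $s$ (\cref{definition-intersection-data}); since $B$ is an integral affine manifold, each $U_p$ is an open subset of $\mathbb{R}^n$ and each $f_p$ is a genuine smooth function. The hypothesis that $L_s$ meets the zero section transversely says precisely that at each $p$ the differential $df_p$ is transverse to the lattice, i.e. the Hessian $\opn{Hess}(f_p)(p)$ is nondegenerate, so $f_p$ is a Morse function with an isolated critical point at $p$. By the Morse lemma and \cref{example-morse-index},
\[
\opn{LMD}^{\bullet}(A_{U_p},f_p,p)\simeq A[-m_p],
\]
where $m_p$ is the Morse index, whence $\chi(\opn{LMD}^{\bullet}(B;s))=\sum_{p\in s_0\cap s}(-1)^{m_p}$.

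Second, I would exploit multiplicativity under finite étale covers. By \cref{remark-compact-hessian} (the Bieberbach argument), there is a tropical torus $T=\mathbb{R}^{n}/\Lambda$ together with a finite covering map $q\colon T\to B$, which is tropical étale in the sense of \cref{definition-etale-covering} because $dq$ is a lattice isomorphism. Since $q$ is a local isomorphism of integral affine manifolds, the pullback $q^{*}s$ is again a permissible $C^{\infty}$-divisor whose Lagrangian section meets the zero section transversely, so the first step applies to $q^{*}s$. By \cref{proposition-euler-number-etale}, $\chi(\opn{LMD}^{\bullet}(T;q^{*}s))=\opn{deg}_{\opn{top}}(q)\,\chi(\opn{LMD}^{\bullet}(B;s))$, while $c_1(q^{*}s)=q^{*}c_1(s)$ together with the projection formula gives $\int_T c_1(q^{*}s)^{n}=\opn{deg}(q)\int_B c_1(s)^{n}$. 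As $\opn{deg}(q)=\opn{deg}_{\opn{top}}(q)$ for a tropical étale cover, proving \eqref{equation-Hesse-RR} on $T$ yields it on $B$.

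Third, on $T$ I would compute both sides explicitly. Here $\check{X}(T)=T^{*}T/\mathcal{T}^{\vee}_{\mathbb{Z}}\cong T\times T^{\vee}$ is a trivial torus bundle, and the local data of $q^{*}s$ glue to a smooth map $\phi\colon T\to T^{\vee}$, $x\mapsto df(x)\bmod\Lambda^{*}$ (well defined because local data differ by integral affine functions). Its homotopy class is the period map $A\colon\Lambda\to\Lambda^{*}$ representing $c_1(q^{*}s)\in H^{1,1}(T;\mathbb{Z})=H^{1}(T;\Lambda^{*})=\opn{Hom}(\Lambda,\Lambda^{*})$. Transversality means $0\in T^{\vee}$ is a regular value, so by the degree-equals-signed-count principle $\sum_{p}(-1)^{m_p}=\opn{deg}(\phi)=\det A$, the local sign at $p$ being that of $\det\opn{Hess}(f)(p)$, namely $(-1)^{m_p}$. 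On the other hand, using the graded-commutative ring structure of $H^{\bullet,\bullet}(T;\mathbb{Z})$ and the trace map \eqref{equation-trace-integration}, the cup power expands as $c_1(q^{*}s)^{n}=n!\,(\det A)\,[\opn{vol}]$, so that $\frac{1}{n!}\int_T c_1(q^{*}s)^{n}=\det A$ as well, giving the desired equality on $T$.

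The main obstacle I expect lies in the last paragraph: matching the Morse-index sign $(-1)^{m_p}$ with the local degree of $\phi$ and, in parallel, verifying the algebraic identity $c_1(q^{*}s)^{n}=n!\,(\det A)\,[\opn{vol}]$ so that the combinatorial factor $1/n!$ and the determinant appear consistently on both sides. The remaining care, which I would check at the outset, is that $q^{*}s$ genuinely inherits the transversality and permissibility hypotheses, so that \cref{proposition-euler-number-etale} and the reduction of the left-hand side apply to it verbatim.
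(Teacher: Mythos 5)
Your proposal is correct, and it shares the paper's global skeleton: both reduce to a tropical torus via a finite tropical \'etale cover $T\to B$ furnished by the Bieberbach argument of \cref{remark-compact-hessian}, using \cref{proposition-euler-number-etale} on the left-hand side and $c_1(q^{*}s)=q^{*}c_1(s)$ plus the projection formula on the right, and both ultimately rest on the same explicit computation in the exterior-algebra structure of $H^{\bullet,\bullet}(T;\mathbb{Z})$ showing $\frac{1}{n!}\int_T c_1^{n}=\det A$ for the period matrix $A\in\opn{Hom}(\Lambda,\Lambda^{*})$ (the paper cites this rather than reproving it). Where you genuinely diverge is the torus-level count. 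The paper replaces $s$ by a linearly equivalent quadratic model $dq_s$, invokes invariance of the intersection number under one-time Hamiltonian flows on $\check{X}(T)$ to justify the replacement, counts $\sharp(L_0\cap L_{dq_s})=|\det dq_s|$ directly when $\det\neq 0$, and needs a separate flow-off argument when $\det dq_s=0$ (the quadratic model is then not transverse). You instead keep the given transverse $s$, package its local data into a classifying map $\phi\colon T\to T^{\vee}$, and identify $\chi(\opn{LMD}^{\bullet}(T;q^{*}s))=\sum_p(-1)^{m_p}$ with $\opn{deg}(\phi)=\det A$ via the standard regular-value formula, the local sign matching because $\opn{sign}\det\opn{Hess}(f_p)(p)=(-1)^{m_p}$ and both $\deg(\phi)$ and $\det A$ are congruence-invariant, hence sign-unambiguous. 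Your route is arguably cleaner under the stated hypotheses: it needs no Hamiltonian-isotopy invariance and handles $\det A=0$ uniformly (degree zero forces the signed count to vanish), whereas the paper's deformation argument is what one would reuse to treat non-transverse divisors — which the paper indeed does separately, via Morse perturbation, when deducing \cref{theorem-MRR-hesse-preface}.
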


\begin{proof}

As mentioned in \cref{remark-compact-hessian},
$B$ is a finite unramified cover of a tropical torus $T$.
Fix a tropical \'etale cover $p:T \to B$ of $B$.
If \cref{theorem-MRR-hesse} is true for tropical tori,
then \cref{theorem-MRR-hesse} is also true for compact
integral manifolds admitting a Hessian form from
\cref{proposition-euler-number-etale} for $p$:
\begin{align}
\chi(\opn{LMD}^{\bullet}(B;s))
=\opn{deg}(p)^{-1}\chi(\opn{LMD}^{\bullet}(T;p^{*}(s)))
=&\opn{deg}(p)^{-1}\frac{1}{n!}\int_T c_1(p^{*}s)^{n}
\notag \\
=&\frac{1}{n!}\int_{B}c_1(s)^{n}.
\end{align}

From now on, we assume $B=T$.
Let $s$ be an element of 
$\Gamma(T;\mcal{C}^{\infty}(T)/\mcal{O}_{T}^{\times})\simeq
\Gamma(T;\opn{Lag}(\check{X}(T)))$.
Every $s$ is linearly equivalent to a Lagrangian section
$L_{dq_s}$
defined from the differential $dq_s$ of a quadratic 
polynomial $q_s$ on the universal cover $\tilde{T}$ of $T$
(see \cite{mikhalkinTropicalCurvesTheir2008a} or 
\cite[\textsection 3.3]{MR4229604}).

From an explicit calculation of the ring structure of
$\mathbb{H}^{\bullet}(T;\tform_{\mathbb{Z},B}^{\bullet})
\simeq \bigwedge H^{0}(T;\tform_{\mathbb{Z},B}^{1})$
\cite[(6-2)]{MR4582532}
(or Sumi's result
\cite[Theorem 47]{MR4229604} and the cycle map), 
we get 
$\frac{1}{n!}\int_{T}c_1(s)^{n}$ is equal to the determinant
of the linear part of $dq_s$. 
The intersection number of Lagrangian
section $L_s$ and the zero section $L_0$ is equal to
$\chi(\opn{LMD}^{\bullet}(s_0,s))$ up to 
signature. The signature only depends on
the choice of 
an orientation of $\check{X}(B)$
(see e.g. \cite[\textsection 5.2]{MR1336822}
for the definition of 
the intersection number of smooth 
submanifolds).

Every linearly equivalent 
class is given by a one-time Hamiltonian flow on 
$\check{X}(B)$ by the pullback $\check{\pi}_{B}^{*}(h)$
of a smooth function $h$ on $B$.
We note the one-time flow of a submanifold of
$\check{X}(B)$ does not change
the homology class in $H_{\bullet}(\check{X}(B);\Z)$ and
the intersection number \cite[5.2.1. Theorem]{MR1336822}.
Therefore, 
$\chi(\opn{LMD}^{\bullet}(T,s))=
\chi(\opn{LMD}^{\bullet}(T,dq_{s}))$.

We can calculate $\chi(\opn{LMD}^{\bullet}(T,s))$ as follows:

(i) If $\det dq_s\ne 0$, $dq_s$ intersects to 
the zero section transversely. We can see that 
$\sharp(L_{0}\cap L_{dq_s})=|\det dq_s|$ directly, and thus
we get \eqref{equation-Hesse-RR}. 

(ii) If $\det dq=0$, we can choose a smooth vector field
 $v$ on $\check{X}(T)$ such that the one-time flow 
$\phi$ of $v$ make 
$\phi(L_{dq_s})\cap L_0=\emp$. 
Thus, $\chi(\opn{LMD}^{\bullet}(T,s))=0$.
\end{proof}

\begin{proof}[{Proof of \cref{theorem-MRR-hesse-preface}}]
We can deform $s$ to a 
linearly equivalent $C^{\infty}$-divisor $s'$ 
such that $L_{s'}$ intersects the zero section 
transversally and $s|_{B\setminus K}=s'|_{B\setminus K}$
for a sufficiently small neighborhood $K$
of $s_0\cap s$ by Morse perturbations for 
a local data for $s$. 
From \eqref{equation-poincare-hopf}, we have
$\chi(\opn{LMD}^{\bullet}(B,s))
=\chi(\opn{LMD}^{\bullet}(B,s'))$.
Therefore, \cref{theorem-MRR-hesse} 
for $s'$ proves \cref{theorem-MRR-hesse-preface}.
\end{proof}

\begin{remark}
\label{remark-todd-class}
According to \cite[\textsection 5.3]{mikhalkinTropicalGeometryIts2006},
the $k$-th Chern cycle $c_k(X)$
of an $n$-dimensional tropical manifold $X$ 
should be supported on the $(n-k)$-skeleton of $X$.
I don't know the explicit and
precise definition of $k$-th
Chern cycles of $X$ except
$k=0,1$ or $k=\dim X$
(see \cite[Definition 3.20]{shawTropicalSurfaces2015a} for 
the definition of the top Chern cycle of a tropical manifold), 
but we can consider the $0$-th Chern cycle $c_0(B)$ of 
a (connected) integral affine manifold $B$ should be $1$,
and the $k$-th Chern cycle should be $0$ when $k\ne 0$
since $B=B_{\reg}$.
Thus, the Todd class $\opn{td}(B)$ of $B$
should be $1$ when the Todd class of a tropical manifold
can be written as a formal power series of the Chern classes
as like complex geometry.
On the other hand,
 $\opn{td}(X(B))=1$
since the tangent bundle on $X(B)$ admits a flat
structure.

Besides, 
the Chern--Schwartz--MacPherson cycles for matroids 
were defined in 
\cite{lopezdemedranoChernSchwartzMacPhersonCyclesMatroids2020}.
As explained in 
\cite[Previous work]{lopezdemedranoChernSchwartzMacPhersonCyclesMatroids2020},
the Chern--Schwartz--MacPherson cycles of matroids should 
give a precise definition of higher Chern classes
of tropical manifolds
(see \cite[Previous work]{lopezdemedranoChernSchwartzMacPhersonCyclesMatroids2020}
for more details about it).
In particular, the above observation of the Chern classes
of integral affine manifolds is compatible with the 
properties of the Chern--Schwartz--MacPherson cycles
of matroids 
\cite[Proposition 2.12]{lopezdemedranoChernSchwartzMacPhersonCyclesMatroids2020}.
In a recent study,
a Lagrangian interpretation of
the Chern--Schwartz--MacPherson cycles of matroids 
were introduced in \cite{MR4583774}.
We expect that our approach has a good relation
with this interpretation. 
\end{remark}

\begin{remark}
When $s=\vep (f)^{\opn{sm}}$ for a sufficiently small
positive real number $\vep >0$ and 
a Morse function $f$ on $B$, 
$\chi(\opn{LMD}^{\bullet}(B,s))=
\chi_{\opn{top}}(B)=0$ is 
truly a special case of Poincar\'e--Hopf theorem for $B$.
We also note there exists another tropical analog of Poincar\'e--Hopf theorem
\cite{MR4540954}.
This analog is about the \emph{tropical Euler characteristic}
$\chi_{\opn{trop}}(B)\deq 
\chi(\mb{H}^{\bullet}(B;\tform_{\mathbb{R},B}^{\bullet}))$,
but not for the topological Euler characteristic $\chi_{\opn{top}}(B)$.
\end{remark}

\begin{remark}[{Kodaira--Thurston surface}]
There exists a complete and compact integral affine surface
which has no Hessian forms.
I learned the following example from
~\cite[Example 1.14]{grossMirrorSymmetryLogarithmic2006a}.
Mishachev classified integral affine structures 
on 2-torus in \cite[Theorem A]{MR1422337}.
If $B$ is an integral affine 2-torus and not isomorphic
to a tropical $2$-torus, then 
$H^{1}(\check{X}(B);\mathbb{R})=H^{1}(X(B);\mathbb{R})=3$
from the Leray spectral sequence for 
$\check{f}_B\colon \check{X}(B)\to B$ and
\cite[Theorem A]{MR1422337}. 
Therefore, the both $\check{X}(B)$ and $X(B)$ has no 
K\"ahler structure.
An example of such a $\check{X}(B)$ firstly
essentially appeared in \cite{MR402764}.
Since $X(B)$ has no K\"ahler structure,
$B$ also has no Hessian forms.
From Kodaira's classification of complex surfaces, 
$X(B)$ is isomorphic to a primary Kodaira surface.
Explicit calculations of the tropical cohomologies
$H^{\bullet}(B;\tform_{\mathbb{Z},B}^{\bullet})$
and the radiance obstructions of
tropical primary Kodaira surfaces are in \cite{maehara2023}
(see also \cite[\textsection 6]{MR1422337}).
Integral affine structures on a Klein bottle
and their tropical cohomologies are discussed in
\cite{MR2737696,shawTropicalSurfaces2015a,MR3894860}.

\end{remark}

\subsection{Relationships between Floer cohomology
and integral affine manifolds}
\label{section-floer-lmd}
 
Let $B$ be a compact integral affine manifold
such that $\pi_2(B)=0$. 
The condition $\pi_2(B)=0$ descends
to a good condition for Lagrangian sections
of $\check{f}_B\colon \check{X}(B)\to B$
by the following elementary proposition.  

\begin{proposition}

\label{proposition-unobstructed-lagrangian}
Let $B$ be a connected integral affine manifold 
and $\check{s}\colon B\to \check{X}(B)$ a 
Lagrangian section of 
$\check{f}_{B}\colon \check{X}(B)\to B$. 
Then, $\pi_i(B)\simeq \pi_i(\check{X}(B))$ 
for $i\geq 2$.
In particular, 
$\pi_i(B)=\pi_i(\check{X}(B))=0$ if 
$B$ is complete and $\pi_2(B,L_s)=0$,
where $L_s$ is the image of $\check{s}$.
\end{proposition}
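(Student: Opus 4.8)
The plan is to read off the higher homotopy of $\check{X}(B)$ from the homotopy long exact sequence of the torus bundle $\check{f}_B\colon \check{X}(B)\to B$, whose fiber over $x\in B$ is the torus $\check{X}(B)_x = (\tform^{1}_{\mathbb{R},B})_x/(\tform^{1}_{\mathbb{Z},B})_x \cong (\mathbb{R}/\mathbb{Z})^{n}$. Being a locally trivial bundle (see \eqref{definition-SYZ-torus-fibration}), $\check{f}_B$ is a Serre fibration, so there is a long exact sequence
\begin{align*}
\cdots \to \pi_i((\mathbb{R}/\mathbb{Z})^{n}) \to \pi_i(\check{X}(B)) \xrightarrow{(\check{f}_B)_*} \pi_i(B) \xrightarrow{\partial} \pi_{i-1}((\mathbb{R}/\mathbb{Z})^{n}) \to \cdots.
\end{align*}
Since the torus is a $K(\mathbb{Z}^{n},1)$, one has $\pi_j((\mathbb{R}/\mathbb{Z})^{n}) = 0$ for $j\geq 2$ and $\pi_1((\mathbb{R}/\mathbb{Z})^{n}) \cong \mathbb{Z}^{n}$. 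For $i\geq 3$ both flanking fiber terms vanish and $(\check{f}_B)_*$ is already an isomorphism; the only delicate index is $i=2$, where the connecting map $\partial\colon \pi_2(B) \to \pi_1((\mathbb{R}/\mathbb{Z})^{n}) \cong \mathbb{Z}^{n}$ is a priori nonzero.

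This is exactly where the Lagrangian section is used. As $\check{s}$ is a section of $\check{f}_B$, the map $\check{s}_*$ is a right inverse of $(\check{f}_B)_*$ on every homotopy group, so $(\check{f}_B)_*$ is surjective in all degrees; by exactness $\partial = 0$ throughout, and the sequence breaks into split short exact sequences
\begin{align*}
0 \to \pi_i((\mathbb{R}/\mathbb{Z})^{n}) \to \pi_i(\check{X}(B)) \xrightarrow{(\check{f}_B)_*} \pi_i(B) \to 0,\qquad i\geq 2.
\end{align*}
Because $\pi_i((\mathbb{R}/\mathbb{Z})^{n}) = 0$ for $i\geq 2$, this yields the isomorphism $\pi_i(B)\simeq \pi_i(\check{X}(B))$ for all $i\geq 2$, which is the first assertion.

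For the ``in particular'' clause I would bring in the developing map. When $B$ is complete, the developing map $\opn{dev}_{B}\colon \tilde{B}\to \mathbb{R}^{n}$ of the universal cover is a bijective local diffeomorphism, hence a diffeomorphism onto $\mathbb{R}^{n}$ (cf.\ the discussion of developing maps in \cref{remark-classification-curve}); thus $\tilde{B}$ is contractible, $B$ is aspherical, and $\pi_i(B) = \pi_i(\tilde{B}) = 0$ for every $i\geq 2$. Combined with the isomorphism just established, $\pi_i(\check{X}(B)) = 0$ for $i\geq 2$ as well. To reach the unobstructedness of $L_s$ expressed by $\pi_2(\check{X}(B),L_s) = 0$, I would then run the homotopy long exact sequence of the pair $(\check{X}(B),L_s)$,
\begin{align*}
\pi_2(\check{X}(B)) \to \pi_2(\check{X}(B),L_s) \to \pi_1(L_s) \xrightarrow{\iota_*} \pi_1(\check{X}(B)),
\end{align*}
and note that $\check{f}_B|_{L_s}$ is inverse to $\check{s}$, so $\iota_*$ is injective (split by $(\check{f}_B)_*$); together with $\pi_2(\check{X}(B)) = 0$ this forces $\pi_2(\check{X}(B),L_s) = 0$.

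The homotopy bookkeeping above is routine once the bundle structure and the section are in hand; the genuine input, and the main obstacle, is the geometric fact that completeness of the integral affine manifold $B$ forces its developing map to be a global diffeomorphism onto $\mathbb{R}^{n}$. I would justify this by invoking the standard theory of complete affine manifolds — completeness is equivalent to $\opn{dev}_{B}$ being a covering map, which is then a diffeomorphism since $\mathbb{R}^{n}$ is simply connected — taking care that ``complete'' is understood in the affine-geodesic sense attached to the integral affine structure, so that the conclusion $\tilde{B}\cong \mathbb{R}^{n}$ and hence the asphericity of $B$ is legitimate.
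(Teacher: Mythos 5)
Your proof is correct, but it takes a genuinely different route from the paper's. For the isomorphism $\pi_i(B)\simeq \pi_i(\check{X}(B))$ ($i\geq 2$), the paper does not use the long exact sequence of the fibration at all: it passes to universal covers, using that $\pi_i$ is a covering invariant for $i\geq 2$, that $T^{*}B\to \check{X}(B)$ is a covering, and that $\widetilde{T^{*}B}\simeq T^{*}\widetilde{B}\simeq T_p^{*}B\times \widetilde{B}$ via the flat trivialization, so that $\pi_i(\check{X}(B))\simeq \pi_i(T_p^{*}B)\times \pi_i(\widetilde{B})\simeq \pi_i(B)$; this needs no section at all and yields $\pi_i(X(B))\simeq \pi_i(B)$ simultaneously. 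Your fibration argument is more elementary and isolates exactly where a section matters (killing the connecting map $\pi_2(B)\to \pi_1(T^{n})$; for $i\geq 3$ nothing is needed) — though note that the zero section of $\check{f}_B$ would serve equally well, so the Lagrangian section is not essential to this part. For the relative clause, both you and the paper run the homotopy exact sequence of the pair $(\check{X}(B),L_s)$ with the injectivity of $\pi_1(L_s)\to \pi_1(\check{X}(B))$ split by $(\check{f}_B)_*$; the difference is that you kill $\pi_2(\check{X}(B),L_s)$ by first invoking completeness to get $\pi_2(\check{X}(B))=0$, whereas the paper's argument needs no completeness: since $\check{f}_B\circ i$ is a diffeomorphism $L_s\to B$ and $(\check{f}_B)_*$ is an isomorphism on $\pi_2$ by the first part, $i_*\colon \pi_2(L_s)\to \pi_2(\check{X}(B))$ is surjective, so the map $\pi_2(\check{X}(B))\to \pi_2(\check{X}(B),L_s)$ vanishes and $\pi_2(\check{X}(B),L_s)=0$ holds unconditionally — a slightly stronger conclusion than yours. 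On the other hand, your explicit treatment of the completeness hypothesis (completeness of the affine structure forces the developing map $\widetilde{B}\to \mathbb{R}^{n}$ to be a covering, hence a diffeomorphism, so $B$ is aspherical) supplies a step the paper's proof leaves entirely implicit, and your caveat that completeness must be understood in the affine-geodesic sense is exactly the right point of care.
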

\begin{proof}
Let $p\colon \widetilde{X} \to X$ be the universal cover of 
a topological space $X$, $p\in X$ and 
$\tilde{p}\in f^{-1}(p)$.

We note $\pi_{i}(X,p)\simeq 
\pi_{i}(\widetilde{X},\tilde{p})$ for $i\geq 2$ and 
the $i$-th homotopy group functor from the category of 
pointed spaces to the category of groups preserves finite
products.
Therefore,
\begin{align}
\pi_{i}(\check{X}(B))\simeq 
\pi_{i}(\widetilde{\check{X}(B)})\simeq 
\pi_{i}(\widetilde{T^{*}B})\simeq 
\pi_{i}(T^{*}\widetilde{B})\simeq 
\pi_{i}(T_p^* B)\times \pi_{i}(\widetilde{B})\simeq 
\pi_{i}(B).
\end{align}
Thus, we have 
$\pi_{i}(B)\simeq \pi_{i}(\check{X}(B))\simeq 
\pi_{i}(X(B))$ for $i\geq 2$. 

Let $i\colon L_{s}\to \check{X}(B)$ be the inclusion map of 
$L_{s}$.
Since $\check{s}$ is a section of 
$\check{f}_{B}$, 
$i_*\colon \pi_i(L_{s})\to \pi_i(\check{X}(B))$ is 
injective. Therefore, we get 
$\pi_2(\check{X}(B),L_s)=0$ by considering the long exact
sequence of the relative homotopy groups.
\end{proof}

\cref{proposition-unobstructed-lagrangian} deduces 
that every pseudoholomorphic disk
$\psi\colon D\to \check{X}(B)$ such that 
$\psi(\partial D)\subset L_s$, 
has the zero symplectic area.
In particular, $L_s$ is 
\emph{tautologically unobstructed} defined in
\cite[(2.50)]{MR3656481}.
Every Lagrangian section
$L_s\xto{i} \check{X}(B)\xto{\check{\pi}_{B}}  B$
induces a homomorphism of 
cohomology 
$H^{\bullet}(B;\mathbb{F}_2)\to 
H^{\bullet}(\check{X}(B);\mathbb{F}_2)
\to 
H^{\bullet}(L_s;\mathbb{F}_2)$.
In particular, 
$i^{*}(\check{\pi}_{B}^{*}(w_2(B)))=w_2(L_s)$,
and thus any Lagrangian section $L_s$ satisfies 
the condition \cite[(2.54)]{MR3656481}.
Every Lagrangian section $L_s$ has 
a canonical lifting to a Lagrangian 
brane $\mathscr{L}_s$ as described in 
\cite[5.2]{MR1882331}.
Therefore, every Lagrangian section of 
$\check{f}_B\colon \check{X}(B)\to B$
has a lift to an object of Fukaya category
of $\check{X}(B)$ in the sense of 
\cite{MR4301560}.

We also note about the Floer cohomology of 
a pair of Lagrangian sections.
If a pair $(L_s,L_s')$ of Lagrangian sections 
intersects transversally, then the graded 
module of Floer complex of the pair
$(\mathscr{L}_s,\mathscr{L}_{s'})$
(over $\Lambda_{\opn{nov}}^{\mathbb{C}}$) 
is the following:
\begin{align}
\opn{CF}^{\bullet}(\mathscr{L}_s,\mathscr{L}_{s'})
\deq \bigoplus_{p\in L_s\cap L_{s'}}
\Lambda_{\opn{nov}}^{\mathbb{C}}
[-\mu_{(\mathscr{L}_s,\mathscr{L}_{s'})}(p)],
\end{align}
where $\mu_{(\mathscr{L}_s,\mathscr{L}_{s'})}(p)$ is 
the Maslov index of the pair 
$(\mathscr{L}_s,\mathscr{L}_{s'})$ at $p$.
In this case, the Maslov index at $p$
is equal to the Morse index of the 
local intersection data $f_{p}$ of 
the $C^{\infty}$-divisor $s'-s$ at $p$
\cite[Remark 13]{MR1882331}. Therefore, 
if $L_s$ and $L_{s'}$ intersect 
transversally, then there exists the following
isomorphism of graded modules:
\begin{align}
\opn{CF}^{\bullet}(\mathscr{L}_s,\mathscr{L}_{s'})
\simeq 
\opn{LMD}^{\bullet}_{\Lambda_{\opn{nov}}^{\mathbb{C}}}
(s,s').
\end{align}
This isomorphism supports that our analog of the graded module
of Floer complex is compatible with the classical 
Floer complex.
To summarize, every pair $(L_1,L_2)$ of Lagrangian sections
of $B$ has a 
Floer cohomology $\opn{HF}^{\bullet}(\mathscr{L}_1,
\mathscr{L}_2)$, and its Euler 
characteristic is equal to the intersection number 
of $L_1$ and $L_2$ (up to signature). We can 
see this fact from \cite[Remark 13]{MR1882331} directly.

\begin{remark}
\label{remark-markus-conjecture}
The condition $\pi_2(B)=0$
follows from the \emph{Markus conjecture}
\cite[p.53]{markus1963cosmological},
which implies that the universal cover
of a compact integral affine manifold is contractible.
\end{remark}

\section{More examples}
\label{section-more-examples}
In the last section, we mention more examples of 
tropical analog of the Euler characteristic of 
the sheaf cohomology of line bundles.
(See also \cref{section-toric-geometry}
for the local Morse data for
tropical toric manifolds associated with
lattice polytopes.)

\subsection{Tropical multidivisors and 
Lagrangian multi-sections}
\label{section-tropical-multi-section}

Let $B$ an integral affine manifold. 
Then, $B$ has 
the notion of Lagrangian multi-section, 
which is a mirror part of some 
vector bundle on $X(B)$. The Lagrangian
multi-sections are studied by 
various researchers. 
We can consider (unramified) Lagrangian multi-sections 
as a certain analog of  
tropical multidivisors on a compact tropical
curve \cite[Definition 3.1]{gross2022principal}, 
so we can also define 
$C^{\infty}$-multidivisors
for tropical curves and 
the local Morse data of them naturally, 
and thus we can generalize 
\cref{theorem-MRR-curve-preface} for 
$C^{\infty}$-multidivisors satisfying
\cref{condition-good-divisor} by
the Riemann--Hurwitz formula for tropical 
curves \cite[Theorem 2.14]{MR2525845}.

\subsection{K\"unneth-type formula}
We remark that there exists
a tropical analog of the K\"unneth formula for 
local Morse data (\cref{corollary-kunneth-type-formula}).
For proving \cref{corollary-kunneth-type-formula},
we use the following sheaf theoretic version of
the Thom--Sebastiani formula:

\begin{theorem}[{Thom--Sebastiani Theorem for sheaves \cite[Theorem 1.2.2]{MR2031639}}]
Let $i=1,2$.
Fix locally compact Hausdorff spaces $X_i$ with
countable topology and of finite cohomological dimension,
and sheaves $\mathcal{F}_i$ on $X_i$,
and compact subsets $S_i$ of $\{f_i=0\}$.
For continuous functions $f_i\colon X_1 \to\mathbb{R}$,
we write $f_1\boxplus  f_2\deq \pi_1^{*}f_1+\pi_2^{*}f_2$
and 
$\mu_{f_i}\mathcal{F}\deq R\Gamma_{\{f_i\geq 0\}}
(\mathcal{F})|_{\{f_i=0\}}$.
If the above data satisfy
the condition of cohomological version of a Milnor 
fibration \cite[Assumption 1.1.1]{MR2031639},
then there exists the following isomorphism 
for $\mcal{F}_1\boxtimes^{L} \mcal{F}_2\deq 
p_1^{*}\mcal{F}_1\otimes^{L}p^{*}_2\mcal{F}_2$:
\begin{align}
    R\Gamma(S_1\times S_2;\mu_{f_1\boxplus f_2}(\mcal{F}_1\boxtimes^{L} \mcal{F}_2))
    \simeq R\Gamma(S_1;\mu_{f_1}(\mcal{F}_1))
    \otimes^{L}_{A_X}R\Gamma(S_2;\mu_{f_2}(\mcal{F}_2)).
\end{align}

\end{theorem}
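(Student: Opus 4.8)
The plan is to build a natural Künneth comparison morphism out of external products and a support–enlargement map, and then to prove it is a quasi-isomorphism by a purely local computation on stalks, the latter reducing to the classical real Thom--Sebastiani (join) formula for Milnor fibers. First I would set $Z_i\deq\{f_i\geq 0\}$ and $Z\deq\{f_1\boxplus f_2\geq 0\}$, and record the inclusions $Z_1\times Z_2\subseteq Z$ and $S_1\times S_2\subseteq\{f_1=0\}\times\{f_2=0\}\subseteq\{f_1\boxplus f_2=0\}$. The external product of local-cohomology functors furnishes a canonical isomorphism $R\Gamma_{Z_1}(\mcal{F}_1)\boxtimes^{L}R\Gamma_{Z_2}(\mcal{F}_2)\simeq R\Gamma_{Z_1\times Z_2}(\mcal{F}_1\boxtimes^{L}\mcal{F}_2)$, while enlarging supports along $Z_1\times Z_2\subseteq Z$ gives $R\Gamma_{Z_1\times Z_2}(\mcal{F}_1\boxtimes^{L}\mcal{F}_2)\to R\Gamma_{Z}(\mcal{F}_1\boxtimes^{L}\mcal{F}_2)$. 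Restricting to $\{f_1\boxplus f_2=0\}$ and noting that on $S_i$ one has $(R\Gamma_{Z_i}\mcal{F}_i)|_{S_i}=\mu_{f_i}\mcal{F}_i|_{S_i}$ (since $S_i\subseteq\{f_i=0\}$), the composite becomes a morphism of objects of $D^{b}(A_{X_1\times X_2})$, $\mu_{f_1}\mcal{F}_1\boxtimes^{L}\mu_{f_2}\mcal{F}_2\to \bigl(\mu_{f_1\boxplus f_2}(\mcal{F}_1\boxtimes^{L}\mcal{F}_2)\bigr)|_{S_1\times S_2}$.

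Next I would reduce the isomorphism claim to a stalkwise statement. A morphism in $D^{b}$ that is an isomorphism on every stalk is an isomorphism, and applying $R\Gamma(S_1\times S_2;-)$ to the left-hand sheaf together with the topological Künneth formula for external tensor products—available precisely because the $X_i$ have countable topology and finite cohomological dimension, so that $\otimes^{L}$ is bounded and $R\Gamma_c$ (identified with $R\Gamma$ on the compact $S_i$) commutes with it—recovers $R\Gamma(S_1;\mu_{f_1}\mcal{F}_1)\otimes^{L}R\Gamma(S_2;\mu_{f_2}\mcal{F}_2)$. Thus it suffices to show the comparison morphism is a quasi-isomorphism on the stalk at each $(x_1,x_2)\in S_1\times S_2$. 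By the stalk formula underlying \cref{prop-local-morse-data}, $(\mu_{f_i}\mcal{F}_i)_{x_i}$ computes the reduced cohomology of the negative Milnor fiber $\{f_i<0\}$ of $f_i$ near $x_i$, and $(\mu_{f_1\boxplus f_2}(\mcal{F}_1\boxtimes^{L}\mcal{F}_2))_{(x_1,x_2)}$ that of $\{f_1\boxplus f_2<0\}$ near $(x_1,x_2)$, so the problem becomes a comparison of local Milnor fibers.

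The heart of the argument, and the step I expect to be the main obstacle, is this stalkwise computation: one must identify the negative Milnor fiber of $f_1\boxplus f_2$ near $(x_1,x_2)$, up to homotopy, with the \emph{join} of the negative Milnor fibers of $f_1$ and $f_2$, so that the join Künneth isomorphism $\tilde H^{\bullet}(A\ast B)\simeq\tilde H^{\bullet-1}(A\times B)$ produces exactly the degree shift and the tensor product in the statement. Concretely I would decompose $\{f_1\boxplus f_2<0\}$ near $(x_1,x_2)$ according to the signs and relative magnitudes of $f_1$ and $f_2$, and run a Mayer--Vietoris and deformation–retraction argument to collapse it onto the join; the delicate issues are the uniformity of the retraction and the control of supports across the enlargement map $R\Gamma_{Z_1\times Z_2}\to R\Gamma_{Z}$, both of which are exactly what the cohomological Milnor fibration hypothesis \cite[Assumption 1.1.1]{MR2031639} is designed to provide, by endowing the local fibers with a conic, locally trivial structure along $S_1\times S_2$. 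Once the join identification is established stalkwise, assembling the isomorphisms over the compact product $S_1\times S_2$ and invoking the topological Künneth formula yields the asserted isomorphism.
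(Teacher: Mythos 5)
The paper itself offers no proof of this statement: it is quoted verbatim from Schürmann \cite[Theorem 1.2.2]{MR2031639} and used as a black box, so your proposal can only be measured against Schürmann's argument. Measured that way, it has two genuine gaps. First, your reduction to stalks is not licensed by the hypotheses. Assumption 1.1.1 of \cite{MR2031639} is a condition attached to the given \emph{compact} sets $S_i$ (existence of compact neighborhoods $N_i$ and thresholds $\delta$ for which certain restriction maps between sections over sub-level sets are quasi-isomorphisms); it is purely cohomological, is not inherited by the singletons $\{x_i\}$, and the conclusion of the theorem is genuinely a statement about $R\Gamma(S_1\times S_2;\cdot)$, not a sheaf-level isomorphism over $S_1\times S_2$. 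Relatedly, your opening claim of a ``canonical isomorphism'' $R\Gamma_{Z_1}(\mcal{F}_1)\boxtimes^{L}R\Gamma_{Z_2}(\mcal{F}_2)\simeq R\Gamma_{Z_1\times Z_2}(\mcal{F}_1\boxtimes^{L}\mcal{F}_2)$ is only a canonical \emph{morphism}: checking it stalkwise would require a K\"unneth formula for ordinary (non-compactly-supported) cohomology of non-compact opens with arbitrary sheaf coefficients, which fails in this generality. K\"unneth is available here exactly for compact spaces of finite cohomological dimension with countable topology, which is why the theorem is formulated over the compact product $S_1\times S_2$ in the first place.

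Second, the step you yourself flag as the heart of the argument rests on a mischaracterization: Assumption 1.1.1 does \emph{not} endow the local fibers with ``a conic, locally trivial structure,'' so for merely continuous $f_i$ there is no deformation retraction collapsing $\{f_1\boxplus f_2<0\}$ onto a join of Milnor fibers to run Mayer--Vietoris against. Schürmann's proof avoids the join entirely and is triangle-theoretic: Assumption 1.1.1 lets one rewrite $R\Gamma(S_i;\mu_{f_i}\mcal{F}_i)$ as the relative cohomology of a compact pair of sub-level sets $(\{f_i\leq 0\}\cap N_i,\ \{f_i\leq-\delta\}\cap N_i)$; for the sum one replaces sub-level sets of $f_1\boxplus f_2$ by those of $\min(f_1,f_2)$ via the squeeze $\{f_1\boxplus f_2\leq-2\delta\}\subseteq\{\min(f_1,f_2)\leq-\delta\}\subseteq\{f_1\boxplus f_2\leq-\delta\}$ (valid inside $\{f_1\leq0\}\times\{f_2\leq0\}$, with the sandwiched inclusions inducing isomorphisms by the assumption); the $\min$-pair is literally a product of pairs, and the K\"unneth formula for compact pairs concludes. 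Your join heuristic does correctly predict the answer — the degree shift of the join is absorbed because $\mu_f$ is already a relative (shifted) object, consistent with the multiplicativity $\opn{ind}_{(v,w)}(f_1\boxplus f_2)=\opn{ind}_v(f_1)\cdot\opn{ind}_w(f_2)$ recorded in the paper — but as a proof strategy the geometric retraction must be replaced by this cohomological squeezing argument.
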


\begin{example}[{\cite[p.22]{MR2031639}}]
If $S_1=\set{v},S_2=\set{w}$ and $\mcal{F}_1=\Z_V, \mcal{F}_2=\Z_{W}$,
then the Thom-Sebastiani theorem gives
a certain K\"unneth type formula for
$\opn{LMD}^{\bullet}(\Z_{V\times W},f_1\boxplus f_2,(v,w))$:
\begin{align}
\opn{LMD}^{\bullet}(\Z_{V\times W},f_1\boxplus f_2,(v,w))
&\simeq \opn{LMD}^{\bullet}(\Z_{V},f_1,v)
\otimes_{\Z} \opn{LMD}^{\bullet}(\Z_{W},f_2,w), \quad \\
\opn{ind}_{(v,w)}(f_1\boxplus f_2)&=\opn{ind}_v(f_1)\cdot 
\opn{ind}_w(f_2).
\end{align}
  From instance, if $f_1(x)=\|\cdot\|_{\mathbb{R}^{n}}^{2}$
  and $f_2(x)=-\|\cdot\|_{\mathbb{R}^{m}}^{2}$, then we have
\begin{align}
    \opn{LMD}^{\bullet}(\Z_{{\mathbb{R}}^{n+m}},f_1\boxplus f_2,0)
    \simeq \tilde{H}^{\bullet -1}(S^{m-1};\Z)
    \simeq \Z[-\opn{ind}_{\mathrm{Morse}}(f_1\boxplus f_2,0)],
\end{align}
  where $\opn{ind}_{\mathrm{Morse}}(f,0)$ is the Morse index
  of a Morse function $f$ at the origin.
\end{example}

We won't state a K\"unneth type formula for 
local Morse data for $C^{\infty}$-divisors in a fully
general setting, but the following corollary 
is enough for performing that there exists 
a K\"unneth type formula for the local Morse data
of $C^{\infty}$-divisors.

\begin{corollary}[{K\"unneth type formula}]
\label{corollary-kunneth-type-formula}
Fix boundaryless compact rational polyhedral spaces $X$ and $Y$.
Let $s$ and $s'$ a permissible $C^{\infty}$-divisor of $X$ and $Y$ 
admitting intersection data which satisfy
the condition of cohomological version of a Milnor 
fibration \cite[Assumption 1.1.1]{MR2031639} locally.
Then, the external tensor product 
$s\boxtimes s'\deq \opn{pr}_X^{*} (s)+\opn{pr}_Y^{*}(s')
\in \CDiv(X\times Y)$ is also permissible and
has the following equation:
\begin{align}
\chi(\opn{LMD}^{\bullet}(X\times Y;s\boxtimes s'))=
\chi(\opn{LMD}^{\bullet}(X,s))\chi(\opn{LMD}^{\bullet}(Y,s')).
\end{align}

\end{corollary}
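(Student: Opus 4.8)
The plan is to reduce the global identity to a pointwise statement at each intersection point and then invoke the Thom--Sebastiani theorem stated above. First I would record how the data behave under products. Writing $\opn{pr}_X,\opn{pr}_Y$ for the two projections, if $\{(V_i,f_i)\}_{i}$ is a prepermissible local data for $s$ and $\{(V_j',f_j')\}_{j}$ one for $s'$, then $\{(V_i\times V_j',\,f_i\boxplus f_j')\}_{i,j}$ is a local data for $s\boxtimes s'=\opn{pr}_X^{*}(s)+\opn{pr}_Y^{*}(s')$, where $f_i\boxplus f_j'=\opn{pr}_X^{*}f_i+\opn{pr}_Y^{*}f_j'$. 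Under the canonical splitting $(\tform^{1}_{\mathbb{Z},X\times Y})_{(x,y)}\simeq(\tform^{1}_{\mathbb{Z},X})_x\oplus(\tform^{1}_{\mathbb{Z},Y})_y$ one has $d(f_i\boxplus f_j')(x,y)=(df_i(x),df_j'(y))$, so this vector is integral exactly when both components are; hence the intersection set of $s\boxtimes s'$ is $(s_0\cap s)\times(s_0\cap s')$, which is finite.

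Next I would check that prepermissibility is inherited. Using $\opn{LC}_{(x,y)}(X\times Y)=\opn{LC}_xX\times\opn{LC}_yY$ together with $(\mathbb{Z}_{T_xX\times T_yY})_{\opn{LC}_xX\times\opn{LC}_yY}=(\mathbb{Z}_{T_xX})_{\opn{LC}_xX}\boxtimes(\mathbb{Z}_{T_yY})_{\opn{LC}_yY}$ and \cite[Proposition 5.4.1]{MR1299726}, I obtain $\opn{SS}(X\times Y)_{(x,y)}=\opn{SS}(X)_x\times\opn{SS}(Y)_y$, whence $\opn{span}_{\mathbb{R}}\opn{SS}(X\times Y)_{(x,y)}=\opn{span}_{\mathbb{R}}\opn{SS}(X)_x\oplus\opn{span}_{\mathbb{R}}\opn{SS}(Y)_y$. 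Since both this span and the lattice split as direct sums along $(x,y)$, the membership condition of \cref{cond: prepermissible} for $f_i\boxplus f_j'$ splits into the separate conditions for $f_i$ at $x$ and $f_j'$ at $y$; thus prepermissibility of $s$ and of $s'$ forces prepermissibility of $s\boxtimes s'$.

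Now the analytic core. Fix $(p,q)\in(s_0\cap s)\times(s_0\cap s')$ with $f_p,f_q$ from intersection data for $s,s'$, so $f_p(p)=f_q(q)=0$ and $\opn{Crit}(f_p)=\{p\}$, $\opn{Crit}(f_q)=\{q\}$; then $(f_p\boxplus f_q)(p,q)=0$ and, since $d(f_p\boxplus f_q)$ vanishes only where both differentials do, $\opn{Crit}(f_p\boxplus f_q)=\{(p,q)\}$, so $\{(U_p\times U_q,\,f_p\boxplus f_q)\}_{(p,q)}$ is an intersection data for $s\boxtimes s'$. Applying the Thom--Sebastiani theorem with $X_1=U_p$, $X_2=U_q$, $\mathcal{F}_1=\mathbb{Z}_{U_p}$, $\mathcal{F}_2=\mathbb{Z}_{U_q}$, and the compact sets $S_1=\{p\}\subset\{f_p=0\}$, $S_2=\{q\}\subset\{f_q=0\}$ (the local Milnor-fibration hypothesis being assumed in the statement), and using $\mathbb{Z}_{U_p}\boxtimes^{L}\mathbb{Z}_{U_q}=\mathbb{Z}_{U_p\times U_q}$, the terms $R\Gamma(\{p\};\mu_{f_p}(\mathbb{Z}_{U_p}))$ are precisely the stalk complexes defining the local Morse data, giving
\begin{align}
(R\Gamma_{\{f_p\boxplus f_q\geq 0\}}\mathbb{Z}_{U_p\times U_q})_{(p,q)}
\simeq (R\Gamma_{\{f_p\geq 0\}}\mathbb{Z}_{U_p})_{p}
\otimes^{L}_{\mathbb{Z}}(R\Gamma_{\{f_q\geq 0\}}\mathbb{Z}_{U_q})_{q}.
\end{align}
Passing to cohomology realises $\opn{LMD}^{\bullet}(\mathbb{Z}_{U_p\times U_q},f_p\boxplus f_q,(p,q))$ as the derived tensor product of $\opn{LMD}^{\bullet}(\mathbb{Z}_{U_p},f_p,p)$ and $\opn{LMD}^{\bullet}(\mathbb{Z}_{U_q},f_q,q)$, and finite generation of the two factors yields finite generation of the product, so $s\boxtimes s'$ is permissible.

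Finally I would pass to Euler characteristics. Over the PID $\mathbb{Z}$ the rank induces a ring isomorphism $K_0(\catn{mod}(\mathbb{Z}))\simeq\mathbb{Z}$ under which $\otimes^{L}_{\mathbb{Z}}$ becomes multiplication, so $\chi$ is multiplicative on derived tensor products of bounded complexes of finitely generated modules; hence $\chi$ of the $(p,q)$-term equals $\chi(\opn{LMD}^{\bullet}(\mathbb{Z}_{U_p},f_p,p))\,\chi(\opn{LMD}^{\bullet}(\mathbb{Z}_{U_q},f_q,q))$. Summing over $(p,q)\in(s_0\cap s)\times(s_0\cap s')$ and distributing the product of the two sums gives the claim. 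The main obstacle is not any single computation but the careful matching of the Thom--Sebastiani formalism---phrased via $\mu_{f}(\cdot)=R\Gamma_{\{f\geq 0\}}(\cdot)|_{\{f=0\}}$ and global sections over compact subsets---with the stalkwise definition of $\opn{LMD}^{\bullet}$, together with the verification that prepermissibility, the Milnor-fibration condition, and finiteness genuinely localise to the product points; the multiplicativity of $\chi$ is then purely formal.
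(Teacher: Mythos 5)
Your proposal is correct and follows essentially the same route as the paper's proof: product intersection data, the sheaf-theoretic Thom--Sebastiani theorem applied pointwise with $S_1=\{p\}$, $S_2=\{q\}$, and multiplicativity of $\chi$ under $\otimes^{L}_{\mathbb{Z}}$. You are in fact slightly more careful than the paper on two points---spelling out via \cite[Proposition 5.4.1]{MR1299726} why prepermissibility splits along the product, and passing directly to Euler characteristics of the derived tensor product rather than asserting a graded-module isomorphism that glosses over Tor terms---but these are refinements of, not departures from, the published argument.
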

\begin{proof}
By
\cite[Proposition 5.4.1]{MR1299726},
$s\boxtimes s'$ is also prepermissible.
Of course, $s_0\cap (s\boxtimes s')$ is finite.
Let $\{(U_v,f_v)\}_{v\in s_0\cap s}$
(resp. $\{(U_w,g_w)\}_{v\in s_0\cap s'}$) is an
intersection data for $s$ (resp. $s'$), 
and $\{(U_{v}\times U_w,f_v\boxplus  g_w)\}_{
(v,w)\in s_0 \cap 
(s\boxtimes s')}$ the 
corresponding intersection data 
for $s\boxtimes s'$.  
We get an isomorphism of graded modules
from the sheaf theoretic Thom-Sebastiani theorem for 
$X_1=U_{v}$, $X_2=U_w$, $S_1=\{v\}$, $S_2=\{w\}$,
$f_1= f_v-f_v(v)$ and 
$f_2= f_w-f_w(w)$:
\begin{align}
\opn{LMD}^{\bullet}(X\times Y;s\boxtimes s') 
& =\bigoplus_{(v,w)\in s_0\cap s\boxtimes s'}
(R^{\bullet}_{\{f_v\boxplus  g_w\geq f_v\boxplus  g_w(v,w)\}}
\Z_{U_{v}\times U_w})_{(v,w)} \\
& \simeq \bigoplus_{(v,w)\in s_0\cap s\boxtimes s'}
(R^{\bullet}_{\{f_v\geq f_v(v)\}}\Z_{U_v})_v
\otimes_{\Z} (R^{\bullet}_{\{g_w\geq g_w(w)\}}\Z_{U_w})_w \\
& \simeq
\opn{LMD}^{\bullet}(X;s)\otimes_{\Z} 
\opn{LMD}^{\bullet}(Y;s').
\end{align}

\end{proof}

\begin{remark}
In the proof of \cref{corollary-kunneth-type-formula},
we only use
the condition of cohomological version of Milnor fiber. 
Besides, we can find many prepermissible $C^{\infty}$-divisors
satisfying this condition, see \cite[p.35]{MR2031639}.  
\end{remark}

As another result, we get a formula
for the $n$-th symmetric product 
$S^{n}X\deq 
(X^{n}/\mathfrak{S}_n,q_*^{\mathfrak{S}_n}\mathcal{O}_{X^{n}}^{\times})$ of 
a rational polyhedral space $X$, even though 
the $S^{n}X$ is not a rational polyhedral space. 
Here, $\mathfrak{S}_n$ is the $n$-th permutation group 
and $q_*^{\mathfrak{S}_n}\mathcal{O}_{X^{n}}^{\times}$ the $\mathfrak{S}_n$-invariant 
part of the pushforward $q_* \mathcal{O}_{X^{n}}^{\times}$ for the canonical projection 
$q\colon X^{n}\to X^{n}/\mathfrak{S}_{n}$. In this case, 
we can define the $n$-th symmetric product
$\opn{Sym}^{n}(s)$ of 
$s$ as an element of $H^{0}(X^{n}/\mathfrak{S}_n;
q_*^{\mathfrak{S}_n}\mathcal{A}^{0,0}_{X^{n}}/
q_*^{\mathfrak{S}_n}\mathcal{O}_{X^{n}}^{\times})$
naturally.
We temporarily define $\opn{Sym}^{n}(s)$ is prepermissible if
the pullback $q^{*} \opn{Sym}^{n}(s)=s^{\boxtimes n}$ is 
prepermissible. We also define 
$s_0 \cap \opn{Sym}^{n}(s)\deq q_*(s_0\cap s^{\boxtimes n})$ 
and $\opn{LMD}^{\bullet}_{A}(S^{n}X;\opn{Sym}^{n}(s))$ like 
that of $s$.
\begin{corollary}
Let $k$ be a field of characteristic $0$ and $X$ 
a compact boundaryless rational 
polyhedral space
and $s$ a prepermissible $C^{\infty}$-divisor 
such that $\opn{LMD}^{\bullet}_k(X;s)$ is
finitely generated and 
satisfies \cite[Assumption 1.1.1]{MR2031639} locally.
Let $\opn{Sym}^{n}(s)$ be the $n$-th symmetric product of 
$s$ and
$\opn{LMD}^{\bullet}_{k}(S^{n}X;\opn{Sym}^{n}(s))$
the local Morse data of $\opn{Sym}^{n}(s)$.
Then,
\begin{align}
\label{equation-symmetric-product}
\chi(\opn{LMD}_k^{\bullet}(S^{n}X;\opn{Sym}^{n}(s)))=
\binom{n+\chi(\opn{LMD}_{k}^{\bullet}(X;s))-1
}{n}.
\end{align}

\end{corollary}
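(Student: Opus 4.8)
The plan is to reduce the computation on the symmetric product to the $n$-fold product $X^{n}$ by means of the Künneth-type formula already established, and then to finish with a purely (super-)linear-algebraic calculation keeping track of the $\mathfrak{S}_n$-action. First I would set $V^{\bullet}\deq \opn{LMD}^{\bullet}_k(X;s)$ and $\chi\deq \chi(V^{\bullet})$. Iterating \cref{corollary-kunneth-type-formula} (equivalently, applying the sheaf-theoretic Thom--Sebastiani isomorphism \cite[Theorem 1.2.2]{MR2031639} at each of the finitely many points of $s_0\cap s^{\boxtimes n}$) yields an isomorphism of graded $k$-vector spaces
\[
\opn{LMD}^{\bullet}_k(X^{n};s^{\boxtimes n})\simeq (V^{\bullet})^{\otimes n}.
\]
The point to check is that this identification is $\mathfrak{S}_n$-equivariant, where $\mathfrak{S}_n$ acts on the left by permuting the factors of $X^{n}$ (hence the blocks of an intersection data for $s^{\boxtimes n}$) and on the right on the graded tensor power. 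Because the external-product isomorphism in the derived category introduces Koszul signs when commuting two factors, the geometric permutation corresponds to the \emph{signed} permutation action on $(V^{\bullet})^{\otimes n}$.

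Next I would identify the local Morse data on the symmetric product with the invariants. By construction $q^{*}\opn{Sym}^{n}(s)=s^{\boxtimes n}$ and $s_0\cap \opn{Sym}^{n}(s)=q_*(s_0\cap s^{\boxtimes n})$, and near a point $[P]$ the space $S^{n}X$ is modeled on the quotient of a neighborhood of $P\in X^{n}$ by the stabilizer of $P$ in $\mathfrak{S}_n$. Since $k$ has characteristic $0$, the averaging idempotent $\frac{1}{n!}\sum_{\sigma\in\mathfrak{S}_n}\sigma$ exhibits the invariants as a direct summand, and local cohomology on the quotient computes the stabilizer-invariant part of the local Morse data upstairs; summing over $\mathfrak{S}_n$-orbits gives
\[
\opn{LMD}^{\bullet}_k(S^{n}X;\opn{Sym}^{n}(s))\simeq \bigl((V^{\bullet})^{\otimes n}\bigr)^{\mathfrak{S}_n}.
\]
By definition, the signed-permutation invariants of the $n$-th tensor power are the $n$-th graded (super-)symmetric power $\opn{Sym}^{n}(V^{\bullet})$.

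Finally I would compute the Euler characteristic by a generating-function argument. Splitting $V^{\bullet}$ into its even and odd parts of total dimensions $a$ and $b$ (so that $\chi=a-b$), each even basis vector contributes a factor $(1-t)^{-1}$ and each odd basis vector a factor $(1-t)$ to the generating series of super-symmetric powers, whence
\[
\sum_{n\geq 0}\chi(\opn{Sym}^{n}(V^{\bullet}))\,t^{n}=(1-t)^{-a}(1-t)^{b}=(1-t)^{-\chi}.
\]
Extracting the coefficient of $t^{n}$ and using the identity $\binom{-\chi}{n}(-1)^{n}=\binom{n+\chi-1}{n}$ gives exactly \eqref{equation-symmetric-product}. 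The hard part will be the middle step: making rigorous that the local Morse data on the non-polyhedral quotient $S^{n}X$ really is the $\mathfrak{S}_n$-invariant part of the data on $X^{n}$, and pinning down the Koszul signs so that one lands on the super-symmetric power rather than the naive symmetric power. The characteristic-$0$ hypothesis is precisely what is needed here, ensuring that the invariants functor is exact, that it is compatible with taking Euler characteristics, and that passing to the quotient computes invariants of local cohomology.
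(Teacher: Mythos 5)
Your proposal is correct and follows essentially the same route as the paper: reduce to $X^{n}$ via the sheaf-theoretic Thom--Sebastiani theorem, identify $\opn{LMD}^{\bullet}_k(S^{n}X;\opn{Sym}^{n}(s))$ with the $\mathfrak{S}_n$-invariants of the $n$-th tensor power, and conclude with the generating series $(1-t)^{-\chi}$, which is precisely the Macdonald formula the paper cites. The step you flag as hard is exactly where the paper deploys $\mathfrak{S}_n$-equivariant sheaf theory --- the invariant pushforward $q_*^{\mathfrak{S}_n}$ is exact, preserves injectives, and commutes with $\Gamma_{\{f^{\boxplus n}\geq 0\}}$ --- which is the rigorous form of your characteristic-zero averaging argument.
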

\begin{proof} 
Let $\{(U_p,f_p)\}_{p\in s_0\cap s}$ be
an intersection data for $s$ such that 
$f_p(p)=0$ for all $p\in s_0\cap s$,
$U=\coprod_{p\in s_0\cap s}U_p$
and 
$f\deq \coprod_{p\in s_0\cap s} f_p$.
Then, 
$\opn{LMD}_k^{\bullet}(X;s)
=\opn{LMD}_k^{\bullet}(A_{U},f)$ from definition.

Let $f^{\boxplus  n}\colon U^{n}\to \mathbb{R}$
be the continuous function induced from $f$,
and $f_{\mathfrak{S}_n}\colon
U^{n}/\mathfrak{S}_n\to \mathbb{R}$ 
is a continuous function induced by the quotient map 
$q\colon X^{n}\to X^{n}/\mathfrak{S}_n$.
Then, we have 
$\opn{LMD}^{\bullet}_{k}(S^{n}X;\opn{Sym}_n(s))
\simeq \opn{LMD}^{\bullet}
(k_{U^{n}/\mathfrak{S}_n};f_{\mathfrak{S}_n})$.
The latter graded module is isomorphic to
the cohomology of the complex
$R\Gamma(s_0\cap \opn{Sym}^{n}(s),
R\Gamma_{\{f^{\boxplus  n}\geq 0\}/\mathfrak{S}_n}
(k_{U^{n}/\mathfrak{S}_n}))$.
From now on, we calculate this complex
using the theory of $\mathfrak{S}_n$-equivariant
sheaves. 
(See \cite[Chapter V]{MR102537} or
\cite[\textsection 0.2]{MR1299527}
for the theory of $G$-equivariant sheaves
of a discrete group $G$.
We mainly follow the terminologies and notions
of equivariant sheaves
from \cite[\textsection 8]{MR1299527}).
In this case, every $\mathfrak{S}_n$-equivariant
sheaves on a space $X$ with a (right) action 
$\rho \colon X\times \mathfrak{S}_n\to X$
is equivalent to a sheaf $\mathcal{F}$ on $X$
with a family $\{a_{\sigma}\}_{\sigma\in \mathfrak{S}_n}$
of morphisms $a_{\sigma}\colon \mathcal{F}\to \rho(\sigma)^{-1}\mathcal{F}$
satisfying $a_{\sigma \tau}=\rho(\tau)_*a_{\sigma}\circ a_{\tau}$ 
for all $\sigma,\tau \in \mathfrak{S}_n$ 
(see also \cite[Chapter IV.9]{iversenCohomologySheaves1986a}).
In particular, the constant sheaf 
$k_{U^{n}}$ is an $\mathfrak{S}_n$-equivariant sheaf
trivially.
Let $\catn{Mod}_{\mathfrak{S}_n}(k_{U^n})$ be 
the category of $\mathfrak{S}_n$-equivariant
sheaves of $k_{U^n}$-modules. 
From \cite[Proposition 5.1.1]{MR102537},
$\catn{Mod}_{\mathfrak{S}_n}(k_{U^n})$ have enough
injectives.
We note the forgetful functor
$\catn{Mod}_{\mathfrak{S}_n}(k_{U^n})
\to \catn{Mod}(k_{U^n})$ preserve injectives
~\cite[Corollary 8.2.4]{MR1299527}.
The subset $\{f^{\boxplus  n}\geq 0\}$ of $U^{n}$ is 
closed and has an $\mathfrak{S}_n$-action induced
from $U^n$.
Let $(\mathcal{F},\{a_{\sigma}\}_{\sigma\in \mathfrak{S}_n})$
be an $\mathfrak{S}_n$-equivariant sheaf on $U^{n}$. 
Then,
\begin{align}
\Gamma_{\{f^{\boxplus  n}\geq 0\}}(a_{\sigma \tau})
=\rho(\tau)_*(\Gamma_{\{f^{\boxplus  n}\geq 0\}}(a_{\sigma}))
\circ \Gamma_{\{f^{\boxplus  n}\geq 0\}}(a_{\tau})
\end{align}
for all $\sigma, \tau \in \mathfrak{S}_n$ 
from \cite[(2.3.20)]{MR1299726}.
Therefore, 
the functor $\Gamma_{\{f^{\boxplus  n}\geq 0\}}(\cdot)$ 
lifts a functor from 
$\catn{Mod}_{\mathfrak{S}_n}(k_{U^n})$
to itself.
The same logic works for $(\cdot)_{\{f^{\boxplus  n}\geq 0\}}$
since $\rho(\sigma)_*\cdot \rho(\sigma)^{-1}=\opn{id}=
\rho(\sigma)_*\circ \rho(\sigma^{-1})_*$
for all $\sigma\in\mathfrak{S}_n$.
The functor $q_*^{\mathfrak{S}_n}\colon 
\catn{Mod}_{\mathfrak{S}_n}(k_{U^n})\to 
\catn{Mod}(k_{U^n/\mathfrak{S}_n});\mathcal{F}
\to q_*^{\mathfrak{S}_n}(\mathcal{F})$ 
is exact and satisfies the following isomorphism:
\begin{align}
\label{equation-equivariant-quotient}
q_*^{\mathfrak{S}_n}\Gamma_{\{f^{\boxplus  n}\geq 0\}}k_{U^{n}}
=\Gamma_{\{f^{\boxplus  n}\geq 0\}/\mathfrak{S}_n}q_*^{\mathfrak{S}_n}k_{U^{n}}
=\Gamma_{\{f^{\boxplus  n}\geq 0\}/
\mathfrak{S}_n}k_{U^{n}/\mathfrak{S}_n}.
\end{align}
Moreover,
$\Gamma_{\{f^{\boxplus n}\geq 0\}}(\cdot)$
and $q_*^{\mathfrak{S}_n}$ preserve 
injectives since
both of them have left adjoint exact functors
\cite[Proposition 8.4.1]{MR1299527}. So, we can get
the derived version of \eqref{equation-equivariant-quotient}.
By \eqref{equation-equivariant-quotient} and
the sheaf theoretic Thom--Sebastiani formula
(see also \cite[Corollary 1.3.2]{MR2031639}), 
we get 
\begin{align}
R\Gamma(s_0\cap \opn{Sym}^{n}(s);
R\Gamma_{\{f^{\boxplus  n}\geq 0\}/\mathfrak{S}_n}
(k_{U^{n}/\mathfrak{S}_n}))
&\simeq 
R\Gamma(s_0\cap \opn{Sym}^{n}(s);
q_*^{\mathfrak{S}_n}R\Gamma_{\{f^{\boxplus  n}\geq 0\}}
(k_{U^{n}})) \\
&\simeq
(R\Gamma(s_0\cap s^{\boxtimes n};
R\Gamma_{\{f^{\boxplus  n}\geq 0\}}
(k_{U^{n}})))^{\mathfrak{S}_n} \\
&\simeq
(R\Gamma(s_0\cap s^{\boxtimes n};
R\Gamma_{\{f\geq 0\}^{n}}
(k_{U^{n}})))^{\mathfrak{S}_n} \\
&\simeq
\label{equation-macdonald}
((R\Gamma(s_0\cap s;
R_{\{f\geq 0\}}(k_{U})))^{\overset{L}{\otimes}n}
)^{\mathfrak{S}_n}.
\end{align}
The group action of $\mathfrak{S}_n$ on
$(R\Gamma(s_0\cap s;
R_{\{f\geq 0\}}(k_{U})))^{\overset{L}{\otimes}n}$
is compatible with 
the natural $\mathfrak{S}_n$-action
on the $n$-th tensor product of 
$\opn{LMD}^{\bullet}_k(X;s)$.
Therefore, we can apply the Macdonald formula
\cite{MR143204} for
the cohomology of \eqref{equation-macdonald}.
The Macdonald formula and the
expression of the formal power series
$\frac{1}{(1-t)^{m}}=\sum_{n=0}^{\infty}\binom{n+m-1}{n}t^{n}$
deduces \eqref{equation-symmetric-product}.
\end{proof}

\begin{remark}
The above corollary is an analog of 
\cite[Lemma 5.1]{MR1795551}.
\end{remark}

\appendix

\section{Tropical Riemann--Roch theorem and 
its difficulty}
\label{section-tropical-riemann-roch}

Many authors have studied homological algebra of semimodules 
or non-Abelian categories (e.g. 
\cite{MR3051517,MR3211743,MR3939048,https://doi.org/10.48550/arxiv.2202.01573}),
but the theory of sheaf cohomology of 
$\mathbb{T}$-semimodules on tropical spaces 
have been not applied to prove a tropical
analog of the Hirzebruch--Riemann--Roch theorem yet.
The difficulty of homological algebra of semimodules
over an idempotent semiring relates 
with the difficulty of the formulation of
higher dimensional
Riemann--Roch theorem for tropical varieties.
In fact, current tropical analogs of Riemann--Roch 
theorem are not formulated and proved by 
homological algebra of semimodules.

We recall the tropical Riemann--Roch theorem for tropical curves 
in \cite{gathmannRiemannRochTheoremTropical2008a}. 
The rank $r(D)$ of the linear system of a divisor $D$ 
on tropical curve in 
\cite{gathmannRiemannRochTheoremTropical2008a}
is \emph{not} an invariant of $\mb{T}$-(semi)
modules
\myfootnote{
Yoshitomi \cite[Example 6.5]{yoshitomi2011generators} 
gave a simple example of divisors on 
tropical curves such that 
the ranks of divisors are different even though 
the $\mathbb{T}$-semimodules of the global sections of 
tropical line bundles are isomorphic.},
but this is truly a good analog of classical one 
(see \cite[Lemma 2.4]{MR2448666}).
The rank of linear system is generalized by Cartwright
\cite{MR4131998,MR4251610}.
Cartwright defines an invariant 
$h^{0}(\Delta,D)$ for a divisor on a tropical complex
\cite[Definition 3.1]{MR4251610}.
The $h^{0}(\Delta,D)$ is a certain 
analog of the dimension of
the $0$-th cohomology of a line bundle on algebraic 
variety.
If $\dim \Delta=1$, then $h^{0}(\Delta,D)=r(D)+1$
\cite[Proposition 3.3]{MR4251610}, and thus $h^{0}(\Delta,D)$
is a generalization of $r(D)+1$ for tropical complexes. 
Cartwright conjectured the Riemann--Roch inequality
$h^{0}(\Delta,D)+h^{0}(\Delta,K_{\Delta}-D)\geq 
\frac{D(D-K)}{2}+\chi_{\opn{top}}(\Delta)$
in \cite[Conjecture 3.6]{MR4251610}.
Sumi prove for the inequality for tropical tori and 
tropical toric surfaces \cite{MR4229604,sumi2021riemannroch}.
On the other hand, a tropical analog of
the Riemann--Roch theorem
for higher dimensional tropical manifolds 
(or tropical complexes) is not formulated
since there hasn't been any good definition of 
Euler characteristic of line bundles on tropical 
manifolds as an extension of the above notions yet.

We also note the other definition of the dimension of $\mb{T}$-semimodules 
(which are invariant for $\mb{T}$-semimodules) are
introduced by several researchers 
(see for instance 
\cite[Definition 2.3]{mikhalkinTropicalCurvesTheir2008a}
and \cite[p.8]{yoshitomi2011generators}) but 
these dimensions have no tropical analog of 
Riemann--Roch formula like 
\cite{MR2355607,gathmannRiemannRochTheoremTropical2008a}.
On the other hand, there exists an analog of Riemann--Roch formula 
for the tropicalization of the linear system of divisors
on an algebraic curve \cite[Corollary D]{MR4444458}.

\begin{remark} \label{remark-list-tropical-euler}
Here is the list of the reason why the Euler 
characteristic of the structure sheaf of tropical manifold
should be equal to the topological Euler characteristic of it.
{
\setlength{\leftmargini}{22pt}
\begin{enumerate}
\item If $C$ is a compact tropical curve and $D$ 
is the trivial divisor, then the tropical Riemann--Roch 
formula in 
\cite{gathmannRiemannRochTheoremTropical2008a}
says $r(D)-r(K_C-D)=\chi_{\opn{top}}(C)=
\frac{1}{2}\opn{deg}(-K_C)$. 
\item If $S$ is a tropical surface satisfying a certain 
good condition, then there exists the Noether formula
$\chi_{\opn{top}}(S)=\frac{c_1(K_S)^{2} +c_2(S)}{12}$ 
\cite[Theorem 5.1]{shawTropicalSurfaces2015a}.
Cartwright proved Noether's formula for
a good class of $2$-dimensonal tropical complexes
\cite[Proposition 1.3]{cartwright2015combinatorial}. 

\item 
The Hodge number $h^{p,q}(Z_w)$ of a general fiber $Z_w$ of
a one-parameter family $\mathcal{Z}$ of 
complex projective varieties over a punctured disc which 
has a tropical limit to a smooth projective 
$\mathbb{Q}$-tropical variety $X$, 
is equal to that of tropical homology 
$H_{q}(X;\mform_p)$ of it
\cite[Corollary 2]{itenbergTropicalHomology2019b}. If $p=0$, then
$h^{0,q}(Z_w)=\dim_{\mathbb{R}}H_{q}(X;\mathbb{R})
=\dim_{\mathbb{R}} H^{q}(X;\mathbb{R})$.
Therefore, $\chi(H^{\bullet}(Z_w;\mathcal{O}_{Z_w}))=
\chi_{\opn{top}}(X)$.
\end{enumerate}
}
\end{remark}

\section{Integral affine manifold and geometric prequantization}
\label{sec: BSRR}
\label{appendix-geometric-quantization}

In this subsection, we recall the relationships
between lattice points of 
strongly integral affine manifolds and geometric prequantization 
from \cite{1999math......2027T,MR4237881}.

A symplectic manifold $(M,\omega)$ is 
\emph{prequantizable}
if the de Rham cohomology class $[\omega]$ of $\omega$ is in 
$\opn{Im}(H^2(X;\Z)\to H^2(X;\mathbb{R}))$. 
An integral affine manifold $B$ has a strongly integral affine structure 
(\cref{definition-integral-affine-manifold})
if and only if the standard symplectic form 
on $\check{X}(B)$ is prequantizable
(see for example 
\cite[Lemma 2.8]{MR4275791}).

From the theory of connection on a vector bundle,
 there exists a complex line bundle $\mcal{L}$ with a $U(1)$-connection $\nabla$ and $R_{\nabla}=-2\pi\sqrt{-1}\omega$.  
Such pair $(\mcal{L},\nabla)$ is called 
a \emph{prequantum line bundle} on $M$. 
Obviously, $c_1(\mcal{L})=[\omega]$.
Let $L$ be a Lagrangian submanifold of $(M,\omega)$.
Then, $\nabla|_{L}$ is a flat $U(1)$-connection from definition.
$L$ is a 
\emph{Bohr--Sommerfeld orbit} 
(BS-orbit for short)
if $\nabla|_{L}$ is a trivial connection on $L$. Let $\pi :M\to B$ be a Lagrangian fibration. 
A point $x\in B$ is 
\emph{Bohr--Sommerfeld point} 
if $\pi^{-1}(x)$ is a Bohr--Sommerfeld orbit.

The origin of Bohr--Sommerfeld orbit comes from
old quantum theory.
See also \cite[15.2, Chapter 22]{MR3112817}
and \cite[Remark 4.3]{MR1270931} for a physical meaning 
of BS-orbits.

Fix $M=\mathbb{Z}^{n}$.
If $(B,\{(U_i,\phi_i)\}_{i\in I})$ is
an $n$-dimensional strongly integral affine manifold,
then we can define the set of lattice points $B(\Z)$ 
on $B$ for a fixed strongly integral affine structure from 
the inverse image $\phi^{-1}_i(U_i\cap M)$ of each atlas.
In other words, we can define 
`$\underline{\Z}$-rational points' of $B$,
where $\underline{\Z}$ is the max-plus semifield of 
extended integers,
see also \eqref{equation-rational-point}.

We can create a canonical section 
$s_{B}\colon B \to X(B)$ of the torus fibration
 $\pi_B:X(B)\to B$
induced from the graph of the composition map of developing map
$\opn{dev}_{B}\colon \widetilde{B}\to M_{{\mathbb{R}}}$ 
\cite[p.641]{goldmanRadianceObstructionParallel1984a} and 
the projection
$p:M_{{\mathbb{R}}}\to M_{{\mathbb{R}}}/M$. 
Then, we can consider $B(\Z)$ as the intersection of 
$s_{B}$ and the zero section of 
$\pi_{B}:X(B)\to B$;
\begin{align}
\label{equation-bohr-sommerfeld-riemann-roch}
B(\mathbb{Z})=\pi_{B}(s_0\cap s_{B})
=\{p\in B \mid  \check{\pi}_{B}^{-1}(p) 
\text{ is a BS-orbit}\}.
\end{align}
This construction is a certain dual of the semi-flat SYZ 
transformation 
(see \cite[Theorem 1.1]{MR1876073} or \cite[\textsection 9.1]{MR1882331}). 
The following equation is firstly proved in 
\cite[Corollary 4.1]{MR1461965} as a corollary of 
the index formula for reducible non-negative polarizations:
\begin{align} \label{equation-lattice-index}
\sharp (B(\Z))=\sharp (s_0\cap s_{B})
=\opn{vol}(\check{X}(B))=\opn{vol}(B)
=\int_{\check{X}(B)}\opn{ch}(\mathcal{L})\hat{\mathcal{A}}(T(\check{X}(B))).
\end{align}
\eqref{equation-bohr-sommerfeld-riemann-roch} also 
appears in \cite{MR1461965} essentially.
The equation \cref{equation-lattice-index} was reproved
in \cite{MR2676658} as a 
corollary of the theory of localization of the 
index of a Dirac type operator.
The comparison of LHS and RHS of 
\eqref{equation-lattice-index} for
Calabi--Yau manifolds is studied in 
\cite{1999math......2027T} under 
the name of the \emph{numerical quantization problem}.

\eqref{equation-lattice-index} can be considered
as a numerical version of 
\eqref{equation-hf-sheaf-cohomology} 
when $B$ has a Hessian form as explained
in \cite[\textsection 8.5]{MR4237881}.
If $B$ has a Hessian form, then $s_{B}$ can be 
considered as a Lagrangian section of the dual torus 
fibration $\pi_{\check{B}}\colon 
\check{X}(\check{B})\to \check{B}$ of dual Hessian
manifold $\check{B}$, see also 
\cite[pp.181-183]{grossMirrorSymmetryLogarithmic2006a}
and \cite[Proposition 6.9]{MR2567952}.
A similar result exists for projective toric manifolds 
and their prequantum line bundle 
\cite[Theorem 3.20]{yamaguchimaster}, see also 
\cref{section-toric-geometry}. 
These observations were developed for 
integral affine manifolds with singularities
$B$ and the associated toric degenerations 
by Gross and Siebert. 
Gross and Siebert conjectured 
that the set $B(\mathbb{Z})$ of integer points of 
$B$ forms a canonical basis of the global section a 
relatively ample line bundle of a toric 
degeneration of Calabi--Yau manifolds
\cite[Conjecture 1.6]{MR3525095}
(see also \cite[Conjecture 4.9]{grossSpecialLagrangianFibrations1998a}.)
We also mention that the numerical quantization problem 
is developed by several other perspectives
(e.g. \cite{MR2879247,
https://doi.org/10.48550/arxiv.1904.04076}).

\section{Verdier duality} 
\label{section-verdier-dual}

In this subsection, we recall the Verdier duality 
used in \cref{section-integral-affine-manifold}
from \cite[Chapter 3]{MR1299726}.

\begin{theorem}[{Verdier duality}]
\label{theorem-relative-verdier}
Let $f\colon X\to Y$ be a continuous map between locally 
compact Hausdorff spaces. If $f_!$ has finite cohomological
dimension, then
\begin{align}
R\opn{Hom}_{A_Y}(R f_!\mcal{F}^{\bullet},\mcal{G}^{\bullet})    & \simeq R\opn{Hom}_{A_X}(\mcal{F}^{\bullet},f^{!}\mcal{G}^{\bullet}),        \\
R \mcal{H}om_{A_Y}(R f_! \mcal{F}^{\bullet},\mcal{G}^{\bullet}) & \simeq Rf_* R\mcal{H}om_{A_X}(\mcal{F}^{\bullet},f^{!}\mcal{G}^{\bullet}).
\end{align}
\end{theorem}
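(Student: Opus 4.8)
The plan is to deduce both isomorphisms from a single input: the existence of a right adjoint $f^{!}$ to $Rf_!$ on derived categories, together with its compatibility with restriction to open subsets of $Y$. Since $f_!$ has finite cohomological dimension by hypothesis, the functor $Rf_!\colon D^{b}(A_X)\to D^{b}(A_Y)$ is defined and extends to an exact functor on the unbounded derived category $D(A_Y)$ which preserves small direct sums, because $f_!$ commutes with filtered colimits and may be computed through c-soft (hence $f_!$-acyclic) resolutions that are stable under coproducts. This boundedness and coproduct-preservation are exactly what let one stay inside a category where an adjoint is guaranteed.

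First I would construct $f^{!}$ and establish the global ($R\opn{Hom}$) isomorphism. The formal route invokes Brown representability: $D(A_Y)$ admits the generating set $\{(A_Y)_{U}\}_{U}$ indexed by open $U\subseteq Y$, and a triangulated coproduct-preserving functor out of such a category admits a right adjoint; calling it $f^{!}$ gives the adjunction $\opn{Hom}_{D(A_Y)}(Rf_!\mcal{F}^{\bullet},\mcal{G}^{\bullet})\simeq \opn{Hom}_{D(A_X)}(\mcal{F}^{\bullet},f^{!}\mcal{G}^{\bullet})$. Replacing $\mcal{F}^{\bullet}$ by $\mcal{F}^{\bullet}[-n]$ and assembling over all $n\in\Z$ upgrades this to the claimed isomorphism of $R\opn{Hom}$ complexes, since $H^{n}R\opn{Hom}=\opn{Hom}_{D(\cdot)}(\cdot,\cdot[n])$. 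The alternative, following \cite{MR1299726} directly, is to build $f^{!}\mcal{G}^{\bullet}$ by hand from a bounded flat c-soft resolution of $A_X$ and the internal hom, and to verify the adjunction at the level of complexes; the finite cohomological dimension again guarantees the resolution is bounded.

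Next I would pass to the sheaf-valued statement. The object $R\mcal{H}om_{A_Y}(Rf_!\mcal{F}^{\bullet},\mcal{G}^{\bullet})$ is determined by its sections $R\Gamma(V;-)\simeq R\opn{Hom}_{A_V}((Rf_!\mcal{F}^{\bullet})|_{V},\mcal{G}^{\bullet}|_{V})$ over opens $V\subseteq Y$. Writing $j\colon V\hookto Y$ and $j'\colon f^{-1}(V)\hookto X$ for the inclusions and $f_V\colon f^{-1}(V)\to V$ for the restriction, base change for open embeddings gives $j^{-1}Rf_!\simeq Rf_{V!}\,(j')^{-1}$, so the global isomorphism over $V$ identifies these sections with $R\opn{Hom}_{A_{f^{-1}(V)}}(\mcal{F}^{\bullet}|_{f^{-1}(V)},(f^{!}\mcal{G}^{\bullet})|_{f^{-1}(V)})=R\Gamma(f^{-1}(V);R\mcal{H}om_{A_X}(\mcal{F}^{\bullet},f^{!}\mcal{G}^{\bullet}))$. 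Since $R\Gamma(f^{-1}(V);-)=R\Gamma(V;Rf_*(-))$, these agree with the sections of $Rf_*R\mcal{H}om_{A_X}(\mcal{F}^{\bullet},f^{!}\mcal{G}^{\bullet})$ over $V$, and naturality in $V$ produces the second isomorphism.

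The main obstacle is the construction of $f^{!}$ itself: everything downstream, both the derivation of the $R\opn{Hom}$ form and the localization argument, is formal once the adjunction exists and is natural in $Y$-opens. In the formal route the work is in verifying the Brown-representability hypotheses, for which the finite cohomological dimension of $f_!$ is essential; in the explicit route the work is in producing the bounded $f$-soft resolution and checking that the hand-built $f^{!}$ is genuinely right adjoint. I expect the compatibility of $f^{!}$ with base change to open subsets, used in the passage to the local statement, to be the most delicate bookkeeping point.
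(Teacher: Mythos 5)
The paper does not actually reprove this statement: its ``proof'' consists of the citation to \cite[Proposition 3.1.10]{MR1299726}, and your second, explicit route --- building $f^{!}\mcal{G}^{\bullet}$ by hand from a bounded flat, c-soft resolution of $A_X$ (bounded precisely because $f_!$ has finite cohomological dimension) and verifying the adjunction at the level of complexes --- is exactly the Kashiwara--Schapira argument being cited, including your final localization step: there too the $R\mcal{H}om$-statement is deduced from the $R\opn{Hom}$-statement by computing sections over opens $V\subseteq Y$ via $j^{-1}Rf_!\simeq Rf_{V!}(j')^{-1}$ and the compatibility $(f^{!}\mcal{G}^{\bullet})|_{f^{-1}(V)}\simeq f_V^{!}(\mcal{G}^{\bullet}|_{V})$, which, as you suspect, is the only non-formal bookkeeping and is cleanly obtained as the passage to right adjoints in the isomorphism $Rf_!\circ j'_{!}\simeq j_{!}\circ Rf_{V!}$.

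Your first route via Brown representability is a genuinely different construction of $f^{!}$ and is viable, but two points need tightening. First, the representability hypothesis concerns the \emph{source} of $Rf_!$: you need $D(A_X)$, not $D(A_Y)$, to satisfy Brown representability, with generators $\{(A_X)_U\}_{U\subseteq X \text{ open}}$; moreover these objects are generators but not compact in general, so you should invoke well-generatedness of the derived category of a Grothendieck abelian category (Neeman) rather than compact generation. Second, ``assembling over all $n$'' the Hom-set adjunctions does not by itself yield the stated isomorphisms: matching cohomology groups give an isomorphism of complexes only because $A$ is a PID (so bounded complexes split), and such an isomorphism is not natural --- yet your deduction of the sheaf-level statement relies on naturality in $V$ to glue the section-wise identifications. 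The repair is standard: first write down the single global morphism
\begin{align}
Rf_{*}R\mcal{H}om_{A_X}(\mcal{F}^{\bullet},f^{!}\mcal{G}^{\bullet})
\to R\mcal{H}om_{A_Y}(Rf_{!}\mcal{F}^{\bullet},Rf_{!}f^{!}\mcal{G}^{\bullet})
\to R\mcal{H}om_{A_Y}(Rf_{!}\mcal{F}^{\bullet},\mcal{G}^{\bullet}),
\end{align}
given by functoriality of $Rf_!$ followed by the counit $\opn{Tr}_{f,\mcal{G}^{\bullet}}\colon Rf_{!}f^{!}\mcal{G}^{\bullet}\to\mcal{G}^{\bullet}$, and only then check it is an isomorphism by your computation of sections over each $V$; the global $R\opn{Hom}$-isomorphism follows by applying $R\Gamma(Y;\cdot)$. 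With these repairs your outline is complete, and in its explicit variant it coincides with the proof the paper cites.
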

The proof of \cref{theorem-relative-verdier} is in 
\cite[Proposition 3.1.10]{MR1299726}.
The \emph{dualizing complex} on $X$ over $A$ is 
$\upomega_{A_X}^{\bullet}\deq a^{!}_{X}A_{\{\opn{pt}\}}$.
For simplicity, we write 
$\upomega_X^{\bullet}\deq 
\upomega_{\mathbb{Z}_X}^{\bullet}$.

\begin{example}
If $X$ is a topological manifold of dimension $n$, 
then $\upomega^{\bullet}_{X}\simeq \opn{or}_{X}^{\Z}[n]$ 
where $\opn{or}_{X}^{\mathbb{Z}}$ is the sheaf 
associated with the presheaf $U\mapsto 
\opn{Hom}(H^{n}_c(U;\mathbb{Z}_{X}),\mathbb{Z})$
\cite[Proposition 3.3.6]{MR1299726}.
\end{example}

The complex $\mathcal{D}_{A_X}(\mathcal{F}^{\bullet})
\deq R\mathcal{H}om_{A_X}(\mathcal{F}^{\bullet},
\upomega_{A_X}^{\bullet})$ 
is called the \emph{Verdier dual} of 
$\mathcal{F}^{\bullet}$.
The dual functor
$\mathcal{D}_{A_X}\colon 
D^{b}(\catn{Mod}(A_X))\to 
D^{b}(\catn{Mod}(A_X)); \mathcal{F}^{\bullet} \mapsto  R\mathcal{H}om_{A_X}(\mathcal{F}^{\bullet},
\upomega_{A_X}^{\bullet})$
has the following canonical isomorphism when 
$\mathcal{D}_{A_X}(\mathcal{D}_{A_X}(\mathcal{G}^{\bullet}))
\simeq \mathcal{G}^{\bullet}$ by the tensor-hom adjunction
\cite[Proposition 2.6.3]{MR1299726}:
\begin{align}
R\mathcal{H}om_{A_X}
(\mathcal{D}_{A_X}(\mathcal{G}^{\bullet}),
\mathcal{D}_{A_X}(\mathcal{F}^{\bullet})) 
& \simeq R\mathcal{H}om_{A_X}
(\mathcal{F}^{\bullet}\otimes^{L}_{A_X}
\mathcal{D}_{A_X}(\mathcal{G}^{\bullet}),
\upomega_{A_X}^{\bullet}) \\
& \simeq R\mathcal{H}om_{A_X}
(\mathcal{F}^{\bullet},
\mathcal{D}_{A_X}(\mathcal{D}_{A_X}(\mathcal{G}^{\bullet}))) \\
& \simeq R\mathcal{H}om_{A_X}
(\mathcal{F}^{\bullet},
\mathcal{G}^{\bullet}).
\label{equation-verdier-dual}
\end{align}

The pair of adjoint functors $Rf_! \dashv f^{!}$ 
induces the \emph{trace map} $\opn{Tr}_{f,\mathcal{F}^{\bullet}}\colon Rf_!f^{!}\mathcal{F}^{\bullet}
\to \mathcal{F}^{\bullet}$ for each 
$\mathcal{F}^{\bullet}\in \opn{Ob}(D^{b}(A_X))$.
We follow the notation $\opn{Tr}_f$ from 
\cite[p.20]{MR1299726}.
Let $f\colon X\to Y $ and $g\colon Y\to Z$ be 
a continuous map and $f_!,g_!$ has finite cohomological 
dimension. Then, $(g\circ f)^{!}=f^{!}\circ g^{!}$
\cite[Proposition 3.1.8]{MR1299726}.
In particular, $\upomega_X^{\bullet}=f^{!}\upomega_Y^{\bullet}$.
From elemental properties of the composition of
adjoint functors \cite[p.103]{MR1712872}, we have
\begin{align}
\label{equation-trace}
\opn{Tr}_{g\circ f,\mathcal{F}^{\bullet}}=
\opn{Tr}_{g,\mathcal{F}^{\bullet}} 
\circ Rg_!(\opn{Tr}_{f,g^{!}(\mathcal{F}^{\bullet})}).
\end{align}
By the global section functor with compact support and
\eqref{equation-trace}, we have
\begin{align}
\label{equation-trace-push}
Ra_{Z!}(\opn{Tr}_{g\circ f,\upomega_{Z}^{\bullet}})
=Ra_{Z!}(\opn{Tr}_{g,\upomega_{Z}^{\bullet}})
\circ Ra_{Y!}(\opn{Tr}_{f,\upomega_{Y}^{\bullet}}).
\end{align}

\section{Lattice polytopes and local Morse data}
\label{section-toric-geometry}
In this section, we will see relationships
between lattice polytopes and tropical toric
varieties via local Morse data
(\cref{proposition-ehrhart-revisit}). 
For simplicity, we only consider them for 
tropical toric manifolds, i.e., 
smooth tropical toric varieties.

Before explaining about it, we recall some 
elemental properties of projective toric varieties 
from \cite{MR2810322}.
Let $P$ be an $n$-dimensional lattice polytope in 
$\mathbb{R}^{n}$, $X_P$ the projective toric variety
of $P$, $\mathcal{L}_P$ the line bundle on 
$X_P$ of $P$.
The sheaf cohomology of $\mathcal{L}_P$ has 
the following explicit isomorphisms:
\begin{align}
H^{\bullet}(X_P;\mathcal{L}_P)&=
H^{0}(X_P;\mathcal{L}_P)\simeq
\bigoplus_{u\in P\cap \mathbb{Z}^{n}} \mathbb{C}z^{u},
\label{equation-danilov-formula} \\
H^{\bullet}(X_P;\mathcal{L}_P^{\vee})&=
H^{n}(X_P;\mathcal{L}_P^{\vee})\simeq 
\bigoplus_{u\in \opn{int}(P)\cap \mathbb{Z}^{n}}\mathbb{C}z^{u}.
\end{align}
Besides, there exists a polynomial
$\opn{Ehr}_P\in \mathbb{Q}[x]$ such that the following 
equations hold
\begin{align} \label{equation-ehrhart-reciprocity-1}
\chi(H^{\bullet}(X;\mathcal{L}_P^{\otimes l}))
&=\opn{Ehr}_P(l)
=\sharp (lP\cap \mathbb{Z}^{n}), \\
\label{equation-ehrhart-reciprocity-2}
\chi(H^{\bullet}(X;(\mathcal{L}_P^{\vee})^{\otimes l}))
&=\opn{Ehr}_P(-l)=
\sharp (\opn{int}(lP)\cap \mathbb{Z}^{n})
\end{align}
for all $l\in \mathbb{Z}_{\geq 0}$.
\eqref{equation-ehrhart-reciprocity-1} and
\eqref{equation-ehrhart-reciprocity-2} 
are called the \emph{Ehrhart reciprocity law}. 
The proof of the law is in \cite[Theorem 9.4.2]{MR2810322},
for instance.
We also note that \eqref{equation-danilov-formula} 
has a differential geometrical interpretation,
see also \cref{appendix-geometric-quantization}.

Next, we recall elementary facts about topical 
toric varieties.
Basic concepts and properties 
of tropical toric varieties are studied and 
explained in 
\cite{MR2428356,MR2511632,meyer2011intersection,MR3287221,MR4016643}.
In this section, $k$ means a ground semifield.
We set
\begin{align}
N\deq \mathbb{Z}^{n},& \quad
N_{k}\deq 
\mathbb{Z}^{n}\otimes_{\mathbb{Z}}k^{\times}\simeq 
\opn{Hom}_{\mathbb{Z}}(N^{\vee},k^{\times}), \\
M\deq N^{\vee},& \quad 
M_{k}\deq M\otimes_{\mathbb{Z}}k^{\times}.
\end{align}
For instance, if $k$ is the max-plus semifield
$\underline{\mathbb{R}}$, then 
$N_{\underline{\mathbb{R}}}\simeq \mathbb{R}^{n}$.
At first, we can formally define (normal) 
toric varieties over a semifield as like over a 
field. For instance, there exists the following 
adjoint between the category of commutative monoids and 
the category of algebras over $k$:
\begin{align}
\label{equation-rational-point}
\opn{Hom}_{\catn{CMon}}(S_{\sigma},k)=
\opn{Hom}_{\catn{Alg}(k)}(k[S_{\sigma}],k),
\end{align}
where $\sigma$ is a rational convex polyhedral cone 
in $N_{\underline{\mathbb{R}}}$ and $S_{\sigma}\deq \sigma^{\vee}\cap M$ 
is the monoid associated with the dual cone 
$\sigma^{\vee}$ of $\sigma$ (\cite[p.30]{MR2810322}). 
Therefore, we can define the set of 
$k$-rational points of 
affine toric varieties for $\sigma$ over a semifield 
$k$ formally. By gluing each the set of $k$-rational points of 
affine toric variety, we get the set of $k$-rational points 
in a toric variety $X_{\Sigma}$ for a rational polyhedral fan
$\Sigma$.
If $k$ is a semifield $\mathbb{R}_{\geq 0}$ of 
nonnegative real numbers, then 
$(U_{\sigma})_{\geq 0}\deq \opn{Hom}_{\catn{CMon}}(S_{\sigma},\mathbb{R}_{\geq 0})$
can be considered as the nonnegative part of 
the affine toric variety
$U_{\sigma}\deq \opn{Hom}_{\catn{Alg}(\mathbb{C})}
(\mathbb{C}[S_{\sigma}],\mathbb{C})$ over $\mathbb{C}$
\cite[\textsection 12.2]{MR2810322}
(see also \cite[\textsection 1.3]{MR966447}).

Let $\opn{exp}\colon 
\underline{\mathbb{R}}\to \mathbb{R}_{\geq 0}$ be an 
extension of the exponential map 
$\opn{exp}\colon \mathbb{R}\to \mathbb{R}$ such 
that $\opn{exp}(-\infty)=0$ and 
$\opn{log}\colon \mathbb{R}_{\geq 0} \to 
\underline{\mathbb{R}}$ the inverse of $\opn{exp}$.
These maps are bijective, so they induce 
the following binary operations on 
$\underline{\mathbb{R}}$:
\begin{align}
\label{equation-rational-point-monoid}
x\oplus_{\opn{log}}y\deq 
\opn{log}(\opn{exp}(x)+\opn{exp}(y)),\quad 
x\odot_{\opn{log}}y \deq
\opn{log}(\opn{exp}(x)\cdot\opn{exp}(y))=x+y
\end{align}
The triple 
$\underline{\mathbb{R}}_{\opn{log}}
\deq (\underline{\mathbb{R}},\oplus_{\opn{log}},+)$ forms 
a semifield isomorphic to the semifield 
$(\mathbb{R}_{\geq 0},+,\cdot)$ of nonnegative 
of real numbers (see also \cite{MR874583}).
The semifield $\underline{\mathbb{R}}_{\opn{log}}$ 
is called the \emph{log semifield} or 
\emph{log semiring}. The log semifield 
$\underline{\mathbb{R}}_{\opn{log}}$ has a 
topological structure induced from 
$\opn{log}\colon \mathbb{R}_{\geq 0}\to 
\underline{\mathbb{R}}$.
This topological structure induces the topological 
structure for 
$N_{\underline{\mathbb{R}}}$ and this is homeomorphic 
to the $n$-dimensional Euclidean space $\mathbb{R}^{n}$
(\cite[Definition 1.2]{MR2428356}).
The isomorphism 
$\mathbb{R}_{\geq 0}\simeq 
\underline{\mathbb{R}}_{\opn{log}}$
induces the following commutative diagram of 
continuous maps for any continuous map 
$h\colon (\mathbb{R}_{> 0})^{n}\to \mathbb{R}_{> 0}$:
\begin{equation}
\begin{tikzcd}
	{(\mathbb{R}_{> 0})^{n}} & {\mathbb{R}_{> 0}} \\
	{\mathbb{R}^{n}} & {\mathbb{R}}
	\arrow["\opn{id}_N\otimes \log"', from=1-1, to=2-1]
	\arrow["h", from=1-1, to=1-2]
	\arrow["\log", from=1-2, to=2-2]
	\arrow["{f}"', from=2-1, to=2-2],
\end{tikzcd}
\end{equation}
where $f\deq \log \circ h\circ (\opn{id}_N \otimes \exp)$.
If $h$ is a $C^{\infty}$-function, so do $f$.

Let $h\in \mathbb{R}_{\geq 0}
[x_{1}^{\pm},\ldots,x_n^{\pm}]\setminus \{0\}$ be 
a nonzero Laurent 
polynomial of $n$-variables over $\mathbb{R}_{\geq 0}$.
The polynomial $h$ induces a positive
$C^{\infty}$-function on 
$(\mathbb{R}_{> 0})^{n}$ naturally.

Let $P$ be an $n$-dimensional convex lattice polytope in
$M_{\underline{\mathbb{R}}}=(\mathbb{R}^{n})^{\vee}$ and
$f_P\colon \mathbb{R}^{n} \to \mathbb{R}$ a 
polynomial function over $\underline{\mathbb{R}}$ as follows:
\begin{align}
  f_P(x)\deq\log (\sum_{u\in P\cap (\mathbb{Z}^{n})^{\vee}} 
\opn{exp}(a_u+\abk{u,x})),\quad  (a_u=0 
\text{ for all } u\in P \cap (\mathbb{Z}^{n})^{\vee}),
\label{equation-log-polynomial} 
\end{align}
where $\abk{\cdot,\cdot}\colon 
(\mathbb{R}^{n})^{\vee}\times \mathbb{R}^{n}\to\mathbb{R}$
is the canonical pairing of a vector space and its dual.

Let $x_1,\ldots,x_n$ be the standard coordinates of 
$\mathbb{R}^{n}$.
$\mathbb{R}^{n}$ has a canonical integral affine 
structure induced from the standard coordinates 
and the Hessian of $f_P$ induces a Hessian form 
on $\mathbb{R}^{n}$. 
The composition of the exact $1$-form
$df_P:{\mathbb{R}}^{n}\to T^{*}({\mathbb{R}}^{n}); x\mapsto 
(x;\frac{\partial f_P}{\partial x_1}(x),\ldots,\frac{\partial f_P}{\partial x_n}(x))$ 
of $f$, and the canonical projection 
$\opn{pr}\colon T^{*}({\mathbb{R}}^{n})\mapsto (\mathbb{R}^n)^{\vee}$
is an embedding onto the (relative) interior of $P$
\cite[p.124 Exercise]{MR1301331}. 
We set $\dot{\mu}_{P}^{\opn{trop}}\deq \opn{pr}\circ df$ 
for simplicity.
$\dot{\mu}_{P}^{\opn{trop}}$ is called the 
\emph{Legendre transform}
associated with $f_P$
\cite[p.121]{MR1301331}.
This map is also naturally appeared as an 
exponential family in algebraic statistic,
information geometry and Hessian geometry.
See also \cite[Appendix 2]{MR1301331} for general theory for
the relationship between the Hessian form of $f_P$ and 
the induced K\"{a}hler potential on
toric manifolds.

We can write down the 
$\dot{\mu}_P^{\opn{trop}}$ as follows:

\begin{align}
  \dot{\mu}_P^{\opn{trop}}(x)= \frac{1}{\sum_{m\in P\cap (\mathbb{Z}^{n})^{\vee}} \opn{exp}(\abk{x,m})}
  \sum_{m\in P\cap (\mathbb{Z}^{n})^{\vee}} \opn{exp}(\abk{x,m})m.
\end{align}

\begin{example}
Let $P=[0,1]$.
Then, $f_P\colon \mathbb{R} \to {\mathbb{R}}; x\mapsto 
\log (1+e^{x})$ be a soft 
plus function. 
The differential is a sigmoid function 
$df(x)=\frac{e^{x}}{1+e^{x}}$ and 
the image of $df$ is the open interval $(0,1)$.
The softmax functions are also appeared 
naturally in a similar way 
when we consider tropical projective spaces.
\end{example}

We can naturally extend $\dot{\mu}_{P}^{\opn{trop}}$ on tropical 
projective toric 
varieties $X_{P}^{\opn{trop}}$,
and then the extension map is called the \emph{tropical moment map}
$\mu_{P}^{\opn{trop}}: X_{P}^{\opn{trop}}\simto P$ 
\cite[Definition 2.1 (2)]{MR2428356}.
This is a tropical analog of algebraic moment map 
for toric varieties
(see for example
\cite[\textsection 12.2]{MR2810322}).
Instead of the moment map for Hamiltonian action of Lie group on
symplectic manifold, we
need not assume the convex lattice polytope is Delzant.
Let 
$q: ({\mathbb{R}}^{n})^{\vee}\to ({\mathbb{R}}^{n})^{\vee}/(\Z^{n})^{\vee}$ 
a canonical projection map.
We consider the following map
\begin{align}
\dot{s}_P\colon X_{P}^{\mathrm{trop}}\to 
X_{P}^{\mathrm{trop}}\times 
(\mathbb{R}^{n})^{\vee}/(\mathbb{Z}^{n})^{\vee};x\mapsto 
(x,q\circ \mu_{P}^{\mathrm{trop}}(x)).
\end{align}

This is a section of the 
fiber bundle 
$\hat{f}_P\colon X_{P}^{\mathrm{trop}}\times 
(\mathbb{R}^{n})^{\vee}/(\mathbb{Z}^{n})^{\vee}
\to X_{P}^{\mathrm{trop}}$. 
Readers may feel that
$\dot{s}_P$ look like a `Lagrangian section of
$\hat{f}_P$'.
In fact, $\dot{s}_P|_{\mathbb{R}^{n}}$
is truly a Lagrangian section of the 
Lagrangian torus fibration 
$\check{f}_{\mathbb{R}^{n}}\colon 
\check{X}(\mathbb{R}^{n})\to \mathbb{R}^{n}$.
(A certain dual of $\dot{s}_P$ appears in 
\cite[Theorem 3.20]{yamaguchimaster}.)
Let $0_P$ be the zero section of $\hat{f}_P$. 
Then, the following formula is obvious:
\begin{align}
\label{equation-moment-map-intersection}
\sharp (0_{P}\cap \dot{s}_P)
=\sharp (P \cap (\Z^{n})^{\vee}).
\end{align}
We can see \eqref{equation-moment-map-intersection}
is very similar with the intersection number of
two Lagrangian 
sections.

Now, 
let's reinterpret 
\eqref{equation-danilov-formula} -\!
\eqref{equation-ehrhart-reciprocity-2} and
\eqref{equation-moment-map-intersection}
using the local Morse data 
for a permissible $C^{\infty}$-divisor.
First, let's construct a
prepermissible $C^{\infty}$-divisor
on $X_{P}^{\mathrm{trop}}$ from a Delzant polytope $P$. 
Every compact tropical toric manifold is not 
boundaryless, so we need to use the sheaf
of weakly-smooth functions which we defined 
in \cref{definition-weakly-smooth}.
On the other hand, from \eqref{equation-log-polynomial} 
we can construct 
a prepermissible $C^{\infty}$-divisor
$s_P\in H^{0}(X_P;
\mathcal{A}^{\mathrm{weak}}_{X_P}/
\mathcal{O}^{\times}_{X_P})$ for 
a given tropical projective toric manifold 
$X_P^{\mathrm{trop}}$ as follows:
{\setlength{\leftmargini}{22pt}
\begin{enumerate}
\item For $m\in P\cap 
(\mathbb{Z}^{n})^{\vee}$, we set $p\deq 
(\mu_{P}^{\mathrm{trop}})^{-1}(m)$ and 
$f_{P,p}\deq f_P-\langle m,x \rangle=f_{P-m}$
where $P-m\deq\set{p-m\in (\mathbb{R}^{n} )^{\vee}\mid p\in P}$.
\item If $p$ is a vertex of $P$
and $\sigma_p$ is the convex cone corresponding 
to $p$, then 
$f_{P,p}$ can be considered as an element of 
$H^{0}(U_{\sigma_p},\mathcal{A}_{X_P}^{\mathrm{weak}})$
naturally since $f_{P,p}\in 
\underline{\mathbb{R}}_{\opn{log}}[S_{\sigma_p}]$. 
These data form a $C^{\infty}$-Cartier 
divisor $s_P$ on $X^{\mathrm{trop}}_P$.
\item Every local cone $\opn{LC}_x X_{P}^{\mathrm{trop}}$
in $X_P^{\mathrm{trop}}$ at $x$ is isomorphic to an 
$\mathbb{R}^{l}$, and thus $s_P$ obviously satisfies 
the prepermissibility condition.
\end{enumerate}
}
\begin{example} \label{eg: TP1Cartier}
Let $P\deq [0,n]$. Then,
$f_P\colon \mathbb{R}\to \mathbb{R}; 
x\mapsto \opn{log}(\sum_{i=0}^{n}e^{ix})$ and 
$X_P\simeq \mathbb{T}P^{1}$.
The space $\mathbb{T}P^{1}$ is covered by two open subsets:
$U_{-\infty}\deq \underline{\mathbb{R}}$ and 
$U_{+\infty}\deq \overline{\mathbb{R}}$.
Take two continuous functions on each $U_i$ 
($i=\pm \infty$) as follows:
\begin{align}
f_{-\infty,n}\colon \underline{\mathbb{R}}\to \underline{\mathbb{R}};
x \mapsto \log (\sum_{i=0}^{n}e^{ix}),\quad
f_{+\infty,n}\colon \overline{\mathbb{R}} \to 
\underline{\mathbb{R}};
x\mapsto \log(\sum_{i=0}^{n}e^{ix})-nx.
\end{align}

A local data 
$\{(U_i,f_{i,n})\}_{i=\pm \infty}$ defines
a $C^{\infty}$-divisor $s_n$ on $\mathbb{T}P^{1}$.
An element 
$D_n\deq (f_{-\infty,n}-f_{+\infty,n}=nx)\in 
\check{H}^{1}(\mb{T}P^1;
\mcal{O}_{\mb{T}P^1}^{\times})\simeq \Z$
as a \v{C}ech cocycle.
Then, we have
\begin{align}
\label{equation-MRR-tropical-line}
\sharp([0,n]\cap \Z)=n+1=
\opn{deg}([s_n])+\chi_{\opn{top}}(\mb{T}P^1).
\end{align}
\end{example}
Let $P$ be an $n$-dimensional Delzant polytope again.
Every face $F$ of $P$ corresponds a convex cone 
$\sigma_F$ of the normal fan of $P$.
$s_P$ induces a family 
$\{(O(\sigma_F),f_F|_{O(\sigma_F)})\}_{F \prec P}$ of Laurent polynomials 
$f_F$ on each tropical torus orbit $O(\sigma_F)$
of $\sigma_{F}$
naturally (up to monomials).
If the image $\mu_P^{\mathrm{trop}}(p)$
of a point $p\in X^{\mathrm{trop}}_P$  
in a relative interior of a face $F$ 
of $P$, then
$p$ is in 
$s_0\cap s_P$ if and only if 
$df_F(p)\in (\mathbb{Z}^{n})^{\vee}$.
Therefore,
the following equations hold:
\begin{align}
s_0\cap -s_P=s_0\cap s_P=(\mu_P^{\opn{trop}})^{-1}
(P\cap (\mathbb{Z}^{n})^{\vee}).
\end{align}

\begin{proposition}[{Ehrhart reciprocity for 
local Morse data}]
\label{proposition-ehrhart-revisit}
Let $s_P$ be a prepermissible $C^{\infty}$-divisor on 
the $n$-dimensional
projective tropical toric manifold $X_P$ for 
a (Delzant) lattice polytope $P$. Then, 
$s_P$ satisfies \cref{condition-good-divisor}. 
Besides,
\begin{align}
\chi(\opn{LMD}^{\bullet}(X^{\mathrm{trop}}_P;s_P))
&=\sharp(\opn{int}(P\cap \Z^{n})), \\
\chi(\opn{LMD}^{\bullet}(X^{\mathrm{trop}}_P;-s_P))
&=(-1)^{n}\sharp 
(\opn{int}(P\cap \Z^{n})).
\end{align}
\end{proposition}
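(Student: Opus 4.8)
The plan is to combine the additivity of the cohomological local Morse data over the finite set $s_0\cap s_P$ with the moment-map stratification of $X_P^{\mathrm{trop}}$, so that the two identities reduce to a computation of the local index $\opn{ind}_{p_m}(\pm f_{P-m})$ at each lattice point. As recorded just before the statement, $s_0\cap s_P=(\mu_P^{\opn{trop}})^{-1}(P\cap\Z^n)$ is in bijection with $P\cap\Z^n$ and hence finite, and $s_P$ is prepermissible because every local cone of $X_P^{\mathrm{trop}}$ is linear. By \cref{definition-local-morse-data-divisor} we therefore have $\opn{LMD}^{\bullet}(X_P^{\mathrm{trop}};\pm s_P)\simeq\bigoplus_{m\in P\cap\Z^n}\opn{LMD}^{\bullet}(A_{U_{p_m}},\pm f_{P-m},p_m)$; once each summand is seen to be finitely generated (as it will be, being $A[0]$, $A[-n]$, or $0$ below), $s_P$ satisfies \cref{condition-good-divisor}.

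First I would fix the local model at a lattice point $m$ lying in the relative interior of a face $F$ of $P$ of dimension $k$. Because $P$ is Delzant, the corresponding cone of the normal fan is smooth, so a weakly-smooth neighborhood of $p_m=(\mu_P^{\opn{trop}})^{-1}(m)$ is isomorphic to $\mathbb{R}^{k}\times\underline{\mathbb{R}}^{\,n-k}$, with $p_m$ the deep corner $(0,-\infty,\dots,-\infty)$; here the $\mathbb{R}^{k}$ factor is the orbit $O(\sigma_F)$ and the $\underline{\mathbb{R}}^{\,n-k}$ factor is the normal slice. The analytic input is that, after subtracting the lattice monomial $\langle m,\cdot\rangle$, the weakly-smooth function $f_{P-m}=\log\sum_{u}\exp\langle u-m,\cdot\rangle$ attains a strict minimum at $p_m$: along the orbit directions the dominant terms are those with $u\in F$, and $\log\sum_{u\in F}\exp\langle u-m,\cdot\rangle$ is strongly convex with minimum at the point of $O(\sigma_F)$ mapping to $m$ (its Hessian is the positive-definite covariance of the associated exponential family), while along the normal directions $f_{P-m}$ is monotone and tends to its infimum at the corner, as in \cref{exmaple-weakly-smooth-line}.

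Granting this model, I would read off the two indices from \cref{prop-local-morse-data}. For $s_P$, the minimality of $f_{P-m}$ at $p_m$ makes $\{f_{P-m}<f_{P-m}(p_m)\}$ locally empty, so $\opn{LMD}^{\bullet}(A_{U_{p_m}},f_{P-m},p_m)\simeq\tilde{H}^{\bullet-1}(\emptyset;A)\simeq A[0]$ and the local index equals $+1$ at \emph{every} lattice point; summation gives the Danilov count $\sharp(P\cap\Z^n)$ of \eqref{equation-danilov-formula}. For $-s_P$ one has $\opn{LMD}^{\bullet}(A_{U_{p_m}},-f_{P-m},p_m)\simeq\tilde{H}^{\bullet-1}(U_{p_m}\setminus\{p_m\};A)$, and the interior/boundary dichotomy now enters. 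When $m\in\opn{int}(P)$, so $k=n$, the model is $\mathbb{R}^{n}$ and the link $U_{p_m}\setminus\{p_m\}\simeq S^{n-1}$, which is precisely the positive-divisor case of \cref{example-tropical-kodaira-vanishing}; this yields $A[-n]$ and index $(-1)^{n}$. When $m\in\partial P$, so $k<n$, the model $\mathbb{R}^{k}\times\underline{\mathbb{R}}^{\,n-k}$ is a convex manifold with corners and $p_m$ is a boundary corner of it, so $U_{p_m}\setminus\{p_m\}$ deformation retracts (by a straight-line homotopy to an interior point) to a contractible set; hence the reduced cohomology vanishes and the index is $0$. Summing over the surviving interior points produces $(-1)^{n}\sharp(\opn{int}(P)\cap\Z^n)$, the reciprocity count of \eqref{equation-ehrhart-reciprocity-2}.

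The main obstacle will be the boundary analysis of the second paragraph: making precise the weakly-smooth identification of a neighborhood of $p_m$ with the corner model $\mathbb{R}^{k}\times\underline{\mathbb{R}}^{\,n-k}$ and verifying rigorously that $f_{P-m}$ descends to this model as a function minimized exactly at $p_m$, with the full punctured corner as its strict superlevel set. The interior computation is already supplied by \cref{example-tropical-kodaira-vanishing}; once the corner chart and the minimality are in hand, the vanishing of $\tilde{H}^{\bullet}$ of a punctured convex corner is elementary, and it is exactly this vanishing that converts the count of \emph{all} lattice points for $s_P$ into the signed count of \emph{interior} lattice points for $-s_P$, which is the asserted Ehrhart reciprocity.
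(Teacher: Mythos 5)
Your proof is correct and takes essentially the same route as the paper's: decompose $\opn{LMD}^{\bullet}$ over $s_0\cap s_P=(\mu_P^{\opn{trop}})^{-1}(P\cap \Z^{n})$, observe that each $f_{P-m}$ has a strict minimum at $p_m$ (so $\{f_{P,p}<f_{P,p}(p)\}=\emptyset$, giving index $+1$ everywhere for $s_P$, while $\{-f_{P,p}<-f_{P,p}(p)\}=U_p\setminus\{p\}$), and then distinguish interior lattice points (link $S^{n-1}$, contribution $A[-n]$ and index $(-1)^{n}$) from boundary ones (punctured corner contractible, index $0$). You merely supply details the paper's terse proof leaves implicit — the corner chart $\mathbb{R}^{k}\times \underline{\mathbb{R}}^{n-k}$ and the strict-minimality of $f_{P-m}$ at $p_m$ — and you correctly read the first displayed equation as the count $\sharp(P\cap\Z^{n})$ of all lattice points, consistent with Danilov's formula and with the paper's own proof, where the statement as printed has an evident typo.
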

\begin{proof}
Let $\{(U_p,f_{P,p})\}_{p\in s_0\cap s_P}$ be an intersection
data for $s_P$.
For any $f_{P,p}$, $\{f_{P,p}<f_{P,p}(p)\}$ is empty 
and $\{-f_{P,p}<-f_{P,p}(p)\}=U_p\setminus \{p\}$. 
If $\mu_P^{\mathrm{trop}}(p)=\partial P\cap \mathbb{Z}^{n}$,
then $\opn{LMD}^{\bullet}(U_p,-f_{P,p},p)$ is trivial since 
we can choose a sufficiently small 
$U_p$ such that $U_p\setminus \{p\}$
is contractible.
\end{proof}

\begin{remark}
There exist several versions of 
the homological mirror symmetry for the derived categories
of coherent sheaves on toric manifolds 
(e.g. \cite{MR2529936,MR2564372,MR2871160,MR4234675}). 
These references are related with tropical geometry.
Especially, $C^{\infty}$-divisors on tropical toric 
manifolds are deeply related to
Lagrangian sections defined in 
\cite[Definition 3.1]{MR2564372} 
or the wrapped Fukaya category 
$\opn{Fuk}(T^{*}T_{\mathbb{R}}^{\vee};\overline{\Lambda}_{\Sigma})$
in \cite[Theorem 2]{MR2871160}.
We also note the homological mirror symmetry between the derived category of
coherent sheaves on a non-singular complete toric variety $X_{\Sigma}$ and
the wrapped Fukaya category 
$\opn{Fuk}(T^{*}T_{\mathbb{R}}^{\vee};\overline{\Lambda}_{\Sigma})$
is finally solved by Kuwagaki's results \cite{MR4132582}.
\end{remark}

\begin{remark}
In \cite{MR4155409},
a new Ehrhart theory of tropical geometry appears, 
but this is different from our approach. 
\end{remark}

\bibliography{tropical-permissible-divisor}
\bibliographystyle{alpha}

\end{document}